\newcommand{\COLORON}{1}
\newcommand{\NOTESON}{0}
\newcommand{\Debug}{0}
\newcommand{\comment}[1]{}
\newcommand{\COMMENT}[1]{}
\definecolor{darkgray}{rgb}{0.3,0.3,0.3}
\newcommand{\defi}[1]{{\color{darkgray}\emph{#1}}}
\newtheorem{proposition}{Proposition}[section]
\newtheorem{definition}[proposition]{Definition}
\newtheorem{theorem}[proposition]{Theorem}
\newtheorem{corollary}[proposition]{Corollary}
\newtheorem{lemma}[proposition]{Lemma}
\newtheorem{observation}[proposition]{Observation}
\newtheorem{conjecture}{{Conjecture}}[section]
\newtheorem{problem}[conjecture]{{Problem}}
\newtheorem{examp}[proposition]{Example}
\theoremstyle{definition}
\newtheorem{remark}{Remark}[subsection]
\newcommand{\FIG}{0}
\newcommand{\note}[1]{ 

\hspace*{-30pt}
	{\color{blue}  NOTE: \color{Turquoise}{\small  \tt \begin{minipage}[c]{1.1\textwidth}  #1 \end{minipage} \ignorespacesafterend }} 
	
	}
\else \newcommand{\note}[1]{} \fi
\newcommand{\afsubm}[1]{ \ifnum \Debug = 1 {\mymargin{#1}}
\fi} 
\newcommand{\fig}[1]{Figure ``{#1}''}
\else \newcommand{\fig}[1]{Figure~\ref{#1}} \fi
\renewcommand{\color}[1]{}
\newcommand{\N}{\ensuremath{\mathbb N}}
\newcommand{\ca}{\ensuremath{\mathcal A}}
\newcommand{\cb}{\ensuremath{\mathcal B}}
\newcommand{\cc}{\ensuremath{\mathcal C}}
\newcommand{\cf}{\ensuremath{\mathcal F}}
\newcommand{\cg}{\ensuremath{\mathcal G}}
\newcommand{\ch}{\ensuremath{\mathcal H}}
\newcommand{\ci}{\ensuremath{\mathcal I}}
\newcommand{\cj}{\ensuremath{\mathcal J}}
\newcommand{\cp}{\ensuremath{\mathcal P}}
\newcommand{\cs}{\ensuremath{\mathcal S}}
\newcommand{\cgr}{\ensuremath{\mathcal R}}
\newcommand{\ct}{\ensuremath{\mathcal T}}
\newcommand{\cu}{\ensuremath{\mathcal U}}
\newcommand{\cx}{\ensuremath{\mathcal X}}
\newcommand{\oo}{\ensuremath{\omega}}
\newcommand{\bet}{\ensuremath{\beta}}
\newcommand{\ccg}{\ensuremath{\mathcal C(G)}}
\newcommand{\sm}{\backslash}
\newcommand{\sydi}{\triangle}
\newcommand{\isom}{\cong}
\newcommand{\cls}[1]{\ensuremath{\overline{#1}}}
\DeclareRobustCommand{\cev}[1]{%
  \mathpalette\do@cev{#1}%
}
\newcommand{\do@cev}[2]{%
  \fix@cev{#1}{+}%
  \reflectbox{$\m@th#1\vec{\reflectbox{$\fix@cev{#1}{-}\m@th#1#2\fix@cev{#1}{+}$}}$}%
  \fix@cev{#1}{-}%
}
\newcommand{\fix@cev}[2]{%
  \ifx#1\displaystyle
    \mkern#23mu
  \else
    \ifx#1\textstyle
      \mkern#23mu
    \else
      \ifx#1\scriptstyle
        \mkern#22mu
      \else
        \mkern#22mu
      \fi
    \fi
  \fi
}
\newcommand{\nin}{\ensuremath{{n\in\N}}}
\newcommand{\pth}[2]{\ensuremath{#1}\text{--}\ensuremath{#2}~path}
\newcommand{\seq}[1]{\ensuremath{(#1_n)_{n\in\N}}} 
\newcommand{\seqi}[1]{\ensuremath{(#1_i)_{i\in\N}}} 
\newcommand{\g}{\ensuremath{G\ }}
\newcommand{\G}{\ensuremath{G}}
\newcommand{\ceil}[1]{\ensuremath{\left\lceil #1 \right\rceil}}
\newcommand{\wqo}{well-quasi-ordered}
\newcommand{\wqoing}{well-quasi-ordering}
\newcommand{\bqo}{better-quasi-ordered}
\newcommand{\bqoing}{better-quasi-ordering}
\newcommand{\GMT}{Graph Minor Theorem}
\newcommand{\Lr}[1]{Lemma~\ref{#1}}
\newcommand{\Lrs}[1]{Lemmas~\ref{#1}}
\newcommand{\Tr}[1]{Theorem~\ref{#1}}
\newcommand{\Trs}[1]{Theorems~\ref{#1}}
\newcommand{\Sr}[1]{Section~\ref{#1}}
\newcommand{\Srs}[1]{Sections~\ref{#1}}
\newcommand{\Prr}[1]{Pro\-position~\ref{#1}}
\newcommand{\Prb}[1]{Problem~\ref{#1}}
\newcommand{\Cr}[1]{Corollary~\ref{#1}}
\newcommand{\Cnr}[1]{Con\-jecture~\ref{#1}}
\newcommand{\Or}[1]{Observation~\ref{#1}}
\newcommand{\Dr}[1]{De\-fi\-nition~\ref{#1}}
\newcommand{\Rr}[1]{Remark~\ref{#1}}
\newcommand{\impl}[2]{\ref{#1} $\to$ \ref{#2}}
\renewcommand{\iff}{if and only if}
\newcommand{\fe}{for every}
\newcommand{\Fe}{For every}
\newcommand{\st}{such that}
\newcommand{\ti}{there is}
\newcommand{\ta}{there are}
\newcommand{\obda}{without loss of generality}
\newcommand{\wrt}{with respect to}
\newcommand{\leth}{large enough that}
\newcommand{\labtequ}[2]{
 \begin{equation} \label{#1} 	\begin{minipage}[c]{0.9\textwidth}  #2 \end{minipage} \ignorespacesafterend \end{equation} }
\newcommand{\mymargin}[1]{
 \ifnum \Debug = 1
  \marginpar{%
    \begin{minipage}{\marginparwidth}\small%
      \begin{flushleft}%
        {\color{blue}#1}%
      \end{flushleft}%
   \end{minipage}%
  }%
 \fi
}%
\newcommand{\extras}[1]{
 \ifnum \Debug = 1
\section{Extras} #1
 \fi
}%
\newcommand{\mySection}[2]{}
\newcommand{\forb}[1]{\mathrm{Forb}(#1)}
\newcommand{\amal}[2]{\ensuremath{#1^{T_#2}}}
\newcommand{\pst}{\cp^*}
\newcommand{\hst}{\ensuremath{H^*_{\omega_1}}}
\newcommand{\Rank}{\ensuremath{\mathrm{Rank}}}
\newcommand{\QRank}{\ensuremath{Q\mathrm{Rank}}}
\newcommand{\ran}[1]{\ensuremath{\mathrm{Rank}_{#1}}}
\newcommand{\RM}{\ensuremath{\cgr^\bullet}}
\newcommand{\RMn}{\ensuremath{\cgr^{n\bullet}}}
\newcommand{\ranm}[1]{\ensuremath{\ran{#1}^\bullet}}
\newcommand{\ranmn}[1]{\ensuremath{\ran{#1}^{n \bullet}}}
\newcommand{\ranmN}[2]{\ensuremath{\ran{#1}^{#2 \bullet}}}
\newcommand{\ranmm}[1]{\ensuremath{\ran{#1}^{m \bullet}}}
\newcommand{\mbu}{\ensuremath{m^\bullet}}
\newcommand{\cgrc}{\ensuremath{\cgr_{\mathrm c}}}
\newcommand{\Sb}{\ensuremath{S^\bullet}}
\newcommand{\ccm}{\cc^\bullet}
\newcommand{\ccmf}{\cc^\bullet_\mathrm{fin}}
\newcommand{\cont}{\ensuremath{2^{\aleph_0}}}
\newcommand{\uf}{\ensuremath{UF}}
\newcommand{\minem}{minor embedding}
\newcommand{\mm}{\ensuremath{<_\bullet}}
\begin{document}

\title{On better-quasi-ordering under graph minors}

\author{Agelos Georgakopoulos\thanks{Supported by  EPSRC grants EP/V048821/1 and EP/V009044/1.}}
\affil{  {Mathematics Institute}\\
 {University of Warwick}\\
  {CV4 7AL, UK}}

\date{\today}
\maketitle

\begin{abstract}
In the aftermath of the Robertson--Seymour \GMT, Thomas conjectured that the countable graphs are well-quasi-ordered under the minor relation. We prove that this conjecture, when restricted to  graphs with no infinite paths (rays), is equivalent to the statement that the finite graphs are \bqo, another well-known open problem. Even more, we prove that the latter implies that the countable rayless graphs are \bqo. 

We prove several other statements to be equivalent to  the above, one of which being that  the rayless countable graphs of rank $\alpha$ can be decomposed into exactly $\aleph_0$ minor-twin classes \fe\ ordinal $\alpha<\omega_1$. 

By restricting the latter statement to trees, and combining it with Nash-Williams' theorem that the infinite trees are \wqo, we deduce as a side result that  a minor-closed family of $\N$-labelled rayless forests is Borel ---in the  Tychonoff product topology--- \iff\  it does not contain all rayless forests.

As another side-result, we prove Seymour's self-minor conjecture for rayless graphs of any cardinality. 
\end{abstract}

{\bf{Keywords:}} graph minor, well-quasi-order, better-quasi-order, countable rayless graph, Borel set, self-minor conjecture. \smallskip

{\bf{MSC 2020 Classification:}} 05C83, 05C63, 06A07.
\maketitle

\section{Introduction}

The celebrated Robertson--Seymour \GMT\  \cite{GMI}--\cite{GMXX} 
states that the finite graphs are \wqo\ under the minor relation $<$. This paper focuses on two well-known problems that it left open: 

\begin{conjecture}[Folklore \cite{DiestelBook25,PeqTow}] \label{con bqo}
The finite graphs are better-quasi-ordered (BQO) under the minor relation.
\end{conjecture}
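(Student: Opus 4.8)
The plan is to retrace the Robertson--Seymour proof of the \GMT\ with better-quasi-ordering in place of well-quasi-ordering at every stage. This is the natural line of attack because BQO, unlike WQO, is closed under the infinitary operations --- infinite and transfinite sequences, and the operation $P(\cdot)$ --- that pervade that proof, and because BQO still supports a minimal-bad-array argument in the spirit of Nash--Williams. The base case is already available: finite trees are BQO under the topological minor relation, as a special case of Nash--Williams' theorem that infinite trees are BQO; more generally one expects that the WQO results for graphs of bounded tree-width and for graphs embeddable in a fixed surface each admit a BQO strengthening.

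First I would set up the array framework. Assume for contradiction that the finite graphs are not BQO, fix a barrier $B$ and a bad array $f$ into finite graphs, and --- after the usual Nash--Williams manipulations (restricting to sub-barriers, and taking $f$ minimal in the appropriate well-founded sense) --- arrange that all values $f(s)$ lie in a single excluded-minor class $\forb{H}$. Now feed the excluded-minor structure theorem into the array: each $f(s)$ carries a tree-decomposition of bounded adhesion whose torsos are almost embeddable in a surface of bounded genus, with boundedly many apex vertices and boundedly many vortices of bounded depth. Since the structure-theorem bounds depend only on $H$, they are uniform over the array; passing to a further sub-barrier, one may take the surface, the apex count and the vortex parameters to be literally constant.

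Next I would split the two layers of this structure. For the tree-decomposition layer, a graph with a bounded-adhesion tree-decomposition is, up to bounded bookkeeping, a tree whose vertices are labelled by the corresponding torsos, each decorated with its adhesion sets as distinguished ``roots''; the labelled-tree theorem of Nash--Williams (trees labelled by a BQO are BQO under label-respecting topological minors) then reduces matters to showing that these torso-labels range over a BQO, provided one also checks the nontrivial implication that a suitable minor between the labelled trees induces a minor between the graphs. For the torso layer one therefore needs the statement: \emph{rooted almost-embeddable graphs --- graphs near-embedded in a fixed surface, with bounded-depth vortices and a bounded set of distinguished vertices --- form a BQO under the rooted minor relation}.

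This last statement is where I expect the main obstacle to lie, and it is essentially what keeps Conjecture~\ref{con bqo} open. Robertson and Seymour prove the corresponding WQO fact by an intricate argument that inducts on the genus of the surface and on the number and depth of the vortices, repeatedly invoking WQO of structurally simpler classes and their theory of societies; no reorganisation of this surface argument is known that delivers a BQO conclusion, because societies, vortices and near-embeddings do not visibly inherit BQO from their constituents the way finite sequences and trees do. I expect the real work to be an essentially from-scratch, Nash--Williams-style proof that graphs drawn on a fixed surface with bounded-depth vortices are BQO under minors, with the array argument above then threaded around it; the conjecture is widely believed to be true, but short of carrying out that surface programme it appears to lie beyond present techniques.
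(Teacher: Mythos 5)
You have not proved the statement, and neither does the paper: this is Conjecture~\ref{con bqo}, which remains open. Your write-up is in fact an honest assessment of why the ``retrace Robertson--Seymour with BQO'' programme stalls, and your diagnosis is accurate as far as it goes: the missing piece is a BQO analogue of the surface/near-embeddability step, and nothing in the Nash--Williams toolkit (transfinite sequences, labelled trees, products) is known to deliver it. The one thing I would flag is that you present the structure-theorem-plus-labelled-trees reduction as though it were routine bookkeeping; in the WQO proof it already takes several volumes of work, and it is not clear that the minimal-bad-array framework interacts cleanly with tangles and tree-decompositions of bounded adhesion the way you suggest. So even the ``easy'' layers of your sketch would need substantial argument before the surface obstacle is reached.

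Where the paper differs is that it does not attempt this programme at all. Its contribution to Conjecture~\ref{con bqo} is Theorem~\ref{fin bqo}, which proves the conjecture \emph{equivalent} to the statement that the countable rayless graphs are WQO under minors (and to their being BQO). The forward direction is established via an encoding $T$ of hereditarily countable iterated power sets of finite graphs into countable rayless graphs, together with a careful analysis (\Lr{subclasses}, \Tr{tfae}) of minor-twin classes stratified by Schmidt rank; the converse uses an immersion of the rayless graphs into $H^*_{\omega_1}(\cf)$. This trades the surface difficulty you identify for a quite different problem --- the WQO of countable rayless graphs --- which the paper argues may be more tractable, and in any case exposes new equivalent formulations (\Tr{main Intro}) such as countability of minor-twin classes at each rank. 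Your proposal and the paper are therefore complementary: you describe the classical route and its obstruction, while the paper sidesteps it by reduction rather than by direct proof.
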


\begin{conjecture}[Thomas' conjecture \cite{ThoWel}] \label{con Tho}
The countable graphs are well-quasi-ordered (WQO) under the minor relation.
\end{conjecture}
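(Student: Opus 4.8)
The plan is to prove Conjecture~\ref{con Tho} by reducing a countable graph to a tree of "simple" pieces and running a Nash-Williams \bqoing\ argument over that tree; the first move is to separate the analysis according to the presence of rays. For the \emph{rayless} part one uses Schmidt's transfinite ranking: every rayless countable graph has a rank $\alpha<\omega_1$, and the canonical rank-decomposition exhibits it as a finite union of lower-rank rayless graphs glued along finite separators, with a finite torso at the top. Transfinite induction on $\alpha$ then reduces \wqoing\ of the rayless countable graphs to a statement about finite graphs. The subtlety is that, in order to survive the limit steps, one needs the finite pieces to be \bqo\ rather than merely \wqo, since it is Nash-Williams' closure lemma (preservation of \bqo\ under $Q$-sequences, equivalently under countable iterated products decorated by finite data) that actually feeds the induction. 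This is exactly the content of the present paper and the reason Conjecture~\ref{con bqo} is the right hypothesis: the rayless restriction of Conjecture~\ref{con Tho} turns out to be equivalent to it, and in fact to follow from it.

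For graphs \emph{with} rays one would pass to a canonical tree-decomposition $(T,\cv)$ of finite adhesion whose ends correspond to the rays of $T$ and whose parts are rayless or tangle-like, then decorate each node of $T$ with its part together with the finitely many adhesion sets and branch-edges at that node. A \minem\ of one such graph into another should then be assembled from (i) a topological embedding of the skeleton tree $T$ into the skeleton tree $T'$ of the target, available from Nash-Williams' theorem that the infinite trees are \bqo\ under minors, together with (ii) a choice, coherent along the image of that embedding, of a rooted \minem\ of each decorated part into the corresponding part of the target; the bookkeeping here is the Kruskal/Minor-Minor style \bqoing\ argument in which the "labels" are the decorated parts ordered by rooted minor containment. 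Since the adhesion sets are finite, a map built node-by-node along a compatible tree embedding is a genuine \minem\ of $G$ into $G'$, so \bqoing\ of the whole class would follow from \bqoing\ of the skeleton trees, \bqoing\ of the rayless parts (first paragraph), and preservation of \bqo\ under tree-amalgamation.

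The hard part, and the reason the conjecture has stayed open since Thomas raised it, is step (ii): there is no Robertson--Seymour-type structure theorem for infinite graphs, so without an extra hypothesis the parts of the tree-decomposition are themselves arbitrary countable graphs and the induction is circular. The scheme above only becomes non-circular once the parts are known a priori to lie in some \wqo---better, \bqo---class, which is why realistic intermediate targets are graphs of bounded tree-width, graphs excluding a fixed finite minor (where one might hope for a structural description of the parts), and, orthogonally, the rayless case. I do not expect a complete proof of Conjecture~\ref{con Tho} in full generality to come out of these techniques; the concrete deliverable is the reduction of its rayless fragment to the finite \bqoing\ conjecture, and the chain of equivalent reformulations announced in the abstract, which isolate precisely where the remaining difficulty lies.
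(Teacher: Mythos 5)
The statement you are asked to prove is \Cnr{con Tho}, which is an \emph{open conjecture}: the paper does not prove it, and your text does not either --- you say so explicitly in your final paragraph. So the proposal is not a proof but a research programme, and as such it has a genuine, indeed fatal, gap: the entire ray-containing case rests on a ``canonical tree-decomposition of finite adhesion whose parts are rayless or tangle-like'', which does not exist for arbitrary countable graphs (consider $K_{\aleph_0}$, or any graph whose rays cannot be separated from one another by finite vertex sets), and you yourself concede that without an a priori \wqo\ bound on the parts the induction is circular. Nothing in the paper, and nothing in your sketch, closes that circle.

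Even restricted to the rayless fragment --- which is what the paper actually delivers (\Tr{fin bqo}) --- your description of the mechanism is inaccurate in ways that matter. Schmidt's rank decomposition does \emph{not} exhibit a rayless graph as a \emph{finite} union of lower-rank pieces: $G-A(G)$ has in general infinitely many components, and for every $\beta<\Rank(G)$ infinitely many of them have rank at least $\beta$ (\Prr{inf bet}). Consequently the induction cannot be fed by ``Nash-Williams' closure lemma for sequences decorated by finite data''. The implication from \Cnr{con bqo} to the \wqoing\ of countable rayless graphs is the hardest result of the paper, and it goes through an entirely different apparatus: marked graphs and the marked-minor relation $\mm$, the reduction of a graph's minor-twin class to its class $\ccm(G)$ of lower-rank marked minors (\Lr{subclasses}, whose proof needs the stability analysis and Hilbert-hotel reshuffling), the countability of minor-twin classes at each rank, the seven-way equivalence of \Tr{tfae} proved by a simultaneous transfinite induction, and finally an immersion of $\cgr$ into the iterated power set $H^*_{\omega_1}(\cf)$. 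None of these steps is visible in, or recoverable from, your outline; invoking preservation of \bqo\ under $Q$-sequences does not substitute for them.
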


While WQO is a simple notion, its strengthening BQO is much harder to define but has the benefit of being preserved by natural operations that occur in inductive arguments, making it an indispensable tool even when one is only interested to prove that an order is WQO; this idea has found applications in several areas including combinatorics  \cite{NWbqo,ThoBet}, order theory \cite{LavFra}, set theory \cite{MarWel} and topology  \cite{Carroy}. 
See \cite{PeqTow} for a survey of these notions.  According to Pequignot \cite{PeqTow}, the poset of finite graphs endowed with the minor relation is\\ 

\vspace*{-.2cm} 
{\it {``the only naturally occurring WQO which is not yet known to be BQO''.}}
\medskip

A graph is \defi{rayless}, if it does not contain an (1-way) infinite path. The first main result of this paper provides a strong connection between the above conjectures, and much more: 

\begin{theorem} \label{fin bqo}
The following statements are equivalent: 
\begin{enumerate}[itemsep=0.0cm]
\item \label{FB} The finite graphs are BQO;
\item \label{RW} The countable {rayless} graphs are WQO; 
\item \label{RB} The countable {rayless} graphs are BQO.
\end{enumerate}
\end{theorem}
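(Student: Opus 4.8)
The plan is to route the entire theorem through the single implication \ref{FB} $\to$ \ref{RB}. The implications \ref{RB} $\to$ \ref{RW} (a BQO is WQO) and \ref{RW} $\to$ \ref{FB}, \ref{RB} $\to$ \ref{FB} are immediate, since the finite graphs form a sub-quasi-order of the countable rayless graphs under $<$ and both WQO and BQO are inherited by sub-quasi-orders. So assume that the finite graphs are BQO; the goal is to deduce that the countable rayless graphs are BQO. I would work throughout with the labelled form of \ref{FB}: for every BQO $Q$, the finite graphs with vertices labelled by elements of $Q$ are BQO under the label-respecting minor relation, in which each branch set of a model must receive a label above that of the vertex it realises. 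For finite graphs this is well known to be equivalent to \ref{FB} as stated --- one handles bounded label sets directly, and a general BQO label set via the standard BQO closure machinery (see \cite{PeqTow}).

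The core of the argument is to decompose a countable rayless graph recursively along its (Schmidt) rank and record it as a labelled well-founded tree. Recall that a rayless graph has rank $0$ iff it is finite, and rank $\le\alpha$ for $\alpha\ge 1$ iff there is a finite $S\subseteq V(G)$ with every component of $G-S$ of rank $<\alpha$; countable rayless graphs have rank $<\omega_1$. Fixing such a set $S=S(G)$ for each rayless $G$ (by a fixed rule) and iterating yields a decomposition tree $T(G)$ whose nodes index the iterated components and whose node at a component $C$ is labelled by the finite graph induced on $S(C)$ together with all edges from $S(C)$ to the cores of the ancestors of $C$. The decisive structural fact is that along any chain of nodes $G=C_0\supsetneq C_1\supsetneq\cdots$ the ranks $\Rank(C_i)$ strictly decrease, so every such chain is finite; hence every node of $T(G)$ has only finitely many ancestors, the sets $S(C)$ partition $V(G)$, and the label of a node is a genuine finite labelled graph. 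These node-labels, quasi-ordered by the label-respecting minor relation, form a BQO by the labelled form of \ref{FB}, and the well-founded trees labelled by a fixed BQO --- ordered by embeddings that preserve the ancestor order and do not decrease labels --- form a BQO by the tree theorem of Nash-Williams \cite{NWbqo} (see also \cite{LavFra}). Through the encoding $G\mapsto T(G)$ the countable rayless graphs then inherit BQO-ness, once the order is transferred back correctly.

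For the transfer one needs that a label-respecting tree embedding $T(G)\hookrightarrow T(G')$ produces a minor embedding $G\le G'$: a bad array $b\mapsto G_b$ of countable rayless graphs would then yield a bad array $b\mapsto T(G_b)$ of decomposition trees, contradicting the previous step. Reassembling a global minor model of $G$ in $G'$ from the local models of the node-labels supplied by the tree embedding is the substantive step, and I expect it to be the main obstacle. The embedding coming from Nash-Williams' theorem certifies only that each node-label minor-embeds into its image label; it does not guarantee that the witnessing models at a node and at its parent \emph{cohere} on the shared ancestor cores, and this coherence is precisely what is needed to glue the local models into a global one. The clean way to manage it is to run the argument as a minimal-bad-array induction on rank --- in effect, the decomposition picture above, unpacked. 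Given a minimal bad array $f$ of countable rayless graphs, decompose each $f(b)$ into its core $S_b$ (a finite graph, and hence these lie in a BQO by \ref{FB}) and its family $\mathcal F_b$ of components of strictly smaller rank; by the minimality of $f$ together with the closure of BQO under countable multisets, the families $\mathcal F_b$ also lie in a BQO, and since the product of two BQOs is a BQO, the array $b\mapsto(S_b,\mathcal F_b)$ is good. This supplies an appropriate pair $b,b'$ and embeddings $S_b\le S_{b'}$ and $\mathcal F_b\hookrightarrow\mathcal F_{b'}$; the remaining, and genuinely substantive, task is to choose these two embeddings \emph{coherently}, so that the component-embedding respects the attachments to the core as transported by the core-embedding. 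From a coherent choice one reads off $f(b)\le f(b')$, contradicting the badness of $f$.

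Two points the write-up must address. First, the limit stages cannot be dispatched by noting that the class of graphs of rank $\le\beta$, unioned over $\beta<\alpha$, is a countable union of BQOs: a countable union of BQOs need not even be WQO (an infinite antichain is one), so it is the well-founded tree structure of $T(G)$, not a bare union, that does the organising. Second, the labelled form of \ref{FB} is invoked at essentially every step, so establishing its equivalence with \ref{FB} as stated is part of the work rather than a formality. Granting \ref{FB} $\to$ \ref{RB}, the cycle \ref{FB} $\to$ \ref{RB} $\to$ \ref{RW} $\to$ \ref{FB} closes, and the three statements are equivalent.
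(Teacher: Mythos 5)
The cycle you propose does not close: the implication \ref{RW} $\to$ \ref{FB} is not immediate, and the reasoning you give for it is incorrect. From the hypothesis that the countable rayless graphs are WQO, the fact that the finite graphs form a sub-quasi-order only yields that the finite graphs are WQO --- which is just the Robertson--Seymour theorem, already known. It says nothing about BQO: a WQO can have sub-quasi-orders that are not BQO (indeed, whether any countable WQO fails to be BQO is precisely the open territory here). So ``WQO and BQO are inherited by sub-quasi-orders'' gives you \ref{RB} $\to$ \ref{FB} (genuinely trivial), but not \ref{RW} $\to$ \ref{FB}. In the paper, \ref{RW} $\to$ \ref{FB} is Theorem~\ref{fin bqo back}, proved by constructing an explicit map $T: H^*_{\omega_1}(Q)\to\cgr$ (building a rayless tree-like graph from each iterated-power-set element and showing the map reflects the order); this is nontrivial and is the very implication your plan omits. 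Without it you have only $\ref{FB}\leftrightarrow\ref{RB}\to\ref{RW}$, leaving \ref{RW} dangling.

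As for your \ref{FB} $\to$ \ref{RB} sketch: the decomposition-tree-plus-Nash-Williams strategy is a sensible outline, and it is honest of you to flag that the ``coherence'' of the local node-models on shared cores is the substantive obstacle --- but flagging it is not resolving it. That coherence problem is exactly what the paper's heavy machinery (marked graphs, the class $\ccm(G)$, Lemma~\ref{subclasses} and its variant Lemma~\ref{subclasses R}, the unmarking trick of Section~\ref{unmarking}, plus the induction of Theorem~\ref{tfae}) is built to handle, and it occupies most of the paper. Your minimal-bad-array paragraph gestures at the right shape of argument but leaves the crux (``choose these two embeddings coherently'') open. So the proposal has one outright error (treating \ref{RW} $\to$ \ref{FB} as free) and one unresolved core step in the direction you do attempt.
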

Thus we have reduced the  better-quasi-ordering of the finite graphs to a comparatively simple statement about infinite graphs. 

\medskip
Commenting on Thomas' \Cnr{con Tho} and similar questions, Robertson, Seymour \& Thomas \cite{RoSeThoExc} wrote: 
\begin{displayquote}
{\it ``There is not much chance of proving these conjectures because they imply that the set of all finite graphs is `second-level better-quasi-ordered' by minor containment, which in itself seems to be a hopelessly difficult problem''.}
\end{displayquote}
The (easier) implication \impl{RW}{FB} of \Tr{fin bqo} provides a far reaching strengthening of this claim: it says that the words `second-level' can be deleted. Moreover, the implication \impl{RW}{RB} is a considerable strengthening. Interestingly, the proof of the latter implication passes via the implication \impl{RW}{FB}. Part of this proof applies to arbitrary quasi-orders (\Cr{cor gen}). Our most difficult result is the implication \impl{FB}{RW}.
\medskip

While the above quote seems as timely as ever, the results of this paper provide new tools for attacking Conjectures~\ref{con bqo} and~\ref{con Tho}, and point out interesting special cases. 
See \Sr{sec outl} for more. 

But let me first try to explain the above quote. What is meant by `second-level better-quasi-ordered' here is probably the following. Given two sets of graphs  $\cg,\cg'$, we write $\cg <_* \cg'$ if \fe\ $G\in \cg$ \ti\ $H\in \cg'$ \st\ $G<H$. Note that $<_*$ is a quasi-order on the powerset $\cp(\cf)$ of the class \cf\ of finite graphs. 

\begin{problem} \label{prob wqo}
Are the sets of finite graphs \wqo\ under $<_*$? 
\end{problem}
An equivalent formulation is whether the set of minor-closed families of finite graphs is \wqo\ by the inclusion relation. 

The connection between \Prb{prob wqo} and \Cnr{con bqo} is that one can take the definition of 
$<_*$ further, and extend it from an ordering of $\cp(\cf)$ into an ordering of $\cp(\cp(\cf))$, and iterate transfinitely in order to define better-quasi-ordering; see \Sr{sec bqo} for details. In particular, our \Tr{fin bqo} implies that if Thomas'  \Cnr{con Tho} has a positive answer, then so does \Prb{prob wqo}. 
\medskip

Motivated by \Tr{fin bqo}, we undertake a thorough study of Thomas' conjecture for rayless graphs, noting that some long-standing open problems for infinite graphs have been settled in the rayless case \cite{BBDSTwins,unfrayless,PitSteStr}. As a warm-up, we will prove Seymour's (unpublished) self-minor conjecture for rayless graphs of any cardinality, which to the best of my knowledge was open even for countable graphs: 

\begin{corollary} \label{cor Seym Intro}
\Fe\ infinite rayless graph $G$, there is a minor model of \g in itself which is not the identity. 
\end{corollary}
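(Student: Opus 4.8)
The plan is to prove, by transfinite induction on Schmidt's ordinal rank $\rho(G)$ of the rayless graph $G$ (so $\rho(G)=0$ for finite $G$, and $\rho(G)$ is the least ordinal admitting a finite $S\subseteq V(G)$ all of whose $G-S$-components have rank $<\rho(G)$; thus $\rho(G)\ge 1$ exactly when $G$ is infinite), the following rooted strengthening: \emph{for every infinite rayless graph $G$ and every finite $R\subseteq V(G)$ there is a minor model of $G$ in itself which fixes every $r\in R$ to a branch set containing $r$ and is not the identity.} \Cr{cor Seym Intro} is then the case $R=\emptyset$.

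\emph{Inductive step ($\rho(G)\ge 2$).} Fix a finite $S\supseteq R$ all of whose $G-S$-components have rank $<\rho(G)$. Since $\rho(G)\ge 2$, these components cannot all be finite, so we pick one that is infinite, call it $D$, and set $D^+:=G[V(D)\cup S]$. A short rank computation gives $\rho(D^+)\le\rho(D)<\rho(G)$: if $S_D$ witnesses $\rho(D)$, then the components of $D^+-(S\cup S_D)$ are exactly those of $D-S_D$, each of rank $<\rho(D)$. Hence the inductive hypothesis, applied to $D^+$ with root set $S$ (enlarged by $R\cap V(D^+)$), yields a non-identity model $\nu$ of $D^+$ in itself fixing each $s\in S$ to a branch set containing $s$. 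Extend it to $G$ by keeping $\nu$ on $V(D^+)$ and setting $\mu(u):=\{u\}$ for $u\notin V(D^+)$. Every edge of $G$ lies inside $D^+$ (handled by $\nu$), inside another $G-S$-component, or joins $S$ to another $G-S$-component; in the last two cases the relevant branch sets are singletons or contain their vertices, so the edge is realised. This produces a non-identity model of $G$ in itself fixing $R$.

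\emph{Base case ($\rho(G)=1$).} Here a finite $S\supseteq R$ can be chosen with every $G-S$-component finite, and there must be infinitely many of them, say $C_1,C_2,\dots$; encode the attachment of $C_i$ to $S$ by the finite graph $C_i^+:=G[V(C_i)\cup S]$ with the vertices of $S$ marked as roots. By the \RST\ in its rooted form --- finite graphs with a fixed finite root set are well-quasi-ordered under root-preserving minors --- the sequence $(C_i^+)_i$ has an infinite non-decreasing subsequence $C^{(1),+}\preceq C^{(2),+}\preceq\cdots$, whose terms $C^{(i)}$ are pairwise distinct components. Build the ``shift'' model: put $\mu(s):=\{s\}$ for $s\in S$; for each $i$, place the model-copy of $C^{(i)}$ inside the host-copy of $C^{(i+1)}$ via the root-preserving minor model witnessing $C^{(i),+}\preceq C^{(i+1),+}$ (its non-root branch sets lie in $V(C^{(i+1)})$, and the $C^{(i)}$--$S$ edges are realised since the roots map to singletons); and set $\mu(v):=\{v\}$ for every $v$ in a component not occurring in the subsequence. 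One checks that $\mu$ is a valid minor model of $G$ in itself, that it fixes $R\subseteq S$ to singletons, and that it is not the identity because no branch set meets the host-copy of $C^{(1)}$, so $\mu(v)\ne\{v\}$ for all $v\in V(C^{(1)})$.

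Nothing above uses countability --- the rank is an ordinal for rayless graphs of every cardinality and transfinite induction on it is unproblematic --- so the argument gives the statement for all infinite rayless graphs. The step I expect to cost the most work is the base case: one genuinely needs the rooted form of the graph minor theorem (plain well-quasi-ordering of finite graphs does not suffice, since the minor must respect the attachment pattern to $S$), and one must organise the shift so that the set of host-components it uses provably omits at least one component --- this omission is exactly what makes the model non-identity, and it is what lets the base case, rather than a vacuous statement, anchor the induction. The inductive step, by contrast, is short; its one idea is to replace the infinite component $D$ by $D^+$ so as to strictly decrease the rank while keeping control of the edges to $S$.
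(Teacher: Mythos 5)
Your proof is correct and takes essentially the same route as the paper: descend through co-parts to a rank-$1$ subgraph, then apply a GMT-based Hilbert--Hotel shift to the (now finite) parts with the accumulated apex set marked. The paper drills down explicitly to $H$ with $\Rank(H)=1$, accumulates the apices along the way in a set $A'(H)$, and then carries out the shift once; your transfinite induction on rank with the rooted strengthening ``fixes every $r\in R$ to a branch set containing $r$'' does the same thing recursively, with $R$ playing the role of the accumulated apex set. The one place where you and the paper genuinely diverge is the form of the graph minor theorem used in the base case. You want a \emph{rooted} GMT in which root branch sets are singletons; this is strictly stronger than the paper's \Tr{GMT}, which is the marked GMT ($\Omega=\{0,1\}$ in the notation of \cite{GM23}). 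The marked form only guarantees that each root's branch set meets the root set, not that it equals the root itself, and a priori it may permute $S$; the paper therefore has to invoke \Lr{perms} to kill the permutations, and then takes non-singleton root branch sets (unions of the per-stage local branch sets) rather than $\{s\}$. Your cleaner $\mu(s)=\{s\}$ does work, but only because the singleton-rooted version of wqo really does hold: label each $v\in C$ by $N_S(v)\in\cp(S)$ and each $s\in S$ by a unique label; apply the labelled version of the GMT (the general-$\Omega$ form of \cite{GM23}'s statement 1.7, with the finite quasi-order $\{\ell_s:s\in S\}\sqcup\cp(S)$); and then observe that the root branch sets, which must contain their roots, can be shrunk to singletons because each non-root branch set already contains a vertex whose $N_S$-label dominates that of its preimage, so it is adjacent to all of the required roots. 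You leave this shrinking step unspoken, and if ``root-preserving minor'' were read as the mere marked relation, the $C^{(i)}$--$S$ edges would fail to be realised by $\mu(s)=\{s\}$. So the proof is sound, but you should be explicit that you need the singleton-rooted form and should either cite the labelled GMT directly or spell out the shrinking argument; alternatively, you could fall back on the marked GMT plus \Lr{perms} as the paper does, at the cost of allowing the root branch sets to be bigger.
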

(Oporowski \cite{Opo90} has found uncountable counterexamples to Seymour's self-minor conjecture, which of course contain rays.) 

\medskip
A well-known equivalent way to define what it means for a quasi-order $(Q,\leq)$ to be  \wqo\ is to say that it has no infinite antichain and no infinite descending chain. None of these conditions implies the other in general, but we will show that in our setup they are in fact equivalent. Let \defi{$\cgr$} denote the class of countable rayless graphs.
Our second main result can be summarized as follows: 

\begin{theorem} \label{main Intro}
The following statements are equivalent: 
\begin{enumerate}[itemsep=0.0cm
,label=(\alph*)]
\item \label{I iir} $\cgr$ is \wqo;
 \item \label{I ivp} $\cgr$ has no infinite descending chain;    
\item \label{I i} $\cgr$ has no infinite antichain; 
\item \label{I twin} \fe\ ordinal $\alpha<\omega_1$, the number of minor-twin classes of countable rayless graphs of rank $\alpha$ is  $\aleph_0$.
\end{enumerate}
\end{theorem}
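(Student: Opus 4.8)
The implications \ref{I iir}$\,\Rightarrow\,$\ref{I ivp} and \ref{I iir}$\,\Rightarrow\,$\ref{I i} are immediate from the definition of WQO, so it suffices to close the circle via \ref{I ivp}$\,\Rightarrow\,$\ref{I twin}, \ref{I i}$\,\Rightarrow\,$\ref{I twin} and \ref{I twin}$\,\Rightarrow\,$\ref{I iir}. The organising tool throughout is the transfinite rank stratification of rayless graphs: a rayless graph of rank $\alpha\ge1$ has a finite set $S$ of vertices such that every component of $G-S$ has rank $<\alpha$, and every countable rayless graph has rank below $\omega_1$. Observe that each of the four statements really concerns graphs of \emph{bounded} rank: an infinite sequence in $\cgr$, and also an array in $\cgr$ (an array is a map on a barrier, hence takes countably many values), has all its terms of rank $\le\alpha$ for some $\alpha<\omega_1$; and by \ref{I twin} only $\aleph_0$ minor-twin classes meet $\cgr_{\le\alpha}$, the class of countable rayless graphs of rank at most $\alpha$. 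So all arguments can be carried out inside a fixed $\cgr_{\le\alpha}$, by transfinite induction on $\alpha$.

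\textbf{The hub, \ref{I twin}$\,\Leftrightarrow\,$\ref{I iir}.} By \Tr{fin bqo}, statement \ref{I iir} is equivalent to statement \ref{FB} of that theorem, that the finite graphs are BQO; thus it is enough to prove that \ref{I twin} is equivalent to \ref{FB}. For the implication from \ref{I twin} to \ref{FB} I would argue contrapositively. From a bad array $f\colon B\to\{\text{finite graphs}\}$ over a barrier $B$, build a countable rayless graph $G_f$ by recursion along the well-founded tree of initial segments of $B$: at a leaf $s\in B$ insert a rooted copy of $f(s)$, and at an interior node take a controlled combination of the graphs built at its children (a disjoint union together with a finite gadget recording the rooting). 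Then the rank of $G_f$ is bounded by a fixed function of the rank of $B$, so all the graphs produced lie in $\cgr_{\le\alpha}$ for one fixed $\alpha<\omega_1$. Encoding additionally an arbitrary $X\subseteq\N$ into $G_f$ via pendant finite gadgets yields a family $\{G_{f,X}:X\subseteq\N\}$ of $2^{\aleph_0}$ pairwise non-equivalent graphs in $\cgr_{\le\alpha}$, so some rank below $\omega_1$ carries $2^{\aleph_0}$ twin classes, contradicting \ref{I twin}.

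\textbf{The converse hub implication, and \ref{I ivp}, \ref{I i}.} For \ref{FB}$\,\Rightarrow\,$\ref{I twin} I would induct on $\alpha$, proving simultaneously that the rooted rank-$\le\alpha$ graphs are BQO (the base case being \ref{FB} itself) and that $\cgr_{\le\alpha}$ has only $\aleph_0$ twin classes. In the inductive step, decompose a rank-$\alpha$ graph $G$ into its finite torso $S$ and the family of its rank-$<\alpha$ components rooted at $S$; by the induction hypothesis there are $\le\aleph_0$ twin classes of rooted pieces and they are BQO, so the down-closed and up-closed classes of pieces occurring in $G$ (respectively, at all, and infinitely often) are finitely based and hence range over only $\aleph_0$ possibilities, and a Higman/Nash-Williams packing argument shows that the triple (torso, ``occurring'' class, ``occurring infinitely often'' class) is a complete invariant of the twin class of $G$. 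Finally, for \ref{I ivp}$\,\Rightarrow\,$\ref{I twin} and \ref{I i}$\,\Rightarrow\,$\ref{I twin} I would prove both contrapositives at once: refining the encoding above by transfinite recursion on $\alpha$, from uncountably many twin classes in some rank one constructs a family $\{G_t:t\in 2^{<\omega}\}$ in $\cgr_{\le\alpha}$ compatible with the tree order ($t\preceq t'$ giving $G_t<G_{t'}$, and incomparable $t,t'$ giving incomparable $G_t,G_{t'}$) that still realises $2^{\aleph_0}$ twin classes; an infinite strictly descending chain is then read off along one branch (reversed), and an infinite antichain off infinitely many branches, negating \ref{I ivp} and \ref{I i} respectively. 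Together with \ref{I iir}$\,\Rightarrow\,$\ref{I ivp} and \ref{I iir}$\,\Rightarrow\,$\ref{I i} this closes all implications.

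\textbf{Main obstacle.} The crux is the encoding $f\mapsto G_f$ — above all, proving that the encoded graphs are pairwise \emph{in}equivalent: one must show that a minor model of $G_f$ inside $G_{f'}$ is forced to respect the recursive tree skeleton and hence induces a comparison of the underlying arrays, which requires a fine analysis of how rooted minors interact with the rank-decomposition of rayless graphs — a companion to the paper's hardest input, the implication \impl{FB}{RW} of \Tr{fin bqo}. A secondary difficulty is the ``complete invariant'' lemma used for \ref{FB}$\,\Rightarrow\,$\ref{I twin}, where one must pin down, using BQO of the pieces, exactly when an infinite family of rooted pieces packs disjointly into another; the transfinite bookkeeping at limit ranks (keeping the rank of $G_f$ at its intended value and propagating the invariant) is routine but needs care.
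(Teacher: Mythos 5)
Your high-level strategy diverges from the paper in a way that introduces two genuine problems.

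\textbf{Circular use of Theorem~\ref{fin bqo}.} You invoke Theorem~\ref{fin bqo} to replace statement~\ref{I iir} by ``the finite graphs are BQO'' and then propose to prove \ref{I twin}$\Leftrightarrow$\ref{FB}. But the direction of Theorem~\ref{fin bqo} you need, namely \ref{FB}$\Rightarrow$\ref{I iir} (the implication \impl{FB}{RW}), is exactly what the paper calls ``our most difficult result'', and it is proved in Section~\ref{sec conv} \emph{after} Theorem~\ref{main Intro}, using the same machinery (Lemma~\ref{subclasses}/\ref{subclasses R} and Theorem~\ref{tfae}). So treating Theorem~\ref{fin bqo} as a black box does not make the hard work disappear — it quietly relies on a result that in the paper's logical order is downstream of the statement you are trying to prove. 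The paper instead proves \ref{I twin}$\Leftrightarrow$\ref{I iir} directly, rank-by-rank, via Theorem~\ref{tfae}, without any detour through BQO of finite graphs.

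\textbf{The ``complete invariant'' lemma is neither the paper's nor justified.} Your invariant (torso, down-closed class of occurring pieces, class of pieces occurring infinitely often) is not the same as the paper's invariant $\ccm(G)$, which is the class of \emph{all} marked minors of $(G,A(G))$ of lower rank — including infinite ones that span many pieces. That this is a complete twin-invariant under suitable hypotheses is exactly Lemma~\ref{subclasses}, which the paper flags as its most challenging proof. Remarks~\ref{rem ccm} and~\ref{rem ccm R} are devoted to showing how easily naive finitary or piece-level invariants fail (e.g.\ $\ccmf$ is not enough), so the burden of proof on your alternative triple being complete is substantial, and your proposal acknowledges the difficulty but gives no argument. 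Without a proof of this lemma, the implication \ref{FB}$\Rightarrow$\ref{I twin} (and, via your framing, the whole hub of the equivalence) is not established.

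\textbf{Where the routes genuinely differ.} Your proposed direct encoding for \ref{I twin}$\Rightarrow$\ref{FB} — from a bad array of finite graphs over a barrier, plus a subset $X\subseteq\N$, to $2^{\aleph_0}$ pairwise non-twin rayless graphs of fixed rank — is a different construction from anything in the paper. The paper factors this as \ref{I twin}$\Rightarrow$\ref{I iir} (the implications \ref{R iii}$\to$\ref{R iiip}$\to$\ref{R ip}$\to$\ref{R iip} of Theorem~\ref{tfae}, via Definition~\ref{def GC} and Lemma~\ref{lem GC}) followed by \ref{I iir}$\Rightarrow$\ref{FB} (the $T$-map of Section~\ref{sec wbqo}). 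Your combined encoding would need the same hard step (showing that a minor model of $G_{f,X}$ in $G_{f',X'}$ respects the tree skeleton and forces a comparison of arrays) and is left entirely at the level of a plan. Likewise, your tree family $\{G_t : t\in 2^{<\omega}\}$ for the contrapositives of \ref{I ivp}, \ref{I i}$\Rightarrow$\ref{I twin} plays the role of the paper's family $G_{\cc_X}$ over co-infinite $X$, which is correct in spirit but again depends on Lemma~\ref{lem GC} to make the minor comparisons behave, and you do not supply an analogue. In short: the overall shape (stratify by rank, reduce twin-class to a minor-closed invariant, encode antichains/chains into uncountably many twin classes) is right, but the two load-bearing ingredients — the complete-invariant lemma and the immersion/encoding argument — are either borrowed circularly or left unproved, whereas the paper supplies them as Lemma~\ref{subclasses} and Theorem~\ref{tfae}.
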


As a consequence, to prove that \cgr\ is \wqo, it would suffice to prove that the rayless graphs of rank $\alpha$ are \wqo\ \fe\ ordinal $\alpha<\omega_1$. But the most interesting aspect of \Tr{main Intro} is the equivalence of the \wqoing\ of \cgr\ to the cardinality condition \ref{I twin}, which I will now explain. We say that two graphs $G,H$ are \defi{minor-twins}, if both $G<H$ and $H<G$ hold. Each rayless graph can be assigned an ordinal number, called its \defi{rank}, by recursively decomposing \g into graphs of smaller ranks (\Sr{sec rank}) similarly to the definition of rank for Borel sets or Hausdorff rank for linear orders. It is known that every ordinal $\alpha$ smaller that the least uncountable ordinal $\omega_1$ is the rank of some countable graph \G. Assuming $\neg \mathrm{CH}$, i.e.\ that $\aleph_1< \cont$,  an alternative way to formulate \ref{I twin} is to say that $\cgr$ consists of exactly $\aleph_1$ minor-twin equivalence classes. 

We will prove a refinement of \Tr{main Intro} (\Tr{tfae}) whereby \cgr\ is replaced by the subclass of graphs of (up to) a given rank. The rank 0 case of this refinement provides a statement equivalent to the \GMT\ (\Cr{UF ctble}).

This refinement is also needed for the proof of the implication \impl{FB}{RW} of \Tr{fin bqo}. Moreover, it adds several further equivalent statements to those of \Trs{fin bqo} and~\ref{main Intro} (\Tr{main}). \Tr{tfae} is proved via an intricate transfinite induction in which we have to show all of these conditions to be equivalent before being able to proceed to the next rank. In fact, we will have to introduce additional equivalent conditions for the induction to work, which involve graphs that have a finite set of their vertices \defi{marked}, endowed with a \defi{marked minor} relation that maps each marked vertex to a branch set containing at least one marked vertex (\Sr{sec finite}).

\medskip
I expect that \Tr{main Intro} remains true, with minor modifications of the proof, when replacing \cgr\ and \cf\ by a variety of subclasses, e.g.\ planar graphs. For the case of rayless trees we will explicitly prove the implication \ref{I iir} $\to$  \ref{I twin}. Combining this with Nash-Williams' \cite{NWbqo} theorem that the trees are \wqo\footnote{In fact, we will need a strengthening of this, proved by Thomas, saying that the graphs of tree-width $k$ are \wqo\ \fe\ $k\in \N$.}, we deduce as above that there are only countably many minor-twin classes of countable rayless trees of rank $\alpha$ \fe\ $\alpha<\omega_1$.  Combining this further with ongoing work with J.~Greb\'ik \cite{GeoGreCom} connecting  minor-closed families with Borel subsets of the space \cg\ of $\N$-labelled graphs\footnote{\cg\ denotes the space of graphs $G$ with $V(G)=\N$, encoded as functions from $\N^2$ to $\{0,1\}$ representing the edges, endowed with the (Tychonoff) product topology. The vertex labelling is ignored when considering minors; it is only used to define the topology on \cg.}, we will deduce the following. 

\begin{theorem} \label{thm Borel Intro}
Let $\ct\subset \cg$ be a minor-closed family of $\N$-labelled rayless forests. Then \ct\ is Borel \iff\ it is proper, i.e.\ it does not contain all rayless forests.
\end{theorem}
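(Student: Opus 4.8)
The plan is to treat the two directions of the equivalence separately, the forward (``if'') direction being the substantial one. Write $\mathcal{RF}$ for the set of all $\N$-labelled rayless forests and $\mathcal{RT}$ for its subfamily of connected graphs, i.e.\ the rayless trees. For the ``only if'' direction I would argue contrapositively: if \ct\ is not proper then $\ct=\mathcal{RF}$, and $\mathcal{RF}$ is not Borel. Indeed, a connected graph is a rayless tree exactly when it is well-founded with respect to any chosen root, so the classical $\Pi^1_1$-completeness of the set of well-founded trees transfers, via a Borel reduction, to show that $\mathcal{RT}$ is not Borel; since $\{G\in\cg: G\text{ connected}\}$ is Borel and $\mathcal{RT}=\mathcal{RF}\cap\{G: G\text{ connected}\}$, the set $\mathcal{RF}$ is not Borel either. (Equivalently: ``$G$ has a ray'' is analytic, so $\mathcal{RF}$ is coanalytic, and one checks coanalytic-hardness already on forests.)

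For the ``if'' direction, assume \ct\ is proper. The first step is to reduce to a descriptive-set-theoretic statement about finitely many forbidden minors. By Nash-Williams' theorem the rayless trees are \bqo\ under $<$, and since a countable forest is determined up to minors by the countable sequence of its tree components --- an operation under which \bqoing\ is preserved --- the rayless forests are \bqo\ too, in particular \wqo. Hence the set of $<$-minimal rayless forests not in \ct\ is a finite antichain $\{F_1,\dots,F_m\}$ with $m\ge 1$, so that $\ct = \{\,G\in\mathcal{RF}: F_i\not< G\text{ for }1\le i\le m\,\}$, and it remains to show this set is Borel.

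For this I would combine two ingredients. The first is our refinement of \Tr{main Intro}: the implication \impl{I iir}{I twin}, which we establish for rayless trees, applied together with Nash-Williams' theorem (so that hypothesis \ref{I iir} holds for trees), shows that for each $\alpha<\omega_1$ there are only countably many minor-twin classes of rayless trees of rank $\alpha$; since every component of a rank-$\alpha$ rayless forest has rank at most $\alpha$ and $|\alpha+1|\le\aleph_0$, the same count then passes to rayless forests of rank $\alpha$. The second is the correspondence between minor-closed families and Borel subsets of \cg\ developed in \cite{GeoGreCom}. Feeding the former into the latter, the proof that $\ct$ is Borel becomes a transfinite induction on the rank in which one shows that, relative to the class of rayless forests avoiding $F_1,\dots,F_m$, both the predicate ``$F_i< G$'' and the predicate ``$G$ has rank at most $\alpha$'' are Borel. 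The base case $\alpha=0$ is essentially \Cr{UF ctble}: one invokes the \RST\ together with the elementary fact that a finite graph is a minor of $G$ iff it has a \emph{finite} minor model, so that ``$F_i\not< G$'' is even a closed condition. In the inductive step one cuts $G$ along a minimal finite separator $S$ so that every component of $G-S$ has strictly smaller rank, applies the induction hypothesis to those components, and uses the countability of the rank-$<\alpha$ twin classes to express ``$G\in\ct$ and $G$ has rank at most $\alpha$'' as a countable Boolean combination of the rank-$<\alpha$ conditions and the finitely many conditions on $S$. Taking the union over $\alpha<\omega_1$ exhibits $\ct$ as Borel.

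The step I expect to be the main obstacle is precisely this transfinite induction, where two interlinked difficulties arise. First, ``$F_i< G$'' is \emph{not} Borel on all of $\cg$ once $F_i$ is infinite --- already for $F_i=K_{1,\aleph_0}$ it is properly analytic on forests --- so one genuinely needs $G$ to range over rayless forests of bounded rank, and keeping this predicate Borel as the rank grows is the delicate part. Second, one must show that restricting to graphs avoiding $F_1,\dots,F_m$ collapses the a priori coanalytic predicate ``$G$ is rayless'' to a Borel one; this is exactly the point at which properness of \ct\ enters, and where the countability of the twin classes of each rank --- coming from \Tr{main Intro} applied to trees --- does the work. Finally, the passage between trees and forests has to be threaded through every stage of the induction, though that last part is routine bookkeeping.
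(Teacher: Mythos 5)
Your ``only if'' direction agrees with the paper's (though the paper simply cites the known co-analytic-completeness of the rayless forests). The ``if'' direction, however, follows a genuinely different route and has a concrete gap.

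The paper's argument proceeds in three steps: (1) since \ct\ excludes some rayless tree, Lemma~\ref{lem exc T} gives a \emph{fixed} $\alpha<\omega_1$ with $\ct\subseteq\Rank_\alpha$; (2) Theorem~\ref{thm trees} then yields countably many minor-twin classes in \ct, so \ct\ can be written as a \emph{countable} union of sets $\cc_n$, each the downward closure of a single graph $F'_n\in\ct$; (3) each $\cc_n$ is $\forb{X_n}$ for finite $X_n$ (by Nash-Williams), so Theorem~\ref{thm min Bor} from \cite{GeoGreCom} --- the result doing the real descriptive-set-theoretic work --- shows $\cc_n$ is Borel. Your plan dispenses with (3) and tries to establish Borelness directly by a transfinite induction on rank, which amounts to reproving a version of Theorem~\ref{thm min Bor} from scratch; that is a substantial undertaking and your sketch is too thin to assess.

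But the decisive flaw is earlier and simpler: you never bound the rank of the graphs in \ct. At the end you write ``Taking the union over $\alpha<\omega_1$ exhibits $\ct$ as Borel.'' That is a union of $\aleph_1$-many Borel sets, which need \emph{not} be Borel. This is not a bookkeeping issue --- it is exactly where properness of \ct\ must be invoked in a quantitative form, and your proof nowhere uses it for this purpose. What you need is the content of Lemma~\ref{lem exc T}: if \ct\ excludes some rayless tree $T$ as a minor, then there is a single ordinal $\alpha(T)<\omega_1$ such that every rayless graph of rank $>\alpha(T)$ contains $T$, whence $\ct\subseteq\Rank_{\alpha(T)}$. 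Without this, the countability of twin classes \emph{per rank} does nothing to prevent \ct\ from spreading across $\omega_1$-many ranks, and your final union step fails. (You should also be careful with your claimed base case: the minimal excluded minors $F_i$ of a proper minor-closed class of \emph{rayless} forests are typically infinite trees, so ``$F_i\not<G$'' is not closed; for finite $G$ it is vacuously true, which is harmless, but your appeal to the Robertson--Seymour theorem and finite minor models is off target.)
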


\subsection{Structure of the paper}
After some preliminaries, we prove the implication \impl{RW}{FB} of \Tr{fin bqo} in \Sr{sec wbqo}. Much of the rest of the paper is devoted to \Tr{main Intro}, and we prepare it with a warm-up: \Sr{sec R1} focuses on graphs of rank 1, introducing some  fundamental ideas of the paper, and concluding with \Cr{cor Seym Intro}. \Sr{sec subcl} is devoted to a key tool (\Lr{subclasses}) implying that, under natural conditions, the minor-twin class of a rayless graph \g of rank $\alpha$ is determined by the class of marked-minors of \g of ranks $<\alpha$. \Lr{subclasses} is needed for the proofs of all three of \Trs{fin bqo}, \ref{main Intro} and \ref{thm Borel Intro}. Sections~\ref{sec forests} and~\ref{sec Borel} constitute an Intermezzo devoted to the proof of  \Tr{thm Borel Intro} (this constitutes an early example of a statement about unmarked graphs for the proof of which marked graphs are necessary). After this, we return to the proof of \Tr{main Intro}, introducing some un-marking techniques in \Sr{unmarking}, followed by the main technical part in \Sr{sec fix rk}, and concluding with  \Sr{sec main prf}. We then use some of the tools gathered thereby to prove the implication \impl{FB}{RB} of \Tr{fin bqo} in \Sr{sec conv}. We finish with some open problems in \Sr{sec outl}.

\medskip
For a reader wishing to gain an impression of the proofs without reading all details, I recommend reading the following selection. \Sr{sec rank} excluding the proof of \Or{min tree}; \Sr{sec bqo}. \Sr{sec wbqo} up to \eqref{qrank}. \Sr{sec R1}, skimming the proof of \Lr{Rank 1}. The statement of \Lr{subclasses}; its proof is the most challenging one in the paper, and \Lrs{UF} and \ref{Rank 1} illustrate the main ideas avoiding the more {\em annoying parts}. \Srs{sec forests} and \ref{sec Borel} are optional; they only contribute to \Tr{thm Borel Intro}. The statement of \Tr{tfae}, and implications \ref{R iip} $\to$ \ref{R iii} and \ref{R iiip} $\to$  \ref{R ip} of its proof. The statement of \Tr{main}. \Sr{sec conv} excluding the proof of \Lr{subclasses R}. 

\section{Preliminaries}

We will be following the terminology of Diestel \cite{DiestelBook25} for graph-theoretic concepts and Jech \cite{Jech} for set-theoretic ones.

\subsection{(Well)-Quasi-Orders} \label{sec WQOs}

A quasi-order $(Q,\leq)$ consists of a set $Q$ and a binary relation $\leq$ on $Q$ which  is  reflexive and transitive (but not necessarily antisymmetric). 
A quasi-order $(Q,\leq)$ is said to be \defi{\wqo}, if \fe\ sequence \seq{G}\ of its elements \ta\ $i<j$ \st\ $G_i \leq G_j$. If such $i,j$ exist then we say that \seq{G}\ is \defi{good}, otherwise it is \defi{bad}. 

A well-known consequence of Ramsey’s theorem is 
\begin{proposition}[{\cite[Proposition~12.1.1]{DiestelBook25}}] \label{wqo ch}
 $(Q,\leq)$ is WQO \iff\ it has no infinite antichain and no infinite sequence \seq{G}\ \st\ $G_{n+1} \leq G_n$ and $G_n \not\leq G_{n+1}$ \fe\ $n$.
\end{proposition}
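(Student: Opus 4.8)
The plan is to prove the two directions separately, using the infinite version of Ramsey's theorem (applied to $2$-colourings of pairs) for the non-trivial direction.

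For the forward implication, assume $(Q,\leq)$ is WQO. An infinite antichain is, by definition, a sequence \seq{G}\ with $G_i\not\leq G_j$ whenever $i\neq j$, so in particular whenever $i<j$; hence it is bad, contradicting WQO. Likewise, an infinite sequence \seq{G}\ with $G_{n+1}\leq G_n$ and $G_n\not\leq G_{n+1}$ \fe\ $n$ satisfies $G_i\not\leq G_j$ whenever $i<j$: indeed $G_j\leq G_{j-1}\leq\dots\leq G_{i+1}$ by transitivity, so $G_i\leq G_j$ would force $G_i\leq G_{i+1}$, a contradiction. Thus this sequence is also bad, and WQO excludes both configurations.

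For the backward implication I would argue by contraposition. Suppose $(Q,\leq)$ is not WQO and fix a bad sequence \seq{G}, i.e.\ $G_i\not\leq G_j$ \fe\ $i<j$. Colour each pair $\{i,j\}$ with $i<j$ \emph{red} if $G_j\leq G_i$ and \emph{blue} otherwise; note that in the blue case $G_i$ and $G_j$ are $\leq$-incomparable, since $G_i\not\leq G_j$ holds automatically by badness. By Ramsey's theorem there is an infinite $I=\{n_0<n_1<\dots\}\subseteq\N$ all of whose pairs get the same colour. If that colour is blue, then $(G_{n_k})_{k\in\N}$ is an infinite antichain. If it is red, then $G_{n_{k+1}}\leq G_{n_k}$ \fe\ $k$, while $G_{n_k}\not\leq G_{n_{k+1}}$ since $n_k<n_{k+1}$ and \seq{G}\ is bad; so $(G_{n_k})_{k\in\N}$ is a sequence of the forbidden descending type. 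Either way the hypothesis of the proposition fails.

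There is no real obstacle here: the only content is the Ramsey dichotomy, and the one point that needs a moment's care is arranging the $2$-colouring so that \emph{each} colour class delivers one of the two forbidden configurations — which is exactly why badness of \seq{G}\ is used to identify "blue" with genuine incomparability rather than merely $G_i\not\leq G_j$.
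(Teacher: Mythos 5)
Your proof is correct and uses exactly the approach the paper alludes to: the paper does not give its own proof but cites \cite[Proposition~12.1.1]{DiestelBook25} and explicitly flags the result as ``a well-known consequence of Ramsey's theorem,'' which is precisely the dichotomy (red = comparable in decreasing order, blue = incomparable) you exploit after extracting a monochromatic infinite set. The one point that genuinely needs care --- that badness of the ambient sequence forces ``blue'' to mean full incomparability rather than merely $G_i \not\leq G_j$ --- you handle correctly.
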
 
Such a sequence \seq{G}\ is called a  \defi{descending chain}.
The following is also well-known: 

\begin{observation}[{\cite[Corollary~12.1.2]{DiestelBook25}}] \label{good seqs}
Every sequence \seq{G}\ of elements of a WQO $(Q,\leq)$ has a subsequence $\{G_{a_n}\}_\nin$ \st\ $G_{a_n} \leq G_{a_k}$ \fe\ $1\leq n < k$.
\end{observation}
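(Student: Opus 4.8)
The plan is to deduce this directly from the infinite version of Ramsey's theorem for pairs, in the same spirit as \Prr{wqo ch}. Given a sequence \seq{G}\ in a WQO $(Q,\leq)$, I would define a $2$-colouring of the pairs $\{i,j\}$ with $i<j$ by colour $1$ if $G_i\leq G_j$ and colour $2$ otherwise. Ramsey's theorem then yields an infinite set $A=\{a_1<a_2<\dotsb\}\subseteq \N$ all of whose pairs receive the same colour.

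The key step is to rule out colour $2$. If every pair in $A$ had colour $2$, then the subsequence $(G_{a_n})_\nin$ would satisfy $G_{a_n}\not\leq G_{a_k}$ \fe\ $n<k$; in particular there would be no $n<k$ with $G_{a_n}\leq G_{a_k}$, i.e.\ $(G_{a_n})_\nin$ would be a bad sequence, contradicting the assumption that $(Q,\leq)$ is WQO. (Equivalently one could invoke \Prr{wqo ch}: such an $A$ would be an infinite antichain.) Hence the monochromatic set $A$ has colour $1$, which says precisely that $G_{a_n}\leq G_{a_k}$ \fe\ $1\leq n<k$, as desired.

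There is essentially no serious obstacle here; the only point requiring the hypothesis is the exclusion of colour $2$, and the only external input is the infinite Ramsey theorem for $2$-colourings of $[\N]^2$. I would simply remark that this input is the same one underlying \Prr{wqo ch}, so no new machinery is needed.
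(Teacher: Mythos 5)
Your two-colour Ramsey argument is correct, and it is essentially the standard proof of this classical fact; the paper itself only cites Diestel's book for this observation and does not reproduce an argument, so there is no competing proof in the paper to compare against. One small inaccuracy, however: the parenthetical remark that a monochromatic colour-2 set ``would be an infinite antichain'' is not quite right. Colour~2 records only that $G_i\not\leq G_j$ for $i<j$ in $A$; it does not exclude $G_j\leq G_i$, so the resulting subsequence is a bad sequence but need not be an antichain. Your main line of reasoning does not depend on this, since you correctly argue straight from the definition of WQO (no bad sequences exist), which is cleaner than going through \Prr{wqo ch}. If you did want to deduce the observation from \Prr{wqo ch}, you would use a \emph{three}-colouring instead (colour by $G_i\leq G_j$; $G_i,G_j$ incomparable; $G_j\leq G_i$ but $G_i\not\leq G_j$): then a monochromatic set of the second colour is genuinely an infinite antichain and one of the third colour a descending chain, each of which \Prr{wqo ch} forbids.
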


\subsection{(Finite) graph minors and marked graphs} \label{sec finite}

Let $G,H$ be graphs. A \defi{minor model} of $G$ in $H$ is a collection of disjoint connected subgraphs $B_v, v\in V(G)$ of $H$, called \defi{branch sets}, 
and edges $E_{uv}, uv\in E(G)$ of $H$, 
such that each $E_{uv}$ has one end-vertex in $B_u$ and one in $B_v$. We write $G<H$ to express that such a model exists, and say that $G$ is a \defi{minor} of $H$.

A \defi{minor embedding} of $G$ into $H$ is a map $h$ assigning to each $v\in V(G)$ a connected subgraph $B_v$ of $H$, and to each $uv\in E(G)$ an edge $E_{uv}$ of $H$ \st\ these sets form a minor model of $G$ in $H$. We write \defi{$h: G< H$} to denote that $h$ is such a minor embedding. 

A \defi{marked graph} is a pair consisting of a graph \g and a subset $A$ of $V(G)$, called the \defi{marked vertices}. Given two marked graphs $(G,A), (H,A')$, a   \defi{marked minor (model)} of $G$ in $H$ is defined as above, except that for each marked vertex $v$ of $G$, we require that the corresponding branch set $B_v$ contains at least one marked vertex of $H$. We write $(G,A) <(H,A')$, or \defi{$G \mm H$} when $A,A'$ are fixed,  if this is possible. We also extend the above definition of \minem\ canonically to marked minors. 

\medskip

Given a set $X$ of graphs, we write $\forb{X}$ for the class of graphs $H$ \st\ no element of $X$ is a minor of $H$. 

\medskip
We recall the Robertson--Seymour \GMT, which we will only use in the proof  \Cr{cor Seym Intro} (and the warm-up \Lrs{UF} and \ref{Rank 1}). 
\begin{theorem}[{\cite{GM23}}] \label{GMT}
The finite marked graphs are \wqo\ under $\mm$.\footnote{This version of the \GMT\ is a special case of statement 1.7 of {\cite{GM23}}, obtained by replacing $\Omega$ by $\{0,1\}$.}
\end{theorem}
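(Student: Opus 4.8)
The plan is to deduce this directly from the general \emph{labelled} version of the Robertson--Seymour theorem, as indicated in the footnote. Recall that statement~1.7 of \cite{GM23} is parametrised by a well-quasi-order $\Omega$ and asserts that the class of finite graphs whose vertices carry labels in $\Omega$ is \wqo\ under the \emph{$\Omega$-minor} relation: a minor model of $G$ in $H$ is admissible precisely when, for each $v\in V(G)$, the branch set $B_v$ contains a vertex of $H$ whose $\Omega$-label is $\geq$ the label of $v$. (The precise formalism of \cite{GM23} may differ cosmetically --- in particular labels may also be attached to edges --- but we shall use only vertex labels.)

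First I would instantiate this with $\Omega := \{0,1\}$ ordered by $0<1$; being finite, it is trivially a well-quasi-order. Next, to each finite marked graph $(G,A)$ I associate the $\{0,1\}$-labelled graph $G^A$ on the same underlying graph $G$, giving every vertex of $A$ the label $1$ and every other vertex the label $0$. This is a bijection between isomorphism classes of finite marked graphs and of finite $\{0,1\}$-labelled graphs.

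The key step is to verify that this correspondence is an isomorphism of quasi-orders, i.e.\ that $G \mm H$ holds (with respect to the given marked sets $A,A'$) if and only if $G^A$ is an $\Omega$-minor of $H^{A'}$. Given any minor model with branch sets $B_v$, $v\in V(G)$: for a vertex $v\notin A$, labelled $0$, the $\Omega$-minor requirement ``$B_v$ contains a vertex of label $\geq 0$'' is vacuous; for a vertex $v\in A$, labelled $1$, it says exactly that $B_v$ contains a vertex of label $1$, that is, a vertex of $A'$. This is precisely the additional condition in the definition of a marked minor model (\Sr{sec finite}). Hence the two relations coincide under the bijection, and so the well-quasi-ordering of finite $\{0,1\}$-labelled graphs under the $\Omega$-minor relation --- which is statement~1.7 of \cite{GM23} with $\Omega=\{0,1\}$ --- is literally the assertion that the finite marked graphs are \wqo\ under $\mm$.

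There is no genuine obstacle here: the only thing requiring care is the faithful bookkeeping between the two formalisms --- choosing the orientation of the order on $\Omega$ so that ``being marked'' corresponds to the larger label, and noting that since $\mm$ only constrains vertex branch sets (never edges) we need no edge labels. Once this translation is fixed, the result is immediate.
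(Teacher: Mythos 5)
Your proposal is correct and matches the paper's approach exactly: the paper simply cites statement~1.7 of \cite{GM23} with $\Omega=\{0,1\}$, and your argument is the careful unpacking of that citation, correctly identifying marked vertices with the larger label so that the $\Omega$-minor constraint on label-$1$ vertices is precisely the marked-minor constraint while the constraint on label-$0$ vertices is vacuous.
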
 

We conclude this subsection with some basic facts relating connectivity and minors. Firstly, note that if \cb\ a minor model of $G$ in $H$, then it maps each component of \g into a component of $H$. The following is a straightforward extension of this which is easy to prove: 

\begin{observation} \label{min comps}
Let $G,H$ be graphs, let \cb\ a minor model of $G$ in $H$, and let $A\subset V(G)$. Then \cb\ maps each component of $G - A$ into a component of $H - \cb(A)$. \qed
\end{observation}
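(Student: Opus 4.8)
The plan is to show that the restriction of $\cb$ to a component $C$ of $G-A$ is itself a minor model of $C$ inside the subgraph $H' := H - \cb(A)$, and then to invoke the elementary fact recalled just before the statement, that a minor model sends a connected graph to a connected subgraph.

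First I would check that this restriction really lands in $H'$. For every $v\in V(C)$ we have $v\notin A$, so since the branch sets of a minor model are pairwise disjoint, $B_v$ is disjoint from $B_w$ for every $w\in A$; hence $B_v$ is disjoint from $\cb(A)$ and $B_v\subseteq H'$. For every edge $uv\in E(C)$, the connecting edge $E_{uv}$ has its two endvertices in $B_u$ and $B_v$ respectively, and both of these branch sets lie in $H'$; as an edge has no vertices other than its two endvertices, $E_{uv}$ survives in $H'$. Thus the subgraphs $(B_v)_{v\in V(C)}$ together with the edges $(E_{uv})_{uv\in E(C)}$ form a minor model of $C$ in $H'$.

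Now, since $C$ is connected, this minor model exhibits a connected subgraph of $H'$, namely the union of the $B_v$ with $v\in V(C)$ together with the edges $E_{uv}$ with $uv\in E(C)$; and this subgraph contains $B_v=\cb(v)$ for every $v\in V(C)$, so it contains the whole $\cb$-image of $C$. A connected subgraph of $H'$ lies in a single component of $H'$, which is precisely the assertion that $\cb$ maps $C$ into a component of $H-\cb(A)$. I do not expect any real obstacle here: the only point requiring a moment's care is that the connecting edges $E_{uv}$ do not secretly touch $\cb(A)$, which is handled by the observation that such an edge lives entirely inside $B_u\cup B_v$; everything else is immediate from the definitions of minor model and of component.
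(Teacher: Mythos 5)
Your proof is correct and takes the route the paper envisions: the paper records this as an observation marked \texttt{\textbackslash qed} with no written proof, introducing it as a ``straightforward extension'' of the preceding remark that a minor model sends components of $G$ to components of $H$. You reduce to exactly that remark by checking that the restriction of $\cb$ to a component $C$ of $G-A$ is a minor model in $H-\cb(A)$, the only point needing care being that the branch edges $E_{uv}$ do not touch $\cb(A)$ — which you handle correctly via disjointness of branch sets.
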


Here, $\cb(A)$ stands for the image of $A$ under \cb, i.e.\ $\bigcup_{v\in A} B_v$.

A \defi{block} of a graph is a maximal 2-connected subgraph. 
\begin{lemma} \label{blocks}
Let $G,H$ be graphs, and $\cb=\{B_v \mid v\in V(G)\}$ a 
minor model of \g in $H$. Then for each block $D$ of \G, \ti\ a block $D'$ of $H$, \st\ each $B_v, v\in V(D)$ intersects $D'$. Moreover, there is a model $\cb'$ of $D$ in $D'$, obtained by intersecting each $B_v$ with $D'$.
\end{lemma}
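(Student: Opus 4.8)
The plan is to single out the block $D'$ of $H$ as the (unique) block of $H$ containing all the connecting edges $E_{uv}$, $uv\in E(D)$, of the given model, and then to verify that intersecting $\cb$ with $D'$ yields a model of $D$. We may assume $H$ is connected: since $D$ is $2$-connected and hence connected, the model $\cb$ together with its connecting edges lies inside a single component of $H$, and the blocks of that component are blocks of $H$.

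The heart of the matter is the following observation: \emph{a minor model of a cycle $C=v_1v_2\cdots v_kv_1$ realizes a genuine cycle of $H$ through all of its connecting edges.} Indeed, for each $i$ let $t_i$ be the endpoint of $E_{v_{i-1}v_i}$ lying in $B_{v_i}$ and $s_i$ the endpoint of $E_{v_iv_{i+1}}$ lying in $B_{v_i}$ (indices mod $k$), and pick a path $P_i$ in $B_{v_i}$ from $t_i$ to $s_i$; then the concatenation $P_1\,E_{v_1v_2}\,P_2\,E_{v_2v_3}\cdots P_k\,E_{v_kv_1}$ is a closed walk, and it is in fact a cycle $W$ of $H$, because the branch sets $B_{v_i}$ are pairwise disjoint and each $P_i$ is a path, so no vertex repeats; moreover $E(W)\supseteq\{E_{v_iv_{i+1}}:i\}$. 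In particular every connecting edge of the model of $D$ lies on a cycle of $H$, hence in a well-defined block of $H$. To see that this block is the same for all of them, recall that any two edges of a $2$-connected graph lie on a common cycle (for edges sharing a vertex $v$ this follows from $D-v$ being connected, giving a cycle through both; the general case follows as the line graph of $D$ is connected). So for $e,f\in E(D)$ there is a cycle $C$ of $D$ through $e$ and $f$, whence by the observation $E_e$ and $E_f$ lie on a common cycle $W$ of $H$, hence in one block. As distinct blocks are edge-disjoint, all connecting edges lie in a single common block $D'$ (which is a genuine block, since $W\subseteq D'$).

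It remains to assemble the restricted model. For each $v\in V(D)$, choose an edge $uv$ of $D$ (one exists since $D$ is $2$-connected); then $E_{uv}\in E(D')$ has an endpoint in $B_v$, and that endpoint lies in $V(D')$, so $V(B_v)\cap V(D')\neq\emptyset$. Put $B'_v:=B_v\cap D'$. The $B'_v$ are pairwise disjoint (the $B_v$ are) and nonempty, and each $E_{uv}$ is an edge of $D'$ with one end in $B'_u$ and one in $B'_v$. Finally, each $B'_v$ is connected: this uses the standard fact that any path of $H$ with both ends in a block $D'$ is contained in $D'$ (otherwise the path would leave and re-enter $D'$ between two of its vertices, yielding an ear that enlarges the maximal $2$-connected subgraph $D'$, a contradiction with maximality); thus, given $p,q\in V(B'_v)$, a $p$--$q$ path inside the connected graph $B_v$ lies in $D'$, hence in $B'_v$. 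Therefore $\cb':=\{B'_v\mid v\in V(D)\}$ together with the edges $E_{uv}$ is a minor model of $D$ in $D'$.

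The only subtle point I anticipate is the emphasized observation in the second paragraph — specifically, verifying that the concatenated walk is genuinely a cycle rather than merely a closed walk, which is precisely where pairwise disjointness of branch sets (together with choosing \emph{paths} inside each $B_{v_i}$) is used. Everything else is routine block–cut-tree bookkeeping, and, since every cycle, path and ear appearing in the argument is finite, the proof is insensitive to whether $G$ and $H$ are finite or infinite.
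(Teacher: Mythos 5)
Your proof is correct, and the first part --- lifting a cycle of $D$ through $e,f$ to a cycle of $H$ through $E_e,E_f$, then using uniqueness of the block containing an edge --- is exactly the paper's argument. For the second part you diverge slightly: the paper proves that $B_v\cap D'$ is a valid branch set by iteratively chopping branch sets at each cut-vertex $x\in V(D')$ (keeping only the part in the component $K$ of $H-x$ meeting $D'$), whereas you invoke the cleaner ``path-convexity'' of blocks --- a path of $H$ with both ends in a block stays in that block --- to show directly that $B_v\cap D'$ is connected and nonempty. Your route is a bit more transparent and avoids the transfinite-looking iteration. One small caveat: your parenthetical justification that any two edges of a $2$-connected graph lie on a common cycle (``the general case follows as the line graph of $D$ is connected'') is not quite a proof as stated, since ``lie on a common cycle'' is not obviously transitive along a chain of edge-adjacent edges; but the fact itself is standard (it is essentially the characterisation of blocks via the cycle-equivalence of edges), so this is a presentational gap rather than a mathematical one.
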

\begin{proof}
\Fe\ $e=uv\in E(D)$, let $B_e$ be a $B_u$--$B_v$~edge in $H$, called a \defi{branch edge}. Since $D$ is 2-connected, any two edges $e,f$ of $D$ lie in a common cycle $C$. Moreover, there is a cycle $C'$ in $\bigcup_{v\in V(C)} B_v \subseteq H$ containing $B_e$ and $B_f$. Since there is a unique block containing any edge of a graph, it follows that there is a block $D'$ of $H$ containing $\{B_e \mid e\in E(D)\}$, and this block intersects each branch edge, and hence each branch set of $D$. 

For the second sentence, suppose $x\in V(D')$ is a cut-vertex of $H$. Then all branch sets of \cb\ are contained in the component $K$ of $H-x$ containing $D'-x$, except possibly for a unique branch set $B$ containing $x$. If such a $B$ exists, then after replacing it with $B \cap K$, \cb\ is still a model of $G$, because no branch edge of \cb\ can lie in $H-K$. Thus doing so for every cut-vertex $x\in V(D')$ we modify \cb\ into the desired model $\cb'$ of \g in $D'$.
\end{proof}

\subsubsection{Suspensions} \label{sec susp}

Given a (marked) graph \G, we define its \defi{suspension $S(G)$} by adding an unmarked vertex $s_G$ and joining $s_G$ to each $v\in V(G)$ with an edge.
Given a marked graph \G, we define its \defi{marked suspension $S^\bullet(G)$} by adding a marked vertex $s_G$ and joining $s_G$ to each $v\in V(G)$ with an edge.

\comment{
\begin{lemma} \label{lem cones un}
Let $G,H$ be unmarked graphs. Then $G<H$ \iff\ \\ $S(G)<S(H)$.
\end{lemma}
}

Note that $S(G)$ is always connected even if $G$ is not. Combining this with the following observation will be often convenient, as it will allow us to assume that any bad sequences we consider consist of connected graphs. 

\begin{lemma} \label{lem cones}
Let $G,H$ be marked graphs. Then $G<H$ \iff\ \\ $S^\bullet(G)<S^\bullet(H)$.
\end{lemma}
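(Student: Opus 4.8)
The statement to prove is: for marked graphs $G,H$, we have $G<H$ (i.e.\ $G\mm H$ for the fixed marked-vertex sets) if and only if $S^\bullet(G)<S^\bullet(H)$. The forward direction is the easy one. Given a marked minor model $\cb=\{B_v \mid v\in V(G)\}$ of $G$ in $H$ together with branch edges $E_{uv}$, I would extend it to a model of $S^\bullet(G)$ in $S^\bullet(H)$ by setting $B_{s_G}:=\{s_H\}$, which is a (marked) branch set since $s_H$ is marked in $S^\bullet(H)$ and $s_G$ is marked in $S^\bullet(G)$. For each new edge $s_G v$ of $S^\bullet(G)$ I need a branch edge from $\{s_H\}$ to $B_v$; since $s_H$ is joined to \emph{every} vertex of $H$ in $S^\bullet(H)$, and $B_v$ is non-empty, such an edge exists. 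All old branch sets and branch edges lie in $H\subseteq S^\bullet(H)$ and remain disjoint from $B_{s_G}=\{s_H\}$, so the collection is a valid marked minor model; hence $S^\bullet(G)<S^\bullet(H)$.

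For the backward direction, suppose $g\colon S^\bullet(G)< S^\bullet(H)$ with branch sets $B_w$, $w\in V(S^\bullet(G))=V(G)\cup\{s_G\}$. The plan is to delete the branch set $B_{s_G}$ of the apex and restrict everything else to $H$, showing what remains is a marked minor model of $G$ in $H$. Write $B:=B_{s_G}$. For each $v\in V(G)$ put $B'_v:=B_v - s_H$ (i.e.\ $B_v$ with the vertex $s_H$ removed, if present). The branch sets $B_v$ are disjoint, so at most one of them contains $s_H$; thus all but at most one $B'_v$ equal $B_v$, and in any case the $B'_v$ are pairwise disjoint connected subgraphs of $H$ --- connectivity of $B'_v$ is the one point needing a small argument, handled below. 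The old branch edges $E_{uv}$ for $uv\in E(G)$ all lie in $H$ (they are not incident with $s_H$, since $s_H$ has no neighbours among the $B_v$ that would be used as $G$-edges... actually one must check $E_{uv}\ne s_H v'$; but an edge of $S^\bullet(H)$ incident with $s_H$ joins $s_H$ to some vertex of $H$, and such an edge cannot serve as a $B_u$--$B_v$ edge for $u,v\in V(G)$ once we have removed $s_H$ from those branch sets --- I would instead simply observe that if $E_{uv}$ were incident with $s_H$ then $s_H\in B_u\cup B_v$, which is fine, and after deletion I need a replacement: since $B'_u,B'_v$ are connected and the original $B_u$--$B_v$ path through $E_{uv}$ either avoids $s_H$ or not, I re-route). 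To keep this clean I would argue: each $E_{uv}$ has an endpoint $x_u\in B_u$ and $x_v\in B_v$; if $x_u=s_H$, replace $E_{uv}$ by any edge of $H$ from $B_v$ to... no. Cleaner: use \Lr{min comps} or just note $s_H$ lies in at most one branch set, say $B_{v_0}$ (or none); for $uv\in E(G)$ with $s_H\notin\{x_u,x_v\}$ keep $E_{uv}$; the only problematic edges involve $v_0$, and there $B'_{v_0}=B_{v_0}-s_H$, still connected, and we pick a fresh branch edge inside $H$ between $B'_{v_0}$ and $B'_v$ --- but such need not exist. So the honest route is the one below.

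The clean argument for the backward direction: observe that $s_H$ is adjacent in $S^\bullet(H)$ to every vertex of $H$, hence to at least one vertex of each $B'_v$. First, I claim $s_H\in B_{s_G}$ may be assumed, or more precisely I modify $g$ so that $B_{s_G}=\{s_H\}$. Indeed, the graph $S^\bullet(G)-s_G = G$, and by \Or{min comps} the branch sets $B_v$, $v\in V(G)$, map the components of $G$ into components of $S^\bullet(H) - B_{s_G}$. Now if $s_H\notin B_{s_G}$, then $s_H\in B_{v_0}$ for some $v_0\in V(G)$ (or $s_H$ is in no branch set at all, the easy case); removing $s_H$ from $B_{v_0}$ leaves it connected because every vertex of $B_{v_0}-s_H$ is either adjacent in $H$ to the rest or... this is exactly the subtlety. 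To sidestep it entirely, I would instead invoke \Lr{blocks}-style reasoning, or — simplest — argue by first passing to a \emph{minimal} minor embedding $g$ (minimizing $\sum|V(B_w)|$): in a minimal model each $B_w$ is a tree and deleting a leaf that is not the unique endpoint of a branch edge stays valid, which forces $B_{s_G}$ to be exactly $\{s_H\}$ (if $s_H\notin B_{s_G}$ one can shrink $B_{s_G}$ to a single vertex $x$, and since $s_H\sim x$ in $S^\bullet(H)$ one can re-root so that $B_{s_G}=\{s_H\}$; I'll spell this out). Once $B_{s_G}=\{s_H\}$, we have $s_H\notin B_v$ for all $v\in V(G)$, so $B'_v=B_v$ are disjoint connected subgraphs of $H$, all branch edges $E_{uv}$ ($uv\in E(G)$) lie in $H$, and the marked condition is inherited: for $v\in V(G)$ marked, $B_v$ contained a marked vertex of $S^\bullet(H)$; that marked vertex is either $s_H$ — impossible since $s_H\notin B_v$ — or a marked vertex of $H$. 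Hence $\{B_v\}_{v\in V(G)}$ with edges $\{E_{uv}\}$ is a marked minor model of $G$ in $H$, giving $G<H$.

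\textbf{Main obstacle.} The only real subtlety is the backward direction's bookkeeping: ensuring that after discarding the apex branch set $B_{s_G}$ the vertex $s_H$ does not pollute the branch sets of the $B_v$, $v\in V(G)$, and that connectivity and the branch edges survive. Passing to a minimal minor embedding (or minimal model) and using that $s_H$ is universal in $S^\bullet(H)$ is the device that makes this painless; verifying that a minimal model has $B_{s_G}=\{s_H\}$ is the step I would write out carefully.
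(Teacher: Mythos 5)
The forward direction is correct and essentially the same as the paper's (trivial, once you note $s_H$ is marked and universal).

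The backward direction has a genuine gap. Your plan is to normalise the model so that $B_{s_G}=\{s_H\}$ and then restrict to $H$. The proposed normalisation — pass to a minimal model, shrink $B_{s_G}$ to a single vertex, then ``re-root'' it onto $s_H$ using the fact that $s_H$ is universal — only succeeds when $s_H$ is not already occupied by another branch set. But it may happen that $s_H\in B_{v_0}$ for some $v_0\in V(G)$, and then setting $B_{s_G}:=\{s_H\}$ collides with $B_{v_0}$; moreover, you cannot simply delete $s_H$ from $B_{v_0}$, since $B_{v_0}-s_H$ may be disconnected in $H$ (it was connected only via the universal hub $s_H$), and if $v_0$ is marked its only marked vertex might be $s_H$. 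Minimality does not rule this out, and your parenthetical ``I'll spell this out'' does not contain the needed idea. Also, the assertion that minimality forces $B_{s_G}=\{s_H\}$ is false even in the easy cases (a minimal model can have $B_{s_G}$ a single vertex of $H$, e.g.\ when some marked vertex of $H$ is more economical).

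The paper handles precisely this residual case by a swap rather than a re-root: if $s_H\in B_{v_0}$ with $v_0\in V(G)$, then $s_H\notin B_{s_G}$, so $B_{s_G}\subseteq H$ and $B_{s_G}$ contains a marked vertex of $H$ (because $s_G$ is marked). One then discards the old $B_{v_0}$ and $B_{s_G}$, declares the new $B_{v_0}$ to be the old $B_{s_G}$, and uses the old branch edges $E_{s_G u}$ (which exist for every $u\in V(G)$ since $s_G$ is universal in $S^\bullet(G)$, and all lie in $H$ since $s_H\notin B_{s_G}\cup B_u$) as the new branch edges $E_{v_0 u}$. This is the one step your proposal is missing; your two easier cases ($s_H$ in no branch set, or $s_H\in B_{s_G}$) are handled the same way in both proofs.
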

\begin{proof}
The forward implication is trivial. For the backward implication, let $M=\{B_v \mid v\in V(S^\bullet(G))\}$ be a model of  $S^\bullet(G)$ in $S^\bullet(H)$. If no branch set $B_v$ contains $s_H$, then $M$ witnesses that $S^\bullet(G)$, and hence $G$, is a minor of $H$ and we are done. If $s_H$ is in the branch set of $s_G$, then by deleting that branch set from $M$ we obtain a model of  $G$ in $H$.

Thus it only remains to consider the case where some $B_v$ with $v\in V(G)$ contains $s_H$. In this case $B_{s_G}$ cannot contain $s_H$ too, and so $B_{s_G} \subseteq H$, and $B_{s_G}$ contains a marked vertex since $s_G$ is marked. Then by removing $B_{s_G}$ from $M$, and re-defining $B_v$ to be $B_{s_G}$, we have modified $M$ into a model of \g in $H$.
\end{proof}

The unmarked version of \Lr{lem cones} is also true, and easier to prove along the same lines: 
\begin{lemma} \label{lem cones unm}
Let $G,H$ be (unmarked) graphs. Then $G<H$ \iff\ \\ $S(G)<S(H)$. \qed
\end{lemma}


\subsubsection{Minor-twins} \label{sec twins}

We say that $G$ and $H$ are \defi{minor-twins}, if both $G<H$ and $H<G$ hold. Any two finite minor-twins are isomorphic, but in the infinite case the relation is much more interesting. 

The class of countable graphs can be decomposed into its  \defi{minor-twin classes}, whereby two graphs belong to the same class whenever they are minor-twins. The minor-twin class of a graph $G$ will be denoted by $[G]_<$. These definitions have obvious analogues for marked minors.

Given a graph class \cc, we let \defi{$|\cc|_<$} denote the cardinality of the set of minor-twin classes of elements of \cc. We define \defi{$|\cc|_{\mm}$} analogously for marked minors.

\subsection{The Rank of a rayless graph} \label{sec rank}

A graph is \defi{rayless}, if it does not contain a 1-way infinite path. Schmidt~\cite{schmidt83} assigned to every rayless graph an ordinal number,  its \defi{rank}, reminiscent of the notion of rank for Borel sets. This notion often enables us to prove results about rayless graphs by transfinite induction on the rank.

The notion of rank comes from the observation that it is possible to construct all rayless graphs by a recursive, transfinite procedure, starting with the class of finite graphs and then, in each step, glueing graphs constructed in previous steps along a common finite vertex set, to obtain new rayless graphs as follows.

\begin{definition}\label{def:clu} For every ordinal $\alpha$, we recursively  define a class of graphs $\ran{\alpha}$ by transfinite induction on $\alpha$ as follows:
\begin{itemize}
\item $\ran{0}$ consists of the finite graphs; and
\item if $\alpha>0$, then a graph \g is in $\ran{\alpha}$ if \ti\ a finite $S\subset V(G)$ \st\ each component of $G - S$ lies in $\ran{\beta}$ for some $\beta<\alpha$.
\end{itemize}
\end{definition}

Schmidt~\cite{schmidt83,DiestelBook25,HalStr} proved that a graph is rayless if and only if it belongs to $\ran{\alpha}$ for some $\alpha$. Easily, if $G$ is countable then so is this $\alpha$, but it may be greater than \oo\ (\Or{min tree} below). Let $\ran{<\alpha}:= \bigcup_{\beta< \alpha} \ran{<\beta}$.

The \defi{rank $\Rank(G)$} of a rayless graph \g is the least ordinal $\alpha$ \st\ $G\in \ran{\alpha}$.
For a class \cc\ of graphs, we let \defi{$\Rank(\cc)$} be the least ordinal $\alpha$ \st\ $\Rank(G)<\alpha$ holds \fe\ $G\in \cc$.

Schmidt~\cite{schmidt83,HalStr} also proved that each rayless graph \g has a unique \defi{kernel $A(G)$}, i.e.\ a minimal set of vertices $S$ \st\ each component of $G - S$ lies in $\ran{\gamma}$ for some $\gamma<\Rank(G)$. We will use the following simple observation:

\begin{proposition} \label{inf bet}
\Fe\ ordinal $\alpha> 0$ every $G\in \Rank_\alpha$, and every $\beta<\alpha$, there are infinitely many components $C$ of $G-A(G)$ \st\ $\Rank(C)\geq \beta$.
\end{proposition}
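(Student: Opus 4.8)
The plan is to argue by contradiction, exploiting the minimality of the kernel. Suppose $\Rank(G)=\alpha>0$, but that for some $\beta<\alpha$ only finitely many components $C_1,\dots,C_n$ of $G-A(G)$ have rank $\geq\beta$ (the case $n=0$ is allowed). I would first record two elementary facts: a subgraph of a rayless graph is rayless, so each $C_i$ is rayless and hence has a rank and a (finite) kernel $A(C_i)$; and, by the defining property of the kernel $A(G)$, every component of $G-A(G)$ has rank strictly below $\alpha$, so in particular $\Rank(C_i)<\alpha$ for each $i$.

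Next I would put $S:=A(G)\cup A(C_1)\cup\dots\cup A(C_n)$, which is finite, and $\gamma':=\max\{\beta,\Rank(C_1),\dots,\Rank(C_n)\}$, which is an ordinal $<\alpha$ since all of $\beta,\Rank(C_1),\dots,\Rank(C_n)$ are. The main step is to verify that every component $D$ of $G-S$ has rank $<\gamma'$. Since $A(G)\subseteq S$, such a $D$ lies inside a unique component $C$ of $G-A(G)$. If $C\notin\{C_1,\dots,C_n\}$, then $S$ is disjoint from $V(C)$ (because $A(G)$ avoids $V(C)$ and each $A(C_i)\subseteq V(C_i)$ is disjoint from $V(C)$), so $D=C$ and $\Rank(D)=\Rank(C)<\beta\leq\gamma'$ by the choice of $C_1,\dots,C_n$. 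If $C=C_i$, then $S\cap V(C_i)=A(C_i)$, so the components of $G-S$ inside $C_i$ are exactly the components of $C_i-A(C_i)$, whence $\Rank(D)<\Rank(C_i)\leq\gamma'$ by the defining property of the kernel $A(C_i)$.

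Finally I would extract the contradiction. If $\gamma'>0$, then $S$ witnesses, via \Dr{def:clu}, that $G\in\ran{\gamma'}$, so $\Rank(G)\leq\gamma'<\alpha$, contradicting $\Rank(G)=\alpha$. If $\gamma'=0$, then $\beta=0$ and every $C_i$ is finite; but $\beta=0$ means $G-A(G)$ has only the finitely many components $C_1,\dots,C_n$, so $G-A(G)$, and hence $G$, is finite, giving $\Rank(G)=0$ and contradicting $\alpha>0$. The argument is short and essentially routine; the only points needing attention are the disjointness bookkeeping identifying the components of $G-S$ inside each $C_i$ with those of $C_i-A(C_i)$, and the degenerate case $\gamma'=0$ — neither of which I expect to pose a genuine obstacle.
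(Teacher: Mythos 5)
Your proof is correct and takes essentially the same route as the paper: contradict by adding the kernels of the finitely many ``large'' components to $A(G)$ and showing the resulting finite set witnesses a rank below $\alpha$. In fact, your version is more careful than the paper's. The paper asserts that $S:=A(G)\cup\bigcup_{C\in\cc}A(C)$ separates $G$ into components ``that all have ranks less than $\beta$'', which is not quite right: a component of $C_i-A(C_i)$ only has rank $<\Rank(C_i)$, and $\Rank(C_i)$ can exceed $\beta$. Your bound $\gamma'=\max\{\beta,\Rank(C_1),\dots,\Rank(C_n)\}<\alpha$ is the correct one, and it still yields $\Rank(G)\le\gamma'<\alpha$, which is all the contradiction requires. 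Your explicit treatment of the degenerate case $\gamma'=0$ (where $G-S$ has no components, so $G$ is finite) is also a detail the paper glosses over; it is harmless there but good practice to spell out.
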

\begin{proof}
If the set \cc\ of such components $C$ is finite, then the finite set $A(G) \cup \bigcup_{C\in \cc} A(C)$ separates $G$ into components that all have ranks less than $\beta$, yielding the contradiction $\Rank(G)\leq \beta$.
\end{proof}

\subsubsection{Rank and minors} 

It is well-known, and not hard to prove, that rank is monotone \wrt\ minors:
\begin{observation}[{\cite[Proposition~4.4.]{HalStr}}] \label{minor rank}
Let $G,H$ be graphs with $G<H$. Then  $\Rank(G) \leq \Rank(H)$.
\end{observation}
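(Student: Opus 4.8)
The plan is a transfinite induction on $\alpha := \Rank(H)$. We may assume $H$ is rayless, since otherwise $\Rank(H)$ is undefined (equivalently $+\infty$) and there is nothing to prove; the induction will incidentally re-establish that in that case $G$ is rayless as well. For the base case $\alpha = 0$, the graph $H$ is finite, and since the branch sets of a minor model of $G$ in $H$ are pairwise disjoint and non-empty we get $|V(G)| \le |V(H)| < \infty$, so $\Rank(G) = 0$.

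For the inductive step, let $\alpha > 0$ and assume the statement holds for all graphs of rank $<\alpha$. Let $S := A(H)$ be the kernel of $H$; it is finite, and every component of $H - S$ has rank $<\alpha$. Fix a minor model $\cb = \{B_v \mid v\in V(G)\}$ of $G$ in $H$ and set $A := \{v\in V(G) : B_v\cap S\neq\emptyset\}$. Since the $B_v$ are pairwise disjoint, $A$ is finite. Now let $C$ be any component of $G - A$. By the choice of $A$, every branch set $B_v$ with $v\in V(C)$, and hence also every branch edge $E_{uv}$ with $uv\in E(C)$, avoids $S$; thus the restriction $\cb\restr V(C)$ is a minor model of $C$ all of whose branch sets and branch edges lie in $H - S$. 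Being the model of a connected graph, its image is connected (this is the principle underlying \Or{min comps}), so it lies within a single component $D$ of $H - S$. Hence $\cb\restr V(C)$ witnesses $C < D$, and since $\Rank(D) < \alpha$ the induction hypothesis gives $\Rank(C)\le \Rank(D) < \alpha$. Therefore $A$ is a finite subset of $V(G)$ such that every component of $G - A$ has rank $<\alpha$, so $G\in\ran{\alpha}$ by \Dr{def:clu}, i.e.\ $\Rank(G)\le \alpha = \Rank(H)$.

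The only point that needs a line of care is the verification that $\cb\restr V(C)$ is genuinely a minor model contained in one component of $H - S$: the branch edges for edges of $C$ must stay out of $S$, which holds because their endpoints lie in branch sets $B_v$ with $v\notin A$. Beyond this bookkeeping there is no genuine obstacle — consistent with the statement being ``not hard to prove'' — the whole argument being driven by the recursive definition of rank together with the fact that a minor model sends each connected piece of $G$ into a connected piece of $H$.
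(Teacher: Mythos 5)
Your proof is correct: the paper itself gives no argument for this observation (it cites \cite[Proposition~4.4.]{HalStr}), and the transfinite induction you carry out --- pulling back the kernel $A(H)$ through the minor model to obtain a finite separator $A\subset V(G)$ whose deletion leaves components that embed into components of $H - A(H)$ --- is precisely the standard proof. The one point worth spelling out, that branch edges for edges of $C$ as well as the branch sets avoid $S$, you do address, and the rest follows directly from \Dr{def:clu} and the connectedness argument underlying \Or{min comps}.
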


The following may be well-known but I could not find a reference:
\begin{observation} \label{apices}
Let $G,H$ be graphs with  $\Rank(G) = \Rank(H)=\alpha$, and $\cb=\{B_v \mid v\in V(G)\}$ a minor model of \g in $H$. Then $B_v$ intersects $A(H)$ \fe\ $v\in A(G)$.
\end{observation}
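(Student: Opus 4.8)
The plan is to argue by transfinite induction on $\alpha = \Rank(G) = \Rank(H)$, using \Or{min comps} and \Lr{blocks}-style localization, but the real engine will be \Prr{inf bet} together with \Or{minor rank}. First I would dispose of the base case $\alpha = 0$: then $A(G) = A(H) = \emptyset$ (finite graphs have empty kernel), so the statement is vacuous. For the inductive step, fix $v \in A(G)$ and suppose for contradiction that $B_v \cap A(H) = \emptyset$. Since $B_v$ is connected and misses $A(H)$, it lies inside a single component $K$ of $H - A(H)$; write $\beta := \Rank(K) < \alpha$.

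The key move is to use \Or{min comps} with $A = A(G)$: the model \cb\ maps each component of $G - A(G)$ into a component of $H - \cb(A(G))$. But I want to localize to the component $K$ of $H - A(H)$ that contains $B_v$. The point is that every component $C$ of $G - A(G)$ whose branch sets all meet $B_v$'s ``side'' must be mapped into $K$, or more precisely: consider $G' := G - (A(G) \setminus \{v\})$, the graph obtained by keeping only the marked vertex $v$ and deleting the other kernel vertices. Then $v$ together with the components of $G - A(G)$ that attach to $v$ form a subgraph, and by \Dr{def:clu} applied to $G$ with separator $A(G) \setminus \{v\}$, the graph $G'$ (being a union of $v$ with some rank-$<\alpha$ components, glued at the single vertex $v$) has rank $\le \alpha$; but more usefully, I should compare ranks directly. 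By \Prr{inf bet}, for every $\gamma < \alpha$ there are infinitely many components $C$ of $G - A(G)$ with $\Rank(C) \ge \gamma$, so in particular $G$ has components of $G - A(G)$ of ranks cofinal in $\alpha$ (when $\alpha$ is a limit) or of rank $\alpha - 1$ repeated (when $\alpha$ is a successor). Restricting \cb\ to any such component $C$ that gets mapped into $K$ gives, via \Lr{blocks} / \Or{min comps}, a minor model of $C$ inside $K$, whence $\Rank(C) \le \Rank(K) = \beta < \alpha$ by \Or{minor rank}.

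The contradiction I am driving at: the branch edges $E_{uv}$ of \cb\ for $uv \in E(G)$ with $u$ adjacent to $v$ force the neighbouring branch sets to be adjacent to $B_v$, hence to lie in $K \cup (\text{a neighbour of } K)$; and since $v$ is in the kernel of $G$, deleting $A(G)\setminus\{v\}$ from $G$ leaves $v$ sitting in components of unbounded-below rank up to $\alpha$. More carefully: let $G_v$ be the union of $\{v\}$ with all components $C$ of $G - A(G)$ such that $C$ has a neighbour in $\{v\}$; if $\Rank(G_v)$ were $< \alpha$ then $A(G)\setminus\{v\}$ would already be a valid separator witnessing $\Rank(G) < \alpha$ (each component of $G - (A(G)\setminus\{v\})$ either avoids $v$ and has rank $<\alpha$, or is $G_v$) --- unless $G_v$ itself has rank $\alpha$, which by \Prr{inf bet} applied within $G_v$ and the minimality of the kernel $A(G)$ is exactly what happens, since $v \in A(G)$ cannot be removed. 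Then the restriction of \cb\ to $G_v$ is a minor model of a rank-$\alpha$ graph into $\overline{K}$ := the union of $K$ with the finitely many vertices of $A(H)$ adjacent to $K$ --- wait, but $B_v$ avoids $A(H)$, so the branch edges at $v$ go to branch sets in $K$ too, giving $\Rank(G_v) \le \Rank(K') $ where $K'$ is a component of $H - (A(H)\setminus\{w\})$ for a suitable $w$; comparing with $\Rank(H) = \alpha$ and $\Rank(K) = \beta < \alpha$ yields the contradiction.

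The main obstacle is making precise the localization claim: that the branch sets of the sub-model on $G_v$ (or on the relevant union of components attaching to $v$) really do stay inside a region of $H$ of rank $< \alpha$. \Or{min comps} gives me ``component of $G - A(G)$ maps into component of $H - \cb(A(G))$'', but $\cb(A(G))$ is not $A(H)$, so I must additionally observe that $\cb(A(G) \setminus \{v\})$ is a finite set and use it as the separator in $H$: by \Dr{def:clu} the components of $H - \cb(A(G)\setminus\{v\})$ each have rank $< \alpha$ provided $\cb(A(G)\setminus\{v\})$ contains enough of $A(H)$ --- which it need not! So the cleanest route is: assume $B_v \cap A(H) = \emptyset$, let $K \ni B_v$ be the component of $H - A(H)$, note $\Rank(K) = \beta < \alpha$; the sub-model of \cb\ on $G_v$ has all branch sets meeting or adjacent to $B_v$, hence (being connected and the whole of $G_v$ being connected through $v$) contained in $K$ together with possibly one vertex of $A(H)$ --- namely none, since $B_v$ and everything branch-edge-adjacent to it stays in $K$. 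Thus $\Rank(G_v) \le \Rank(K) = \beta < \alpha$, contradicting $\Rank(G_v) = \alpha$, which holds because $v$ is in the (minimal) kernel $A(G)$. I would then double-check the claim $\Rank(G_v) = \alpha$ against \Prr{inf bet} and the minimality of $A(G)$: if $\Rank(G_v) < \alpha$ then $A(G) \setminus \{v\}$ separates $G$ into pieces all of rank $< \alpha$, contradicting minimality of the kernel. $\qed$
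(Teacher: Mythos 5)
Your overall strategy---use minimality of the kernel to show $\Rank(G_v)=\alpha$ (where $G_v$ consists of $v$ together with the components of $G-A(G)$ attached to $v$), then derive a contradiction by localizing the model into a rank-$<\alpha$ region of $H$---is the paper's, and your argument that $\Rank(G_v)=\alpha$ follows from minimality of $A(G)$ is correct. But the localization step has a genuine gap. You assert that because $B_v\subseteq K$ and $G_v$ is connected through $v$, all branch sets of the sub-model on $G_v$ lie in $K$: ``$B_v$ and everything branch-edge-adjacent to it stays in $K$.'' That is false. A neighbour $w$ of $v$ in $G_v$ has its branch edge $E_{vw}$ with one endpoint in $B_v\subseteq K$, but $B_w$ is a connected subgraph of $H$ that may contain a vertex of $A(H)$ and extend from there into arbitrary other components of $H-A(H)$. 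Nothing in the definition of a minor model keeps $B_w$ inside $K$ (or even inside $\overline{K}$), so you have not placed the restricted model in a region of rank $<\alpha$, and the contradiction does not go through.

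The paper closes exactly this gap with a pruning step your proposal lacks. Since the branch sets are pairwise disjoint, at most $|A(H)|$ of them meet $A(H)$; hence only finitely many components $C\in\cc_v$ contain a vertex whose branch set hits $A(H)$. Delete those finitely many components from $G_v$ to get $G'_v$; this does not drop the rank (if $\Rank(G'_v)<\alpha$, reattaching the finitely many deleted parts via their finite kernels to a separator of $G'_v$ would witness $\Rank(G_v)<\alpha$). Now $G'_v$ is still connected through $v$ and \emph{all} of its branch sets avoid $A(H)$, so its image really is contained in the single component $K$ of $H-A(H)$, giving $\alpha=\Rank(G'_v)\le\Rank(K)<\alpha$ by \Or{minor rank}, the desired contradiction. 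Two smaller points: your base-case remark that $A(G)=\emptyset$ when $\alpha=0$ is wrong (for finite $G$ the kernel is $V(G)$, not $\emptyset$; the statement is still trivially true there, since branch sets are nonempty, but for a different reason), and the ``transfinite induction on $\alpha$'' framing is cosmetic---you never actually invoke an inductive hypothesis, and the paper's argument is direct.
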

\begin{proof}
Let $v\in A:= A(G)$, and let $\cc=\cc_v$ denote the set of components of $G-A$ sending an edge to $v$. We claim that
\labtequ{KG}{$\Rank(\cc)=\alpha$.}
Indeed, if $\Rank(\cc)<\alpha$, then $G_v:=G[\{v\}\cup \bigcup \cc]$ has rank less than $\alpha$ too. Moreover, each component of $G- (A-v)$ is either $G_v$ or a component of $G-A$, and therefore it has rank less than $\alpha$ (\fig{figGv}). This contradicts the fact that $A$ is, by definition, a minimal set with this property.

\begin{figure} 
\begin{center}
\begin{overpic}[width=.45\linewidth]{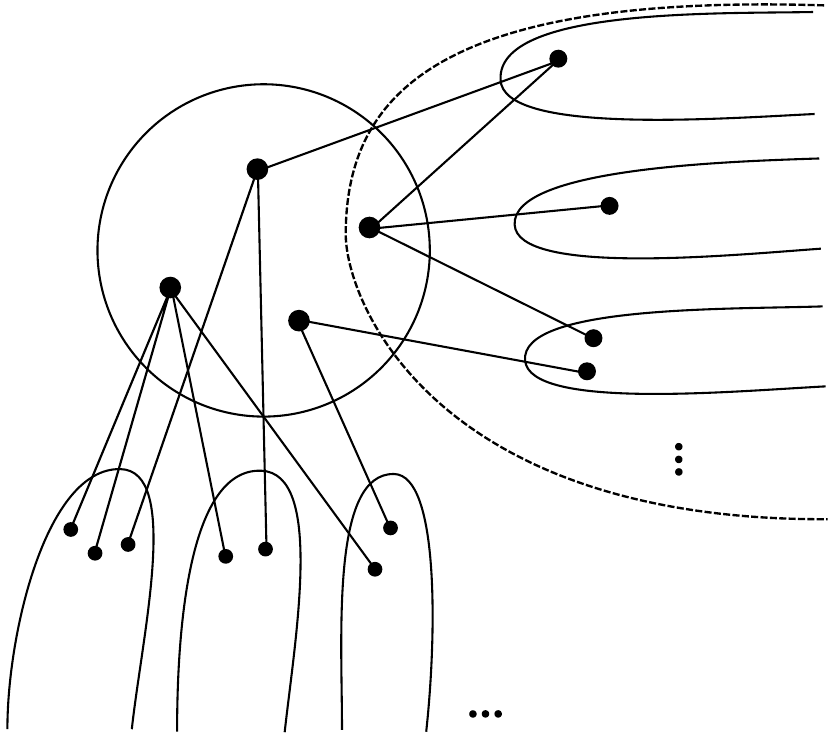} 
\put(43,58){$v$}
\put(47,85){$G_v$}
\put(15,63){$A$}
\end{overpic}
\end{center}
\caption{The components of $G-A$ and $G- (A-v)$ in the proof of \eqref{KG}.} \label{figGv}
\end{figure}

\medskip
Suppose $B_v$ contains no vertex of $A':=A(H)$ for some $v\in A$. Since $B_v$ is connected, it is contained in a component $C$ of $H-A'$. By \eqref{KG}, we have $\Rank(G_v)= \alpha$. This remains true if we delete from $G_v$ those elements of $\cc_v$ containing a branch set intersecting $A'$, since there are at most finitely many such branch sets. Let $G'_v$ be the subgraph of $G_v$ obtained after this deletion. Since $G'_v$ is connected, and all its branch sets avoid $A'$,  its image under \cb\ is contained in $C$. But $\Rank(C)<\Rank(H)=\alpha$, while $\Rank(G'_v)= \alpha$ as observed above. This contradicts \Or{minor rank}, hence $B_v$ must intersect $A'$.
\end{proof}

\begin{observation} \label{min tree}
\Fe\ countable ordinal $\alpha$, \ti\ a countable tree $T_\alpha$ with $\Rank(T_\alpha)=\alpha$, \st\ $T_\alpha< G$ \fe\ non-empty, connected, graph $G$ with $\Rank(G)\geq \alpha$. 
\end{observation}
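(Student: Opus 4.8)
\subsection*{Proof proposal for \Or{min tree}}

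The plan is to build the trees $T_\alpha$ by transfinite recursion on $\alpha<\omega_1$ and to prove their required properties by a simultaneous transfinite induction. The statement as phrased is, however, not quite strong enough to run the induction: when we want a minor model of $T_\alpha$ inside a connected graph $G$ with $\Rank(G)\ge\alpha$, we will assemble it from minor models of lower-rank trees $T_\beta$ living in the components of $G-A(G)$, and to glue these models to a common ``root'' branch set by single edges we must be able to prescribe, in advance, a vertex of each component that the corresponding $T_\beta$-model is forced to cover. Accordingly I would prove the following rooted strengthening: each $T_\alpha$ carries a distinguished vertex $r_\alpha$, and for every non-empty connected graph $G$ with $\Rank(G)\ge\alpha$ and every $v\in V(G)$ there is a minor model of $T_\alpha$ in $G$ whose branch set of $r_\alpha$ contains $v$. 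Taking an arbitrary $v$ and forgetting $r_\alpha$ recovers the Observation.

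For the construction, let $T_0$ be a single vertex $r_0$, and for $\alpha>0$ let $T_\alpha$ consist of a root $r_\alpha$ joined by an edge to the root of a fresh copy of $T_\beta$, one such copy for each pair $(\beta,i)$ with $\beta<\alpha$ and $i\in\N$. Since $\alpha$ is countable this is a countable tree. That $\Rank(T_\alpha)=\alpha$ splits as follows. Deleting $r_\alpha$ leaves components that are copies of the $T_\beta$, each of rank $\beta<\alpha$ by induction, so $T_\alpha\in\ran{\alpha}$ by \Dr{def:clu}. Conversely I would first record the easy auxiliary fact that a connected graph $H$ containing infinitely many pairwise disjoint non-empty connected subgraphs each of rank $\ge\gamma$ has $\Rank(H)\ge\gamma+1$ --- if not, then (as $H$ is infinite) its finite kernel $A(H)$ is defined, some such subgraph avoids $A(H)$ and lies in a single component $K$ of $H-A(H)$, and $\Rank(K)$ is then both $<\Rank(H)\le\gamma$ and, by \Or{minor rank}, $\ge\gamma$, which is absurd. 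Applying this to the infinitely many disjoint copies of $T_\gamma$ inside $T_\alpha$ gives $\Rank(T_\alpha)\ge\gamma+1$ for every $\gamma<\alpha$, hence $\Rank(T_\alpha)\ge\alpha$.

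It remains to establish the rooted minor property. The case $\alpha=0$ is immediate. For $\alpha>0$, let $G$ be non-empty connected with $\gamma:=\Rank(G)\ge\alpha$ and fix $v\in V(G)$; then $\gamma>0$, so $A:=A(G)$ is a non-empty finite set (were it empty, $G$ itself, being connected, would have rank $<\gamma$). Pick a finite connected subgraph $B_0$ of $G$ with $A\cup\{v\}\subseteq V(B_0)$; this will be the branch set of $r_\alpha$. Enumerate the copies attached to $r_\alpha$ in $T_\alpha$, the $k$-th being a copy of $T_{\beta_k}$ with $\beta_k<\alpha\le\gamma$. Since $B_0$ is finite and, by \Prr{inf bet}, $G-A$ has infinitely many components of rank $\ge\beta_k$, we may greedily choose pairwise distinct components $C_k$ of $G-A$, each disjoint from $B_0$, with $\Rank(C_k)\ge\beta_k$. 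As $C_k$ is a component of $G-A$ in the connected graph $G$, there is an edge $a_kv_k$ of $G$ with $a_k\in A\subseteq V(B_0)$ and $v_k\in V(C_k)$; the rooted hypothesis for $\beta_k$ applied to $C_k$ and $v_k$ yields a minor model of $T_{\beta_k}$ in $C_k$ whose root branch set contains $v_k$. Now take $B_0$ as the branch set of $r_\alpha$, the model in $C_k$ (and its internal branch edges) as the model of the $k$-th pendant copy of $T_{\beta_k}$, and the edge $a_kv_k$ as the edge of $T_\alpha$ joining $r_\alpha$ to the root of that copy. The branch sets are connected and pairwise disjoint (distinct components $C_k$, each avoiding $B_0$), so this is a minor model of $T_\alpha$ in $G$ with $v\in V(B_0)$, as required.

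The main obstacle is exactly the connectivity bookkeeping of the last paragraph: one must arrange simultaneously that the finite root branch set $B_0$ contains the kernel $A(G)$, is connected, and misses infinitely many of the high-rank components of $G-A(G)$, and --- crucially --- that the edge joining each such component to $A(G)$ can be made to land on the \emph{root} branch set of the $T_\beta$-model inside it, which is precisely what forces the rooted formulation of the induction. The remaining points (countability, that $T_\alpha$ is a tree, the two rank inequalities) are routine once \Prr{inf bet} and \Or{minor rank} are available.
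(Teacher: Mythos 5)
Your proof is correct and follows essentially the same route as the paper's: the identical rooted strengthening \eqref{any root}, the same construction of $T_\alpha$, and the same use of \Prr{inf bet} to select fresh high-rank components for the recursive embedding of the pendant subtrees. The only (harmless) deviations are cosmetic --- you take a finite connected subgraph through $A(G)\cup\{v\}$ as the root branch set rather than $A(G)$ together with whole components it touches, and you spell out the verification that $\Rank(T_\alpha)=\alpha$ and that $A(G)\ne\emptyset$, both of which the paper leaves implicit.
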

\begin{proof}
We will prove the statement  by induction on $\alpha$. For this it will be convenient to think of each $T_\alpha$ as a rooted tree, and denote its root by $r_\alpha$. Instead of $T_\alpha< G$, we will prove the following strengthening, which will be important for our induction:
\labtequ{any root}{\fe\ $v\in V(G)$, there is a model of $T_\alpha$ in \g \st\ the branch set of $r_\alpha$ contains $v$.}

For $\alpha=0$, we just let $T_\alpha$ be the tree on one vertex, and note that \eqref{any root} is trivially satisfied by letting $B_{r_\alpha}=\{v\}$. 

For $\alpha>0$, we construct $T_\alpha$ as follows. We start with the disjoint union of countably infinitely many copies of $T_\beta$ for each $\beta<\alpha$, add a new vertex $r_\alpha$, and join $r_\alpha$ to the root of each such copy of $T_\beta$ with an edge. Note that $A(T_\alpha)=\{r_\alpha\}$ by construction. 

\smallskip
This completes the construction of $T_\alpha$ \fe\ ordinal $\alpha$, and it now remains to prove \eqref{any root}, which we do by transfinite induction of  $\alpha$. Having checked the start $\alpha=0$ of the induction above, we may assume that \eqref{any root} holds for all ordinals $\beta<\alpha$. Given $G$ as above and $v\in V(G)$, we construct the desired model of $T_\alpha$ in \g as follows. Pick a \pth{v}{A(G)}\ $P$ in \G. Moreover, \fe\ $x,y\in A(G)$, pick a \pth{x}{y}\ $P_{xy}$ in \G. Since $A(G)$ is finite, the union of all these paths meets only a finite set \cc\ of components of $G-A(G)$. We let  $B_{r_\alpha}=A(G) \cup \bigcup \cc$ be the branch set of $r_\alpha$ in our model. All other branch sets will be chosen within $G - \bigcup \cc$. 

Let \seq{C}\ be an enumeration of the components of $T_\alpha - r_\alpha$, and recall that each $C_n$ is isomorphic to $T_\beta$ for some $\beta<\alpha$. For $i=1,2,\ldots$, we recursively find a model of $C_i$ in  $G - \bigcup \cc$ as follows. We pick a component $C'_i$ of  $G - \bigcup \cc$ with $\Rank(C'_i)\geq \beta$ \st\ $C'_i \not\in \cc$, and $C'_i\neq C'_j$ for any $j<i$, which $C'_i$ exists by \Prr{inf bet}. Since $G$ is connected, there is a vertex $v_i\in C'_i$ sending an edge $e_i$ to $A(G)$. Applying the inductive hypothesis \eqref{any root} with $G$ replaced by  $C'_i$, and $v$ replaced by $v_i$, and $\alpha$ replaced by $\beta$, we obtain a minor model of $C_i \isom T_\beta$ in $C'_i$, in which the branch set corresponding to $r_\beta$ contains $v_i$. Adding the edges $e_i$, and the branch set $B_{r_\alpha}$ to these models for all $i\in \N$ we obtain the desired model of  $T_\alpha$ in \G.
\end{proof}

\subsection{Better-quasi-orders} \label{sec bqo}

Rather than repeating the original definition of a better-quasi-order, we will work with an equivalent one. Intuitively, a quasi-order $(Q,\leq)$ is \bqo\  if it is \wqo, and so are its subsets, sets of subsets, sets of sets of subsets, and so on transfinitely, whereby we recursively extend $\leq$ from elements of $Q$ to subsets of $Q$. \mymargin{Improve} To make this precise, we need the following terminology.

Let $\cp^*(A)$ denote the set of non-empty subsets of a set $A$, i.e.\ $\cp^*(A):= \cp(A)\sm \{\emptyset\}$. Let $Q$ be a quasi-order. For every ordinal $\alpha$ we define, by transfinite induction, the `iterated power set' $V^*_\alpha(Q)$ as follows. We start our induction by setting $V^*_0(Q):=Q$. Having defined $V^*_\alpha(Q)$, we let $V^*_{\alpha+1}(Q):= \cp^*(V^*_\alpha(Q))$. Finally, if $\alpha$ is a limit ordinal, we let $V^*_\alpha(Q):=\bigcup_{\beta< \alpha} V^*_\beta(Q)$. We say that $X\in V^*_\alpha(Q)$ is \defi{hereditarily countable} if it is a countable set of hereditarily countable sets (the latter having been defined recursively, starting by declaring each element of $Q$ to be  hereditarily countable).

Having defined $V^*_\alpha(Q)$ \fe\ $\alpha$, we let $V^*(Q):=\bigcup_{\alpha} V^*_\alpha(Q)$. The \defi{$Q$-rank $\QRank(X)$} of an element $X\in V^*_\alpha(Q)$ is defined as $\alpha+1$ where $\alpha$ is the least ordinal \st\ $X\in V^*_\alpha(Q)$. Thus $\QRank(X)=1$ \iff\ $X\in Q$ (the $+1$ may look strange for now, but it will be justified later).

\begin{definition} \label{def qo}
Given a quasi-order $(Q,\leq)$, recursively define a quasi-order $\leq_+$ on $V^*(Q)$ as follows
\begin{enumerate}
\item \label{qo i} if $X,Y \in Q$, then $X\leq_+ Y$ in $V^*(Q)$ \iff\ $X\leq Y$ in $Q$;
\item \label{qo ii} if $X\in Q$ and $Y \not \in Q$, then
$X\leq_+ Y$ \iff\ there exists $Y' \in Y$ with $X\leq_+ Y'$;
\item \label{qo iv} if $X\not \in Q$ and $Y \not \in Q$, then
$X\leq_+ Y$ \iff\ for every $X'\in X$ there exists $Y'\in Y$ with $X'\leq_+ Y'$.
\end{enumerate}
\end{definition}

Let $H^*_{\omega_1}(Q)$ denote the set of hereditarily countable elements of $V^*_{\omega_1}(Q)$ equipped with the above quasi-order  induced from $V^*(Q)$.

\begin{theorem}[{\cite[Theorem 3.45]{PeqTow}}\footnote{Private communication with Yann Pequignot suggests that this theorem was known to experts on BQO, but apparently it first appear in print in this formulation in \cite{PeqTow}.}] \label{lem H wqo}
A quasi-order $Q$ is BQO if and only if $H^*_{\omega_1}(Q)$ is WQO. \mymargin{{\tiny if and only if $H^*_{\omega_1}(Q)$ is well-founded {\cite[Theorem 3.46]{PeqTow}}.}}
\end{theorem}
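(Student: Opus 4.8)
The plan is to prove the two implications separately, using the classical apparatus of Nash--Williams' theory of better-quasi-orders. Recall that a \emph{$Q$-array} is a map $f\colon B\to Q$ from a \emph{barrier} $B$ (a Ramsey-type family of finite subsets of $\omega$), that $f$ is \emph{good} if $f(s)\leq_Q f(t)$ for some $s,t\in B$ with $s\triangleleft t$ (the shift relation), and that $Q$ is BQO precisely when every $Q$-array is good. Two structural facts will do all the work: every barrier $B$ carries a countable ordinal \emph{rank}; and $B$ admits \emph{derivatives}, namely for each $n$ in the underlying set $N$ of $B$ the family $B_{n}:=\{\,s\setminus\{n\}: s\in B,\ \min s=n\,\}$ is a barrier on $\{m\in N:m>n\}$ of strictly smaller rank, on which $f_{n}(t):=f(\{n\}\cup t)$ is a $Q$-array. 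Since a barrier is a countable family of finite sets, any object produced by a recursion that follows this derivative structure is hereditarily countable and has $\QRank<\omega_1$.

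For the forward direction, assume $Q$ is BQO. The engine is the classical Nash--Williams lemma that the power set of a BQO is a BQO: if $Q$ is BQO then $\cp^*(Q)$, ordered by $X\leq Y \Leftrightarrow \forall x\in X\,\exists y\in Y\ x\leq y$, is BQO --- one would cite this, or reproduce the standard argument, which takes a putative bad $\cp^*(Q)$-array, passes to a sub-barrier, and uses badness to pick out of each $f(s)$ a single witness $x_s$, producing a bad $Q$-array. I would then iterate this lemma through the transfinite recursion defining $V^*_\alpha(Q)$: successor steps are exactly the lemma, and limit steps are handled by the observation that a barrier has countable image, so an array into a limit stage factors through an earlier stage up to an initial-segment fragment. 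This yields that $H^*_{\omega_1}(Q)$ is BQO, hence WQO.

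For the backward direction I would argue by contraposition. Suppose $Q$ is not BQO and fix a bad $Q$-array $f\colon B\to Q$ with underlying set $N$. Define $\Phi(B,f)\in V^*(Q)$ by recursion on the rank of $B$: set $\Phi(B,f):=f(\emptyset)$ if $B=\{\emptyset\}$, and $\Phi(B,f):=\{\,\Phi(B_{n},f_{n}): n\in N,\ B_{n}\neq\emptyset\,\}$ otherwise; by the remark above this is a hereditarily countable element of $V^*(Q)$ of $\QRank<\omega_1$. The crux is that $\Phi$ is order-reflecting: if $\Phi(B_{n},f_{n})\leq_+\Phi(B_{m},f_{m})$ for some $n<m$, then unwinding the recursive definition of $\leq_+$ down to the base case produces $s\in B_{n}$ and $t\in B_{m}$ with $f(\{n\}\cup s)\leq_Q f(\{m\}\cup t)$ and $\{n\}\cup s\triangleleft\{m\}\cup t$, contradicting the badness of $f$. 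Hence $\{\,\Phi(B_{n},f_{n}): n\in N,\ B_{n}\neq\emptyset\,\}$ is an infinite antichain in $H^*_{\omega_1}(Q)$, so $H^*_{\omega_1}(Q)$ is not WQO.

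The step I expect to be the main obstacle is the order-reflection property of $\Phi$ in the backward direction: one must check that $\leq_+$-comparability of the recursively built sets really does propagate level by level down to a single good pair of the original array, respecting the shift relation $\triangleleft$ at each level and correctly handling barriers of limit rank in the recursion. By comparison the forward direction is a fairly routine transfinite iteration of the Nash--Williams power-set lemma, the only mild subtlety being the bookkeeping at limit ordinals below $\omega_1$.
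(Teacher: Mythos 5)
The paper does not prove this statement; it cites it (Theorem~3.45 of the Pequignot survey) and explicitly uses it as the working \emph{definition} of BQO, so there is no in-paper proof to compare yours against. Your reconstruction from the Nash--Williams barrier/array definition is structurally sound: for the forward direction, transfinitely iterate the power-set lemma and use that a countable array into $H^*_{\omega_1}(Q)$ factors through a single stage $V^*_\alpha$, $\alpha<\omega_1$, by regularity of $\omega_1$; for the converse, recurse on the derivative tree of a bad array to build a witness $\Phi(B,f)\in H^*_{\omega_1}(Q)$ of $Q$-rank bounded by the (countable) rank of $B$. One imprecision: your argument shows $\Phi(B_n,f_n)\not\leq_+\Phi(B_m,f_m)$ only for $n<m$ in $N$, so $(\Phi(B_n,f_n))_{n}$ is a bad \emph{sequence}; it need not be an antichain, since the shift relation $\triangleleft$ is oriented ($\min s<\min t$) and hence the badness of $f$ only blocks one direction of comparability. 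A bad sequence is of course all that is needed to refute WQO. On the crux you flag (order-reflection of $\Phi$): it does go through, because the clause for $X,Y\notin Q$ in Definition~\ref{def qo} lets you choose \emph{which} element of the left-hand set to descend into, and always choosing the one indexed by the already-determined next element of the right-hand path forces $s\setminus\{\min s\}$ to be an initial segment of $t$, which is precisely $s\triangleleft t$; furthermore the absence in Definition~\ref{def qo} of a clause with $X\notin Q$, $Y\in Q$ guarantees the recursion bottoms out on the $s$-side no later than on the $t$-side, so both endpoints of the resulting good pair land in $B$.
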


We will effectively use \Tr{lem H wqo} as our definition of \defi{\bqoing}. (We will refrain from repeating Nash-Williams' original definition of \bqoing, as this will never be used in the paper.) 
\smallskip

Let $Seq(Q)$ be the set of all finite or countably infinite sequences with elements in $(Q,\leq)$. We endow $Seq(Q)$ by a quasi-ordering $\preceq$ by letting $S \preceq T$ if there exists an embedding from $F$ into $G$, i.e.\ a strictly increasing map $\phi$ from the index set of $S$ to that of $T$, \st\ $S(i) \leq T(\phi(i))$ \fe\ $i$. We will use the following well-known lemmas about \bqoing.

\begin{lemma}[{\cite{NWbqo}, \cite[Lemma 4]{KuhWel}}] \label{lem Seq}
If a quasi-order $Q$ is BQO, then so is $Seq(Q)$.
\end{lemma}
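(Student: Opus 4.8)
The plan is to establish \Lr{lem Seq} --- that $Seq(Q)$ is BQO whenever $Q$ is --- by invoking \Tr{lem H wqo} as the working definition of BQO. So I would aim to show: if $H^*_{\omega_1}(Q)$ is WQO, then $H^*_{\omega_1}(Seq(Q))$ is WQO. The natural route is not to argue about $H^*_{\omega_1}$ directly, but to reduce $Seq(Q)$ to something that lives inside the $V^*$-hierarchy over $Q$. Observe that a sequence $S\in Seq(Q)$ indexed by (an initial segment of) $\N$ can be encoded as a set of ``labelled positions'': to position $i$ with entry $S(i)=q$ associate a nested object recording both $q$ and the ordinal position $i$. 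Concretely, one can code a countable ordinal/natural number $i$ hereditarily inside $V^*$ over any nonempty $Q$ (using nested singletons, von Neumann style), pair it with $q\in Q$ via a Kuratowski-type pair $\{\{a\},\{a,b\}\}$, and then take the set of all such pairs over the index set of $S$. This gives an injection $\iota: Seq(Q)\hookrightarrow H^*_{\omega_1}(Q)$, landing at some bounded $Q$-rank (rank roughly $\omega+c$ for a small constant $c$).

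The key point I would then need is a \emph{monotonicity/reflection} statement: $S\preceq T$ in $Seq(Q)$ if and only if (or at least, is implied by / implies, in the direction we need) $\iota(S)\leq_+ \iota(T)$ in $V^*(Q)$. The ``embedding $\phi$'' in the definition of $\preceq$ must be recovered from the witnessing function supplied by clause~\ref{qo iv} of \Dr{def qo}: given $X'=\iota$-image of position $i$ in $S$, clause~\ref{qo iv} hands back some $Y'=\iota$-image of a position in $T$ dominating it, and the ordinal-coding component forces this to be a strictly increasing correspondence on indices while the $Q$-component gives $S(i)\le T(\phi(i))$. The delicate part is making sure the coding of ordinals is \emph{rigid} under $\leq_+$: distinct natural numbers coded as nested singletons should be $\leq_+$-incomparable (or at least, $m\leq_+ n \Leftrightarrow m\le n$ as integers) so that the combinatorics of $\preceq$ is faithfully mirrored. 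This is a standard trick (it appears in Nash-Williams' and later treatments, and is exactly the content of \cite[Lemma 4]{KuhWel}), but getting the bookkeeping right --- especially handling the empty sequence, sequences of different lengths, and the interaction of the pairing code with $\leq_+$ --- is where the real work lies.

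Having set up the injection $\iota$ and the faithfulness lemma, the proof concludes quickly. Suppose $(S_n)_{\nin}$ were a bad sequence in $Seq(Q)$ witnessing failure of WQO; then $(\iota(S_n))_{\nin}$ is a sequence in $H^*_{\omega_1}(Q)$, and by faithfulness it is also bad for $\leq_+$, contradicting the assumed WQO-ness of $H^*_{\omega_1}(Q)$ given by \Tr{lem H wqo} applied to the BQO hypothesis on $Q$. For the full conclusion ``$Seq(Q)$ is BQO'' one iterates: it suffices that $H^*_{\omega_1}(Seq(Q))$ is WQO, and since $Seq(Q)$ embeds (via a rank-shifting version of $\iota$, applied levelwise) into $H^*_{\omega_1}(Q)$ in a way that respects the quasi-order, one gets an order-embedding $H^*_{\omega_1}(Seq(Q)) \hookrightarrow H^*_{\omega_1}(H^*_{\omega_1}(Q)) \cong H^*_{\omega_1}(Q)$, the last identification because the hereditarily-countable elements of bounded rank over $Q$ already form a structure closed under this operation up to order-isomorphism below $\omega_1$ (using that $\omega_1$ is closed under the relevant ordinal arithmetic). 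Then WQO-ness of $H^*_{\omega_1}(Q)$ transfers along the embedding.

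The main obstacle I anticipate is precisely the \emph{faithfulness of the coding under $\leq_+$}: one must choose the encoding of index positions so that $\leq_+$ cannot ``cheat'' by matching entries in the wrong order or collapsing distinct indices, and one must verify this using only clauses~\ref{qo i}--\ref{qo iv} of \Dr{def qo}, which quantify in a somewhat asymmetric way ($\exists$ in clause~\ref{qo ii}, $\forall\exists$ in clause~\ref{qo iv}). A secondary, more technical nuisance is that the $V^*$-construction uses $\cp^*$ (nonempty subsets only), so the encoding must never require the empty set as a building block --- which slightly complicates the ordinal coding and the treatment of the empty sequence. Since \Lr{lem Seq} is cited as ``well-known'' with references to \cite{NWbqo} and \cite[Lemma 4]{KuhWel}, I would in practice cite those for the BQO-preservation and only sketch the translation into the $H^*_{\omega_1}$-formulation used here; a fully self-contained proof would essentially reproduce the classical argument in the encoded setting.
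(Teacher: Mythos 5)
The paper does not prove this lemma; it is cited from Nash-Williams \cite{NWbqo} and K\"uhn \cite{KuhWel}, so there is no internal proof to compare against. Your overall architecture --- find an immersion of $Seq(Q)$ into $H^*_{\omega_1}(Q)$, then use $H^*_{\omega_1}(H^*_{\omega_1}(Q))=H^*_{\omega_1}(Q)$ --- is exactly the pattern this paper uses in \Sr{sec conv} (cf.\ \Or{obs hst}, \Or{obs H}, \Cr{cor gen}), so the strategy would be sound \emph{if} the immersion existed. The gap is that your coding is not faithful, and the obstruction is structural, not a matter of bookkeeping.

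On nonempty sets, $\leq_+$ is a domination quasi-order: clause \ref{qo iv} of \Dr{def qo} says ``for every $X'\in X$ there exists $Y'\in Y$ with $X'\leq_+ Y'$'', with no injectivity constraint on the witness $Y'$. On sequences, $\preceq$ demands a \emph{strictly increasing}, hence injective, index map $\phi$. No coding of $S$ as a set of (position, entry) codes $P_i(S(i))$ can reconcile these two quantifier shapes. You are forced to make $P_i(q)\leq_+ P_j(q')$ hold whenever $i\le j$ and $q\le q'$, since a shorter sequence must be allowed to embed into a longer one at shifted positions; but then nothing stops two distinct members of $\iota(S)$ from being dominated by the \emph{same} member of $\iota(T)$. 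Concretely, take $Q=\{a,b\}$ an antichain, $S=(a,a)$, $T=(b,a)$: then $S\not\preceq T$ (there is only one $a$ in $T$), yet $\iota(S)\leq_+\iota(T)$, because both $P_0(a)$ and $P_1(a)$ are dominated by the single code $P_1(a)$ in $\iota(T)$. So the reflection $\iota(S)\leq_+\iota(T)\Rightarrow S\preceq T$ --- exactly what your bad-sequence argument needs --- fails, and your sentence ``the ordinal-coding component forces this to be a strictly increasing correspondence on indices'' is the missing (and false) lemma. One might instead try a \emph{rigid} coding in which distinct position codes are pairwise $\leq_+$-incomparable, so that the matching is forced to be the identity on indices; but that requires an infinite $\leq_+$-antichain in $H^*_{\omega_1}(Q)$, which is precisely what the BQO hypothesis on $Q$ (via \Tr{lem H wqo}) rules out. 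So the coding can work neither way. The classical proofs avoid this by arguing directly with barriers and fronts rather than by coding sequences into the iterated power set.
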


\begin{lemma}[{\cite[COROLLARY~22A]{NWbqo}, \cite[Lemma 3]{KuhWel}}] \label{lem prod}
If two quasi-orders $Q_1,Q_2$ are BQO, then so is $Q_1 \times Q_2$.
\end{lemma}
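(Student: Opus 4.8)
The plan is to prove this by the standard Nash--Williams barrier/array machinery (which is what the cited sources \cite{NWbqo,KuhWel} use), with the option of shortcutting via \Lr{lem Seq}. Recall that in this framework $Q$ is BQO \iff\ every \emph{$Q$-array} $f\colon B\to Q$ (a map from a barrier $B$ on some infinite subset of $\N$) is \emph{good}, meaning there are $s\triangleleft t$ in $B$ with $f(s)\leq f(t)$. So let $f$ be a $Q_1\times Q_2$-array and write $f=(f_1,f_2)$, where $f_i$ is the $Q_i$-array obtained by composing $f$ with the $i$-th projection. The obvious first attempt — ``$f_1$ is good, $f_2$ is good, combine'' — fails because the good pairs produced for $f_1$ and for $f_2$ need not coincide. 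The fix is to first pass to a sub-array on which the $Q_1$-part is \emph{perfect}.

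First I would invoke the key structural lemma of BQO theory: if $Q_1$ is BQO, then every $Q_1$-array has a perfect sub-array, i.e.\ (after passing to a sub-array, which changes neither the conclusion nor the badness of anything) there is a barrier $B'$ with $f_1(s)\leq f_1(t)$ for \emph{all} $s\triangleleft t$ in $B'$. Then $f_2$ restricted to $B'$ is again a $Q_2$-array, hence good since $Q_2$ is BQO, so there exist $s\triangleleft t$ in $B'$ with $f_2(s)\leq f_2(t)$. By perfectness of the first coordinate on $B'$ we also have $f_1(s)\leq f_1(t)$, whence $f(s)=(f_1(s),f_2(s))\leq (f_1(t),f_2(t))=f(t)$ in the product order. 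Thus $f$ is good, and $Q_1\times Q_2$ is BQO. (Symmetric in $Q_1,Q_2$, so the choice of which coordinate to perfect first is immaterial.)

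Alternatively, and more economically, one can reduce to \Lr{lem Seq}. Using the two elementary closure properties that a disjoint union $Q_1\sqcup Q_2$ of two BQO's is BQO (which follows from the Ramsey property of barriers: the partition $f^{-1}(Q_1)\cup f^{-1}(Q_2)$ of the domain barrier has a sub-barrier lying in one part), and that a quasi-order order-embedding into a BQO is itself BQO (a $Q'$-array composed with the embedding is a $Q$-array, and good pairs pull back), it suffices to note that $(a,b)\mapsto \langle a,b\rangle$ embeds $Q_1\times Q_2$ order-faithfully into $Seq(Q_1\sqcup Q_2)$: since the identity is the only strictly increasing map $\{1,2\}\to\{1,2\}$, we have $\langle a,b\rangle\preceq\langle a',b'\rangle$ exactly when $a\leq a'$ in $Q_1$ and $b\leq b'$ in $Q_2$. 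Then \Lr{lem Seq} gives that $Seq(Q_1\sqcup Q_2)$ is BQO, and hence so is $Q_1\times Q_2$.

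In both routes the genuine content is the same non-elementary input from BQO theory — the minimal bad array lemma (equivalently, the existence of perfect sub-arrays), respectively the Ramsey property of barriers together with \Lr{lem Seq}; everything around it is bookkeeping. I would therefore expect the ``main obstacle'' to be precisely the perfect-sub-array step, which is the standard hard lemma underpinning essentially all positive BQO results and which I would cite rather than reprove.
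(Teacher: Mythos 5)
The paper does not actually prove this lemma: it is stated with citations to \cite{NWbqo} and \cite{KuhWel} and used as a black box, so there is no in-paper argument to compare against. Both of your routes are correct, and you have correctly identified where the genuine content lies. The first route is essentially the proof in the cited sources: by Ramsey for barriers (equivalently, the perfect-sub-array consequence of the minimal bad array machinery) pass to a sub-barrier on which the first coordinate is perfect, then apply goodness of the second coordinate on that sub-barrier and combine. The second route is an economical shortcut given that the paper already imports \Lr{lem Seq}: the map $(a,b)\mapsto\langle a,b\rangle$ is an order-faithful embedding of $Q_1\times Q_2$ into $Seq(Q_1\sqcup Q_2)$ precisely because the identity is the only strictly increasing self-map of a two-element index set, disjoint unions of BQOs are BQO by Ramsey for barriers, and BQO passes to sub-quasi-orders since good pairs pull back along an order-faithful map; then \Lr{lem Seq} finishes it. In either route the only non-elementary ingredient is Ramsey for barriers / the minimal bad array lemma, which it is appropriate to cite rather than reprove, as you do.
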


For the proof of the implication \impl{FB}{RB} of \Tr{fin bqo} we will use the following closedness property of $H^*_{\omega_1}$. 

\begin{observation} \label{obs H}
For every quasi-order $Q$, we have\\ $H^*_{\omega_1}(H^*_{\omega_1}(Q))= H^*_{\omega_1}(Q)$.
\end{observation}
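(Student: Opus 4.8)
The plan is to show the two containments separately, using only the structural definition of $V^*_\alpha$, the notion of hereditary countability, and \Tr{lem H wqo} as a black box for the quasi-order part. First I would observe that the underlying \emph{set} $H^*_{\omega_1}(Q)$ consists precisely of those $X \in V^*(Q)$ that are hereditarily countable and whose $Q$-rank is at most $\omega_1$. Since $\omega_1$ is a limit ordinal and $V^*_{\omega_1}(Q)=\bigcup_{\beta<\omega_1}V^*_\beta(Q)$, an element is in $V^*_{\omega_1}(Q)$ iff it lies in some $V^*_\beta(Q)$ with $\beta<\omega_1$; combined with hereditary countability this gives a clean recursive characterization of membership in $H^*_{\omega_1}(Q)$: an element belongs to it iff it is an element of $Q$, or it is a countable set all of whose members belong to $H^*_{\omega_1}(Q)$ and such that there is a countable bound $\beta<\omega_1$ on the ranks of its members. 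The key point is that a countable union of countable ordinals is again a countable ordinal, which is exactly what keeps us inside $\omega_1$.

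Next I would prove the inclusion $H^*_{\omega_1}(H^*_{\omega_1}(Q)) \subseteq H^*_{\omega_1}(Q)$ by transfinite induction on the $H^*_{\omega_1}(Q)$-rank of an element $X \in V^*(H^*_{\omega_1}(Q))$ (call this base quasi-order $R:=H^*_{\omega_1}(Q)$). If $\QRank_R(X)=1$ then $X\in R=H^*_{\omega_1}(Q)$ and there is nothing to prove. Otherwise $X$ is a countable set of elements of $V^*(R)$ of strictly smaller $R$-rank, each hereditarily countable over $R$; by the induction hypothesis each such member lies in $H^*_{\omega_1}(Q)$, so each has a $Q$-rank below $\omega_1$. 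Since $X$ is countable, $\sup$ of these $Q$-ranks is still some $\beta<\omega_1$, hence $X\in V^*_{\beta+1}(Q)\subseteq V^*_{\omega_1}(Q)$, and $X$ is hereditarily countable over $Q$ because its members are. So $X\in H^*_{\omega_1}(Q)$. The reverse inclusion $H^*_{\omega_1}(Q)\subseteq H^*_{\omega_1}(H^*_{\omega_1}(Q))$ is the easy direction: every $Y\in H^*_{\omega_1}(Q)$ is an element of the ground quasi-order $R$ used to build $H^*_{\omega_1}(R)$, hence sits in $V^*_0(R)\subseteq V^*_{\omega_1}(R)$ and is trivially hereditarily countable over $R$ (it is an element of $R$), so $Y\in H^*_{\omega_1}(R)=H^*_{\omega_1}(H^*_{\omega_1}(Q))$.

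Finally I would check that the two quasi-orders agree on this common underlying set. On $H^*_{\omega_1}(Q)$, both the order induced from $V^*(Q)$ and the order induced from $V^*(H^*_{\omega_1}(Q))$ are defined by the same recursion in \Dr{def qo}: clauses \ref{qo i}--\ref{qo iv} only ever refer to ``$X\in Q$ (resp.\ $X\in R$)'' as the base case and otherwise pass to members, and for any $X\in H^*_{\omega_1}(Q)$ with $\QRank>1$ the two recursions unfold identically down to elements of $Q$, since an element of $Q$ is also an element of $R=H^*_{\omega_1}(Q)$ and the base-case comparison in clause \ref{qo i} is literally $\leq$ on $Q$ in both cases. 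A short induction on rank confirms $X\leq_+ Y$ holds in one sense iff it holds in the other. Combining the set equality with the order equality yields $H^*_{\omega_1}(H^*_{\omega_1}(Q))=H^*_{\omega_1}(Q)$ as quasi-orders.

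I expect the main obstacle to be purely bookkeeping: being careful about what ``hereditarily countable'' means relative to the changing ground quasi-order (over $Q$ versus over $R$), and making sure the rank-bookkeeping genuinely stays below $\omega_1$ — i.e.\ invoking at the right moment that a countable supremum of countable ordinals is countable. There is no serious mathematical content beyond that regularity fact about $\omega_1$; the whole statement is essentially a fixed-point observation about the operator $Q\mapsto H^*_{\omega_1}(Q)$, and the proof is a double transfinite induction matching up sets and orders.
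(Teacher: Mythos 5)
Your proposal is correct and follows essentially the same route as the paper: the paper's proof hinges on exactly the observation that a countable nonempty subset of $H^*_{\omega_1}(Q)$ already lies in $H^*_{\omega_1}(Q)$ because $\omega_1$ is regular, and then inducts on rank over the new ground order $R = H^*_{\omega_1}(Q)$, which is what your inductive step does. You are a bit more explicit than the paper in spelling out the trivial reverse inclusion and in verifying that the two recursively-defined quasi-orders coincide on the common underlying set, but these are minor expansions of the same argument rather than a different approach.
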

\begin{proof}
Notice that any countable non-empty subset $X$ of $H^*_{\omega_1}(Q)$ belongs to $H^*_{\omega_1}(Q)$\footnote{This is the crucial observation, and it is used by Pequignot in {\cite[Theorem 3.46]{PeqTow}}. It is important here that we are working with $H^*_{\omega_1}$ instead of $V^*$.}; this is because each $Y\in X$ belongs to some $V^*_{\alpha_Y}(Q)$, and letting $\beta:= \sup_{Y\in X} \alpha_Y$ we have $X\in V^*_{\beta}(Q) \cup V^*_{\beta+1}(Q)$ by the definitions because $\beta<\oo_1$ as $\oo_1$ is a regular ordinal. This means that the hereditarily countable sets in $V^*_{1}(H^*_{\omega_1}(Q))$ are already in $H^*_{\omega_1}(Q)$. By induction, the same applies to $V^*_{\alpha}(H^*_{\omega_1}(Q))$ \fe\ $\alpha<\oo_1$, and so $H^*_{\omega_1}(H^*_{\omega_1}(Q))= H^*_{\omega_1}(Q)$.
\end{proof}


\section{\cgr\ is WQO impies \cf\ is BQO} \label{sec wbqo}

The aim of this section is to prove the implication \impl{RW}{FB} 
of \Tr{fin bqo}:
\begin{theorem} \label{fin bqo back}
If the countable { rayless} graphs are WQO, then the finite graphs are BQO.
\end{theorem}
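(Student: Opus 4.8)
The plan is to encode an arbitrary BQO test object for finite graphs — i.e.\ a hereditarily countable element of $H^*_{\omega_1}(\cf)$ — as a single countable rayless graph, in such a way that the iterated power-set order $\leq_+$ on the left is faithfully reflected by the minor order on the right. By \Tr{lem H wqo}, it suffices to show that $H^*_{\omega_1}(\cf)$ is WQO, so I would take a bad sequence $(X_n)_\nin$ in $H^*_{\omega_1}(\cf)$ and produce from it a bad sequence $(\Gamma_n)_\nin$ of countable rayless graphs, contradicting \ref{RW}. The central device is a recursive \emph{coding} construction $X \mapsto \Gamma(X)$ defined by transfinite recursion on $\QRank(X)$: when $X\in\cf$ is a finite graph, $\Gamma(X)$ is essentially $X$ itself (made connected, say, via a suspension using \Lr{lem cones unm}); when $X\notin\cf$, so $X=\{Y_i : i\in I\}$ is a countable set with each $\QRank(Y_i)<\QRank(X)$, we form $\Gamma(X)$ by taking the disjoint union of countably infinitely many copies of each $\Gamma(Y_i)$ and gluing them all at a single new vertex (or a small fixed gadget), much as in the construction of $T_\alpha$ in \Or{min tree}. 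The $\QRank$ translates into the rank of \Sr{sec rank}, so hereditarily countable $X$ give countable rayless graphs.

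The two things to verify are the forward and backward faithfulness of the coding: $X\leq_+ Y \iff \Gamma(X)<\Gamma(Y)$. The forward direction ($\Rightarrow$) should be a routine transfinite induction matching the three clauses of \Dr{def qo} against minor models: clause \ref{qo i} is immediate; clause \ref{qo ii} says some $Y'\in Y$ already dominates $X$, so a copy of $\Gamma(Y')$ sits inside $\Gamma(Y)$ and we recurse; clause \ref{qo iv} is an "each–there exists" condition, and since $\Gamma(X)$ consists of (infinitely many copies of) pieces $\Gamma(X')$ glued at one vertex, we route each $\Gamma(X')$ into a distinct copy of the dominating $\Gamma(Y')$ inside $\Gamma(Y)$ — here the infinitely-many-copies feature of the construction is exactly what lets us avoid collisions, paralleling the use of \Prr{inf bet} in \Or{min tree}. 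The backward direction ($\Leftarrow$) is where I expect the real work: given a minor model of $\Gamma(X)$ in $\Gamma(Y)$, I must read off an $\leq_+$-witness. This requires controlling where the gluing vertices and the block structure of $\Gamma(X)$ can land — \Lr{blocks}, \Or{min comps}, \Or{apices}, and the uniqueness of the kernel $A(G)$ are the tools I would lean on, to force the branch set of the apex of $\Gamma(X)$ to hit the kernel of $\Gamma(Y)$, so that components of $\Gamma(X)-A$ map into components of $\Gamma(Y)-A'$ and the recursion can descend in rank.

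The main obstacle is making the backward direction watertight: a minor model is not obliged to respect the intended "tree of copies" structure of the coded graphs, so the coding gadgets must be rigid enough (e.g.\ attaching distinctive finite graphs as "labels" to each copy, and choosing the apex gadget so that it cannot be simulated from within a single component) that any minor model is forced into the canonical form, yet simple enough to keep ranks countable and the recursion going. A secondary subtlety is that $X$ ranges over genuine \emph{sets}, so distinct elements $Y_i,Y_j$ of $X$ may nonetheless satisfy $Y_i\leq_+ Y_j$ and $Y_j\leq_+ Y_i$; the coding need only be faithful up to the quasi-order, not injective, which should actually make the backward argument more forgiving. Once the coding is in place, the final step is bookkeeping: a bad sequence $(X_n)$ has uniformly bounded (countable) $\QRank$ only after passing to a subsequence is \emph{not} automatic, but since each $X_n$ is hereditarily countable its $\QRank$ is some countable ordinal and we may extract a subsequence of weakly increasing ranks via \Or{good seqs}-style reasoning, or simply argue directly that $(\Gamma(X_n))$ is a bad sequence of countable rayless graphs regardless of rank, contradicting \ref{RW} and completing the proof.
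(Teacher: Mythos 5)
Your overall strategy is the same as the paper's: encode each $X\in H^*_{\omega_1}(\cf)$ as a countable rayless graph $\Gamma(X)$ by transfinite recursion on $\QRank$ (base case a finite gadget, inductive step a countably-branching tree of copies glued at a new apex), and then transfer a good pair $\Gamma(X_i)<\Gamma(X_j)$ back to $X_i\leq_+ X_j$. You also correctly identify that only the implication $\Gamma(X)<\Gamma(Y)\Rightarrow X\leq_+ Y$ is needed; the paper states this as condition~\eqref{T} and proves the converse~\eqref{Tc} only as a side remark. And you are right that bounding the ranks of a bad sequence is unnecessary: the transfer works pointwise.

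However, there is a genuine gap precisely where you flag it, and your specific construction would not close it. The crux is rigidity: given an arbitrary minor model of $\Gamma(X)$ in $\Gamma(Y)$, one must force the branch set of the apex of $\Gamma(X)$ to land on a vertex of the appropriate ``level'' in $\Gamma(Y)$, so that the recursion can descend. Your base-case gadget $\Gamma(X)=S(X)$ for $X\in\cf$ provides no such invariant; the suspension vertex of a finite graph lies in only finitely many triangles and is in no way distinguishable to a minor model, so a branch set in $\Gamma(Y)$ is free to ``simulate'' the apex from inside a single copy. The paper's construction is engineered exactly to prevent this: it first reduces (via \Lr{lem Seq} and \Lr{lem prod}) to 1-connected finite graphs $Q$, so that each $X\in Q$ has an edge; then for $\QRank(X)=1$ it takes \emph{countably infinitely many} disjoint copies of $X$ with a root $r$ joined to \emph{all} their vertices, and at higher $\QRank$ joins $r$ only to the roots of the subtrees. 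This yields the invariant~\eqref{triangles}: a vertex lies in infinitely many triangles iff it has level~1, and no vertex of level $>1$ lies in any triangle. That triangle count, together with the fact that a branch set of maximal level separates its children, is what forces the minor model into canonical form. Your remark about ``attaching distinctive finite graphs as labels'' and ``choosing the apex gadget so that it cannot be simulated from within a single component'' is the right instinct, but it is the whole ball game and must be supplied; as written, $\Gamma$ lacks the needed invariant and the backward induction stalls at its first step.
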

\begin{proof}
Let $Q$ be the set of finite \defi{1-connected graphs}, i.e.\ connected graphs with at least two vertices. 
By \Lrs{lem Seq} and~\ref{lem prod}, it suffices to prove that $Q$ is \bqo. Indeed, any finite graph $G$ is the disjoint union of a number $n_G$ of isolated vertices and a sequence $s_G$ of 1-connected subgraphs. Thus we can represent $G$ as a pair $(n_G,s_G)$, and apply \Lr{lem prod} to such pairs after applying \Lr{lem Seq} to these sequences, recalling that $\N$ is well-ordered and hence \bqo.

\medskip
It thus remains to prove
\labtequ{Q bqo}{If the class \cgr\ of countable rayless graphs is WQO, then $Q$ is BQO.}
To prove this we will define a map $T$ that assigns to each set $X$ in $H^*_{\omega_1}(Q)$ a rayless graph $T(X)$ in such a way that 
\labtequ{T}{$T(X)< T(Y)$ implies $X\leq Y$ \fe\ $X,Y\in H^*_{\omega_1}(Q)$,}
where $\leq$ stands for the relation $\leq_+$ of \Dr{def qo}.
To see how this implies \ref{Q bqo}, recall that, by \Lr{lem H wqo}, if  $H^*:=H^*_{\omega_1}(Q)$ is WQO then $Q$ is BQO. Let \seq{X}\ be a sequence of elements of $H^*$. If the countable rayless graphs are WQO, then $\{T(X_n)\}_\nin$ is good, hence so is \seq{X} by \ref{T}, and so $H^*$ is WQO as desired.

It remains to define $T$ and prove that it satisfies \ref{T}. We define $T(X), X\in H^*$ by transfinite induction on the $Q$-rank of $X$ as follows. If $\QRank(X)=1$, in which case $X\in Q$ is a finite connected graph (on at least two vertices), then $T(X)$ comprises a countably infinite collection of pairwise disjoint copies of $X$, and an additional vertex $r$, called the \defi{root}, joined by an edge to all other vertices. 

If $\QRank(X)>1$, then $T(X)$ comprises a countably infinite collection of pairwise disjoint copies of $T(X')$ for each $X'\in X$, and an additional root vertex $r$ joined by an edge to the root of each such $T(X')$. 

It is easy to show, by transfinite induction on $\QRank(X)$, that $T(X)$ is a countable graph, and that it is rayless; to see the latter, notice that any ray in $T(X)$ can visit $r$ at most once, hence it would need to have a sub-ray in a copy of $T(X')$ for some $X'\in X$ if $\QRank(X)>1$, or in a copy of $X$ if $\QRank(X)=1$. It is not hard to show, again by transfinite induction on $\QRank(X)$, that 
\labtequ{qrank}{$\Rank(T(X))=\QRank(X)$ \fe\ $X\in H^*$.}
(This justifies the $+1$ in the definition of $\QRank(X)$; without it this equation would hold only when $\QRank(X)\geq \omega$.)

\medskip
We call $r=:r(T(X))$ the \defi{root} of $T(X)$. The \defi{children} of $r$ are the copies of the roots $\{r(T(X')) \mid X' \in X\}$. The \defi{descendant} relation is the transitive closure of the child relation just defined. Given a descendant $r'$ of $r$, we write $\ceil{r'}$ for the component of $T(X)$ containing $r'$ formed when deleting the edge from $r'$ to its parent. We set $\ceil{r}=T(X)$. Notice that $\ceil{r'}$ is isomorphic to $T(Y)$ for some element $Y$ of the transitive closure of $X$.

By construction, we can assign to each vertex $v$ of $T(X), X\in H^*$ an ordinal number called the \defi{level} of $v$, similar to the notion of \QRank, as follows. If $X\in Q$, each vertex of $X$ and its copies has level 0. For $X\in H^*$, we proceed inductively to define the \defi{level} of $r(T(X))$ to be the smallest ordinal that is larger than the level of each vertex $v\neq r(T(X))$ of $T(X)$, which is defined in a previous step of the induction since $v\in V(T(X'))$ for some $X'\in X$. 

Notice that for each triangle $\Delta$ of $T(X)$, at least two of the vertices of $\Delta$  have level 0.
Recall moreover that each $X\in Q$ contains at least one edge. It thus follows from our construction that, \fe\ $X\in H^*$,
\labtequ{triangles}{a vertex $v$ of $T(X)$ lies in infinitely many triangles of $T(X)$ \iff\ $v$ has level 1, and no vertex of level $>1$ lies in a triangle.}

\medskip

We now prove that this definition of $T$ satisfies \ref{T}, by a nested transfinite induction on $\QRank(X)$ and $\QRank(Y)$.

\begin{remark} \label{rem Q}
More generally, we can let $Q'$ be any non-empty family of 1-connected finite graphs. Then \ref{triangles} still holds. By restricting \eqref{T} to $Q'$, we deduce the following variant of \eqref{Q bqo}: if $T[Q']$, i.e.\ the image of $H^*_{\oo_1}(Q')$ under $T$, is \wqo, then $Q'$ is \bqo. 
The remainder of this proof works verbatim when replacing $Q$ by $Q'$, and $\cgr$ by $T[Q']$ in \eqref{Q bqo}. 
\end{remark}

\medskip
Our inductive proof of \eqref{T} starts with $\QRank(X)$ being 1: assume that $T(X)<  T(Y)$ holds for some $X\in Q$ and $Y\in H^*$. Let $B_r$ denote the branch set corresponding to the root $r=r(T(X))$ in some minor model $\cb$ of $T(X)$ in $T(Y)$. We claim that $B_r$ contains at least one vertex of level 1 in $Y$. Indeed, by \eqref{triangles} $r$ lies in infinitely many triangles of $T(X)$, and if  $B_r$ avoids level 1 vertices of $T(Y)$, then it lies in at most finitely many triangles of any minor of $T(Y)$.

Let $G$ be one of the copies of $X$ in $T(X)$, let $xy$ be an edge of $G$, and notice that $xyr$ is a triangle of $T(X)$. Then the branch sets $B_x,B_y$ of $\cb$ are contained in a component $C$ of level 0 vertices of $T(Y)$. Let $r'$ be the unique level 1 vertex of $T(Y)$ sending edges to $C$. Notice that $r'\in B_r$. Since $G$ is connected, and $r'$ separates $C$ from the rest of $T(Y)$, we deduce that $B_v\subset C$ \fe\ $v\in V(G)$. By our construction of $T(Y)$, there are infinitely many level 0 components $C'$ of $T(Y)$ isomorphic with $C$ and incident with $r'$. It follows that 
\labtequ{ceil}{$T(X)\leq \ceil{r'}$.}
If $\QRank(Y)=1$, then $C$ is just a copy of $Y$ and we obtain the desired $X \leq Y$ since $G$ was a copy of $X$. If $\QRank(Y)>1$, then let $Y'$ be the element of $Y$ \st\ some copy of $T(Y')$ in $T(Y)$ contains $r'$, and hence $\ceil{r'}$. In this case \eqref{ceil} implies $T(X)\leq T(Y')$, and by transfinite induction on $\QRank(Y)$, of which the previous case is the initial step, we deduce $X \leq Y'$, and hence $X \leq Y$ by \ref{qo ii} of \Dr{def qo}. 

Thus we have completed the initial step $\QRank(X)=1$ of our inductive proof of \eqref{T}. Assume now that $\QRank(X)>1$, and $T(X)\leq T(Y)$ holds for some $Y\in H^*$. We cannot have $\QRank(Y)=1$ by \Or{minor rank} and \eqref{qrank}. Thus we are in case \ref{qo iv} of \Dr{def qo}, and so our task is to find, for each $X'\in X$, some $Y'\in Y$ \st\ $X'\leq Y'$.

To this aim, let again $B_r$ denote the branch set of some  minor model $\cb$ of $T(X)$ in $T(Y)$  corresponding to the root $r=r(T(X))$. Let $r'$ be the vertex of $B_r$ of maximal level among all vertices of $B_r$; this exists because  $B_r$ is connected, and hence if it contains vertices of all levels $\beta< \alpha$ for some ordinal $ \alpha$, then it must also contain a vertex of level $\alpha$. By \eqref{triangles}, the level of $r'$ is at least 1. Notice that all but at most one of the edges of $r$ are mapped by $\cb$ to descendants of $r'$, because $r'$ sends at most one edge to a non-descendant. Call this edge $e$ if it exists. Since $T(X)$ contains infinitely many pairwise disjoint copies of $T(X')$ incident with $r$, at least one (in fact almost all) of these copies $G$ is mapped by $\cb$ to a subgraph of $T(Y)$ avoiding $e$. Therefore, $G$ is mapped by $\cb$ into $\ceil{r''}$ for some child $r''$ of $r'$, because $G$ is connected, $r'$ separates its children, and $r'\in B_r$ cannot lie in $B_v$ for any $v\in V(G)$. Let $Y'$ be the element of $Y$ for which some copy of $T(Y')$ contains $r''$, which exists since $r''\neq r(T(Y))$ as $r''$ is a child of another vertex. Then $T(Y')$ contains $\ceil{r''}$, and hence $T(X')< T(Y')$ because $T(X')< \ceil{r''}$. Since $\QRank(X')<\QRank(X)$, our inductive hypothesis yields $X'\leq Y'$ as desired, completing the inductive step.
\end{proof}

By \Rr{rem Q}, we immediately deduce
\begin{corollary} \label{cor Q}
Let $F$ be a set of finite graphs, and let $Q$ be the set of 1-connected elements of $F$. If $T[Q']$ is \wqo, then $F$ is \bqo. 
\end{corollary}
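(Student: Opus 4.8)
The plan is to obtain Corollary~\ref{cor Q} by combining \Rr{rem Q} with the disjoint-union bookkeeping that opens the proof of \Tr{fin bqo back}. First I would apply \Rr{rem Q} with $Q':=Q$: the map $T$ restricts to $H^*_{\omega_1}(Q)$, the key implication \eqref{T} continues to hold there (its proof uses only \eqref{triangles}, which is unaffected by the restriction), and therefore if $T[Q]$ is \wqo\ then $H^*_{\omega_1}(Q)$ is \wqo, so by \Tr{lem H wqo} the quasi-order $Q$ is \bqo.

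Next I would lift \bqoing\ from the 1-connected graphs $Q$ to all of $F$, mimicking the argument that begins the proof of \Tr{fin bqo back}. Every $G\in F$ is the disjoint union of a number $n_G\in\N$ of isolated vertices together with a finite sequence $s_G$ of its 1-connected components; these components lie in $Q$ as long as $F$ is closed under taking components (the situation in the intended applications, e.g.\ $F$ minor-closed or subgraph-closed), and otherwise one first replaces $F$ by $F^+:=F\cup\{\text{components of members of }F\}$, taking $Q$ to be its 1-connected elements and using that a subclass of a \bqo\ quasi-order is \bqo. This yields a map $G\mapsto(n_G,s_G)$ from $F$ into $\N\times Seq(Q)$.

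I would then check that this map reflects the minor relation: if $(n_G,s_G)\preceq(n_H,s_H)$, i.e.\ $n_G\le n_H$ and some strictly increasing $\phi$ satisfies $s_G(i)<s_H(\phi(i))$ for every $i$, then placing a minor model of each $s_G(i)$ inside the pairwise distinct components $s_H(\phi(i))$ of $H$ and sending the $n_G$ isolated vertices of $G$ to $n_G$ isolated vertices of $H$ (possible since $n_G\le n_H$) exhibits $G<H$. Since $\N$ is well-ordered and hence \bqo, and $Q$ is \bqo\ by the first step, $Seq(Q)$ is \bqo\ by \Lr{lem Seq} and $\N\times Seq(Q)$ is \bqo\ by \Lr{lem prod}; as a quasi-order admitting an order-reflecting map into a \bqo\ quasi-order is itself \bqo, we conclude that $F$ is \bqo.

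This is essentially an assembly of \Rr{rem Q} with the opening paragraph of the proof of \Tr{fin bqo back}, so I do not expect a serious obstacle. The only two points needing care are those flagged above: ensuring that the 1-connected components of members of $F$ are themselves governed by the hypothesis --- i.e.\ that $Q$ (after closing $F$ under components, if necessary) really is the family of 1-connected graphs to which $T$ is being applied --- and the routine verification that the pair-encoding reflects the minor relation.
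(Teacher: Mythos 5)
Your proposal correctly reconstructs the argument the paper leaves implicit in its one-line derivation of \Cr{cor Q}: apply \Rr{rem Q} with $Q'=Q$ to get that $Q$ is BQO (the ``$T[Q']$'' in the corollary's statement is a typo for ``$T[Q]$''), then lift to $F$ via the $(n_G,s_G)$ encoding into $\N\times Seq(Q)$ and \Lrs{lem Seq} and~\ref{lem prod}, exactly as in the opening of the proof of \Tr{fin bqo back}. Your verification that the encoding reflects the minor relation is also correct.

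The one point that does not hold up is your escape hatch for $F$ not closed under components: replacing $F$ by $F^+$ enlarges the family of 1-connected graphs from $Q$ to some $Q^+\supseteq Q$, and the hypothesis ``$T[Q]$ is WQO'' says nothing about $T[Q^+]$; moreover, ``a subclass of a BQO is BQO'' would only derive BQO of $Q$ from that of $Q^+$, which is the wrong direction. The corollary is implicitly being asserted for $F$ closed under taking components---e.g.\ any minor-closed class, which you correctly identify as the intended setting---and in that case every 1-connected component of a member of $F$ already lies in $Q$, so no enlargement is needed. Under that reading your argument is complete and coincides with the paper's.
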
 

It is not hard to prove the converse of \eqref{T},  by induction on the $Q$-rank 
by recursively preserving the property that the branch set of the root contains the root of the target graph:
\labtequ{Tc}{$X\leq Y$ implies $T(X)< T(Y)$ \fe\ $X,Y\in H^*_{\omega_1}(Q)$.}
Thus combining \eqref{T},  \eqref{Tc} and \Tr{lem H wqo}, and recalling that  \defi{$T[Q]$} denotes the image of $H^*_{\omega_1}(Q)$ under $T$, we deduce
\labtequ{cor iff}{The finite graphs are \bqo\ \iff\ $T[Q]$ is \wqo.}
This can be thought as an additional equivalent condition that could be added to \Tr{fin bqo}.


\section{Graphs of rank 1, and the self-minor conjecture} \label{sec R1}

This section introduces some of the fundamental techniques used throughout the paper, and serves as a preparation towards the more difficult \Sr{sec subcl}. It handles graphs of rank 1. The section culminates with the proof of \Tr{cor Seym Intro}, which is based on the same techniques.
\medskip

Let \defi{\uf} (to be read ``union-finite'') denote the class of countable graphs \g \st\ each component of \g is  finite. Note that $\uf\subset \Rank_1$. 
For $G\in \uf$, we let $\cc(G)$ denote the class of finite graphs $H$ \st\ $H< G$. 
The following is a toy version of \Lr{subclasses}, a central tool for the proof of \Tr{main Intro}.

\begin{lemma} \label{UF}
\Fe\ $G,G'\in \uf$, we have $G<G'$ \iff\ $\cc(G)\subseteq \cc(G')$.
\end{lemma}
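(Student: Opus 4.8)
The forward implication is immediate: if $G<G'$ and $H$ is any finite minor of $G$, then $H<G'$ by transitivity of the minor relation, so $\cc(G)\subseteq\cc(G')$.

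For the converse I would argue as follows. Assume $\cc(G)\subseteq\cc(G')$. If $G$ has only finitely many components it is itself a finite graph, so $G\in\cc(G)\subseteq\cc(G')$ and we are done; and if $G'$ has only finitely many components it is finite, which forces $G$ to be finite (since $\cc(G)$ contains graphs on arbitrarily many vertices as soon as $G$ is infinite), and we conclude as before. So assume both $G$ and $G'$ have infinitely many — hence finite — components, $G=\bigsqcup_{n\ge1}G_n$ and $G'=\bigsqcup_{n\ge1}G'_n$. Since each $G_n$ is connected, building a minor model of $G$ in $G'$ amounts to choosing, for each $n$, a host component $\phi(G_n)$ of $G'$ together with branch sets inside it — i.e.\ to a map $\phi$ from the components of $G$ to those of $G'$ with $\bigsqcup_{n\in\phi^{-1}(c)}G_n<c$ for every component $c$ of $G'$. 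I would construct $\phi$ by a compactness argument fed by two consequences of the hypothesis. (i) Every finite disjoint union $H_k:=G_1\sqcup\dots\sqcup G_k$ is a minor of $G$, hence lies in $\cc(G)\subseteq\cc(G')$, hence is a minor of $G'$; so every finite portion of the intended packing is realizable. (ii) If an iso-type $q$ occurs infinitely often among the components of $G$, then applying $\cc(G)\subseteq\cc(G')$ to the disjoint union of $n$ copies of $q$ (a minor of $G$) for every $n$ shows that $q$ is a minor of infinitely many components of $G'$ (and more generally, for a finite graph $H$ with arbitrarily many disjoint $H$-minors in $G$ the same holds in $G'$, using $G'\in\uf$). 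By (ii) one routes the infinitely many copies of each infinite-multiplicity type of $G$ to pairwise distinct hosts in $G'$ via a diagonalisation; the remaining bounded-multiplicity components of $G$, and the coherence of all these choices, are then handled by König's Infinity Lemma applied to a tree of partial packings — here the Robertson--Seymour theorem is available to keep the combinatorics finite (e.g.\ $\cc(G')$ has only finitely many minimal excluded minors, and the components of $G$ form a well-quasi-order under $<$).

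The delicate point, and the main obstacle, is exactly this last assembly. The naive tree — whose $k$-th level is all minor models of $H_k$ in $G'$ ordered by restriction — is not locally finite, since a partial model can place its branch sets among components of $G'$ of unboundedly large index; nor can one simply give every component of $G$ its own host, because the hypothesis permits a single finite component of $G'$ to be the only possible simultaneous host of several small components of $G$. So the tree (or the associated system of distinct representatives) must be cut down to a locally finite one that stays nonempty at every level by (i) and whose infinite branch still assembles into a genuine model of all of $G$, rather than one that illegally collapses infinitely many branch sets into one finite component of $G'$; this is where (ii) and the Robertson--Seymour structure are brought to bear. As the paper indicates, \Lr{UF} is the rank‑$1$ toy case of \Lr{subclasses}, and this ``compactness plus structure'' strategy is the blueprint for the harder proof.
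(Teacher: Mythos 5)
Your forward implication, the reduction to the case where both $G$ and $G'$ have infinitely many (finite) components, and the auxiliary observations (i) and (ii) are all fine. But the assembly step---which you yourself identify as the main obstacle---is left open, and the dichotomy you propose does not close it. You split the components of $G$ into infinite-multiplicity iso-types (routed greedily to fresh hosts by (ii)) and ``the remaining bounded-multiplicity components'' (deferred to K\"onig plus Robertson--Seymour). The second class can easily still be infinite: take $G$ to be the disjoint union of a strict $<$-chain $C_1<C_2<\dots$ of pairwise non-isomorphic finite graphs. Then every iso-type occurs exactly once, your diagonalisation applies to nothing, and the entire burden falls on a K\"onig tree that is infinite-branching at every level. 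Asserting that Robertson--Seymour ``keeps the combinatorics finite'' because $\cc(G')$ has finitely many excluded minors, or because the components of $G$ form a WQO, does not by itself produce a locally finite tree, and you do not say how it would; so as written this is a genuine gap.

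The paper's dichotomy is of a different kind: it is relative to a fixed sequence of partial embeddings rather than to isomorphism multiplicities. After choosing minor embeddings $h_n\colon G_n< G'$ of the union $G_n$ of the first $n$ components of $G$, one calls a component $C$ \emph{$h$-stable} if $\bigcup_n h_n(C)$ is finite (so the $h_n$ map $C$ into a bounded piece of $G'$), and $h$-unstable otherwise. K\"onig/compactness is applied \emph{only} to the $h$-stable components, and there the tree really is finitely branching, since a stable $C$ has only finitely many candidate images. For each unstable $U$, the $h_n(U)$ hit infinitely many distinct components of $G'$; \Tr{GMT} together with \Or{good seqs} extracts from those targets an infinite $<$-chain $Y_1<Y_2<\dots$, and the Hilbert Hotel step shifts the current image along chain embeddings $g_m\colon Y_m<Y_{m+1}$ to free the first unnailed $Y_k$ for $U$. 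This chain-plus-shift move is exactly what is missing from your sketch; it is what lets the unstable components be placed one by one without congestion, and it is also the device that generalises (with ``$\ccm(H)$ is WQO'' replacing the \GMT) to \Lr{Rank 1} and \Lr{subclasses}. Your ``more generally'' form of (ii) does hint at a different possible fix---by the \GMT\ only finitely many components of $G$ can lie below only finitely many components of $G$, so those can be packed together as a single finite minor of $G'$ and the remaining components greedily routed to fresh hosts---but that is not what you wrote, and you would still need to set up and verify that argument in full.
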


To see the relevance of this to cardinality of minor-twin classes as in \Tr{main Intro} \ref{I twin}, let us prove that \Lr{UF} implies that $|\uf|_<= \aleph_0$. In fact, this statement is equivalent to the \GMT:

\begin{corollary} \label{UF ctble}
$|\uf|_<= \aleph_0$ \iff\ the finite graphs are \wqo.
\end{corollary}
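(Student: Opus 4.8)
\textbf{Proof proposal for Corollary~\ref{UF ctble}.}

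The plan is to prove both directions by exhibiting a bijection-flavoured correspondence between the minor-twin classes of $\uf$ and the minor-closed families of finite graphs, and then invoking the standard characterisation of WQO. First I would observe that, by \Lr{UF}, the map $G \mapsto \cc(G)$ induces an \emph{injection} from the set of minor-twin classes of $\uf$ into the powerset of the class $\cf$ of finite graphs: if $\cc(G)=\cc(G')$ then $G<G'$ and $G'<G$, so $G$ and $G'$ are minor-twins; conversely minor-twins clearly have the same $\cc(\cdot)$ by transitivity of $<$. Moreover each $\cc(G)$ is a \emph{minor-closed} family of finite graphs (if $H<G$ and $H'<H$ with $H'$ finite, then $H'<G$). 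So $|\uf|_< $ equals the number of minor-closed families of finite graphs that arise as some $\cc(G)$, $G\in\uf$.

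Next I would check that \emph{every} minor-closed family $\cf'$ of finite graphs arises this way: given $\cf'$, let $G_{\cf'}$ be the disjoint union of one copy of each isomorphism type in $\cf'$ (countably many, since there are only countably many finite graphs up to isomorphism), which lies in $\uf$; then $\cc(G_{\cf'})=\cf'$ because a finite graph $H$ is a minor of a disjoint union of finite graphs iff it is a minor of one of the summands (minor models respect components, \Or{min comps}), and $\cf'$ is minor-closed. Hence $|\uf|_<$ equals exactly the number of minor-closed families of finite graphs, equivalently the number of antichains of finite graphs under $<$ (each minor-closed family being the set of graphs containing no member of its antichain of minimal forbidden minors).

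For the forward direction, suppose $|\uf|_<=\aleph_0$. If the finite graphs were not WQO under $<$, then since $<$ has no infinite strictly descending chain on finite graphs (rank, or simply edge/vertex count, is monotone and finite), Proposition~\ref{wqo ch} gives an infinite antichain $\{H_n\}_\nin$ of finite graphs. For each subset $S\subseteq\N$ the graph $G_S:=\bigsqcup_{n\in S} H_n \in \uf$, and by the antichain property together with \Or{min comps} we get $\cc(G_S)\neq\cc(G_{S'})$ whenever $S\neq S'$ (if $n\in S\setminus S'$ then $H_n\in\cc(G_S)\setminus\cc(G_{S'})$), so the $G_S$ lie in $2^{\aleph_0}$ distinct minor-twin classes, contradicting $|\uf|_<=\aleph_0$. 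For the converse, suppose the finite graphs are WQO. Then every minor-closed family is determined by its finite antichain of minimal excluded minors (finiteness of the antichain is exactly WQO), and there are only $\aleph_0$ finite sets of finite graphs up to isomorphism, so there are only $\aleph_0$ minor-closed families; by the correspondence above, $|\uf|_<\leq\aleph_0$. It is $\geq\aleph_0$ since, e.g., the graphs $K_n$ for $n\in\N$ have pairwise distinct $\cc(\cdot)$, hence lie in distinct minor-twin classes. Thus $|\uf|_<=\aleph_0$.

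The only genuine content is the reduction via \Lr{UF} and the realisation of arbitrary minor-closed families inside $\uf$; everything else is the textbook equivalence ``finite graphs WQO $\iff$ only countably many minor-closed families $\iff$ every minor-closed family has a finite obstruction set''. I do not anticipate a real obstacle here, since \Lr{UF} does all the heavy lifting and the \GMT\ itself is not needed --- we are only translating its statement. The one point to state carefully is that there are only countably many isomorphism types of finite graphs, which makes both the ``every family is realised'' step and the ``$\leq\aleph_0$'' count go through.
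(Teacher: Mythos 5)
Your plan is broadly sound and the backward direction matches the paper's argument, but the forward direction has a genuine gap: it silently assumes the infinite antichain $\{H_n\}$ consists of \emph{connected} graphs, which need not be the case, and the argument collapses otherwise. Concretely, if $n\in S\setminus S'$ you claim $H_n\notin\cc(G_{S'})$; this would require that $H_n$ be a minor of some single summand $H_m$, $m\in S'$, which contradicts the antichain property. But if $H_n$ is disconnected, its components may embed into \emph{different} summands $H_m, H_{m'}\in\{H_k:k\in S'\}$, giving $H_n<G_{S'}$ with no contradiction to incomparability of the $H_k$. (For instance $H_n=K_4\sqcup C_5$, $H_m=K_5\sqcup K_3$, $H_{m'}=K_3\sqcup C_6$ are pairwise $<$-incomparable, yet $H_n < H_m\sqcup H_{m'}$.) The paper handles this explicitly by first replacing each antichain element by its suspension, invoking \Lr{lem cones unm}: suspensions are always connected, and $G<H$ iff $S(G)<S(H)$, so the antichain property is preserved. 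You need this reduction (or some substitute) before the $G_S$ construction works.

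A secondary, though non-fatal, inaccuracy: your claim that \emph{every} minor-closed family $\cf'$ of finite graphs is realised as $\cc(G)$ for some $G\in\uf$ is false. If $G\in\uf$ then $\cc(G)$ is automatically closed under finite disjoint unions (each component embeds into a separate component of $G$, and there are infinitely many to spare once each isomorphism type appears infinitely often or can be repeated via other summands), whereas a minor-closed family need not be: e.g.\ the graphs on at most $5$ vertices form a minor-closed family containing $K_3$ but not $K_3\sqcup K_3$. In your construction $\cc(G_{\cf'})$ is the closure of $\cf'$ under disjoint unions, not $\cf'$ itself. This does not break the proof, since you only ever use the \emph{injection} $[G]_<\mapsto\cc(G)$ (for the upper bound) and the direct $G_S$ construction (for the lower bound), never the claimed bijection — but the sentence as written is incorrect and should be dropped or corrected.

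Apart from these, the structure is the same as the paper's: the backward direction via finite obstruction sets and countability of finite-subset choices, and the forward direction via the continuum-many-disjoint-unions contrapositive.
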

\begin{proof}
For the backward direction, note that
\Lr{UF} says that the minor-twin class $[G]_<$ of $G\in \uf$ is determined by $\cc(G)$. Using the fact that the finite graphs are \wqo, we can express $\cc(G)$ as $\cc(G)=\forb{X}$ for a finite set $X$ of finite graphs. Thus there are countably many choices for $\cc(G)$, hence for $[G]_<$, since there are countably many finite graphs to choose $X$ from. 

\smallskip
For the forward direction, suppose for a contradiction that \seq{G}\ is an anti-chain of finite graphs under the minor relation. By \Lr{lem cones unm}, we may assume that each $G_n$ is connected. For each $X\subset \N$, let $\cc_X$ be the (minor-closed) class of finite graphs $\forb{X}$. Let $G_X:= \bigcup \cc_X \in \uf$ be the (countably infinite) graph obtained as the disjoint union of all the graphs in $\cc_X$. Note that \fe\ finite graph $H$, we have $H<G_X$ \iff\ $H\in \cc_X$ because each $G_i\in X$ is connected. This implies that $G_X$ is not a minor-twin of $G_Y$ whenever $X\neq Y$, because any graph in the symmetric difference $X\sydi Y$ is a minor of exactly one of $G_X,G_Y$. Since there are continuum many $X\subset \N$, we have obtained continuum many minor-twin classes $[G_X]_<$ (thus we could add $|\uf|_< <  \cont$ as a further equivalent statement).
\end{proof}

Generalising the idea of the proof of  \Cr{UF ctble} to higher ranks will be the key to proving the equivalence \ref{I iir} $\leftrightarrow$ \ref{I twin} of \Tr{main Intro},  the main difficulty being that we do not have an analogue of the \GMT\ for ranks higher than 0. 

\medskip
We prepare the proof of \Lr{UF} by recalling a well-known idea: \\ 

\noindent {\bf Hilbert's Hotel Principle:} Suppose a hotel has infinitely many single rooms, numbered $R_1,R_2, \ldots$, and each $R_i$ is occupied by a guest $G_i$. If a new guest $G$ arrives, they can be accommodated in $R_1$, by moving each $G_i$ to $G_{i+1}$. 

\begin{proof}[Proof of \Lr{UF}]
The forward implication follows immediately by restricting a minor model of $G$ in $G'$ to any $H\in \cc(G)$.

For the backward implication, suppose $\cc(G)\subseteq \cc(G')$, and let $G_n$ be the union of the first $n$ components of $G$ in a fixed but arbitrary enumeration of its components. Let $G'_n$ be a subgraph of $G'$ \st\ $G_n< G'_n$, which exists since $\cc(G)\subseteq \cc(G')$. Easily, we may assume $G'_n$ is finite. Let $h_n: G_n < G'_n$ be a minor embedding (as defined in \Sr{sec finite}).

Call a component $C_i$ of $G$ \defi{$h$-stable}, if $\bigcup_n h_n(C_i)$ is finite; in other words, if $C_i$ is mapped to a finite set of components of $G'$ by the $h_n, n\in \N$. Let \seq{S}\ be an enumeration of the $h$-stable components of \G, and \seq{U} an enumeration of the other components of \G. One of these enumerations may be finite, or even empty.

Let $G_S$ denote the (possibly empty) subgraph of \g consisting of its $h$-stable components. By a standard compactness argument, there is a minor embedding $h_S: G_S < G'$ \st\ $h_S(S_i)$ coincides with $h_n(S_i)$ for infinitely many values of $n$. If $G_S = G$ we are done, so suppose from now on $U_1$ exists. 

We will now modify $h_S$, recursively in at most \oo\ steps $i=1,2,\ldots$, into a minor embedding of $G$ into $G'$, whereby in step $i$ we handle $U_i$. Importantly, the image of $h_S$ might be all of $G'$, and so we may have to reshuffle $G_S$ inside $G'$ to make space for the $U_i$'s.

We set $h^0_S:= h_S$, and assume recursively that $h^j_S: G_S \cup \{U_1, \ldots,U_{j-1}\}$ has been defined for every $j<i$. Moreover, we assume that a finite number of components of $G$ have been \defi{nailed} into components of $G'$, which means that we promise that $h^i_S$ will coincide with $h^{i-1}_S$ 
on all components that have been nailed before. We will ensure that every component of \G---stable or not--- will be nailed in some step. No components have been nailed at the beginning of step $1$.
If $U_i$ does not exist, then we just let $h^{i}_S= h^{i-1}_S$, and nail $S_i$ ---this is the easy case, and we can just terminate the process as $h^{i-1}_S$ is a minor embedding of $G$ into $G'$ in this case. 

Otherwise, let $\seq{C^i}$ be an infinite sequence of distinct components of $G'$ into which $U_i$ is embedded by some $h_n$, which exists since $U_i$ is not $h$-stable. 
As the finite graphs are \wqo\ by \Tr{GMT}, $\seq{C^i}$ has an infinite subsequence \seq{Y}\ \st\ $Y_r<Y_m$ \fe\ $r<m \in \N$ by \Or{good seqs}. Pick $k=k(i)$ such that no $Y_m, m\geq k$ has been nailed yet.

If $Y_k$ does not intersect the image of $h^{i-1}_S$, we let $h^{i}_S$ extend $h^{i-1}_S$ by embedding $U_i$ into $Y_k$; this is possible since some $h_n$ embeds $U_i$ into $Y_k$ by the definition of the latter. We nail $U_i$ to $Y_k$ (thereby promising that $h^{j}_S$ will embed $U_i$ into $Y_k$ \fe\ $j\geq i$). Finally, if $S_i$ exists, we nail it to the component containing $h^{i-1}_S(S_i)$, again promising that $h^{j}_S(S_i)$ is fixed from now on.

It remains to consider the ---more difficult--- case where $Y_k$ intersects the image of $h^{i-1}_S$. In this case, imitating Hilbert's Hotel principle, we modify  $h^{i-1}_S$ into $h^{i}_S$ by shifting the `contents' of each $Y_m, m\geq k$ to $Y_{m+1}$, and mapping $U_i$ into $Y_k$. To make this precise, fix a minor embedding $g_m: Y_m < Y_{m+1}$ \fe\ $m\geq k$, which exists by the definition of \seq{Y}. Then, \fe\ $m\geq k$, and every component $C$ of $G$ \st\ $h^{i-1}_S(C)$ intersects $Y_m$ ---and therefore $h^{i-1}_S(C)$ is contained in $Y_m$--- we let $h^{i}_S(C):= g_m \circ h^{i-1}_S(C)$, so that $h^{i}_S$ embedds $C$ into $Y_{m+1}$. Thus $h^{i}_S$ now maps the domain $G_S \cup \{U_1, \ldots,U_{i-1}\}$ of $h^{i-1}_S$ to $G' \sm Y_k$. We extend $h^{i}_S$ to $U_i$, embedding $U_i$ into $Y_k$ (by imitating some $h_n$), and we nail $U_i$ to $Y_k$.

Again, if $S_i$ exists, we nail it to the component containing $h^{i}_S(S_i)$ ---which may coincide with the component containing $h^{i-1}_S(S_i)$, or have been shifted from some $Y_m$ to $Y_{m+1}$.

This completes the definition of $h^{i}_S, i\in \N$. Note that each of $U_i,S_i$ that exists has been nailed by step $i$, and its  $h^{\ell}_S$-image is fixed for $\ell\geq i$. Thus $h^{i}_S$ converges, as $i\to \infty$, to a minor embedding $h: G \to G'$, proving our claim $G<G'$.
\end{proof}

Our next result extends \Lr{UF} from \uf\ to its superclass $\Rank_1$. For this we will need to adapt the above definition of $\cc(G)$ (\Dr{def ccm} below), whereby we will have to consider marked graphs. 
 
\begin{definition} \label{def part}
For $G\in \ran{\alpha}, \alpha \geq 1$, a \defi{co-part} of \g is a component of $G- A(G)$. Given a co-part $C$ of \G, we call the subgraph $G[C \cup A(G)]$ induced by $C \cup A(G)$ a \defi{part} of $G$. 
\end{definition} 
Note that each part of \g has lower rank than that of \G.

\begin{definition} \label{def ranm}
Let \defi{$\ranm{\alpha}$} denote the class of marked  graphs $(G,M)$ with $G\in \ran{\alpha}$  and $M$ finite. Define \defi{$\ranm{<\alpha}$} analogously.
\end{definition}

\begin{definition} \label{def ccm}
Given a rayless graph $G\in \ran{\alpha}$, we let \defi{$\ccm(G)$} denote the class of marked graphs in $\ranm{<\alpha}$ that are marked-minors of $(G,A(G))$. 
\end{definition}

The following lemma, which extends \Lr{UF}, is again not formally needed for our later proofs; we include it as a warm-up towards the more difficult \Lr{subclasses}, but the reader will need to be familiar with its proof.  

\begin{lemma} \label{Rank 1}
Let $G,H\in \ran{1}$, and suppose $|A(G)|= |A(H)|$. We have\\ $G<H$ \iff\ $\ccm(G)\subseteq \ccm(H)$.
\end{lemma}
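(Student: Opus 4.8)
\textbf{Proof proposal for \Lr{Rank 1}.}

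The forward implication is immediate, exactly as in \Lr{UF}: a marked minor model of $(G,A(G))$ in $(H,A(H))$ restricts to a marked minor model of any $(G',M')\in\ccm(G)$ in $(H,A(H))$, so $\ccm(G)\subseteq\ccm(H)$. (Here one uses that restricting a minor model of $G$ to a marked-minor $G'$ of $(G,A(G))$ still sends each marked vertex of $G'$ to a branch set meeting $A(H)$, which follows from the definition of marked minor together with transitivity.) So the real content is the backward implication, and the plan is to imitate the Hilbert's Hotel argument from the proof of \Lr{UF}, but now working over the kernels. Write $A:=A(G)$, $A':=A(H)$, both finite and of equal size $k$. A rank-$1$ graph is, modulo the kernel, essentially an element of \uf: every component of $G-A$ is finite (since $G\in\ran 1$ means each component of $G-A$ lies in some $\ran\beta$ with $\beta<1$, i.e.\ is finite), and similarly for $H-A'$. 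So $G$ is obtained from a disjoint union of finite graphs (the co-parts of $G$) by adding back the finite set $A$ together with the edges from $A$ to the co-parts and the edges inside $A$; each co-part $C$ of $G$ carries the extra data of how it attaches to $A$, which is exactly encoded by the marked graph $(G[C\cup A], A)$, an element of $\ranm{<1}=\ranmN{0}{}$, i.e.\ a finite marked graph. The hypothesis $\ccm(G)\subseteq\ccm(H)$ says precisely that every finite marked graph arising as a marked minor of $(G,A)$ — and in particular every part $(G[C\cup A],A)$ — is a marked minor of $(H,A')$.

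The construction then runs as follows. First fix a bijection between $A$ and $A'$; we must build a minor model of $G$ in $H$ whose branch sets for vertices of $A$ each contain the corresponding vertex of $A'$ (this is automatic from the marked-minor hypothesis and \Or{apices}, but it is cleanest to arrange it by hand). For finitely many co-parts at a time, say $G_n$ = union of the first $n$ co-parts of $G$ together with $A$, the hypothesis plus a compactness argument (or just the finiteness of $A$) gives a marked minor embedding $h_n:(G_n,A)\mm(H_n,A')$ into a \emph{finite} subgraph $H_n$ of $H$ containing $A'$. Exactly as in \Lr{UF}, call a co-part $C$ of $G$ \emph{$h$-stable} if $\bigcup_n h_n(C)$ is finite, i.e.\ it only ever gets mapped into finitely many co-parts of $H$; split the co-parts into the stable ones $S_1,S_2,\dots$ and the unstable ones $U_1,U_2,\dots$. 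By a König/compactness argument extract a marked minor embedding $h_S$ of the stable part $G_S := G[\bigcup_i S_i \cup A]$ into $H$ agreeing with infinitely many $h_n$ on each $S_i$. Then, recursively over $i=1,2,\dots$, incorporate $U_i$: since $U_i$ is unstable it is embedded by infinitely many $h_n$ into infinitely many distinct co-parts $C^i_1,C^i_2,\dots$ of $H$ (as an \emph{unmarked} finite graph — the co-parts of $H$ themselves carry no marks, only their attachment to $A'$ matters, and that attachment has already been taken care of by the $A'$-branch-sets), and by \Tr{GMT} (WQO of finite graphs, via \Or{good seqs}) a subsequence $Y_1<Y_2<\cdots$ is $\le$-increasing; using fixed minor embeddings $g_m:Y_m<Y_{m+1}$ one performs the Hilbert's-Hotel shift to free up $Y_{k}$ and embed $U_i$ there, nailing co-parts as in \Lr{UF} so that the sequence $h^i_S$ stabilises on each co-part and converges to the desired minor embedding $h:G<H$. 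Crucially, throughout this process the branch sets of the vertices of $A$ are left untouched (they live in $A'$, disjoint from the co-parts being shuffled), so the result is still a \emph{marked} minor model, and in particular an ordinary minor model witnessing $G<H$.

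The main obstacle, and the reason this is only a warm-up for \Lr{subclasses}, is the bookkeeping around the kernel: one must check that the data $\ccm(G)$ really does capture how co-parts attach to $A$ (so that a marked-minor of a single part can be realised inside $H$ \emph{with} the kernel vertices going to the correct vertices of $A'$), and that the Hilbert's-Hotel reshuffling of co-parts of $H$ never disturbs the already-committed images of $A$ and of nailed co-parts. The one genuinely new point beyond \Lr{UF} is the equal-kernel-size hypothesis $|A(G)|=|A(H)|$ and the need to use it (together with \Or{apices}, which forces $B_v\cap A'\ne\emptyset$ for $v\in A$, hence — by a counting/pigeonhole argument using $|A|=|A'|$ — that the $A$-branch sets hit $A'$ in a bijective fashion and can be taken to be singletons in $A'$); this is what lets us treat the co-parts of $H$ as unmarked finite graphs and apply \Tr{GMT} to them verbatim. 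All the limiting arguments (compactness for $h_S$, nailing, convergence of $h^i_S$) are identical to those in \Lr{UF} and I would simply refer back to that proof rather than repeat them.
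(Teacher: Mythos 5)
Your proposal goes wrong in the same place twice, and both errors have the same root cause.

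First, the claim that by \Or{apices} and a counting argument the branch sets of the vertices of $A$ ``can be taken to be singletons in $A'$'' is not justified and is false in general. \Or{apices} gives that each $B_a$, $a\in A$, contains exactly one distinct vertex $a'\in A'$, but $B_a$ can and typically must extend into co-parts of $H$: if an edge $av$ with $v$ in a co-part $C$ of $G$ is witnessed by a branch edge from some $y\in B_a\cap (H\setminus A')$ to $B_v$, and $a'$ itself has no neighbour in $B_v$, then shrinking $B_a$ to $\{a'\}$ destroys that branch edge and cannot in general be repaired without rewiring other vertices. The paper's proof explicitly accepts this: it constructs a convergent family of branch sets $h_A(x)$, $x\in A$, which are connected subgraphs of $H$ containing $x'$ but possibly reaching far into co-parts of $H$. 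Everything downstream has to account for this.

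Second, and as a consequence, applying the \emph{unmarked} \GMT\ to the co-parts of $H$ does not give you a Hilbert's Hotel shift that preserves a minor model. An unmarked minor embedding $g_m:Y_m<Y_{m+1}$ between co-parts of $H$ carries no information about which vertices of $Y_m$ and $Y_{m+1}$ attach to which vertices of $A'$, so after shifting, (i) the portions of $h_A(a)$ lying in $Y_m$ need no longer be adjacent to $a'$, breaking connectivity of the $A$-branch sets, and (ii) the branch edges from $A$-branch sets into co-parts need no longer exist. This is precisely why the paper applies the \emph{marked} version of \Tr{GMT} to the \emph{parts} $(Y_n,A')$ of $H$ (not to bare co-parts), and then invokes \Lr{perms} to pass to a subsequence along which the induced permutations of $A'$ are the identity. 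Your proposal omits both of these steps, and they are not cosmetic: without the marking, the shifted map is not a minor embedding, and without \Lr{perms} even the marked shift can cycle the vertices of $A'$ and disconnect $h_A(a)$ from $a'$. The rest of your outline (splitting into stable/unstable co-parts, compactness for the stable part, nailing) does match the paper, but the two points above are exactly the ``bookkeeping around the kernel'' that makes this lemma harder than \Lr{UF}, and they are where the proposal breaks down.
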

\begin{proof}
As before, the forward implication is straightforward.

For the backward implication, let \seq{P}\ be an enumeration of the parts of \G, and  \seq{P'}\  an enumeration of the parts of $H$. Let $G_n$ be the graph  $\bigcup_{i\leq n} P_i$ with $A:=A(G)$ marked. Choose a (marked) minor embedding $h_n: G_n < H_n$, where $H_n\subset H$ is finite and has $A':=A(H)$ as its marked vertex set. 


We will use \seq{h}\ to construct a model of \g in $H$ using the ideas of the proof of \Lr{UF}, whereby we need to pay special attention to the vertices in $A$, in particular to how their branch sets intersect $H\sm  A'$. 

Since each co-part $P_i \sm A$ is connected, it is mapped to a co-part of $H$ by each $h_n, n\geq i$. Note that $h_n(x) \cap A'$ is a singleton $\{x'\}$ \fe\ $x\in A$ and $\nin$ by \Or{apices}. Thus this map $x \mapsto x'$ is a bijection from $A$ to $A'$. By passing to a subsequence of $\{h_n\}$ if necessary, we may assume that this bijection is fixed \fe\ \nin. Moreover, we can choose subsequences $\{h_n \} \supseteq \{h^1_n\} \supseteq \{h^2_n\} \ldots$ of $\{h_n \}$ \st\ \fe\ $x\in A$ and every $i\in \N$, the intersection of the branch set $h^i_n(x)$ with the parts $\{P'_1, \ldots, P'_i\}$ is independent of $n$\mymargin{Uses rank=1}; indeed, having chosen $\{h^{i-1}_n \}$, we observe that since $P'_i$ and $A$ are finite, there is an infinite subsequence $\{h^{i}_n \}$ along which $h^{i-1}_n(x) \cap P'_i$ is constant \fe\ $x\in A$.  

Let $h'_n:=h^n_n$. Note that $\{h'_n\}$ is a subsequence of $\{h_n\}$, and that $h'_n(x)$ converges \fe\ $x\in A$, to a connected subgraph $h_A(x)$ of $H$ containing $x'$ and no other vertex of $A'$. This is the beginning of our construction of a minor model of \g in $H$. Let us now embed the vertices in $G \sm A$.

Similarly to the proof of \Lr{UF}, we call a part $P_i$ of $G$ \defi{$h'$-stable}, if $\bigcup_n h'_n(P_i)$ is finite.  Let \seq{S}\ be an enumeration of the $h'$-stable parts of \G, and \seq{U} an enumeration of its other parts. Let $G_S:= \bigcup_\nin S_n $. Again, a standard compactness argument yields a minor embedding $h_S: G_S < H$ \st\ $h_S(S_i \sm A)$ coincides with $h'_n(S_i\sm A)$ for infinitely many values of $n$ whenever $S_i$ exists\mymargin{Uses rank=1}. Note that $h_S$ extends $h_A$ since the latter is the limit of the restriction of $\{h'_n\}$ to $A$. By construction, $h_S=:h^0_S$ is a minor embedding of $G_S$ into $H$.


We continue by following the lines of the proof of \Lr{UF}: for $i=1,2,\ldots$, if $U_i$ exists, we let $\{C^i_n,\nin\}$ be an infinite sequence of distinct parts of $H$ \st\ each $C^i_n\sm A'$ contains $h'_m(U_i\sm A)$ for some $m\in \N$, whereby we used the fact that each $h_m$ maps each co-part of \g to one of $H$. 

Combining the marked-graph version of the \GMT\  \ref{GMT} with \Or{good seqs}, we deduce that \seq{C^i}\ has an infinite subsequence \seq{Y}\ \st\ $Y_r\mm Y_m$ \fe\ $r<m \in \N$, whereby  $A'$ is  the set of marked vertices of each $Y_n$.

From now on we will not need to use the assumption that $\Rank(G)=\Rank(H)=1$; this will be  important later, as the rest of this proof is also used for \Lr{subclasses}. 

As before, we pick $k=k(i)$ such that no $Y_m, m\geq k$ has been nailed yet, and the interesting case is where  $Y_k$ intersects the image of $h^{i-1}_S$.  
In this case, we want to apply Hilbert's Hotel principle again to `shift' the $h^{i-1}_S$-image within each $Y_m, m\geq k$ to $Y_{m+1}$, but we need to be careful with the image of  $A$. 
For this, we start by picking a marked-minor embedding $g_m: Y_m < Y_{m+1}$ \fe\ $m\geq k$. Note that as $A'$ is the set of marked vertices of each $Y_n$, each $g_m$ induces a permutation $\pi_m$ of $A'$. Moreover, by composing consecutive $g_m$'s we obtain marked-minor embeddings $g_{mt}: Y_m < Y_{t}$ \fe\ $t \geq m\geq k$, which again induce permutations $\pi_{mt}$ on $A'$. Applying \Lr{perms} we find a subsequence $(Y'_n)$ of $(Y_n)$ \st\ each of the corresponding permutations $\pi_{mt}$ is the identity. We may assume \obda\ that $Y'=Y$. Thus we can now repeat the idea of \Lr{UF} to shift the $h^{i-1}_S$-image within each $Y_m, m\geq k$ to $Y_{m+1}$ and embed, and nail, $U_i$ to $Y_k$ to obtain $h^{i}_S$. We also nail $S_i$, if it exists, to the part of $H$ containing $h^{i}_S(S_i\sm A)$. As before, the $h^{i}_S$ converge as $i\to \infty$ to a minor embedding of $G$ in $H$. 
\end{proof}

\subsection{Proof of \Cr{cor Seym Intro}}

We now use the above techniques to prove Seymour's self-minor conjecture for rayless graphs (\Cr{cor Seym Intro}), which we restate here for convenience:

\begin{corollary} \label{cor Seym}
\Fe\ infinite rayless graph $G$, there is a proper \minem\ $g: G < G$. 
\end{corollary}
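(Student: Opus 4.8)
The plan is to prove \Cr{cor Seym} by transfinite induction on $\Rank(G)=\alpha$, where $G$ is an infinite rayless graph; by \Lr{lem cones unm} we may assume $G$ is connected (if $S(G)$ has a proper self-minor-embedding that fixes no vertex, we can transfer it back; more simply, a disconnected graph has an obvious proper self-minor-embedding permuting isomorphic components, so the interesting case is $G$ connected). \textbf{Base case $\alpha=1$.} If $G$ has rank $1$, then $G-A(G)$ has infinitely many components (else $G$ would be finite, contradicting that $G$ is infinite rayless of rank $1$ — here I use \Prr{inf bet} or just that a finite union of finite graphs is finite). By the pigeonhole principle applied to the finite graphs (using the \GMT, \Tr{GMT}, together with \Or{good seqs}), among the parts $\{P_i\}$ of $G$ there are two parts $P_i, P_j$ with $i\neq j$ and $P_i \mm P_j$ as marked graphs (with $A(G)$ marked); or even: infinitely many parts pairwise comparable. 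We then build a proper self-minor-embedding: send $P_i$ into $P_j$ by the marked embedding (matching up $A(G)$ correctly, using \Or{apices} to see the kernel is respected), and shuffle the remaining parts by a suitable injection of the index set into itself that is not surjective (a ``Hilbert's Hotel'' shift), keeping $A(G)$ pointwise fixed. The resulting embedding moves at least the vertices of $P_i$, hence is proper.

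\textbf{Inductive step.} Suppose the result holds for all rayless graphs of rank $<\alpha$, and let $\Rank(G)=\alpha>1$. Write $A:=A(G)$ and let $\{P_i\}_{i\in\N}$ enumerate the parts of $G$ (infinitely many, by \Prr{inf bet}); each $P_i$ has rank $<\alpha$. Two cases. \emph{Case (a): some part $P_i$ is infinite.} Then $P_i$ is an infinite rayless graph of rank $<\alpha$, so by the inductive hypothesis it has a proper self-minor-embedding $g_i\colon P_i<P_i$. We want to extend $g_i$ by the identity on $A$ and on the other parts — but $g_i$ need not fix $A\cap P_i$, and more seriously $P_i$ as a \emph{part} includes $A$, so a self-embedding of the part need not behave on $A$. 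This is the delicate point: we should instead work with \emph{marked} parts and marked self-minor-embeddings, and appeal to a marked version of the inductive hypothesis (proving, by the same induction, the stronger statement: every infinite marked rayless graph $(G,M)$ with $G$ connected and $\Rank(G)\ge 1$ has a proper marked self-minor-embedding, \emph{or} at least one fixing $M$ pointwise when the "infinity" lives outside $M$). Glue such a marked self-embedding of one part with the identity elsewhere. \emph{Case (b): every part $P_i$ is finite.} Then $G\in\uf$ is impossible since $\Rank(G)=\alpha>1$, so actually some part must have rank $\geq 1$, i.e.\ be infinite — so Case (b) with all parts finite cannot occur when $\alpha>1$. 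Hence only Case (a) arises, \emph{unless} the parts have rank $<\alpha$ but there are infinitely many of some fixed rank $\beta$ with $1\le\beta<\alpha$; then among these infinitely many parts of rank $\beta$, by the inductive hypothesis combined with \Tr{main Intro}-type finiteness of minor-twin classes... — actually the cleaner route is: if infinitely many parts of $G$ are pairwise comparable under $\mm$ (which, by induction on rank plus \Or{good seqs}, we can try to arrange), we do a Hilbert's Hotel shift on those parts as in the base case, fixing $A$ pointwise, giving a proper self-embedding.

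\textbf{Main obstacle.} The genuine difficulty is the interaction between the recursion on parts and the marked vertices $A(G)$: a part $G[C\cup A(G)]$ carries $A(G)$ as a "boundary" that must be preserved globally, so one cannot freely apply the unmarked inductive hypothesis to a single part. The right fix is to prove the statement for \emph{marked} rayless graphs throughout the induction — exactly the marked-graph machinery (\Lrs{Rank 1} and the forthcoming \Lr{subclasses}) that the paper has already set up — so that "proper self-minor-embedding" is understood in the marked category and gluing along $A(G)$ is automatic. A second, more technical obstacle is ensuring properness: when we shuffle infinitely many parts we must guarantee the shuffle actually moves a vertex (a Hilbert's Hotel injection $i\mapsto i+1$ on an infinite family of parts does this, provided at least one part is nonempty, which it is). The base case $\alpha=1$ — essentially \Lr{Rank 1} applied with $G=H$ plus the pigeonhole on finitely-presentable parts via \Tr{GMT} — already contains all the ideas, and the inductive step just iterates this pattern one rank at a time, so I expect the write-up to be a transfinite induction mirroring the structure of the proof of \Lr{Rank 1}, with the marked formulation doing the bookkeeping.
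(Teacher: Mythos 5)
Your plan is a transfinite induction on $\Rank(G)$, which is a genuinely different route from the paper's, and it contains real gaps you acknowledge but do not close. The paper's proof of \Cr{cor Seym} is \emph{not} inductive: it exploits the well-foundedness of the ordinals to find, by a \emph{finite} descent through nested kernels $G = G_0 \supset G_1 \supset \dots \supset G_k =: H$ (each $G_{i+1}$ a co-part of $G_i$ of rank $\geq 1$), a subgraph $H$ with $\Rank(H)=1$. Collecting all the parent kernels $A'(H) := A(G_0)\cup\dots\cup A(G_k)$ into one finite set, the parts of $H$ are finite graphs, so one application of \Tr{GMT} plus \Or{good seqs}, corrected via \Lr{perms} to fix $A'(H)$ pointwise, gives an infinite ascending chain; a single Hilbert's Hotel shift on those $\omega$ many finite parts produces a proper self-minor model of $G[A'(H)\cup\bigcup_{i<\omega}P_i]$ fixing $A'(H)$ pointwise, which then extends by the identity on $G-H$ and on the remaining parts. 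No inductive hypothesis at higher ranks is ever invoked, and no ``marked version of the statement'' needs to be proved.

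Two concrete gaps in your proposal. First, in Case~(a) you need a proper self-minor-embedding of an infinite part $P_i$ that fixes $A(G)$ pointwise so that the extension-by-identity to the other parts glues correctly; this is \emph{strictly stronger} than your unmarked inductive hypothesis, and even stronger than a marked self-minor-embedding (which only requires each marked branch set to \emph{contain some} marked vertex, not the original one). You correctly flag this (``this is the delicate point'') and propose to ``prove the statement for marked rayless graphs throughout the induction,'' but you do not carry it out, and the bookkeeping (in particular that $A(P_i)$ need not equal $A(G)$, so which set to mark, and how to control the induced permutation of $A(G)$ across the glued model) is exactly the content one would have to supply. Second, your fallback ``cleaner route'' --- find infinitely many pairwise $\mm$-comparable parts of rank $\beta\ge 1$ --- is unavailable: \Tr{GMT} and \Or{good seqs} only give comparability among \emph{finite} (rank-$0$) graphs, and establishing it at higher ranks is essentially the open well-quasi-ordering problem the paper is about. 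Your phrase ``by induction on rank plus \Or{good seqs}, we can try to arrange'' is a non-sequitur: your inductive hypothesis concerns self-minor-embeddings, not WQO, so it does not produce comparable parts. The lesson the paper's proof teaches is precisely that you should not climb through ranks at all; since every nonzero rank has an infinite co-part of rank $\geq 1$, you can dig straight down to a rank-$1$ core in finitely many steps and do all the combinatorics there.
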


We start with a simple lemma about permutations, which we will apply to permutations of $A(G)$ arising from self-minor models of $G$.

\begin{lemma} \label{perms}
For every sequence \seq{\pi}\ of permutations of a finite set $A$ there is an infinite index set $Y\subseteq \N$ \st\ $\pi_{jk}= Id$ \fe\ $j<k\in Y$, where $\pi_{jk}:= \pi_{k-1} \circ \pi_{k-2} \circ \ldots \circ \pi_{j+1} \circ \pi_j$.
\end{lemma}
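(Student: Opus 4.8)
For every sequence $\seq{\pi}$ of permutations of a finite set $A$, there is an infinite index set $Y\subseteq \N$ such that $\pi_{jk}=Id$ for all $j<k\in Y$, where $\pi_{jk}:=\pi_{k-1}\circ\pi_{k-2}\circ\cdots\circ\pi_{j+1}\circ\pi_j$.

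The plan is to reduce this to an infinite Ramsey argument on the \emph{partial composition products} $\sigma_n:=\pi_{0n}=\pi_{n-1}\circ\cdots\circ\pi_0$ (with $\sigma_0:=Id$). The key algebraic observation is that $\pi_{jk}=\sigma_k\circ\sigma_j^{-1}$: indeed $\sigma_k=\pi_{k-1}\circ\cdots\circ\pi_j\circ(\pi_{j-1}\circ\cdots\circ\pi_0)=\pi_{jk}\circ\sigma_j$, so $\pi_{jk}=\sigma_k\circ\sigma_j^{-1}$. Hence $\pi_{jk}=Id$ if and only if $\sigma_j=\sigma_k$. Now since the symmetric group on $A$ is finite, the sequence $(\sigma_n)_{n\in\N}$ takes some value $\tau$ infinitely often; let $Y:=\{n\in\N \mid \sigma_n=\tau\}$, an infinite set. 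Then for any $j<k$ in $Y$ we have $\sigma_j=\sigma_k=\tau$, so $\pi_{jk}=\tau\circ\tau^{-1}=Id$, as desired.

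First I would state and verify the identity $\pi_{jk}=\sigma_k\circ\sigma_j^{-1}$ carefully by an easy induction on $k-j$ (or just unwind the definitions, being mindful of composition order), together with the special case $\pi_{jk}=Id\iff\sigma_j=\sigma_k$. Then I would invoke the pigeonhole principle for the finitely-many-valued sequence $(\sigma_n)$ to extract the infinite fibre $Y$ over a repeated value. Finally I would conclude by substituting into the identity. No deep tool is needed beyond finiteness of $\mathrm{Sym}(A)$ and the pigeonhole principle; one does not even need the full strength of Ramsey's theorem here, only its one-dimensional (pigeonhole) version.

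I do not expect any genuine obstacle. The only place to be careful is bookkeeping with the order of composition in the definition of $\pi_{jk}$ versus $\sigma_n$, and making sure the "telescoping" identity is written with the factors in the correct order; once that is pinned down the rest is immediate. (If one wanted a proof that generalises to non-finite structures one would replace pigeonhole by a Ramsey-type argument, but for a finite set $A$ pigeonhole on $(\sigma_n)$ suffices.)
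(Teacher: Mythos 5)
Your proof is correct and is essentially identical to the paper's: the paper also sets $S_k:=\pi_{k-1}\circ\cdots\circ\pi_0$, picks a permutation $\pi$ that equals $S_k$ infinitely often, and lets $Y$ be (essentially) the set of such $k$, using exactly the telescoping identity $\pi_{jk}=S_k\circ S_j^{-1}$ that you isolate. The only difference is presentational: you spell out the telescoping identity and the pigeonhole step explicitly, whereas the paper leaves them implicit.
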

\begin{proof}
\Fe\ $k\in \N$ let $S_k:= \pi_{k-1} \circ \pi_{k-2} \circ \ldots \circ \pi_{0}$, let $\pi$ be a permutation of  $A$ that coincides with $S_k$ for infinitely many $k$, and let $Y$ be the set of those $k$ except the least one. 
\end{proof}

\begin{proof}[Proof of \Cr{cor Seym}]
We can find a subgraph $H\subset G$ of rank 1
as follows. If $\Rank(G)=1$ we just let $H:= G=:G_0$. If $\Rank(G)>1$, we pick a co-part $G_1$ of \g with $\Rank(G_1)\geq 1$; such a $G_1$ exists, because if all co-parts of \g have rank 0 then \g has rank 1 by the definitions. We then iterate, with $G$ replaced by $G_1$, to obtain a sequence $G_1,G_2,\ldots$ of subgraphs of \G, each of rank at least 1. Note that $\Rank(G_i)> \Rank(G_{i+1})$ since $G_{i+1}$ is a co-part of $G_{i}$. Thus the sequence terminates because the ordinal numbers are well-ordered, and we let $H$ be the final member $G_k$ of this sequence. Clearly, $\Rank(H)=1$, for the sequence would have continued otherwise.

Let  $A'(H):= A(G) \cup A(G_1) \ldots \cup A(G_{k})$. (We can think of $A'(H)$ as the union of $A(H)$ with all its `parent' vertices.)
Note that $A'(H)$ is finite. 
Let $P_i, {i\in \ci}$ be a (possibly transfinite) enumeration of the parts of $H$, and let $P'_i:= G[A'(H) \cup P_i]$. Apply \Tr{GMT} to \seq{P'} as above (with $A'(H)$ always marked) to find an infinite $<$-chain that fixes $A'(H)$, for which we also use \Lr{perms} below, and then apply the HH principle to form a proper self-minor model of $G[A'(H) \cup \bigcup_{i\in\oo} P_i]$ (note that we are ignoring $P_i$ for $i\geq \oo$ so far). Extend this model to $G-H$, and to $P_i, i\geq \oo$, by the identity map, to obtain the desired $g: G < G$. 
\end{proof}

\comment{
	\subsection{A toy extension to higher rank}

Generalising \Lrs{UF} and \Lr{Rank 1} to higher ranks is much harder in general (and the topic of \Sr{sec subcl}), but there is a class of graphs for which it becomes substantially easier. This subsection, which can be skipped, is about this class. \medskip

We let $\cc^*(G)$ denote the class of marked graphs in $\ran{<\alpha}$ consisting of finite unions of parts $H$ of $G-A(G)$, with $A(G)$ marked.
We say that \g is \defi{\oo-repetitive}, if for each of its parts $H$, there are infinitely many parts of \g isomorphic to $H$.

\begin{lemma} \label{CG}
\Fe\ $\alpha< \oo_1$ and \oo-repetitive graphs $G,H\in \ran{\alpha}$, we have $G<H$ \iff\ $\cc^*(G)\subseteq \cc^*(H)$.
\end{lemma}
\begin{proof}
The forward implication follows again immediately by restricting a minor model of $G$ in $H$ to any $H\in \cc^*(G)$.

For the backward implication, let \seq{P}\ be an enumeration of the parts of \G, and let $G_n:= \bigcup_{i\leq n} P_i$.

Since $H$ is \oo-repetitive, we can \defi{dedicate} each part of $H$ to a unique part of \G\ so that for each $P_i$ there are infinitely many parts of $H$ of each isomorphism class that are dedicated to $P_i$. Then, for each $G_n$, we can choose a minor embedding $f_n: G_n < H$ that embeds each $P_i, i\leq n$ to the union of $A(H)$ with parts of $H$ dedicated to $P_i$. 

Since $A(H)$ is finite, there is an infinite subsequence 
\seq{f^1}\ of \seq{f}\ \st\ $f^1_j(P_1) \cap A(H)$ is the same \fe\ $j\in\N$. We may assume that the $f^1_j(P_1)$ coincide outside $A(H)$ too, because $P_1$ is always mapped to parts of $H$ dedicated to it. Repeating this argument with $P_2, P_3, \ldots$, we obtain subsequences $f^2 \supset f^3, \ldots$, \st\ $\bigcup_n f^n_1(P_n)$ is a minor embedding of $G$ into $H$. 
	\end{proof}
}

\section{Extending \Lr{UF} to Rank $>1$} \label{sec subcl}

Recall that $\ccm(G)$ denotes the class of marked minors of $(G,A(G))$ of lower rank (\Dr{def ccm}). The following is a key lemma for the proof of \Tr{main Intro}, and it generalises \Lr{Rank 1}. 
\begin{lemma} \label{subclasses}
Let $G,H$ be countable graphs with $\Rank(H)= \Rank(G)$ for some ordinal  $\alpha< \oo_1$. 
Assume \mymargin{first assumption used for \Lr{lem redundant}, second for  \Lr{lem Gn}.} $\ccm(H)$ is \wqo, 
and $|\ranm{\beta}\cap \ccm(G)|_{\mm}$ is countable \fe\ $\beta<\alpha$. Then we have $G<H$ \iff\ $\ccm(G)\subseteq \ccm(H)$.
\end{lemma}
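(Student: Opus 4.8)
The plan is to mimic the structure of the proof of \Lr{Rank 1}, replacing the rank-$1$ specific arguments with the two new hypotheses: that $\ccm(H)$ is \wqo, and that for each $\beta<\alpha$ the class $\ranm{\beta}\cap \ccm(G)$ has only countably many marked-minor-twin classes. The forward implication is again immediate by restricting a minor model of $G$ in $H$ to any marked minor of $(G,A(G))$ of smaller rank. For the backward implication, enumerate the parts $P_1,P_2,\dotsc$ of $G$ and the parts $P'_1,P'_2,\dotsc$ of $H$, set $G_n:=\bigcup_{i\le n} P_i$ with $A:=A(G)$ marked, and choose (marked) minor embeddings $h_n\colon G_n< H_n$ where $H_n\subseteq H$ is finite with marked set $A':=A(H)$; these exist because $G_n\in\ccm(G)\subseteq\ccm(H)$ (using that parts have strictly lower rank, \Or{minor rank} and that $|A(G)|=|A(H)|$ follows from \Or{apices} applied in both directions, so $G_n$ really is a marked minor of $(H,A(H))$). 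By \Or{apices} each $h_n$ restricted to $A$ induces a bijection onto $A'$, and by passing to a subsequence we fix this bijection; we also want to stabilise, for each $x\in A$ and each $i$, the intersection of the branch set $h_n(x)$ with the first $i$ parts of $H$ — but here we can no longer do this by the crude rank-$1$ finiteness argument, so instead I would argue that each branch set $h_n(x)$ meets only finitely many parts of $H$ (since it is connected and $A'$ is finite, it lies in $A'$ together with finitely many co-parts), and then use a diagonal/compactness argument over the (at most countably many) parts it can meet to extract a subsequence along which $h_n(x)$ converges to a connected subgraph $h_A(x)\ni x'$ meeting no other vertex of $A'$.

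Next I would split the parts of $G$ into the \emph{$h'$-stable} ones (whose images under the subsequence $\{h'_n\}$ stay finite) and the rest, enumerate them as $\seq{S}$ and $\seq{U}$ respectively, and by compactness obtain $h_S=:h^0_S\colon G_S< H$ extending $h_A$, where $G_S$ is the union of the stable parts and $h_S(S_i\setminus A)$ coincides with infinitely many $h'_n(S_i\setminus A)$. Then process the unstable parts $U_i$ one at a time, exactly as in \Lr{Rank 1}: for each $i$, take an infinite sequence $(C^i_n)$ of distinct parts of $H$ each containing (off $A'$) some $h'_m(U_i\setminus A)$; now instead of the \GMT\ we invoke the hypothesis that $\ccm(H)$ is \wqo\ (each $C^i_n$ with marked set $A'$ lies in $\ccm(H)$, since it has rank $<\alpha$ — recall parts of $H$ have rank $<\alpha=\Rank(H)$) to pass, via \Or{good seqs}, to a subsequence $\seq{Y}$ with $Y_r\mm Y_m$ for $r<m$; pick $k=k(i)$ so no $Y_m$, $m\ge k$, is nailed; if $Y_k$ misses the image of $h^{i-1}_S$ extend directly and nail, otherwise apply Hilbert's Hotel via marked-minor embeddings $g_m\colon Y_m< Y_{m+1}$, using \Lr{perms} to arrange that the induced permutations of $A'$ are all the identity, shift the contents of each $Y_m$ ($m\ge k$) to $Y_{m+1}$, embed and nail $U_i$ into $Y_k$, and re-nail $S_i$. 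Since every part gets nailed by some finite stage, the $h^i_S$ converge to a minor embedding $h\colon G< H$.

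The role of the countability hypothesis $|\ranm{\beta}\cap\ccm(G)|_{\mm}$ countable needs to be located precisely. I expect it enters in two places that the rank-$1$ proof handled for free. First, in the compactness/diagonalisation steps (constructing $h_A$, constructing $h_S$): to extract convergent subsequences of the $h_n$ restricted to a fixed part $P_i$, we need that $P_i$ (as a marked graph) has, up to marked isomorphism or at least up to the relevant finitary invariants, only countably many "states" it can be mapped to; more importantly, for the stable/unstable dichotomy to behave well and for the compactness to terminate, we use that the parts of $G$ fall into countably many marked-minor-twin classes so that we can process them in an $\omega$-length induction. Second — and this is where I expect the main obstacle — unlike in rank $1$, a part $P_i$ of $G$ need not be mapped by $h_n$ into a \emph{single} part of $H$ together with $A'$; its branch sets can spread across many co-parts of $H$, because the co-parts of $G-A(G)$ can themselves have large rank and their images interact with $A(H)$ in complicated ways. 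Controlling this — showing that after suitable bookkeeping each $U_i$ can still be treated as embedding into "$A'$ plus one part-like chunk $Y_k$" so that the Hilbert's Hotel shift is legitimate — is the crux; I would handle it by the same maximal-level / separation argument used in \Sr{sec wbqo} (the unstable part, being connected and mapped off the finitely many co-parts hit by the "central" structure, lands inside a single co-part of $H$), and I anticipate the paper isolates this via auxiliary lemmas (referenced in the margin note as \Lr{lem redundant} and \Lr{lem Gn}) that use exactly the two standing hypotheses. Closing the induction then requires verifying that all the subsequence extractions can be diagonalised into one, which is routine once the countability of the relevant index classes is in hand.
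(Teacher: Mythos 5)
Your plan has two genuine gaps, both connected to the fact that the hypothesis $|A(G)|=|A(H)|$ of \Lr{Rank 1} has been deliberately dropped from \Lr{subclasses} (the paper's remark immediately after the statement says so). Your claim that $|A(G)|=|A(H)|$ ``follows from \Or{apices} applied in both directions'' is incorrect: \Or{apices} requires a minor model of a rank-$\alpha$ graph in a rank-$\alpha$ graph, whereas $\ccm(G)\subseteq \ccm(H)$ only supplies models of rank-$<\alpha$ subgraphs of $G$ in $H$, which yields $|A(G)|\le|A(H)|$ and nothing in the other direction. So the $h_n$ need not restrict to bijections $A(G)\to A(H)$, and --- more seriously --- a co-part $P_i^c$ of $G$ need not be mapped by $h_n$ into a single co-part of $H$: its branch sets may use vertices of $A(H)$ left unused by the (injective, not necessarily surjective) images of $A(G)$. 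You flag this near the end, but the ``maximal-level / separation argument of \Sr{sec wbqo}'' you appeal to does not transfer here; what is actually needed is the bookkeeping via $A'$-signatures and the isolation of the finitely many \emph{annoying} parts of $G$ (those whose branch sets use $A(H)$ in almost every $h_n$) \emph{before} the stable/unstable dichotomy can even be formulated, and none of that appears in your argument.

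The second gap is the step ``by compactness obtain $h_S: G_S<H$ extending $h_A$''. In \Lr{Rank 1} this works because every part of $G$ is finite, so the restrictions of the $h_n$ to a fixed stable part range over a finite set of possibilities and a K\"onig-type compactness argument applies. At rank $\alpha>1$ the parts are infinite and no such compactness is available. The actual route is to first prove $\Rank(G_S)<\Rank(G)$, which places $(G_S,A(G))$ in $\ccm(G)\subseteq\ccm(H)$ so that a single $h_n$ already embeds $G_S$; but for this to fall out of the sequence $(h_n)$ one must take $(G_n)$ to be the sequence of \Lr{lem Gn}, which marked-minor dominates \emph{every} rank-$<\alpha$ subgraph of $G$ containing $A(G)$, not merely the finite unions $\bigcup_{i\le n}P_i$ you propose. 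This is exactly where the two standing hypotheses do their work: the countability hypothesis feeds \Lr{lem Gn}, and the \wqoing\ of $\ccm(H)$ feeds \Lr{lem redundant}, which supplies the finiteness of the $H$-stable parts of $H$ needed to prove $\Rank(G_S)<\Rank(G)$. Your closing conjecture that these two lemmas exist to control the Hilbert's-Hotel step misplaces them; once $G_S$ has been embedded, the Hilbert's-Hotel processing of the unstable parts does run essentially as you describe.
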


Compared to \Lrs{UF} and~\ref{Rank 1}, this statement removes the condition $|A(G)|= |A(H)|$, and imposes two additional conditions in order to be able to handle ranks $\alpha>1$. To appreciate the role of these conditions, recall that when we used \Lr{UF} to prove \Cr{UF ctble}, we used the \GMT\ and the fact that there are countably many isomorphism types of finite graphs. We do not have analogous statements for higher ranks, and so our condition that $\ccm(H)$ is \wqo\ replaces the former, and the condition $|\ranm{\beta}\cap \ccm(G)|_{\mm} = \aleph_0$ replaces the latter. Note that both conditions are about graphs of lower rank than that of $G,H$, which will allow us to apply \Lr{subclasses} within inductive arguments. 

\smallskip
\Lr{subclasses} is an important reason why we are forced to consider marked graphs even though we are mainly interested in unmarked ones: 
\begin{remark} \label{rem ccm}
We cannot replace $\ccm(G)$ in \Lr{subclasses} by its unmarked version $\cc(G)$,  
as shown by the following example. Let $G_0$ be the disjoint union of the finite cliques $K_n, n\in \N$. Let $H=S(G_0)$ be its suspension (as defined in \Sr{sec susp}, and $G:= S(H)$. Easily, every finite minor of $G$ is a minor of $H$, and nevertheless $G\not< H$ by \Or{apices} since $|A(G)|=2$ and $|A(H)|=1$. This example also shows that it is important to mark the apex vertices and only those in the definition of $\ccm(G)$. 
\end{remark}

\begin{remark} \label{rem ccm R}
We can also not replace $\ccm(G)$ by its finitary version \defi{$\ccmf(G)$}, i.e.\ the class of finite marked graphs  that are marked-minors of $(G,A(G))$, as shown by the following example. Let $H=S(G_0)$ be as above, and let $H'=S(\omega \cdot H)$ be the suspension over the disjoint union of $\omega$ copies of $H$ (thus $\Rank(H')=2$). \Fe\ \nin, let $S_n:= S(\omega \cdot K_n)$. Let $G':=S(\bigcup_{\nin} S_n)$ (again $\Rank(G')=2$). Note that $H\not<S_n$ for any $n$, and using this it is not hard to see that $G'\not<H'$. On the other hand, both $\ccmf(G'),\ccmf(H')$ consist of all finite graphs with at most one marked vertex.
\end{remark}


We prepare the proof of \Lr{subclasses} with two lemmas:

\begin{lemma} \label{lem Gn}
Suppose $\Rank(G)=\alpha< \oo_1$, and $|\ranm{\beta}\cap \ccm(G)|_{\mm}$ is countable \fe\ $\beta<\alpha$. Then there is a sequence  $G_1 \subset G_{2} \subset \ldots$ of subgraphs of \G, containing $A:=A(G)$, \st\ 
\begin{enumerate}
\item \label{Gn ii} $\Rank(G_i)< \Rank(G)$ \fe\ $i \in \N$; and
\item \label{Gn iii} \fe\ $G' \subset G$ containing $A$ with $\Rank(G')< \Rank(G)$ \ti\ $n$ \st\ $(G',A) \mm (G_n,A)$. 
\end{enumerate}
\end{lemma}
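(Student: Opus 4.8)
The goal is to build an exhaustive increasing sequence of bounded-rank subgraphs $G_1\subset G_2\subset\cdots$ of $G$, all containing $A=A(G)$, which collectively capture all marked minors of $(G,A)$ of rank $<\alpha$. The natural building blocks are the parts of $G$ (in the sense of \Dr{def part}): each part $P$ of $G$ has rank $<\alpha$ by the remark following that definition, so finite unions of parts together with $A$ are the canonical candidates for the $G_n$. First I would fix an enumeration $P_1,P_2,\dotsc$ of the parts of $G$ (there are only countably many since $G$ is countable) and consider, as a first attempt, $G_n':=G[A\cup P_1\cup\dotsb\cup P_n]$. Property \ref{Gn ii} is immediate for these, since a finite union of graphs of rank $<\alpha$ together with the finite set $A$ still has rank $<\alpha$ (the kernel is contained in $A$ together with finitely many lower-rank pieces). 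The issue is property \ref{Gn iii}: an arbitrary $G'\subset G$ with $A\subseteq G'$ and $\Rank(G')<\alpha$ need not be contained in any finite union of parts — it can meet infinitely many parts of $G$, each in a small piece.

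To fix this, I would use the countability hypothesis on $|\ranm{\beta}\cap\ccm(G)|_{\mm}$. The key observation is that if $G'\subseteq G$ contains $A$ and $\Rank(G')=\beta<\alpha$, then $(G',A)$ is itself a marked minor of $(G,A)$ (via the identity branch sets), so $(G',A)\in\ranm{\beta}\cap\ccm(G)$, or more precisely its marked-minor-twin class is one of only countably many. Enumerate a set of representatives $R_1,R_2,\dotsc$ of all marked-minor-twin classes occurring in $\bigcup_{\beta<\alpha}(\ranm{\beta}\cap\ccm(G))$; by hypothesis this is a countable list. Each $R_j$ is, by definition of $\ccm(G)$, a marked minor of $(G,A)$, hence admits a marked minor model in $(G,A)$; since $R_j$ is (by construction here I would additionally arrange, using \Lr{lem cones} or a suspension trick if needed, that the relevant witnesses can be taken with finitely many branch sets) — more directly: a marked minor model of $R_j$ in $G$ uses branch sets $B_v$, and since $R_j$ has rank $<\alpha$ its kernel $A(R_j)$ is finite and the non-kernel part splits into co-parts; by \Or{apices} the branch sets of $A(R_j)$ meet $A$. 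The model may spread over infinitely many parts of $G$, but one can truncate: delete from $R_j$ all co-parts whose branch sets meet parts $P_i$ with large index, retaining a marked minor that is still marked-minor-equivalent to enough of $R_j$. I would instead take the cleaner route: for each $j$ fix one marked minor model of $R_j$ in $G$, let $W_j\subseteq V(G)$ be the (possibly infinite) union of its branch sets, and let $W_j^{(m)}$ be the restriction of this model obtained by keeping only those co-parts of $R_j$ whose branch sets lie within $G[A\cup P_1\cup\dotsb\cup P_m]$; using \Prr{inf bet} and the finiteness of $A(R_j)$ one checks $(G[A\cup P_1\cup\dotsb\cup P_m]\cap W_j , A)\mm G_m'$ and, for $m$ large, this truncation is still $\succeq$ a fixed marked minor equivalent to $R_j$ in the relevant sense — but this requires care.

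The cleanest formulation, which I would ultimately adopt: build $G_n$ by a diagonal process. At stage $n$, having the countable list $R_1,R_2,\dotsc$, ensure that $G_n$ contains, for each $j\le n$, a subgraph $G'_{j}\subseteq G$ with $A\subseteq G'_j$ realizing the marked-minor class $R_j$ via the identity (i.e. $(G'_j,A)$ is marked-minor-equivalent to $R_j$); such a $G'_j\subseteq G$ with $\Rank(G'_j)<\alpha$ exists precisely because $R_j$ occurs in $\ccm(G)$ — here one needs the easy fact that a marked minor of $(G,A)$ of rank $\beta$ is realized "inside" $G$ by an actual subgraph of rank $\le\beta$ containing $A$, obtained by taking the union of the branch sets. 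Since each $R_j$ has rank $<\alpha$ and there are finitely many ($n$ of them) to accommodate at stage $n$, and each requires only a subgraph of rank $<\alpha$ containing $A$, we may take $G_n$ to be a finite union of the $G'_j$, $j\le n$, together with $A\cup P_1\cup\dotsb\cup P_n$; this still has rank $<\alpha$. Then \ref{Gn ii} holds by construction, and \ref{Gn iii} holds because any $G'\subseteq G$ with $A\subseteq G'$, $\Rank(G')<\alpha$, has $(G',A)$ marked-minor-equivalent to some $R_j$, and for $n\ge j$ we get $(G',A)\mm(G'_j,A)\subseteq(G_n,A)$ by transitivity of $\mm$ and the fact that $\mm$ respects subgraph inclusion of the target. \textbf{The main obstacle} is the step asserting that an arbitrary marked minor $R_j$ of $(G,A)$ of rank $\beta$ is realized by an honest \emph{subgraph} $G'_j$ of $G$ of rank $\le\beta$ containing $A$: one takes the union of the branch sets and the branch edges, but must verify the rank bound (the union of the branch sets of a rank-$\beta$ graph need not obviously have rank $\le\beta$ — however, it does, because the branch sets of $A(R_j)$ are finitely many and meet $A$ by \Or{apices}, and deleting $A$ leaves the branch-set-images of the co-parts of $R_j$, each a connected subgraph of rank $\le$ that of the corresponding co-part; so \Or{minor rank} controls each piece). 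Pinning down this rank estimate, together with making the diagonalization over the countably many classes genuinely well-defined, is where the real work lies; the rest is bookkeeping.
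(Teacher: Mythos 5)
You have the right skeleton --- enumerate twin-classes, pick a representative from each, take increasing finite unions --- but you route it through a step that is both unnecessary and, as stated, incorrect. Your plan is to enumerate representatives $R_1,R_2,\dots$ of \emph{all} $\mm$-twin-classes occurring in $\ccm(G)$, and then for each $j$ to exhibit a subgraph $G'_j\subseteq G$ with $A\subseteq G'_j$ and $\Rank(G'_j)<\alpha$ realizing the class of $R_j$. You propose taking the union $W$ of the branch sets of a model of $R_j$ in $G$ and argue $\Rank(W)\le\Rank(R_j)$. This has the inequality backwards: since $R_j< W$, \Or{minor rank} gives $\Rank(R_j)\le\Rank(W)$, and $W$ can have strictly (indeed arbitrarily) larger rank --- a single vertex of $R_j$ may have an infinite branch set. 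You also invoke \Or{apices} between $R_j$ and $G$, but that observation requires $\Rank(R_j)=\Rank(G)$, which fails here since $\Rank(R_j)<\alpha=\Rank(G)$. So the ``easy fact'' you yourself flag as the main obstacle is not a fact; it is the gap.

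The paper's proof sidesteps the realization problem entirely by noticing that you do not need to capture all of $\ccm(G)$: condition \ref{Gn iii} quantifies only over graphs $G'$ that are already subgraphs of $G$ containing $A$ with $\Rank(G')<\Rank(G)$. So work directly with the family of marked graphs $\{(G',A)\mid A\subset G'\subset G,\ \Rank(G')<\Rank(G)\}$. Each such $(G',A)$ is a marked minor of $(G,A)$ via identity branch sets, hence lies in $\ccm(G)$; the countability hypothesis (combined with the countability of $\alpha$) therefore shows this family has only countably many $\mm$-twin-classes. Choose one representative $G'_n\subseteq G$ from each --- now automatically a subgraph of the right rank, no realization needed --- and set $G_n:=\bigcup_{j\le n}G'_j$: property \ref{Gn ii} holds because a finite union of rank-$<\alpha$ subgraphs of $G$ still has rank $<\alpha$, and property \ref{Gn iii} holds because any admissible $G'$ is $\mm$-equivalent to some $G'_j\subseteq G_n$ for $n\ge j$. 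You came close with your ``via the identity'' parenthetical; by phrasing the enumeration over $\ccm(G)$ rather than over this smaller subfamily you created a realization problem that the statement never requires you to solve. (Appending $A\cup P_1\cup\dots\cup P_n$ to $G_n$ is likewise unnecessary; exhaustion of $G$ is not required, as the paper itself remarks immediately after the lemma.)
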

\begin{proof}
By our assumption, the family of marked graphs $$\{(G',A) \mid A \subset G' \subset G, \Rank(G')< \Rank(G)\} \subseteq \ccm(G)$$ decomposes into countably many marked-minor-twin classes (whereby we use the fact that there are countably many ordinals $\beta< \alpha$). Enumerate these classes as \seq{\cc}, and pick a representative $G'_n \subset G$ from each $\cc_n$. Then $G_n:= \bigcup_{j\leq n} G'_j$ has the desired properties. 
\end{proof}

(We can achieve $G = \bigcup_\nin G_n$ if desired, by adding $P_n$ to $G'_n$, but we will not need this.) \smallskip

Given two graphs $G,H$ of the same rank satisfying $|A(G)|= |A(H)|$ (for example 
we could have $G=H$), and a \minem\ $h: G < H$, note that $h$ induces a bijection from $A(G)$ to $A(H)$ by \Or{minor rank}. 
\begin{definition} \label{def hA} 
We denote this bijection by \defi{$h^A$}.
\end{definition}

Call a part $P$ (as in \Dr{def part}) of $H\in \ran{\alpha}$  \defi{$H$-unstable}, if there is a sequence \seq{h}\ of minor embeddings $h_n: H < H$ \st\ each $h_n$ maps $P$ into a different part of $H$ and $h_n^A$ is the identity. Otherwise, we say that $P$ is \defi{$H$-stable}.

\begin{lemma} \label{lem redundant}
Let $H$ be a rayless graph \st\ $\ccm(H)$ is \wqo. Then at most finitely many parts of $H$ are $H$-stable.
\end{lemma}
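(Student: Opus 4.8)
The plan is to prove the contrapositive in a slightly rephrased form: assuming infinitely many parts of $H$ are $H$-stable, I will build a bad sequence in $\ccm(H)$, contradicting the hypothesis that $\ccm(H)$ is \wqo. First I would exploit the recursive structure of rayless graphs. Write $A:=A(H)$ and let $P_1,P_2,\dots$ be an enumeration of infinitely many distinct $H$-stable parts of $H$. Each part $P_i$ has rank strictly below $\Rank(H)$, so $(P_i,A)$ lies in $\ccm(H)$. The idea is that $H$-stability is exactly the obstruction that prevents us from `absorbing' one part into another via a self-minor that fixes $A$ pointwise; so if the $(P_i,A)$ were pairwise comparable in a nice way, we could promote such comparisons into self-embeddings of $H$ moving one stable part into another, contradicting stability.

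The key step is to make that promotion precise. Suppose $(P_i,A)\mm(P_j,A)$ for some $i<j$, witnessed by a marked-minor embedding $g$. Since $A$ is finite and $g$ is a marked minor, $g$ induces a bijection (indeed, since ranks are equal, $g^A$ in the sense of \Dr{def hA}) on $A$; by \Lr{perms} applied along an infinite family of such comparisons we may arrange that all the induced permutations of $A$ are the identity. Then I would define a self-embedding $h\colon H<H$ as follows: map $A$ by the identity, map $P_i$ into $P_j$ using $g$ (extended so that the branch sets of vertices of $A$ pick up the relevant pieces of $P_j$), and map every other part of $H$ by the identity. One must check this is a legitimate minor model of $H$ in $H$: the branch sets for $A$-vertices are connected because $g$ is a marked minor and each such branch set in $P_j$ meets a marked vertex, which we glue to $A$; and the edges between $A$ and the other (identically-mapped) parts are preserved. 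Iterating this over an infinite increasing sequence of indices gives a sequence $h_n\colon H<H$ with $h_n^A=\mathrm{id}$ moving $P_{i_0}$ into arbitrarily many different parts, contradicting that $P_{i_0}$ is $H$-stable. Hence the sequence $((P_i,A))_{i\in\N}$ must be a bad sequence, i.e.\ there are \emph{no} $i<j$ with $(P_i,A)\mm(P_j,A)$, contradicting that $\ccm(H)$ is \wqo.

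Actually, a cleaner route avoids \Lr{perms} entirely. Since $\ccm(H)$ is \wqo, by \Or{good seqs} the sequence $((P_i,A))_{i\in\N}$ — if it were infinite — has a subsequence $(P_{i_k},A)$ with $(P_{i_k},A)\mm(P_{i_\ell},A)$ for all $k<\ell$. Applying \Lr{perms} to the induced permutations of $A$ along this subsequence yields a further subsequence along which \emph{all} the composed permutations are the identity; relabel it $(Q_1,Q_2,\dots)$. Now fix $Q_1$: for each $m\ge 2$ we have a marked-minor embedding $Q_1<Q_m$ fixing $A$ pointwise, which extends to a self-embedding $h_m\colon H<H$ fixing $A$ and the identity on all parts other than $Q_1$, and mapping $Q_1$ into $Q_m$. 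Since the $Q_m$ are pairwise distinct parts, $Q_1$ is $H$-unstable — but we assumed it is $H$-stable. This contradiction shows that only finitely many parts of $H$ can be $H$-stable.

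\textbf{Main obstacle.} The delicate point is verifying that the extension of the marked-minor embedding $g\colon Q_1<Q_m$ to a genuine \emph{minor model of $H$ in $H$} is valid. The subtlety is entirely about the vertices of $A$: in the self-embedding we want branch sets $B_a$, $a\in A$, that are connected subgraphs of $H$, that are pairwise disjoint, that are disjoint from the branch sets of the $Q_1$-vertices (which live inside $Q_m$) and from all the identically-mapped parts, and such that all $A$--$A$ and $A$--$P$ edges of $H$ are realized. The natural choice is $B_a = \{a\}\cup(\text{the part of }g(a)\text{ lying in }Q_m)$, and one must argue connectivity (via the edge from $a$ to $Q_m\subseteq H$), disjointness (different marked vertices of $Q_m$ map from different $a$ by the bijection property, up to a harmless finite shuffling of which parts of $Q_m$ are `used'), and that deleting finitely many parts of $H$ that $g$ happens to touch does not hurt, because $H$ has infinitely many isomorphic copies of each — or, more simply, because we only need $h_m$ to be \emph{some} self-embedding moving $Q_1$ into $Q_m$, not a bijection. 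This bookkeeping is routine but is where all the real content sits; everything else (\Lr{perms}, \Or{good seqs}, rank monotonicity) is invoked off the shelf.
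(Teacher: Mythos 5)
Your main line of argument (WQO $\Rightarrow$ good sequence $\Rightarrow$ increasing $\mm$-chain of stable parts $\Rightarrow$ self-embedding of $H$ moving one stable part around $\Rightarrow$ contradiction with stability) is the same as the paper's. But the way you construct the self-embedding in the ``cleaner route'' has a genuine gap. You claim that, having fixed the chain $Q_1\mm Q_2\mm\dotsb$ with all induced $A$-permutations trivial, each marked-minor embedding $Q_1\mm Q_m$ ``extends to a self-embedding $h_m\colon H<H$ fixing $A$ and the identity on all parts other than $Q_1$, and mapping $Q_1$ into $Q_m$.'' That $h_m$ is not a minor model of $H$ in $H$: the branch sets assigned (by the identity) to the vertices of $Q_m$ would intersect the branch sets assigned to the vertices of $Q_1$, since the latter live inside $Q_m$. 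In a minor model of $H$ in $H$ every vertex of the source $H$ must receive a branch set, so you cannot simply ``not use'' the vertices of $Q_m$ — the remark that $h_m$ need not be a bijection addresses surjectivity, not this disjointness failure on the domain side. The alternative hand-wave, ``because $H$ has infinitely many isomorphic copies of each,'' is also unjustified: a general rayless $H$ need not contain even two isomorphic parts (nothing in the hypotheses gives you this).

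The correct fix is exactly what you tried to ``avoid'': you must displace $Q_m$ as well, and this requires the Hilbert Hotel shift along the chain. The paper builds a \emph{single} self-embedding $h\colon H\mm H$ that is the identity off the chain, the identity on $A$, and shifts $P_{a_i}$ into $P_{a_{i+1}}$ for every $i$ simultaneously (so the part about to be overwritten is always moved one step further along the infinite chain, and no two branch sets collide), and then takes $h_n := h^{\circ n}$. These $h_n$ move $P_{a_1}$ into $P_{a_{n+1}}$ and all fix $A$, contradicting $H$-stability of $P_{a_1}$. Your first paragraph gestures at ``iterating'' but does not commit to composing a single shift, and the ``cleaner route'' explicitly drops the shift; as written, neither produces a valid minor model. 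Also note the internal inconsistency: the ``cleaner route'' opens by claiming to avoid \Lr{perms} and then invokes \Lr{perms} in the very next sentence — you do need it (or something equivalent) to guarantee the identity on $A$, exactly as the paper does.
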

\begin{proof}
Suppose not, and let \seq{P}\ be an enumeration of the infinitely many $H$-stable parts of $H$. Since $\ccm(H)$ is \wqo, \seq{P}\ is good, and so by \Or{good seqs}, there is an infinite chain $P_{a_1} \mm P_{a_2} \mm \ldots$. Let $h_i:  P_{a_i} \mm P_{a_{i+1}}$ be corresponding minor embeddings, and note that $h_i$ induces a permutation  $\pi_i$ on $A(H)$ by \Or{apices}. By \Lr{perms} we may assume, by passing to a subsequence if necessary, that each $\pi_i$ is the identity on $A(H)$. Combining this with the Hilbert Hotel Principle as in the proof of \Lr{Rank 1} we can define $h: H\mm H$ 
\st\ $h(P_{a_i}) \subseteq P_{a_{i+1}}$ \fe\ $i$, and $h^A$ is the identity. Let  $h_n, \nin$ be the composition of $h$ with itself $n$ times. Then \seq{h}\ witnesses that $P_{a_1}$ is $H$-unstable, contradicting our assumption.
\end{proof}

\begin{remark} \label{rem redu}
Call a part $P$ of $H\in \ran{\alpha}$ \defi{redundant}, if $H< H \sm P^c$, where $P^c:= P \sm A(H)$ denotes the co-part of $P$. Then \Lr{lem redundant} remains true if we replace `$H$-stable' by `irredundant'.
\end{remark}

We can now prove the main result of this section.

\begin{proof}[Proof of \Lr{subclasses}]
Again, the forward implication is trivial. \mymargin{Assuming $\Rank(H)= \Rank(G)$}
For the backward implication we will follow the approach of \Lr{Rank 1}, and it is assumed that the reader is familiar with its proof. The main technical difficulty in comparison to that lemma will be handling the stable parts, and those co-parts using vertices of $A'$ in their branch sets. 
\medskip

Let \seq{P}\ be an enumeration of the parts of \G.
Let \seq{G}\ be a sequence of subgraphs of \g as provided by \Lr{lem Gn}. We may assume that $P_n \subseteq G_n$, because we can add $\bigcup_{i\leq n} P_i$ to $G_n$ without increasing its rank. Let $P_i^c:=P_i\sm A$ be the co-part of $P_i$. 

Let $A:=A(G)$ and $A':=A(H)$. 
%
%
Since $\ccm(G)\subseteq \ccm(H)$, \ti\ a sequence of marked-minor embeddings $h_n: (G_n,A) \mm (H,A')$. 

We say that $P_i$ \defi{uses} $x\in A'$ in $h_n$, if $x\in h_n(P_i^c)$, and we say that $P_i$ \defi{uses $A'$} in $h_n$ if it uses some $x\in A'$. We are going to use a subsequence of $(h_n)$ whose members treat $A'$ in a similar way; to make this precise, given $f: (G_m,A) \mm (H,A')$, we define its \defi{$A'$-signature} $\sigma_f: A' \to X$, where $X:= A \cup \{\emptyset\}$, as follows. For each $x\in A'$ used by some $a\in A$ in $f$, we let  $\sigma_f(x)=a$. For each other $x\in A'$ we let $\sigma_f(x)=\emptyset$.

\comment{
	\begin{align*}
&\sigma_f(x)=v, \text{ if $v\in A$;}\\
&\sigma_f(x)=P, \text{ if $v$ lies in $P^c$ for some $P\in \ca_g$, and }\\
&\sigma_f(x)=\emptyset \text{ otherwise.}
\end{align*}
If $x$ is not used by any vertex in $f$, then we also 	let $\sigma_f(x)=\emptyset$.
}

Since the target $X$ of $\sigma_{h_n}$ is finite, there is an infinite subsequence of $(h_n)$ along which $\sigma_{h_n}$ is a constant map $z$, and we may assume \obda\ that this subsequence coincides with $(h_n)$. We call any such sequence \seq{h}\ a \defi{$z$-sequence}.

\medskip
In \Lr{Rank 1} we had $|A|= |A'|$, which ensured that $z$ was a bijection between $A,A'$, and every co-part $P_i^c$ of \g was mapped to a co-part of $H$ by each $h_n$, but neither of this needs to be the case now. Therefore, we will need a more delicate definition of stable and unstable parts, which we prepare with the following notion.

We say that $P_i$ is \defi{$h$-annoying}, if $P_i^c$ uses $A'$ in almost all $h_n$. Let $\ca=\ca_h$ denote the set of  $h$-annoying parts of \G. Note that 
\labtequ{ca fin}{$|\ca_h|\leq |A'|$,}
because each $x\in A'$ is used by at most one co-part in each \minem.  Easily, by passing to a subsequence, we may assume that \fe\ $P_i\not\in \ca_h$,  
\labtequ{Pic}{$P_i^c$ uses $A'$ in at most finitely many $h_n$.}
Indeed, we can construct a subsequence $(h')$ of $h$ with this property recursively as follows. Let $h^0_n:= h_n$, and for $i= 1,2,\ldots$, let $h^i_n$ be a subsequence of $h^{i-1}_n$ in which $P_i$ never uses $A'$ if $P_i\not\in \ca_{h^{i-1}}$. If  $P_i\in \ca_{h^{i-1}}$, just let $h^i=h^{i-1}$. Note that $h^i$ may have annoying parts that were not annoying in $h^{i-1}$, but they are boundedly many by \eqref{ca fin}. 

We may assume, by passing to a co-final subsequence of $h^i$, that each $h^i$-annoying part $P$ uses $A'$ in $h^i_n$ for every $n$. Note that this means that $P$ uses $A'$ for every $n$ in any subsequence of $h^i_n$. Since each $h^i$ is a subsequence of $h^{i-1}$, this implies that $\ca_{h^i}$ is increasing with $i$, and therefore its limit $\ca$ is a finite set of parts of $G$.

Let $h'_n:= h_n^n$. Note that $\ca_{h'}= \ca$, and therefore \eqref{Pic} is satisfied if we replace $h$ by $h'$ (and $\ca_h$ by $\ca$). Since $h'$ is a subsequence of $h$, we will from now on just assume $h=h'$ to keep the notation simpler.
\medskip

We now adapt the definition $h$-stable from \Lr{UF} to the non-annoying parts of any $z$-sequence  \seq{g}. We call a part $P\not\in\ca_g$ of \g \defi{$g$-stable}, if there is a subgraph $H'$ of $H$ consisting of finitely many parts of $H$ with the following property: for  all $n$ \st\ $g_n(P)$ does not use $A'$, we have $g_n(P) \subset H'$. Call $P\not\in\ca_g$ \defi{$g$-unstable} otherwise. Note that if $P$ is $g$-unstable, then 
\labtequ{unstab C}{there is an infinite sequence \seq{C}\ of distinct parts of $H$ and a strictly increasing sequence \seq{\tau}\ \st\ $g_{\tau(n)}(P^c) \subseteq C_n^c$ \fe\ $n$.}
This is because $g_n(P^c)$ is contained in a co-part of $H$ by \Lr{min comps} whenever $P$ does not use $A'$. 

We claim that there is $z$-sequence \seq{g}\ maximizing the set of unstable parts in the following sense: 
 \labtequ{unstab}{There is a $z$-sequence \seq{g}, $g_n: G_n \to H$, such that every part $P$ of \g that is $f^P$-unstable \wrt\ some $z$-sequence \seq{f^P} is also $g$-unstable.
}
To see this, enumerate those parts $P$ as \seqi{U}, and form $g_n$ by picking infinitely many members from each sequence $(f^{U_i})$, assuming \obda\ that $f^{U_i}_n(U_i^c)$ lie in distinct co-parts of $H$ for different values of $n$, which we can by \eqref{unstab C}, and in particular $f^{U_i}_n(U_i^c)$ never uses $A'$. Note that no $U_i$ can be $g$-annoying, since infinitely members of ($f^{U_i})$ are also members of $g$. This proves \eqref{unstab}.

 \medskip
Fix $(g_n)$ as in \eqref{unstab}, and let $\cs$ be the set of $g$-stable parts of $G$, and $\cu$ the set of all other parts of \G. Thus $\{P_n \mid \nin\} = \ca_g \ \dot{\cup}\ \cs \ \dot{\cup}\ \cu$.

Let $\cs'$ be the set of $H$-stable parts of $H$ ---as defined before \Lr{lem redundant}---  and $\cu'$ the set of all other parts of $H$. Let $H_S:= \bigcup \cs'\subset H$. By \Lr{lem redundant}, 
 \labtequ{csfin}{$\cs'$ is finite, and therefore $\Rank(H_S)<\Rank(H)$.}
 %

Next, we claim that
\labtequ{gn cs}{\fe\ $P\in \cs$, and almost every $n$ \st\  $g_n(P)$ does not use $A'$, we have $g_n(P)\subset H_S$.}
%
Suppose this fails, which means that there is some $P\in \cs$, and an infinite strictly increasing sequence \seq{\tau}, \st\ $g_{\tau(n)}(P)$ does not use $A'$ and $g_{\tau(n)}(P^c)$ is contained in an element $U_n$ of $\cu'$ \fe\ $n$. Since $P$ is $g$-stable, $\bigcup_n g_{\tau(n)}(P)$ meets only finitely many parts of $H$, and so we may assume that these $U_n$ coincide with a fixed $U\in U'$. Let $\seq{f}, f_n: H \mm H$ be a sequence of \minem s witnessing that $U$ is $H$-unstable, i.e.\ the $f_n$ embed $U$ into infinitely many distinct parts of $H$ and $f_n^{A}$ is the identity on $A'$ \fe\ $n$. Then the sequence of compositions $f_n \circ  g_{\tau(n)}$ embed $P$ into infinitely many distinct parts of $H$, and so $P$ is $(f \circ  g \circ \tau)$-unstable. 
But this contradicts our choice of $g$ because of \eqref{unstab}, since $P$ is $g$-stable and $(h \circ  g \circ \tau)$ is a $z$-sequence. This contradiction proves \eqref{gn cs}.

\medskip
Let $G_S:= \bigcup (\cs \cup \ca_g)$. We now use \eqref{gn cs} to prove that
\labtequ{RGS}{$\Rank(G_S)< \Rank(H) (= \Rank(G))$.}
Suppose to the contrary $\Rank(G_S)=\Rank(G)$, which is larger than $\beta:= \Rank(H_S)$ by \eqref{csfin}. 

Let $A'':= A(H) \cup \bigcup_{Q\in \cs'} A(Q)$  (\fig{figQs}), and note that $A''$ is finite by \eqref{csfin}, and that 
\labtequ{lt beta}{each component of $H_S - A''$ has rank smaller than $\beta$}
by the definition of $A(Q)$. 

\begin{figure} 
\begin{center}
\begin{overpic}[width=.6\linewidth]{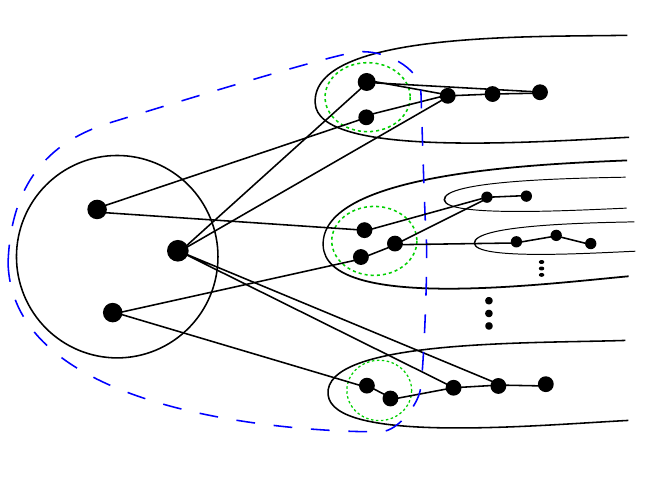} 
\put(88,59){$Q_1$}
\put(88,13){$Q_k$}
\put(34.5,57){\textcolor{green}{$A(Q_1)$}}
\put(36.4,13){\textcolor{green}{$A(Q_k)$}}
\put(12,33){$A(H)$}
\put(25,60){\textcolor{blue}{$A''$}}
\put(97,36){$C$}
\end{overpic}
\end{center}
\caption{The vertex set $A'':= A(H) \cup \bigcup_{Q\in \cs'} A(Q)$ in the proof of \eqref{lt beta}, enclosed by the dashed curve (blue).} \label{figQs}
\end{figure}

Since we are assuming $\Rank(G_S)>\beta$, there are infinitely many $P\in (\cs \cup \ca_g)$ with $\Rank(P^c)\geq \beta$ by \Prr{inf bet}. As $\ca_g$ is finite by \eqref{ca fin}, there are infinitely many such $P$ in $\cs$. Let us choose $|A''|+1$ of them, and denote them $S_0,  \ldots, S_{|A''|}$. By \eqref{gn cs} we can pick $m$ \leth\ \fe\ $i\in [0,\ldots, |A''|]$, either $g_m(S_i^c)$ uses $A'$ or $g_m(S_i) \subseteq H_S$. By the pigeonhole principle, \ti\ $k\in [0,\ldots, |A''|]$ \st\ $g_m(S_k^c)$ does not use $A''$ ---and hence $A'$--- because the $S_i^c$ are pairwise disjoint, and so at most one of them can use any fixed vertex of $A''$. But then $g_m(S_k^c)$ is contained in $H_S$ by the choice of $m$, and in fact in a component $C$ of $H_S - A''$ since $S_k^c$ is connected and it avoids $A''$. Thus $S_k^c<C$, which contradicts \Or{minor rank} since $\Rank(S_k^c)\geq \beta > \Rank(C)$ by \eqref{lt beta}. This contradiction proves \eqref{RGS}.

\medskip


From \eqref{RGS} we deduce $(G_S,A) \in \ccm(G)$, and so $(G_S,A) \mm (G_\ell,A)$ for some $\ell$,  and therefore $g_n$ induces a marked-minor embedding $(G_S,A) \mm (H,A')$ \fe\ large enough $n$. Pick such an $n$, and let $g_S:= g_n$. 
(We do not claim that $g_S(G_S)$ is contained in $H_S$, or that $G_S$ consists of finitely many parts of \G.)

\medskip
From now on we can proceed as in the proof of \Lr{Rank 1} to extend $g_S$ to $h: G<H$ (and in fact $h: (G,A) \mm (H,A')$, though we will not use this) by recursively applying the Hilbert Hotel Principle to the $i$th $g$-unstable part $U_i$ of \G. The main difference is that instead of appealing to the \GMT, we now use our assumption that $\ccm(H)$ is \wqo---combined with \Or{good seqs}--- in order to find a chain \seq{Y}\ within any sequence \seq{C^i}\ of parts of $H$ coming from \eqref{unstab C} applied to $U_i$. The only other difference is that rather than enumerating the parts in \cs, and nailing the $i$th one in step $i$, we now enumerate the co-parts of $H$ intersecting $G_S(\cs \cup \ca_g)$ as \seq{P'}, and ensure that the `contents' of $P'_i$ are never `shifted' after step $i$.  
\end{proof}

\section{Countability of minor-twin-types of rayless forests} \label{sec forests}

In this section we prove that \fe\ ordinal $\alpha< \oo_1$ there are countably many minor-twin classes of countable forests of rank $\alpha$ (\Tr{thm trees}). This is an important step towards the proof of the backward direction of \Tr{thm Borel Intro}, which we will conclude in the next section. Our proof relies on Thomas' theorem that the class \defi{$TW(t)$} of countable graphs of tree-width at most $t$ are \wqo\ \fe\ $t\in \N$ \cite[(1.7)]{ThoWel}. Although we are mainly interested in forests, our proof employs an intricate inductive argument for which it is essential to consider $TW(t)$ \fe\ $t$. For a similar reason, we have to consider marked graphs even though our focus is on unmarked ones. Our final result will apply to $TW(t)$, not just the forests. 

We prepare our proof with two lemmas. The first extends Thomas' aforementioned result to marked graphs. 

\begin{lemma} \label{lem TW m}
For every $t\in \N$, the class of marked graphs $(G,M)$ with $G\in TW(t)$ is \wqo\ under $\mm$.
\end{lemma}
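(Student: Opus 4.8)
The plan is to derive the marked version from Thomas' unmarked result by a standard gadget-plus-coloring trick. First I would recall that, since a marked set $M$ is required to be finite, the marked minor relation $\mm$ on graphs of tree-width $\leq t$ can be encoded by a bounded-tree-width \emph{unmarked} structure: given $(G,M)$ with $|M|$ possibly varying, attach to each marked vertex $v\in M$ a private pendant gadget $J_v$ — for concreteness a clique on $t+2$ new vertices, all joined to $v$ — to obtain an unmarked graph $\widehat G$. Adding such a gadget raises tree-width by at most a constant (each $J_v\cup\{v\}$ fits in a bag together with any bag containing $v$; more carefully one checks $\mathrm{tw}(\widehat G)\le t+1$, say), so $\widehat G\in TW(t')$ for a fixed $t'=t'(t)$. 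The point of using a clique of size exactly $t+2$ is that \emph{no} vertex of a graph in $TW(t)$ can lie in a branch set that is the image of a vertex whose branch set must contain a $K_{t+2}$ — so in any unmarked minor embedding $\widehat G<\widehat H$, each gadget-$K_{t+2}$ of $\widehat G$ must be mapped into a gadget of $\widehat H$, which in turn forces the attached formerly-marked vertex of $G$ to be mapped to a branch set meeting a formerly-marked vertex of $H$.

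The key steps, in order, are: (1) define the gadget construction $(G,M)\mapsto \widehat G$ and verify the tree-width bound, citing the fact that adding a clique attached to a single vertex costs a bounded amount; (2) prove the ``recovery lemma'': $(G,M)\mm (H,M')$ if and only if $\widehat G < \widehat H$ — the forward direction is routine (extend the marked model by mapping each gadget into the gadget of the matched marked vertex), and the backward direction uses the $K_{t+2}$-rigidity argument above to read off a marked model from an unmarked one, after possibly pruning branch sets (exactly as in \Lr{blocks} / \Lr{lem cones}, where one discards a branch set that has strayed into a gadget); (3) given a bad sequence $\seq{(G,M)}$ for $\mm$ in $TW(t)$, apply step (2) to get a sequence $\seq{\widehat G}$ in $TW(t')$, which is bad for $<$, contradicting Thomas' theorem $\cite{ThoWel}$ that $TW(t')$ is \wqo; conclude by \Prr{wqo ch}-style reasoning (or simply the definition of \wqo) that the marked class is \wqo.

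The main obstacle I anticipate is step (2)'s backward direction: one must be careful that a gadget $K_{t+2}$ of $\widehat G$ could in principle be realized as a minor of $\widehat H$ using vertices spread across a gadget \emph{and} part of $H\subseteq\widehat H$, rather than living purely inside one gadget of $\widehat H$. This is where the size bound $t+2 > \mathrm{tw}(H)+1$ is essential: since $K_{t+2}$ has no minor model in any graph of tree-width $\le t$ (a $K_{t+2}$-minor forces tree-width $\ge t+1$), the branch sets realizing a gadget clique cannot all avoid the gadgets of $\widehat H$, and a short connectivity argument — each gadget of $\widehat H$ is attached at a single cut vertex — pins the whole gadget-clique inside one gadget of $\widehat H$, yielding the claimed correspondence of marked vertices. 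Once this rigidity is in hand, the rest is bookkeeping of the sort already carried out in the proof of \Lr{lem cones}.
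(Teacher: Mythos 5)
Your approach is essentially the paper's: encode each marked vertex by a large clique gadget, use tree-width bounds to force gadgets of $\widehat G$ to land inside gadgets of $\widehat H$ (via \Lr{blocks} and the fact that $K_{t+2}$ has tree-width $t+1>t$), and thereby transfer Thomas' unmarked WQO theorem for $TW(t')$ to the marked class. The paper's only notable extra move is to first apply a double marked suspension $S^\bullet(S^\bullet(\cdot))$ to make all graphs 2-connected (and then uses a $K_{t+4}$-gadget with $v$ identified into the clique), which lets it conclude $B_v\ni w$ directly because $B_v$ must meet two blocks that intersect only at $w$ --- whereas your construction needs a small swap argument, which your ``possibly pruning'' remark alludes to but does not spell out, to handle a marked vertex $v$ that is isolated in $G$, whose branch set could otherwise be a singleton inside $J_w$ missing $w$.
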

This is easily proved by replacing each marked vertex by a complete graph of the right size: 
\begin{proof}
Let $((G_i,M_i))_{i\in\N}$ be a sequence of marked graphs in $TW(t)$. We need to show that it is good. Easily, we can assume that each $G_i$ has more than $t+2$ vertices.
Let $(G'_i,M'_i):= S^\bullet(S^\bullet((G_i,M_i)))$ ---the marked suspension as defined in \Sr{sec susp}---  and note that $G'_i$ is 2-connected, and $TW(G'_i)\leq t+2$. Applying \Lr{lem cones} twice, we deduce that $((G_i,M_i))_{i\in\N}$ is good if $((G'_i,M'_i))_{i\in\N}$ is, and so it remains to prove the latter. 

Next, for each $i$, we modify $(G'_i,M'_i)$ into an unmarked graph $G''_i$ of tree-width $t+3$ by attaching a copy of $K_{t+4}$ (which has tree-width $t+3$) to each $v\in M'_i$ by identifying $v$ with an arbitrary vertex of $K_{t+4}$. We claim that \fe\ $i,j\in \N$, 
\labtequ{cla Gi}{$G''_i< G''_j$ \iff\ $(G'_i,M'_i) \mm (G'_j,M'_j)$.}
This claim implies our statement, because $(G''_i)_{i\in\N}$ is good by Thomas' aforementioned theorem, implying that $((G'_i,M'_i))_{i\in\N}$ is good too.

The backward implication of \eqref{cla Gi} is trivial (and not needed for our proof). For the forward implication, suppose \cb\ is a minor model of $G''_i$ in $G''_j$. Since $G'_i$ is a block of $G''_i$, \Lr{blocks} says that \cb\ can be restricted into a minor model $\cb'$ of $G'_i$ within a block of $G''_j$. The latter block can only be $G'_j$, because $|G'_i|> t+4$ by our assumption, and all other blocks of $G''_j$ are copies of $K_{t+4}$. Moreover, applying \Lr{blocks} to each other block $B$ of $G''_i$, which is a $K_{t+4}$, we deduce that for each $v\in V(B)$, the branch set $B_v$ contains a vertex of a single block $B'$ of $G''_j$. This $B'$ must be a $K_{t+4}$ because $TW(B)=t+3> TW(G'_j)$. Thus \cb\ induces a 1-1 correspondence between the $t+4$ vertices in $B$ and the $t+4$ vertices in $B'$. It follows that the branch set of the unique vertex in $V(B) \cap M'_i$ contains the unique vertex in $V(B') \cap M''_j$. This means that $\cb'$ maps each marked vertex of $G'_i$ to a branch set containing a marked vertex of $G'_j$, which proves \eqref{cla Gi}.
\end{proof}

For our proof of \Tr{thm Borel Intro} in the next section we will need to prove that there are only countably many minor-twin types of countable forests of any given rank $\alpha< \oo_1$. The ideas involved are similar to the proof of \Cr{UF ctble}, except that instead of $\cc(G)$ we will be working with $\ccm(G)$, and therefore with marked graph. Moreover, instead of the \GMT, we now need to use Nash-Williams' theorem \cite{NWbqo} that the countable trees are \wqo. We have made one step towards adapting our tools to marked graphs with \Lr{lem TW m}, but we will need more: using the same unmarking idea as in the previous proof, we will next upper-bound the number of minor-twin classes of {\em marked} forests, and more generally graphs in $TW(t)$, by the number of minor-twin classes of {\em unmarked} graphs in $TW(t+1)$: 

\begin{lemma} \label{lem TW car}
For every ordinal $\alpha< \oo_1$, and every $t\in \N_{>0}$, we have\\ $|\ranm{\alpha} \cap TW(t)|_{\mm} \leq |\ran{\alpha} \cap TW(t+1)|_<$.
\end{lemma}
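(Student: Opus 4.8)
The plan is to use the same unmarking device as in the proof of \Lr{lem TW m}: replace each marked vertex of a graph in $TW(t)$ by a clique large enough to be "visible" under minors, obtaining an unmarked graph in $TW(t+1)$, and show that this assignment is injective on marked-minor-twin classes. Fix $t\in\N_{>0}$ and $\alpha<\oo_1$. Given a marked graph $(G,M)\in\ranm{\alpha}\cap TW(t)$, define $\widehat{G}$ to be the unmarked graph obtained from $G$ by attaching, to each $v\in M$, a copy of $K_{t+2}$ sharing the single vertex $v$ with $G$ (so $K_{t+2}$ has tree-width $t+1$). First I would check that $\widehat{G}\in\ran{\alpha}\cap TW(t+1)$: attaching a finite clique at a single vertex does not create a ray, and since $A(G)$ is finite and $M$ is finite, adding finitely many cliques at single vertices does not raise the rank (the kernel $A(G)\cup M$ still witnesses that each component of $\widehat{G}-(A(G)\cup M)$ has rank $<\alpha$, using that the new clique-vertices other than $v$ have rank $0$); and tree-width is $\max(t,t+1)=t+1$ since gluing along a single vertex takes the max of the tree-widths.

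The key step is the analogue of \eqref{cla Gi}: for all $(G,M),(H,M')\in\ranm{\alpha}\cap TW(t)$ with $|G|,|H|$ large,
\[
\widehat{G}<\widehat{H}\iff (G,M)\mm(H,M').
\]
The backward direction is trivial. For the forward direction I would argue as in \Lr{lem TW m} via \Lr{blocks}: a minor model of $\widehat{G}$ in $\widehat{H}$ restricts to a model of each block of $\widehat{G}$ inside a block of $\widehat{H}$; the attached $K_{t+2}$'s of $\widehat{G}$ have tree-width $t+1$, strictly larger than the tree-width of every block of $\widehat{H}$ that is \emph{not} one of the attached $K_{t+2}$'s (such a block lies inside a single $TW(t)$ component, possibly plus identification vertices, so has tree-width $\leq t$ --- here is where I'd need a small care, choosing the attached clique to be $K_{t+2}$, and first applying \Lr{lem cones unm} or a suspension to make $G,H$ $2$-connected so that "$G$ is a block of $\widehat{G}$" makes sense, exactly mirroring the device in \Lr{lem TW m} where $S^\bullet(S^\bullet(\cdot))$ was used). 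Hence each attached clique of $\widehat{G}$ must map into an attached clique of $\widehat{H}$, giving a $1$--$1$ correspondence of their vertex sets and in particular mapping the shared vertex $v\in M$ into a branch set meeting the corresponding shared vertex $v'\in M'$; restricting the model to the $G$-part then yields a marked minor $(G,M)\mm(H,M')$.

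From this equivalence the cardinality bound follows: $(G,M)\mapsto[\widehat{G}]_<$ is well-defined on marked-minor-twin classes (if $(G,M)$ and $(G',M)$ are marked-minor-twins then $\widehat{G},\widehat{G'}$ are minor-twins) and injective (if $\widehat{G},\widehat{H}$ are minor-twins then each of $\widehat{G}<\widehat{H}$, $\widehat{H}<\widehat{G}$ gives the corresponding marked minor, so $(G,M),(H,M')$ are marked-minor-twins). One subtlety to dispose of: the equivalence was stated for $G,H$ "large". Marked graphs on at most $t+2$ vertices (say) form only finitely many marked-minor-twin classes in the finite case, and any infinite marked graph can be enlarged harmlessly (e.g.\ via $S^\bullet$, which is compatible with $\mm$ by \Lr{lem cones} and preserves rank and membership in $TW(t)$ up to $+1$ --- or more directly one just notes there are only finitely many small exceptions), so this does not affect the countable/uncountable bound we are after. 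Therefore $|\ranm{\alpha}\cap TW(t)|_{\mm}\leq|\ran{\alpha}\cap TW(t+1)|_<$, as claimed.

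The main obstacle I anticipate is the block-analysis in the forward direction of the displayed equivalence: making precise that \emph{every} block of $\widehat{H}$ other than the planted $K_{t+2}$'s genuinely has tree-width $\leq t$ (a block can straddle a planting vertex $v'$, but then it is contained in the union of $H$ with finitely many planted cliques \emph{minus} their interiors, i.e.\ essentially a block of $H$ together with cut-vertices, of tree-width $\leq t$), and then pushing the branch-set bijection through to conclude that marked vertices map to branch sets containing marked vertices --- this is exactly the combinatorics already carried out in \Lr{lem TW m}, so it should go through with the cliques sized $t+2$ rather than $t+4$.
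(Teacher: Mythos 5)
Your core construction (plant a $K_{t+2}$ at each marked vertex) and the key step of the forward implication (the planted cliques of $\widehat G$ have tree-width $t+1$, so by Lemma~\ref{blocks} each must land inside a planted clique of $\widehat H$, those being the only blocks of $\widehat H$ of tree-width $>t$) are exactly what the paper does. However, the auxiliary device you keep circling back to --- first applying $S^\bullet(S^\bullet(\cdot))$ so that ``$G$ is a block of $\widehat G$'' --- is not only unnecessary here but would actually defeat the statement you are trying to prove. The double suspension raises tree-width by up to $2$, so if you suspend first and then plant cliques large enough to dominate (which would now have to be $K_{t+4}$, as in Lemma~\ref{lem TW m}, not $K_{t+2}$), the resulting unmarked graph lives in $TW(t+3)$, not $TW(t+1)$, and the stated inequality with $TW(t+1)$ on the right no longer follows. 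In Lemma~\ref{lem TW m} this is harmless because the conclusion only needs Thomas' theorem for \emph{some} $k$, whereas Lemma~\ref{lem TW car} has a specific tree-width budget. The paper sidesteps this entirely by applying Lemma~\ref{blocks} \emph{only} to the planted $K_{t+2}$'s (which are already blocks, since each planting vertex is a cut vertex), so $2$-connectivity of $G,H$ plays no role. You should drop the suspension idea completely and keep only the direct argument.

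A second, more minor, point: the subtlety you flag (``$|G|,|H|$ large'') is not the relevant one for this lemma; no size hypothesis is needed. The subtlety that does matter is matching the number $n$ of marked vertices: the vertex-counting step which forces $\cb(w)\cap(G'_Y\setminus G_Y)=\emptyset$ for unmarked $w$, and hence allows the restriction of $\cb$ to $G_X$ to stay inside $G_Y$, uses $|M_X|=|M_Y|$. The paper fixes $|M_X|=|M_Y|=n$ up front, establishes the equivalence for that $n$, and then sums the resulting bound over $n\in\N$, using that $|\ran{\alpha}\cap TW(t+1)|_<$ is infinite. Your injectivity argument tacitly needs this; it does come for free when $\widehat G,\widehat H$ are minor-twins (the clique bijection in each direction forces $|M_X|=|M_Y|$), but you should say so rather than worry about the size of the graphs.
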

This lemma is the reason why it does not suffice to use Nash-Williams' theorem about forests, and instead we need Thomas' extension to $TW(t)$.
\begin{proof}
Pick a representative $(G_X,M_X)$ from each marked-minor-twin class $X$ of $\ranm{\alpha}  \cap TW(t)$, and use it to define the unmarked graph\\ $G'_X\in \ran{\alpha}  \cap TW(t+1)$ similarly to the proof of \Lr{lem TW m}, by attaching a copy of $K_{t+2}$ to each marked vertex and unmarking it.

Suppose $G_X,G_Y$ have the same number of marked vertices, say $n$. Similarly to  \eqref{cla Gi}, we claim that 
\labtequ{cla GX}{$G'_X< G'_Y$ \iff\ $(G_X,M_X) \mm (G_Y,M_Y)$.}
From this we immediately deduce that $G'_X,G'_Y$ are not minor-twins unless $X=Y$. This implies that each $n\in\N$ contributes at most $|\ran{\alpha}  \cap TW(t+1)|_<$ to the count of classes in $\ranm{\alpha} \cap TW(t)$, and since the former is infinite, we obtain the desired inequality.

Thus it only remains to check \eqref{cla GX}. The proof is similar to that of \eqref{cla Gi}, except that we now only apply \Lr{blocks} to the $K_{t+2}$'s of $G'_X$: if \cb\ is a minor model of $G'_X$ in $G'_Y$, then it maps each copy of $K_{t+2}$ in $G'_X$ to one in $G'_Y$, whereby we use the fact that $G_X$ has no $K_{t+2}$ minor as its tree-width is less than that of $K_{t+2}$. It follows easily from this that for each $v\in M_X$ the branch set $\cb(v)$ contains a vertex of $M_Y$. Moreover, if $w\in V(G_X) \sm M_X$,  then   $\cb(w)$ cannot intersect $G'_Y \sm G_Y$, because all vertices in the latter subgraph are needed to accommodate the copies of $K_{t+2}$ in $G'_X$. Thus by restricting \cb\ to $G_X$ we obtain a marked-minor model of $(G_X,M_X)$ in $(G_Y,M_Y)$, proving \eqref{cla GX}.
\end{proof}



We are now ready to prove the main result of this section:

\begin{theorem} \label{thm trees}
For every ordinal $\alpha< \oo_1$, and every $t\in \N_{>0}$, we have\\ $|\ranm{\alpha}  \cap TW(t)|_{\mm} = \aleph_0$.
\end{theorem}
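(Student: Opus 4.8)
The plan is to prove, simultaneously for every $t\in\N$, the two statements $|\ran{\alpha}\cap TW(t)|_< = \aleph_0$ and $|\ranm{\alpha}\cap TW(t)|_{\mm} = \aleph_0$ by transfinite induction on $\alpha<\oo_1$, noting that all graphs in $TW(t)$ are countable, so that \Lr{subclasses} is applicable. Within a single rank $\alpha$ the unmarked statement will be established first, using only the \emph{marked} statement at ranks $<\alpha$, and the marked statement at rank $\alpha$ will then be deduced from the unmarked one at rank $\alpha$; this ordering is what keeps the induction well-founded, since the marked-to-unmarked passage of \Lr{lem TW car} raises the tree-width. The base case $\alpha=0$ is immediate: $\ranm{0}\cap TW(t)$ consists of finite marked graphs, of which there are only countably many up to isomorphism, while the unmarked paths $P_1,P_2,\ldots$ already form an infinite $\mm$-antichain, so both cardinalities equal $\aleph_0$.

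For the unmarked statement at rank $\alpha$ it suffices to bound the number of minor-twin classes of graphs of rank \emph{exactly} $\alpha$ in $TW(t)$, since graphs of rank $<\alpha$ are never minor-twins of a rank-$\alpha$ graph (\Or{minor rank}) and contribute only $\aleph_0$ classes (countably many ranks, each contributing at most $|\ranm{\beta}\cap TW(t)|_{\mm}=\aleph_0$ by the induction hypothesis, as the unmarked twin classes inject into the marked ones via $[G]_<\mapsto[(G,\emptyset)]_{\mm}$). So let $G,H\in\ran{\alpha}\cap TW(t)$ with $\Rank(G)=\Rank(H)=\alpha$. As tree-width is minor-monotone, every (marked) minor of a graph in $TW(t)$ again lies in $TW(t)$; hence $\ccm(H)$ is a subclass of the class of all marked graphs whose underlying graph is in $TW(t)$, which is \wqo\ by \Lr{lem TW m}, and so $\ccm(H)$ is \wqo. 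For the same reason $\ranm{\beta}\cap\ccm(G)\subseteq\ranm{\beta}\cap TW(t)$ for each $\beta<\alpha$, so by the induction hypothesis $|\ranm{\beta}\cap\ccm(G)|_{\mm}$ is countable. Thus both hypotheses of \Lr{subclasses} hold for the pair $(G,H)$, and symmetrically for $(H,G)$, so $G<H$ \iff\ $\ccm(G)\subseteq\ccm(H)$; consequently $G$ and $H$ are minor-twins if and only if $\ccm(G)=\ccm(H)$.

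It remains to count the possible classes $\ccm(G)$. By transitivity of $\mm$ together with \Or{minor rank}, $\ccm(G)$ is a union of $\mm$-twin classes and is $\mm$-downward closed inside $\cd_t:=\bigcup_{\beta<\alpha}\bigl(\ranm{\beta}\cap TW(t)\bigr)$. The set $\cd_t$ has only $\aleph_0$ many $\mm$-twin classes (countably many ranks $\beta$, each contributing $\aleph_0$ by the induction hypothesis), and $\cd_t$ is itself \wqo, being a subclass of the \wqo\ class of \Lr{lem TW m}. In a \wqo, every downward-closed subclass is the complement of the upward closure of a finite antichain of twin classes, so a countable \wqo\ has only countably many downward-closed subclasses; hence there are at most $\aleph_0$ possibilities for $\ccm(G)$, and so at most $\aleph_0$ minor-twin classes among the rank-$\alpha$ graphs of $TW(t)$. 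Combined with the $\aleph_0$ classes of smaller rank this gives $|\ran{\alpha}\cap TW(t)|_<=\aleph_0$ for every $t$. Finally, applying \Lr{lem TW car} with $t+1$ in place of $t$ yields $|\ranm{\alpha}\cap TW(t)|_{\mm}\le|\ran{\alpha}\cap TW(t+1)|_<=\aleph_0$, while the reverse inequality is trivial; this completes the induction.

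I expect the only genuinely delicate points to be bookkeeping rather than mathematics. First, the induction must be organised so that the unmarked statement at a given rank precedes the marked one and invokes the marked statement only at strictly smaller ranks, otherwise the tree-width increase in \Lr{lem TW car} creates a circularity. Second, the principle ``a countable \wqo\ has only countably many downward-closed subsets'' is being applied to the quotient of $\cd_t$ by the $\mm$-twin relation, not to $\cd_t$ up to isomorphism (which may be uncountable); this is precisely why \Lr{subclasses} --- which pins the twin type of a rank-$\alpha$ graph down to its class $\ccm$ of lower-rank marked minors --- and the counting of \emph{twin} classes rather than isomorphism types, are indispensable here.
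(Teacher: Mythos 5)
Your proof is correct and takes essentially the same approach as the paper: transfinite induction on $\alpha$ using Lemmas~\ref{subclasses}, \ref{lem TW m} and \ref{lem TW car}, pinning down the minor-twin class of a rank-$\alpha$ graph $G\in TW(t)$ by $\ccm(G)$ and counting the possible $\ccm(G)$'s as downward-closed subsets of a countable WQO, with the marked statement then recovered from the unmarked one at tree-width $t+1$ via Lemma~\ref{lem TW car} (the only organisational difference is that the paper additionally layers by $|A(G)|$, which is not strictly needed since $|A(G)|$ is recoverable from $\ccm(G)$). One small slip in the base case: the paths $P_1,P_2,\dots$ form a chain, not an antichain, under the minor relation; but they are still pairwise non-twins, which is all the lower bound $\ge\aleph_0$ requires.
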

\begin{proof}
We will prove the unmarked version 
\labtequ{Ra TW}{$|\ran{\alpha} \cap TW(t)|_< = \aleph_0$}
by a simultaneous (transfinite) induction on $\alpha$ and $t$. From this the statement  follows immediately from \Lr{lem TW car}.

For $\alpha=0$, our claim \eqref{Ra TW} is trivially true \fe\ $t$ as there are countably many (marked) finite graphs; here we do not need the restriction on the tree-width.

For the inductive step $\alpha>0$, we proceed by induction on $t$. Let \defi{$\ran{\alpha}^n$} be the set of those $G\in \ran{\alpha}$ with $|A(G)|=n$. To prove that $|\ran{\alpha} \cap TW(t)|_<$ is countable, it suffices to prove that $|\ran{\alpha}^n  \cap TW(t)|_<$ is countable \fe\ \nin. Let $G\in \ran{\alpha}^n(TW(t))$, and apply \Lr{subclasses} to deduce that the minor-twin class $[G]_{\mm}$ is determined by $n=|A(G)|$ and the class $\ccm(G)$; here, to be able to apply \Lr{subclasses}, we use the fact that $\ccm(G) \subseteq \ranmn{<\alpha}  \cap TW(t)$ is \wqo\ by \Lr{lem TW m}, and our inductive hypothesis that $|\ranm{\beta}  \cap TW(t)|_{\mm}$ is countable \fe\ $\beta<\alpha$. 

Since $\ccm(G)$ is \wqo, we can write $\ccm(G)$ as $\forb{X} \cap \ranmn{<\alpha}\cap TW(t)$ for some finite $X \subset \ranmn{<\alpha} \cap TW(t)$, whereby we used the fact that $TW(t)$ is minor-closed. 
We claim that $|\ranmn{<\alpha} \cap TW(t)|_{\mm}$ is countable. Indeed, \fe\ $\beta< \alpha$, \Lr{lem TW car} yields \\ $|\ranm{\beta}  \cap TW(t)|_{\mm} \leq |\ran{\beta}  \cap TW(t+1)|_<$, which is countable by our inductive hypothesis on $\alpha$. Taking the union over all $\beta<\alpha$, which are countably many as $\alpha<\omega_1$, establishes our claim that $|\ranmn{<\alpha} \cap TW(t)|_{\mm}$ is countable. Thus there are countably many ways to choose its finite subset $X$ from above, hence countably many ways to choose $\ccm(G)$, and therefore $[G]_<$. This proves \eqref{Ra TW}.
\end{proof}

\section{Proper minor-closed classes of rayless forests are Borel}  \label{sec Borel}

In this section we use \Tr{thm trees} to prove \Tr{thm Borel Intro}, which we restate for convenience: 

\begin{theorem} \label{thm Borel}
Let $\ct\subset \cg$ be a minor-closed family of $\N$-labelled rayless forests. Then \ct\ is Borel \iff\ it does not contain all rayless forests.
\end{theorem}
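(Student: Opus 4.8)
The plan is to treat the two directions separately: the substantive one is that a proper minor-closed family of rayless forests is Borel, while the converse is classical descriptive set theory. I shall also indicate where the ``bridge'' of \cite{GeoGreCom} between minor-closed families and Borel subsets of \cg\ enters. For the converse direction, note that the set of $\N$-labelled rayless forests is $\mathbf{\Pi}^1_1$ as a subset of \cg: being a forest is a closed condition, and ``$G$ contains a ray'' is analytic (the witnessing ray is a point of a Polish space). It is moreover $\mathbf{\Pi}^1_1$-complete: fix a bijection $e\colon\N^{<\N}\to\N$ and send a tree $\tau\subseteq\N^{<\N}$ to the graph $G_\tau$ on $\N$ whose edges are the pairs $\{e(s),e(s')\}$ with $s,s'\in\tau$ one a parent of the other; this map is continuous, $G_\tau$ is always a forest (its non-isolated part being the tree underlying $\tau$), and ---since every ray of a rooted tree is eventually a descending branch--- $G_\tau$ is rayless \iff\ $\tau$ is well-founded. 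As the well-founded trees form a $\mathbf{\Pi}^1_1$-complete set, so does the set of $\N$-labelled rayless forests, which is therefore not Borel; and if \ct\ is not proper then, being a minor-closed family of rayless forests, \ct\ equals this set and hence is not Borel.

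For the main direction, assume \ct\ is proper and fix a rayless forest $F_0\in\cg\setminus\ct$; since \ct\ is minor-closed, $\ct\subseteq\forb{F_0}$. The heart of the argument is the following auxiliary fact, which I would prove by transfinite induction on the rank:
\begin{quote}
every countable rayless forest is a minor of $T_\alpha$ for some $\alpha<\oo_1$,
\end{quote}
where $T_\alpha$ is the tree of \Or{min tree}. The base case reduces to the statement that every finite forest is a minor of $T_n$ for some $n<\oo$, which follows by a short induction on the height using the recursive structure of the $T_n$. For the inductive step, let $F$ be a rayless forest of rank $\beta>0$ with kernel $A:=A(F)$; as $A$ is finite, only finitely many graph-components of $F$ meet $A$, and each component of $F-A$ has rank $<\beta$, hence is a minor of some $T_\gamma$ with $\gamma<\oo_1$ by the induction hypothesis. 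Letting $\gamma_0<\oo_1$ be the supremum of these ordinals, all components of $F-A$ embed as minors into $T_{\gamma_0}$ (using \Or{min tree}), while \Or{min tree} also ensures that every $T_\gamma$ with $\gamma>\gamma_0$ contains $\aleph_0$ pairwise disjoint copies of $T_{\gamma_0}$. One then reassembles a minor model of $F$ inside $T_{\gamma_1}$, for a suitable $\gamma_1<\oo_1$, by taking a copy of the finite skeleton of $F$ spanned by $A$ and routing each of the (at most $\aleph_0$) trees that $F$ hangs off it into its own copy of $T_{\gamma_0}$.

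Granting this, $F_0<T_{\gamma_1}$ for some $\gamma_1<\oo_1$, so $\ct\subseteq\forb{F_0}\subseteq\forb{T_{\gamma_1}}$; but a rayless forest with no $T_{\gamma_1}$ minor has all of its components of rank $<\gamma_1$ by \Or{min tree}, hence lies in $\ran{\gamma_1}$. Thus $\Rank(\ct)\leq\gamma_1+1=:\delta<\oo_1$. Since \ct\ is minor-closed it is a union of minor-twin classes, and all of its members lie in $\bigcup_{\beta<\delta}(\ran{\beta}\cap TW(1))$; by the unmarked version of \Tr{thm trees} (with $t=1$) each $\ran{\beta}\cap TW(1)$ meets only $\aleph_0$ minor-twin classes, so \ct\ is the union of at most $\aleph_0\cdot|\delta|=\aleph_0$ minor-twin classes. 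It then remains to invoke \cite{GeoGreCom}, by which a minor-closed family of countable graphs that is a union of countably many minor-twin classes is Borel; this yields that \ct\ is Borel. The main obstacle is the auxiliary cofinality fact ---that the $T_\alpha$ are cofinal among rayless forests under the minor relation--- and in particular the reassembly step of its induction; a secondary point, on which I can be only as precise as \cite{GeoGreCom} allows, is the passage from ``countably many minor-twin classes'' to ``Borel''.
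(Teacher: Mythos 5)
Your proof follows essentially the same structure as the paper's (bound the rank of $\ct$, invoke \Tr{thm trees} to get countably many minor-twin classes, finish via \cite{GeoGreCom}), and the rank-bounding step via the cofinality of the $T_\alpha$ is a sound way to recover what the paper gets from \Lr{lem exc T}. Your explicit $\Pi^1_1$-completeness argument for the converse, while the paper merely cites references, is also fine.

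However, there is a genuine gap in the final step. You attribute to \cite{GeoGreCom} the statement ``a minor-closed family of countable graphs that is a union of countably many minor-twin classes is Borel,'' but the version of that result actually recorded in the paper is \Tr{thm min Bor}, which says only: if $\cc(G)$ is the set of $\N$-labelled minors of a single countable graph $G$ \emph{and} $\cc(G)=\forb{\cx}\cap\cg$ for a \emph{countable} set $\cx$, then $\cc(G)$ is Borel. Writing $\ct=\bigcup_n\cc(F'_n)$ as you do, you still owe a proof that each $\cc(F'_n)$ admits a countable forbidden-minor description; this is not automatic for an arbitrary down-set in the minor quasi-order. The paper supplies exactly this via the well-quasi-ordering of countable forests (\Lr{lem TW m}, going back to Nash-Williams and Thomas), which gives each $\cc_n=\forb{X_n}$ with $X_n$ \emph{finite}. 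Without invoking the WQO of rayless/countable forests somewhere, the hypothesis of \Tr{thm min Bor} is unverified and the conclusion that each $\cc(F'_n)$ is Borel does not follow. There is also a lighter bookkeeping point you elide --- \Tr{thm trees} is about unlabelled, possibly finite, graphs while $\ct\subset\cg$ consists of $\N$-labelled (hence infinite-vertex-set) graphs, so one must pass to the unlabelled minor-closure $\ct'$ and choose suitable representatives $F'_n\in\ct$ (padding finite ones with isolated vertices), as the paper does --- but that is routine; the missing WQO step is the real gap.
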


Recall that \cg\ denotes the space of graphs $G$ with $V(G)=\N$, which we call \defi{$\N$-labelled} graphs, encoded as functions from $\N^2$ to $\{0,1\}$ representing the edges, endowed with the  product topology.  Our condition that $\ct\subset \cg$ be  \defi{minor-closed} here means that if $G\in \ct$ and $H\in \cg$ is a minor of $G$ (according to the standard definition for unlabelled graphs as in \Sr{sec finite}), then $H\in \ct$. 

The reader will not need to know much about the topology of \cg\ in order to understand this section; the connection between minor-closed families and Borel sets is established by the following result that we will use to prove \Tr{thm Borel}:

\begin{theorem}[{\cite{GeoGreCom}}]\label{thm min Bor}
Let \g be a countable graph, and let $\cc\subset \cg$ denote the set of countable $\N$-labelled graphs that are isomorphic to a minor of \G. Suppose $\cc(G)=\forb{\cx} \cap \cg$ for a countable set  \cx\ of (unlabelled) graphs. Then \cc\ is a Borel subspace of \cg. 
\end{theorem}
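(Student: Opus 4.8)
The plan is to show that $\cc$ is \emph{both} analytic ($\boldsymbol{\Sigma}^1_1$) and co-analytic ($\boldsymbol{\Pi}^1_1$) as a subset of the Polish space $\cg$, and then to invoke Suslin's theorem that $\boldsymbol{\Sigma}^1_1 \cap \boldsymbol{\Pi}^1_1 = \boldsymbol{\Delta}^1_1$ equals the Borel sets (see e.g.\ \cite{Jech}). Both hypotheses of the theorem are genuinely needed for this: being the set of minors of a fixed graph will only give analyticity, while being of the form $\forb{\cx} \cap \cg$ for a \emph{countable} $\cx$ will only give co-analyticity.

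For analyticity, first I would set up a standard Borel space of minor models. Fixing an enumeration of $V(G)$, a minor model of an $\N$-labelled graph $H$ in $G$ is a function $M$ assigning to each $v \in \N = V(H)$ a set $B_v \subseteq V(G)$ and to each $(u,v) \in \N^2$ a potential branch edge $e_{uv} \in E(G) \cup \{\ast\}$; the space $\cm_G$ of all such $M$ is a countable product of countable discrete spaces, hence Polish. The set
\[
R_G \ :=\ \{\, (H,M) \in \cg \times \cm_G : M \text{ is a valid minor model of } H \text{ in } G \,\}
\]
is Borel: ``each $B_v$ is non-empty'' and ``the $B_v$ are pairwise disjoint'' are open, resp.\ closed, conditions on $M$; ``$G[B_v]$ is connected'' is, for each $v$, the countable intersection over $(a,b) \in V(G)^2$ of the condition ``$a \notin B_v$, or $b \notin B_v$, or some finite $a$--$b$ walk of $G$ has all its vertices in $B_v$'', which is $G_\delta$ in $M$; and ``$e_{uv}$ joins $B_u$ to $B_v$ whenever $uv \in E(H)$'' is Borel in $(H,M)$. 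Since $H$ is isomorphic to a minor of $G$ exactly when $H < G$ (as $<$ respects isomorphism), we have $\cc = \{\, H \in \cg : H < G\,\}$, which is the projection of $R_G$ to the $\cg$-coordinate; hence $\cc$ is analytic.

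For co-analyticity I would use the hypothesis $\cc = \forb{\cx} \cap \cg$, which yields
\[
\cg \setminus \cc \ =\ \bigcup_{X \in \cx} \{\, H \in \cg : X < H \,\}.
\]
For each fixed countable graph $X$, exactly the same construction as above --- now with branch sets $B_v \subseteq \N = V(H)$ indexed by $v \in V(X)$, i.e.\ the roles of $G$ and $H$ reversed --- exhibits $\{\, H \in \cg : X < H \,\}$ as the projection of a Borel subset of $\cg \times \cm_X$, hence as an analytic set. As $\cx$ is countable and $\boldsymbol{\Sigma}^1_1$ is closed under countable unions, $\cg \setminus \cc$ is analytic, so $\cc$ is co-analytic. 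Therefore $\cc \in \boldsymbol{\Sigma}^1_1 \cap \boldsymbol{\Pi}^1_1 = \boldsymbol{\Delta}^1_1$, i.e.\ $\cc$ is Borel.

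The main obstacle I anticipate is purely a matter of bookkeeping: verifying that ``$M$ is a valid minor model'' is genuinely Borel --- not merely analytic --- in $(H,M)$, the one delicate clause being connectedness of the branch sets, which must be phrased through \emph{finite} walks so that it remains Borel; and dealing cleanly with degenerate cases ($G$ finite, a finite minor padded out by isolated vertices to vertex set $\N$, empty branch edges) so that they do not disturb the computation. It is worth stressing that only the countability of $\cx$, and no structural property of its members, enters the co-analytic half; likewise the analytic half uses nothing about $G$ beyond its being countable.
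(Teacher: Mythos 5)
Theorem~\ref{thm min Bor} is not proved in this paper at all: it is imported from the companion work \cite{GeoGreCom}, so there is no in-paper argument to compare yours against. On its own terms your proposal is correct, and it is exactly the argument one would expect that reference to contain: the set $\{H\in\cg : H<G\}$ is the projection of a Borel set of minor models, hence analytic, while the hypothesis $\cc=\forb{\cx}\cap\cg$ with $\cx$ countable writes the complement as a countable union of sets $\{H\in\cg : X<H\}$, each analytic by the same device, so $\cc$ is also co-analytic and Suslin's theorem finishes the job. Your treatment of the one delicate clause --- phrasing connectedness of a branch set via finite walks, which makes it $G_\delta$ rather than merely analytic --- is the right move, and you correctly isolate which hypothesis each half of the argument consumes.

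Two cosmetic points. First, ``each $B_v$ is non-empty'' is a countable intersection of open conditions (one per $v$), hence $G_\delta$ rather than open as stated; this changes nothing. Second, $\cx$ is only assumed to be a countable set of graphs, so a member $X$ could in principle be uncountable; then $\{H\in\cg : X<H\}$ is empty, hence Borel, so the countable-union step survives, but it is worth saying explicitly rather than tacitly assuming every $X\in\cx$ is countable.
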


Apart from this, the only topological statement that we will need in our proof is the fact that any countable union of Borel sets is itself Borel. 

We will also need the following lemma, which is perhaps well-known when restricted to trees, but we include a proof for completeness.

\begin{lemma} \label{lem exc T}
For every countable rayless tree $T$ \ti\ an ordinal $\alpha(T)< \oo_1$ \st\ $T<G$ holds \fe\ graph $G$ with $\Rank(G)> \alpha(T)$.
\end{lemma}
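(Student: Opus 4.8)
The plan is to prove the following more useful reformulation: every countable rayless tree $T$ has a rank $\alpha(T):=\Rank(T)$ with the property that $T<G$ for every graph $G$ with $\Rank(G)>\alpha(T)$. In fact this is precisely the content of \Or{min tree}: there we constructed, for every countable ordinal $\alpha$, a tree $T_\alpha$ with $\Rank(T_\alpha)=\alpha$ such that $T_\alpha<G$ for every non-empty connected $G$ with $\Rank(G)\ge\alpha$. So the heart of the matter is to reduce an arbitrary countable rayless tree $T$ to one of these canonical trees $T_\alpha$.

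The key step is therefore: \emph{for every countable rayless tree $T$, we have $T<T_\beta$ for some $\beta<\oo_1$.} I would prove this by transfinite induction on $\Rank(T)$, mirroring the recursive structure of \Dr{def:clu}. If $\Rank(T)=0$, then $T$ is finite, and $T<T_1$ (say) since $T_1$ contains arbitrarily large finite trees as minors by construction, or more simply one checks directly that $T<T_n$ for $n=|V(T)|$. For the inductive step, let $A:=A(T)$ be the kernel of $T$; since $T$ is a tree and $A$ is finite, $T-A$ has components $C_1,C_2,\dots$ each of rank $\beta_i<\Rank(T)$, and by the inductive hypothesis each $C_i$ (or rather the part $T[C_i\cup A]$, which is still a rayless tree) satisfies $C_i<T_{\beta_i'}$ for some $\beta_i'<\oo_1$. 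Now set $\beta:=\sup_i\beta_i'$, which is a countable ordinal since the $\beta_i'$ form a countable family and $\oo_1$ is regular; taking $\alpha(T):=\beta+1$, I claim $T<T_{\beta+1}$. This is because $T_{\beta+1}$ was built by attaching, to a root $r_{\beta+1}$, infinitely many copies of $T_\gamma$ for every $\gamma<\beta+1$, in particular infinitely many copies of $T_\beta$; since $T<T_\beta\cdot$(a suitably large finite or countable disjoint union) and the root of $T$ together with the paths joining the finitely many vertices of $A$ can be routed through finitely many of these copies and the root $r_{\beta+1}$, one embeds $T$ as a minor of $T_{\beta+1}$. The argument here parallels the construction of the model in the proof of \Or{min tree} (choosing a \pth{v}{A(T)} $P$ and \pths{x}{y} $P_{xy}$ and absorbing the finitely many components they meet into the branch set of the root).

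Once we know $T<T_{\beta}$ for some $\beta<\oo_1$, we set $\alpha(T):=\beta$. Then for any graph $G$ with $\Rank(G)>\alpha(T)=\beta$, we want $T<G$. We may assume $G$ is connected and non-empty, since if $\Rank(G)>\beta$ then some component $G_0$ of $G$ has $\Rank(G_0)>\beta$ (indeed $\Rank(G)$ equals the supremum of $\Rank$ plus the structure of the kernel, and in any case a component of maximal rank witnesses this), and $T<G_0\le G$; strictly, if $\Rank(G)>\beta$ then $\Rank(G)\ge\beta+1>\beta$, so by \Or{min tree} applied with $\alpha:=\beta$ (or $\beta+1$ if one prefers a strict inequality there) we get $T_\beta<G_0$, hence $T<T_\beta<G_0<G$ by transitivity of the minor relation. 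This completes the proof.

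The main obstacle I anticipate is the inductive step showing $T<T_{\beta+1}$ cleanly: one must be careful that absorbing the kernel $A$ and the connecting paths into a single branch set (that of $r_{\beta+1}$) does not destroy the minor models of the individual parts, and that only finitely many copies of each $T_\gamma$ are used while the rest are simply ignored. This is exactly the bookkeeping already carried out in the proof of \Or{min tree}, so the cleanest route may be to phrase the induction so as to directly invoke a strengthened inductive statement (``there is a model of $T$ in $T_{\beta}$ whose root branch set contains any prescribed vertex'', analogous to \eqref{any root}), rather than re-deriving the path-routing from scratch. A secondary, more minor point is confirming that each \emph{part} $T[C_i\cup A]$ of a rayless tree is again a rayless tree (true, since it is a connected subgraph of a tree, hence a tree, and a subgraph of a rayless graph, hence rayless) so that the inductive hypothesis applies to it and gives a bound below $\oo_1$; taking the supremum over the countably many parts then stays below $\oo_1$ by regularity.
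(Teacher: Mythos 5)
Your overall plan — reduce to showing $T<T_\beta$ for some countable $\beta$, then chain through \Or{min tree} — is the same as the paper's. Your two closing reductions also match: you correct yourself appropriately that $\Rank(G)>\beta$ only gives a component $G_0$ with $\Rank(G_0)\geq\beta$ rather than $>\beta$ (the bound could be tight for a disjoint union of graphs of rank exactly $\beta$), and this is indeed enough.

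Where you genuinely diverge from the paper is in the decomposition and the induction. You remove the whole kernel $A(T)$ so that every component $C_i$ of $T-A$ has strictly smaller rank, allowing a single transfinite induction on $\Rank(T)$. The paper instead removes only the root $r$ and must accept that some components of $T-r$ retain the same rank as $T$ (exactly those containing vertices of $A(T)$); it compensates with a \emph{nested} induction on $\Rank(T)$ and the ``spread'' $S(T,r)=\max_{x\in A(T)}d(x,r)$, and it works with the rooted subgraph embedding $\preceq$ rather than with minors. The payoff of the paper's choice is that the reassembly step is trivial: $r$ maps to $r_\alpha$, and each component of $T-r$ hangs off $r$ by exactly one edge and is plugged into a fresh child subtree of $r_\alpha$. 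Your choice avoids the extra spread parameter but makes the reassembly the bottleneck, and that is exactly where your argument has a gap.

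The gap: you need a concrete reason why $T<T_{\beta+1}$. Two problems arise. First, a component $C_i$ of $T-A$ in a tree can send edges to \emph{several distinct} vertices of $A$ — this happens precisely when the unique $T$-path between two vertices $a,b\in A$ passes through $C_i$. If you place the model of $C_i$ inside a single child subtree $T_{\beta_i}^{(i)}$ of $r_{\beta+1}$, there is only one edge, $r_{\beta_i}^{(i)}r_{\beta+1}$, leaving that subtree; you cannot represent two or more $C_i$--$A$ edges to branch sets that sit outside it. Second, unlike in \Or{min tree}, the roles are reversed: there the canonical tree $T_\alpha$ was the source and $A(G)$ in the target was swallowed into a single branch set; here $T$ is the source, so the vertices of $A(T)$ (and of the finite connecting subtree $T^*$ of $T$ spanning them) all need distinct, pairwise disjoint branch sets, and you cannot ``absorb'' them into the single branch set of the root as you suggest. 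A correct fix along your lines would be to embed the finite tree $T^*$ as a subtree of the top finitely many levels of $T_{\beta+k}$ (for $k$ depending on the depth of $T^*$), and hang the components of $T-T^*$ (each attached by one edge) off the appropriate vertices, using a root-preserving inductive hypothesis as you hint. But this requires checking that those levels have children of large enough rank available, i.e.\ $\beta+1$ will generally not suffice, and the routing of $T^*$ has to be carried out; none of this is done. Your appeal to the paths $P,P_{xy}$ in \Or{min tree}'s proof does not transfer because of the reversed roles noted above.

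Two smaller inaccuracies: $T<T_1$ is false for a general finite tree $T$ — $T_1$ is an infinite star and its minors are stars — so for the base case you need something like $T<T_{\omega}$, as the paper does. And $T[C_i\cup A]$ need not be connected (an isolated $a\in A$ not adjacent to $C_i$ gives a second component), so it is in general a forest rather than a tree; since you apply the inductive hypothesis to $C_i$ itself this is harmless, but the parenthetical claim is wrong.
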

\begin{proof}
We will state a modified statement that will help us apply transfinite induction on $\Rank(T)$.
A \defi{rooted tree} $(T,r)$ is a tree $T$ with one of its vertices $r$ designated as the root. The \defi{tree-order $\leq_r$} on $V(T)$ is the partial order defined by setting $x \leq_r y$ for any two vertices $x,y$ \st\ $x$ lies on the unique path in $T$ from $r$ to $y$. Given rooted trees $(T,r), (T',r')$, we write $(T,r) \preceq (T',r')$ if there is a subgraph embedding $h$ of $T$ into $T'$ that respects the tree-order, i.e.\ $x \leq_r y$ implies $h(x) \leq_{r'} h(y)$ \fe\ $x,y \in V(T)$. We will prove: 
\labtequ{stronger}{For every countable rayless rooted tree $(T,r)$ \ti\ an ordinal $\alpha=\alpha(T)< \oo_1$ \st\ $(T,r) \preceq (T_\alpha, r_\alpha)$ holds,}
where $T_\alpha$ denotes the minimal tree of Rank $\alpha$, as provided by \Or{min tree}, with rooted $r_\alpha$ provided in its construction. 

For finite $T$ it is not hard to see, by induction on the size of $T$, that  $\alpha(T)=\oo$ suffices. 

For $\Rank(T)\geq 1$, the inductive hypothesis is easier to apply when $A(T)=\{r\}$, but this need not be the case. Therefore, we introduce the \defi{spread $S(T,r)$} of a rooted tree $(T,r)$, defined as $S(T,r):= \max_{x\in A(T)} d(x,r)$, where $d$ denotes the graph distance. Fixing $\Rank(T)$, we prove \eqref{stronger} by induction on $S(T,r)$ as follows. 

Let $C_1, C_2,\ldots$ be a (possibly finite) enumeration of the components of $T-r$, and root each $C_i$ at the unique neighbour $r_i$ of $r$ in $C_i$. We claim that 
\labtequ{spread}{\fe\ $i$, either $\Rank(C_i)<\Rank(T)$, or $S(C_i,r_i)<S(T,r)$ (or both).}
To see this, suppose first that $C_i \cap A(T)=\emptyset$. Then $\Rank(C_i)<\Rank(T)$ because $C_i$ is contained in a component of $T- A(T)$ in this case. Otherwise, let $A':= C_i \cap A(T)\neq \emptyset$. Then $\Rank(C_i)=\Rank(T)$, and it is not hard to check that $A(C_i)= A'$. Let $x\in A'$ be a vertex realising $S(C_i,r_i)$. Then $d(x,r)=1+d(x,r_i)$, implying the desired $S(T,r)> S(C_i,r_i)$.

Using \eqref{spread}, we can now define $\alpha:= 1+ \sup_{i\in\N} \alpha(C_i)$,  noting that $\alpha(C_i)$ is well-defined by induction on $S(T,r)$, nested inside our induction on  $\Rank(T)$. To start the induction on $S(T,r)$, we note that $S(T,r)=0$ \iff\ $A(T)=\{r\}$, in which case the first possibility always applies in \eqref{spread}, and therefore $\alpha(C_i)$ is well-defined by induction on $\Rank(T)$. 

We claim that $(T,r) \preceq (T_\alpha,r_\alpha)$. To prove this, we map $r$ to $r_\alpha$, and use our inductive hypothesis to embed each $C_i$ into an appropriate component of $T_\alpha- r_\alpha$, rooted at the neighbour of $r_\alpha$, preserving the tree order. The latter is possible because, by the construction of $T_\alpha$, there are infinitely many components of $T_\alpha- r_\alpha$ isomorphic to $T_\alpha(C_i)$ for each $i$, and so we can pick a distinct such component to embed each $C_i$ using our inductive hypotheses.

This proves \eqref{stronger}. Our statement now follows by forgetting the root, and noting that any graph $G$ with $\Rank(G)> \alpha(T)$ has a component $G'$ with $\Rank(G')\geq \alpha(T)$ by the definition of rank, and $G'$ contains $T_\alpha$ as a minor by \Or{min tree}.
\end{proof}

We can now prove the main result of this section:
\begin{proof}[Proof of \Tr{thm Borel}]
If \ct\ is the family $\cf\cgr$ of all $\N$-labelled rayless forests, then it is well-known that it is not Borel (in fact it is co-analytic complete) \cite{Kechris,GeoGreCom}. 

So suppose \ct\ excludes some rayless forest $T$ as a minor. Then \ct\ excludes a rayless tree, obtained from $T$ by adding a vertex and joining it to each component, and so  \Lr{lem exc T} implies that $\ct \subseteq \Rank_\alpha \cap \cf\cgr$ for some ordinal $\alpha< \oo_1$. 

We would like to apply \Tr{thm trees}, but there is a subtlety we need to address: the former result is about unlabelled graphs, while $\ct\subset \cg$.   Therefore, we let $\ct'$ denote the class of (unlabelled) graphs $G$ \st\ \g is isomorphic to a minor of some graph in $\ct$. Note that $\ct'$ is minor-closed in the standard sense, unlike \ct\ which does not contain any graphs with finite vertex set. We still have $\ct' \subseteq \Rank_\alpha$, and combining this with \Tr{thm trees}, we deduce that $\ct'$ consists of countably many minor-twin classes because there are countably many ordinals $\beta \leq \alpha$. Let \seq{\ct}\ be an enumeration of these classes, and pick a representative $F_n$ from each $\ct_n$. For each infinite $F_n$, let $F'_n$ be an element of \ct\ isomorphic to $F_n$. For each finite $F_n$, let $F'_n$ be an element of \ct\ consisting of infinitely many isolated vertices and a finite graph isomorphic to $F_n$. Let $\cc_n\subset \cg$ denote the set of countable $\N$-labelled graphs that are isomorphic to a minor of $F'_n$. Easily,  
\labtequ{ctU}{$\ct= \bigcup \cc_n$,}
because \ct\ is minor-closed, and $\bigcup \cc_n$ contains a minor-twin of each element of $\ct$. Note that, since the countable forests are \wqo\ (\Lr{lem TW m}), $\cc_n=\forb{X_n}$ for a finite set $X_n$ (consisting of $K_3$ and a finite set of forests). Thus each $\cc_n$ is a Borel subset of \cg\ by \Tr{thm min Bor}, and therefore \ct\ is Borel by \eqref{ctU}.
\end{proof}

\begin{remark} \label{rem Bor}
If Thomas' conjecture, or its restriction to rayless graphs, is true, then following the lines of the proof of \Tr{thm Borel}, but using \Tr{main} \ref{M iir} $\leftrightarrow$ \ref{M iii} instead of \Tr{thm trees}, we would deduce that 
if $\ct\subset \cg$ is a minor-closed family of $\N$-labelled rayless graphs that does not contain all rayless forests then \ct\ is Borel.
\end{remark}

\section{From marked to unmarked graphs} \label{unmarking}

Marked graphs play an important role in the proof of \Tr{main Intro}, mainly via \Lr{subclasses}. This section provides two important tools for the former, which essentially allow us to ignore the marking in certain cases: 

\begin{lemma} \label{lem unmark}
For every ordinal $\alpha$, if $\ranm{\alpha}$ has a (infinite) descending chain, then so does $\ran{\alpha}$. 
\end{lemma}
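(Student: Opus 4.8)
The statement to prove is \Lr{lem unmark}: if $\ranm{\alpha}$ has an infinite descending chain (under $\mm$), then so does $\ran{\alpha}$ (under $<$). The natural approach is to take a descending chain $(G_n, M_n)$ in $\ranm{\alpha}$ and turn it into a descending chain of unmarked graphs by encoding the marking combinatorially into the graph structure. Recall a descending chain means $(G_{n+1}, M_{n+1}) \mm (G_n, M_n)$ but $(G_n, M_n) \not\mm (G_{n+1}, M_{n+1})$ for all $n$; so we need an unmarking operation $G \mapsto \widehat G$ that preserves both the ``$\mm$ holds'' direction and the ``$\mm$ fails'' direction, i.e. an operation satisfying $\widehat{(G,M)} < \widehat{(H,M')}$ if and only if $(G,M) \mm (H,M')$ (or at least the two implications we need, in the two relevant directions).

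\textbf{The unmarking gadget.} The first thing I would try is the simplest gadget: attach to each marked vertex a large finite rigid decoration that a minor model is forced to respect. Since the graphs here have bounded rank rather than bounded tree-width, I cannot reuse the $K_{t+2}$-attachment trick of \Lrs{lem TW m} and~\ref{lem TW car} verbatim (a $K_{t+2}$ has a minor model inside any sufficiently large clique, and our graphs may contain arbitrarily large cliques). Instead I would attach a \emph{pendant structure whose presence is detectable in a minor model}: for instance, attach to each marked vertex $v$ a long pendant path, or better, a pendant copy of a graph that cannot appear as a minor attached at an arbitrary vertex. A clean choice: form $\widehat{(G,M)}$ from $S^\bullet(G)$-style construction, or simply add, for each $v \in M$, infinitely many pendant triangles at $v$ (so $v$ lies in infinitely many triangles), mimicking the ``level 1'' trick from \eqref{triangles} in \Sr{sec wbqo}. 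Then in any minor model of $\widehat{(G,M)}$ in $\widehat{(H,M')}$, the branch set of a vertex lying in infinitely many triangles must contain a vertex of $\widehat{(H,M')}$ that also lies in infinitely many triangles; one then checks that such vertices are exactly the (copies of) marked vertices, forcing the minor model to map $M$-branch-sets onto $M'$-vertices, which is precisely the marked-minor condition. One must also check the operation stays within $\ran{\alpha}$: adding finitely many pendant triangles at each vertex does not change which finite vertex set witnesses the rank decomposition, so $\Rank(\widehat{(G,M)}) = \Rank(G)$, hence $\widehat{(G,M)} \in \ran{\alpha}$ whenever $(G,M) \in \ranm{\alpha}$.

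\textbf{Putting it together.} Given the descending chain $(G_n,M_n)$ in $\ranm{\alpha}$, I would set $H_n := \widehat{(G_n,M_n)} \in \ran{\alpha}$. From the ``if'' direction of the gadget equivalence, $(G_{n+1},M_{n+1}) \mm (G_n,M_n)$ gives $H_{n+1} < H_n$. From the ``only if'' direction, $(G_n,M_n) \not\mm (G_{n+1},M_{n+1})$ gives $H_n \not< H_{n+1}$. Hence $(H_n)$ is an infinite descending chain in $\ran{\alpha}$, as required. (If the chosen gadget only cleanly yields \emph{one} of the two implications, I would adjust: the direction we genuinely need is $H_{n+1} < H_n \Rightarrow (G_{n+1},M_{n+1}) \mm (G_n,M_n)$ for the ``not'' part to transfer, and $(G_{n+1},M_{n+1}) \mm (G_n,M_n) \Rightarrow H_{n+1} < H_n$ for the ``$<$'' part — so a full iff is what I want and the pendant-triangle gadget should supply it, by the argument above together with the observation that a vertex of level $>1$, i.e. an internal vertex of a decoration, lies in no triangle.)

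\textbf{Main obstacle.} The delicate point is the ``only if'' direction: showing that an \emph{unmarked} minor model $H_{n+1} < H_n$ necessarily respects the marking, i.e. cannot cheat by routing a marked branch set through the triangle-decorations or by collapsing decorations of distinct marked vertices together. This is exactly the kind of block/rigidity bookkeeping done in the proofs of \eqref{cla Gi} and \eqref{cla GX}, and I expect the argument to go through similarly — the key facts being that each decoration triangle is a block, that a branch set lying in infinitely many triangles is pinned to a marked vertex by \eqref{triangles}-style reasoning, and \Or{min comps}/\Lr{blocks} to control where the non-marked part goes. Verifying that the decoration does not accidentally create \emph{new} marked-minor relations (the failure direction) is where I would spend the most care.
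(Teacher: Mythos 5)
The gadget you propose does not work: the criterion ``lies in infinitely many triangles'' does not characterize the decorated vertices in $\widehat H$, because a general graph $H\in\ran{\alpha}$ may already contain unmarked vertices lying in infinitely many triangles---for instance a vertex that naturally carries infinitely many pendant triangles of its own. Such a vertex is indistinguishable from a decorated marked vertex of $\widehat H$, so a minor model of $\widehat G$ in $\widehat H$ is free to route the branch set of a marked $v\in M$ there, never meeting $M'$; the ``only if'' direction fails. The analogies you invoke do not carry over. In \eqref{cla Gi} and \eqref{cla GX}, the $K_{t+2}$-gadget is forced onto a decoration \emph{precisely because} the host graph has tree-width $<t+1$ and hence no $K_{t+2}$-minor; you correctly observe that this rigidity is unavailable in $\ran{\alpha}$, but the same failure strikes pendant triangles, since nothing about rank limits how many triangles a vertex may naturally lie in. Similarly, \eqref{triangles} is not a general fact but a property of the specific graphs $T(X)$, whose only triangles are the deliberately placed ones; a general $H\in\ran{\alpha}$ has no such discipline.

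The paper's proof takes a genuinely different route, and I do not see how to shortcut it with a single decoration applied to each $(G_n,M_n)$ independently. It first \emph{normalizes} the chain: using \Lr{lem cones} to make each $G_i$ (and even $A(G_i)$) 2-connected, and then \Or{minor rank} and \Or{apices} to pass to a tail along which $\Rank(G_i)$, $|A(G_i)|$, $|M_i|$ and $|A(G_i)\cap M_i|$ are all constant. This is where the descending-chain hypothesis is really used, and it has no analogue in your plan. Only then are decorations attached, and they are tailored to the normalized data: copies of $T_\alpha$ are attached to the marked vertices \emph{outside} $A(G_i)$ to reduce to $M_i\subseteq A(G_i)$, and then two copies of $G_i$ itself are attached to each \emph{unmarked} vertex of $A(G_i)$ to reduce to $M_i=A(G_i)$---in each case the 2-connectivity of $G_i$, resp.\ of $A(G_i)$, lets \Lr{blocks} pin the minor model to the correct block. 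Once $M_i=A(G_i)$ with constant rank, no gadget is needed at all: by \Or{apices}, every unmarked minor model between consecutive graphs automatically sends apex sets to apex sets, i.e.\ is marked. In short, the idea is to make the marking \emph{redundant} (equal to an intrinsic invariant of the graph) rather than to make it \emph{detectable}; your plan supplies no mechanism to force $M$ to align with anything intrinsic, and without the normalization and the self-referential $G_i$-copy decoration I do not see how to repair it.
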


\begin{corollary} \label{cor unmark}
\Fe\ $0\leq \alpha<\omega_1$, we have $|\ran{\alpha}|_<= |\ranm{\alpha}|_{\mm}$.
\end{corollary}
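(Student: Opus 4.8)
Corollary~\ref{cor unmark} is a counting statement, and the natural route is to bound $|\ran{\alpha}|_<$ and $|\ranm{\alpha}|_{\mm}$ against each other in both directions, using \Lr{lem unmark} to control the descending-chain side when needed. In one direction the inequality $|\ran{\alpha}|_< \leq |\ranm{\alpha}|_{\mm}$ is essentially free: an unmarked graph $G\in\ran{\alpha}$ is the same object as the marked graph $(G,\emptyset)$, and for unmarked graphs $G<H$ holds if and only if $(G,\emptyset)\mm(H,\emptyset)$, since the marked-minor condition is vacuous when there are no marked vertices. Hence the map $[G]_< \mapsto [(G,\emptyset)]_{\mm}$ is well-defined and injective, giving $|\ran{\alpha}|_< \leq |\ranm{\alpha}|_{\mm}$. (One should check this map is injective: if $(G,\emptyset)$ and $(H,\emptyset)$ are marked-minor-twins then $G<H$ and $H<G$, so $[G]_<=[H]_<$.)

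The reverse inequality $|\ranm{\alpha}|_{\mm} \leq |\ran{\alpha}|_<$ is the substantive half, and here I would reuse the unmarking trick from \Lrs{lem TW m} and~\ref{lem TW car}: attach a rigid gadget to each marked vertex that forces any minor model to respect the marking. Concretely, pick a representative $(G_X,M_X)$ from each marked-minor-twin class $X$ in $\ranm{\alpha}$; after applying $S^\bullet$ twice (via \Lr{lem cones}) we may assume each $G_X$ is $2$-connected and sufficiently large, then attach a disjoint copy of a large complete graph $K_N$ to each marked vertex (identifying the marked vertex with one vertex of the $K_N$) and forget the marking, obtaining an unmarked graph $G'_X$. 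Since attaching finitely many finite cliques does not create a ray, $G'_X$ is still rayless; one checks its rank is unchanged (the kernel absorbs the finitely many extra clique vertices, or more carefully the cliques are finite so they contribute nothing to the rank recursion). The core claim, proved exactly as \eqref{cla Gi} and \eqref{cla GX} using \Lr{blocks} to track where the $K_N$-blocks must go, is that for graphs with the same number of marked vertices, $G'_X < G'_Y \iff (G_X,M_X)\mm(G_Y,M_Y)$; consequently $G'_X,G'_Y$ are minor-twins iff $X=Y$. Since there are only countably (in fact finitely, for each fixed number of marked vertices) many values of $|M_X|$ to sort by, and $|\ran{\alpha}|_<$ is infinite, this yields $|\ranm{\alpha}|_{\mm}\leq\aleph_0\cdot|\ran{\alpha}|_< = |\ran{\alpha}|_<$.

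**The role of \Lr{lem unmark} and where the obstacle lies.** Putting the two inequalities together gives $|\ran{\alpha}|_< = |\ranm{\alpha}|_{\mm}$ directly, so in fact \Lr{lem unmark} is not strictly needed for this cardinality identity under the gadget approach — but I suspect the paper intends the result to be read together with the descending-chain transfer, or uses \Lr{lem unmark} to handle a degenerate case. The main obstacle I anticipate is the rank bookkeeping in the gadget construction: one must verify that attaching a $K_N$ to a marked vertex $v$ of $G_X$ does not change $\Rank$, which is clear when $v\in A(G_X)$ (the clique vertices become part of a new, still-finite component-after-removing-the-kernel) but needs a small argument when $v\notin A(G_X)$, since then $v$ lies in some part $P$ of lower rank and the clique just enlarges that part without raising its rank above $\alpha$. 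The other delicate point is ensuring $N$ is chosen large enough relative to the tree-width or clique number of $G_X$ so that $G_X$ genuinely has no $K_N$-minor — but here, unlike in \Sr{sec forests}, we have no tree-width bound, so instead I would make $N$ depend on $G_X$ itself (take $N$ larger than $|V(G_X)|$, say, after the double suspension), which is fine because the gadget is attached per-representative, not uniformly. The verification of the $\iff$ in the core claim is then a routine adaptation of the \Lr{blocks}-based argument already carried out twice in the paper.
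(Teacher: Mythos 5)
Your easy direction ($|\ran{\alpha}|_< \leq |\ranm{\alpha}|_{\mm}$) is exactly the paper's, and fine. Your hard direction, however, has a genuine gap: the clique-gadget you import from \Lrs{lem TW m} and~\ref{lem TW car} only works because there the graphs have bounded tree-width, so one can choose a single finite $N$ such that none of the unmarked graphs has a $K_N$-minor, and then \Lr{blocks} forces every copy of $K_N$ in $G'_X$ to land on a copy of $K_N$ in $G'_Y$. For arbitrary graphs in $\ran{\alpha}$ there is no such $N$: a countable rayless graph of rank $1$ can contain $K_n$ as a minor for every $n$ (e.g.\ $\dot{\bigcup}_n K_n$), so no finite clique is excluded, and the argument that the gadget blocks must map to gadget blocks breaks down. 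Your proposed fix, ``take $N$ larger than $|V(G_X)|$,'' is not available: $G_X$ is in general countably infinite, so $|V(G_X)|=\aleph_0$ and there is no finite $N$ above it. Even setting that aside, $N$ would have to exceed the clique/tree-width structure of the \emph{target} $G_Y$ as well, and since $Y$ ranges over the whole family there is no coherent per-representative choice.

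The paper avoids this by not using clique gadgets at all. It derives the hard inequality from \Rr{rem corunm}, which observes that the constructions inside the proof of \Lr{lem unmark} — attaching copies of the minimal rayless tree $T_\alpha$ (from \Or{min tree}) to marked vertices outside $A$, and attaching two further copies of the graph itself to each unmarked vertex of $A$ — turn a family of marked graphs with constant $|A(G_i)|$, $|M_i|$, $\Rank(G_i)$ into unmarked graphs $G'_i$ with $G_i \mm G_j$ iff $G'_i < G'_j$. Those gadgets are forced into place by $2$-connectedness, \Lr{blocks} and \Or{apices}, not by excluding a clique, so they survive the absence of any tree-width bound. The proof of \Cr{cor unmark} then just selects, from an uncountable family of $\mm$-twin-class representatives, an uncountable subfamily on which the three numerical invariants are constant (possible since each takes countably many values), applies \Rr{rem corunm}, and reads off an equally large family of non-twin unmarked graphs. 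So \Lr{lem unmark} is not handling a ``degenerate case'' as you speculated — its gadget machinery is the substance of the hard direction, and your replacement of it by the $K_N$ gadget does not go through.
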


\begin{proof}[Proof of \Lr{lem unmark}]
Suppose \ti\ a \mm-descending chain $G_1 \gneq G_2 \gneq G_3 \gneq  \ldots$ in  $\ranm{\alpha}$, and let $M_i$ denote the set of marked vertices of $G_i$. By \Lr{lem cones} we may assume \obda\ that each $G_i$ is 2-connected, because we may add a couple of (marked) suspension vertices to each $G_i$ without violating any of the relations $G_i \gneq G_j$. Moreover, we may assume that $A(G_i)$ is 2-connected \fe\ $i$, since every suspension vertex lies in $A(G_i)$. Here we use the obvious fact that $\Rank(S(G))=\Rank(G)$ \fe\ graph \G.

Since $(G',M')\mm (G,M)$ implies $|M|\geq |M'|$, we deduce that $|M_i|$ is monotone decreasing. Thus we may assume that it is constant (and at least 1, or there is nothing to prove). Similarly, since $(G',M')\mm (G,M)$ implies 
 $\Rank(G) \geq \Rank(G')$ (\Or{minor rank}), we may assume, by induction on $\alpha$, that $\Rank(G_i)= \alpha$ \fe\ $i$. Letting $A_i:=A(G_i)$, it follows from \Or{apices} that $|A_i|$ is monotone decreasing too, and again we may assume that it is constant. Similarly, we have $|A_i \cap M_i| \geq |A_{i+j} \cap M_{i+j}|$ \fe\ $i,j\in \N$, and so we can also assume that $|A_i \cap M_i|$ is constant. Note that this implies that $|M_i \sm A_i|$ is  constant too, and that
\labtequ{Mtil}{any minor model $\cb$ of $G_{i}$ in $G_{i-j}$ maps each vertex in $M_i \sm A_i$ to a branch set intersecting $M_{i-j} \sm A_{i-j}$.}

We claim that
\labtequ{MinA}{we may assume that $M_i\subseteq A_i$ \fe\ $i$.}
Indeed, if this is not the case, then let $\tilde{M} _i:= M_i \sm A_i$, and for each $x\in \tilde{M} _i$, attach a copy of the minimal tree $T_\alpha$ of Rank $\alpha$, provided by \Or{min tree}, to $x$, by identifying the root of $T_\alpha$ with $x$, to obtain the marked graph $(\tilde{G}_i,M_i\sm \tilde{M} _i)$ \fe\ $i\in \N$. We will show that \seq{\tilde{G}}\ is still a \mm-descending chain: firstly, using \eqref{Mtil} we can extend $\cb$ into a minor model of $\tilde{G}_{i}$ in $\tilde{G}_{i-j}$. This shows that $\tilde{G}_{i-j} \mm \tilde{G}_{i}$. To show that $\tilde{G}_{i} \not\mm \tilde{G}_{i+j}$, suppose $\tilde{\cb}$ is such a minor model. Then by \Lr{blocks}, since $G_{i}\subset \tilde{G}_{i}$ is 2-connected, \ti\ a model $\cb'$ of 
$G_{i}$ in a block of $\tilde{G}_{i+j}$. Since the only block of $\tilde{G}_{i+j}$ is $G_{i+j}$ by construction, we deduce that $G_{i} \mm G_{i+j}$, a contradiction that proves \eqref{MinA}.

Next, we claim that
\labtequ{MeqA}{we may assume that $M_i= A_i$ \fe\ $i$.}
To prove this, we extend each $G_i$ into a supergraph $G'_i$ as follows. For each unmarked $x\in A_i$, add two disjoint copies of $G_i$ to $G_i$, and join $x$ to each of its two copies by an edge (\fig{figCopies}, middle). Having done so for each $x\in A_i \sm M_i$ we have obtained $G'_i$. Note that each block of $G'_i$ is isomorphic to $G_i$ via an isomorphism that preserves the marking. To prove \eqref{MeqA} it suffices to check that $G'_{j-1} \gneq G'_{j}$ still holds \fe\ $j>1$. 

\begin{figure} 
\begin{center}
\begin{overpic}[width=1.0\linewidth]{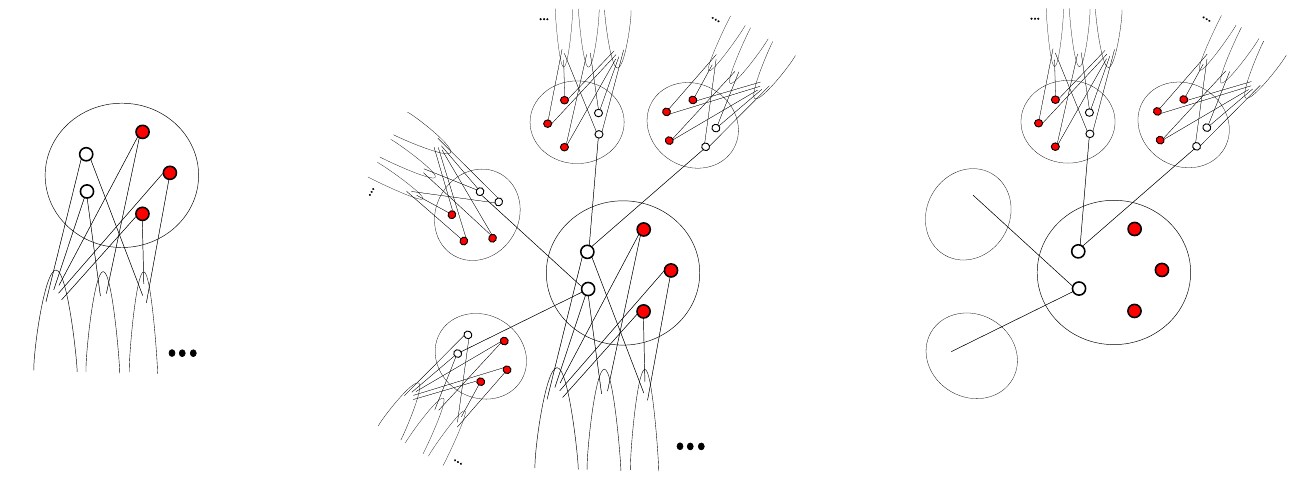} 
\put(9,3){$G_j$}
\put(46,-1){$G'_j$}
\put(85,3){$G'_{j+1}$}
\put(43,20.5){$x$}
\put(39,32){$X$}
\put(76.5,32){$X'$}
\put(81,20.5){$\tilde{x}$}
\put(84.5,27){$x'$}
\end{overpic}
\end{center}
\caption{The graph $G'_j$ in the proof of \eqref{MeqA} (middle), produced by joining copies of $G_j$ (left), and an attempt to embed it into $G'_{j+1}$ (right).} \label{figCopies}
\end{figure}

Let us first check $G'_j \mm G'_{j-1}$. Pick a minor embedding $f: G_{j} < G_{j-1}$. 
Using \eqref{Mtil} we can extend $f$ into a minor embedding of $G'_j$ in $G'_{j-1}$ by embedding each copy of $G_j$ attached to $x$ to the copies of $G'_{j-1}$ attached to the (unique) vertex $x' \in A_{j-1} \sm M_{j-1}$ contained in $f(x)$, by imitating $f$ inside these copies. This proves $G'_j \mm G'_{j-1}$.

To check $G'_{j} \not\mm G'_{j+1} $, suppose to the contrary there is a minor embedding 
$g: G'_j \mm G'_{j+1}$. Recall that, by \Or{apices}, each $g(x), x\in  A(G'_j)$ contains a distinct $x'\in A(G'_{j+1})$. Since $|A_i|$ and $|A_i \cap M_i|$ are constant, it follows that $|A(G'_i)|$ is constant too, hence $|A(G'_j)|=|A(G'_{j+1})|$. Thus there is a bijection  $x\mapsto x'$ from $A(G'_j)$ to $A(G'_{j+1})$.

Recall we are assuming that $A_j$ is 2-connected, and so $g$ maps $A_j\subset A(G'_j)$ so that each branch set intersects a fixed block $B$ of $G'_{j+1}$ by \Lr{blocks}. We claim that this block $B$ must be $G_{j+1}$ (rather than one of its copies in the construction of $G'_{j+1}$). Suppose to the contrary, there is $x\in A_j$ \st\ $x'\not\in A_{j+1}$. Thus $x'$ is a copy of an unmarked vertex of $A_{j+1}$ (\fig{figCopies}, right). Note that the branch set $g(x)$ of $x$ cannot contain the unique neighbour $\tilde{x}$ of $x'$ in $A_{j+1}$, because $x\mapsto x'$ and so $\tilde{x}$ is in the branch set of some other vertex of $A(G'_j)$. Thus $g(x)$ avoids $A_{j+1}$. Since at most one of the two copies of $G_j$ adjacent to $x$ can contain the edge $x'\tilde{x}$, it follows that $g$ maps at least one of these copies $X$ inside the copy $X'$ of $G_{j+1}$ containing $x'$. But this copy avoids $x'$ which is already used by $g(x)$, and we therefore reach a contradiction as we do not have enough vertices in $X'\cap A(G_{j+1})$ to accommodate $X\cap A(G_{j})$.


Thus $g$ maps $A_j$ so that each branch set intersects $G_{j+1}$. Since $G_j \supset A_j$ is 2-connected, each branch set of a vertex of $G_j$ intersects $G_{j+1}$ by the first sentence of \Lr{blocks}. By the second sentence, there is a model of $G_j$ in $G_{j+1}$ respecting the marking, a contradiction that proves \eqref{MeqA}.

\medskip
Recall that, by \Or{apices}, any unmarked minor model $f: G_i < G_j$ is a marked minor model of $(G_i, A(G_i))$ in $(G_j, A(G_j))$. Thus \eqref{MeqA} implies that $G_1 \gneq G_2 \gneq G_3 \gneq  \ldots$ is also an unmarked descending chain in $\ran{\alpha}$. 
\end{proof}

\begin{remark} \label{rem corunm}
In this proof we only used the assumption that the family \seq{G}\ is a descending chain in order to make each of $|A(G_n)|, |M_n|$ and $\Rank(G_n)$ independent of $n$. Using this, we then produced a modified family \seq{G'}\ of unmarked graphs \st\ $G_i \mm G_j$ \iff\ $G'_i < G'_j$ \fe\ $i,j$. This has \Cr{cor unmark} as an important consequence:
\end{remark}

\begin{proof}[Proof of \Cr{cor unmark}]
Easily,  $|\ran{0}|_<= |\ranm{0}|_{\mm}= \aleph_0$ since there are countably many (marked) finite graphs, so assume $\alpha\geq 1$ from now on.

The inequality $|\ran{\alpha}|_< \leq |\ranm{\alpha}|_{\mm}$ is trivial since each $<$-equivalence class of $\ran{\alpha}$ is contained in a distinct $\mm$-equivalence class of $\ranm{\alpha}$.

For the converse inequality, let $((G_i,M_i))_{i\in \ci}$ be a family of marked graphs, one from each $\mm$-equivalence class of $\ranm{\alpha}$. If \ci\ is countable then we are done since $|\ran{\alpha}|_<$ is infinite. If it is uncountable, then there is an uncountable subfamily $((G_i,M_i))_{i\in \cj\subseteq \ci}$ within which each of $|A(G_i)|, |M_i|$ and $\Rank(G_i)$ is constant, because there are countably many choices for each of these numbers. Using  \Rr{rem corunm} we can transform  this subfamily into a family $(G'_i)_{i\in \cj}$ of unmarked graphs of the same rank no two of which are minor twins. This completes the proof since $|\cj|=|\ci|=|\ranm{\alpha}|_{\mm}$.
\end{proof} 

\begin{remark} \label{rem anti}
We cannot adapt \Lr{lem unmark} to antichains, because we cannot keep $|A_i(G)|$ constant. 
\end{remark}

\section{Equivalences for a fixed rank} \label{sec fix rk}

The following is the main technical ingredient of the proof of \Tr{main Intro}; it strengthens it by providing additional equivalent conditions, and refines it by layering graphs \wrt\ their rank. Let 
$\ranmn{<\alpha}:= \{ (G,M) \in \ranm{<\alpha} \mid |M|\leq n\}$. (We do not require $M\subseteq A(G)$ here.)
\begin{theorem}\label{tfae}
The following are equivalent \fe\ ordinal $1\leq \alpha<\omega_1$: 
	\mymargin{
 $\bullet$ $\ranm{<\alpha}$ has no antichain??; 
$\bullet$ $\ranm{<\alpha}$ is \wqo??;
$\bullet$ $\ran{<\alpha}$ is \wqo?;
	}
\begin{enumerate}[itemsep=0.0cm, label=(\Alph*)]
 \item \label{R iip} $\ranmn{<\alpha}$ is \wqo\ \fe\ \nin;
 \item \label{R iii} $|\ran{\alpha}|_<$ is countable; 
 \item \label{R iiim} $|\ranm{\alpha}|_{\mm}$ is countable; 
 \item \label{R iiip} $|\ran{\alpha}|_< < 2^{\aleph_0}$;
 \item \label{R iv} $\ranm{\alpha}$ has no descending chain; 
 \item \label{R ivp} $\ran{\alpha}$ has no descending chain; 
\item \label{R ip} $\ranmn{<\alpha}$ has no antichain \fe\ \nin; 
\end{enumerate}
\end{theorem}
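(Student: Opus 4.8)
The plan is to prove \Tr{tfae} by transfinite induction on $\alpha$: I assume that the equivalence of \ref{R iip}--\ref{R ip} has already been established for every ordinal $\beta$ with $1\le\beta<\alpha$, and I use freely the corresponding facts for $\beta=0$, namely that $\ranm{0}$ (the finite marked graphs) is \wqo\ under $\mm$ by \Tr{GMT} and has only countably many $\mm$-classes. (For $\alpha=1$ this recovers \Cr{UF ctble} and \Lr{Rank 1}.) The target is the loop
\[
\ref{R iip}\ \Rightarrow\ \ref{R iii}\ \Rightarrow\ \ref{R iiip}\ \Rightarrow\ \ref{R ip}\ \Rightarrow\ \ref{R iip},
\]
supplemented by $\ref{R iip}\Rightarrow\ref{R ivp}$ and $\ref{R ivp}\Rightarrow\ref{R ip}$ so that the descending-chain conditions join the loop. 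Two equivalences are immediate: $\ref{R iii}\Leftrightarrow\ref{R iiim}$ is \Cr{cor unmark}, while $\ref{R iv}\Leftrightarrow\ref{R ivp}$ holds because a descending chain in $\ran{\alpha}$ is trivially one in $\ranm{\alpha}$, and conversely \Lr{lem unmark} turns a marked descending chain in $\ranm{\alpha}$ into an unmarked one in $\ran{\alpha}$. Finally $\ref{R iii}\Rightarrow\ref{R iiip}$ is trivial since $\aleph_0<\cont$.

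For $\ref{R ip}\Rightarrow\ref{R iip}$, I would note first that \ref{R ip} at $\alpha$ restricts to ``$\ranmn{<\beta}$ has no antichain'' for every $n$ and every $\beta<\alpha$, so by the inductive hypothesis every clause of \Tr{tfae} holds at each $\beta<\alpha$; in particular $\ranm{\beta}$ has no descending chain for $\beta<\alpha$. If $(G_i)$ were a descending chain in $\ranmn{<\alpha}$, then $\Rank(G_{i+1})\le\Rank(G_i)$ by \Or{minor rank}, so the ranks are eventually constant, equal to some $\gamma\le\Rank(G_1)<\alpha$, and the relevant tail is a descending chain inside $\ranm{\gamma}$ --- a contradiction. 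Hence $\ranmn{<\alpha}$ has neither an antichain nor a descending chain, so it is \wqo\ by \Prr{wqo ch}; this is \ref{R iip}.

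For $\ref{R iip}\Rightarrow\ref{R iii}$ (the first distinguished implication): again \ref{R iip} yields all clauses at every $\beta<\alpha$, so $|\ranm{\beta}|_{\mm}$ is countable for $\beta<\alpha$ and hence $|\ranm{<\alpha}|_{\mm}\le\aleph_0$ (a countable union, $\alpha<\oo_1$). For $G$ with $\Rank(G)=\alpha$ we have $\ccm(G)\subseteq\ranmn{<\alpha}$ with $n:=|A(G)|$, and this is \wqo\ by \ref{R iip}; so the $\mm$-downward-closed set $\ccm(G)$ is determined by the finite antichain of minimal elements of $\ranmn{<\alpha}\setminus\ccm(G)$, of which --- up to $\mm$-equivalence, all that matters here --- there are only countably many choices. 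Thus $\ccm(G)$ takes only countably many values, and by \Lr{subclasses} (its hypotheses hold: $\ccm(G)$ is \wqo, and $|\ranm{\beta}\cap\ccm(G)|_{\mm}\le|\ranm{\beta}|_{\mm}$ is countable) the class $[G]_<$ is determined by $\ccm(G)$, since for $H$ of rank $\alpha$ one has $G<H\iff\ccm(G)\subseteq\ccm(H)$. Together with $|\ran{\beta}|_<\le\aleph_0$ for $\beta<\alpha$, this gives $|\ran{\alpha}|_<=\aleph_0$. For $\ref{R iip}\Rightarrow\ref{R ivp}$ I argue by contradiction: a descending chain $(G_i)$ in $\ran{\alpha}$ may be taken to have constant rank $\gamma$ and constant $|A(G_i)|$ (using \Or{minor rank} and \Or{apices}); if $\gamma<\alpha$ this contradicts the inductive hypothesis, and if $\gamma=\alpha$ then, by \Or{apices} and the trivial direction of \Lr{subclasses}, $(\ccm(G_i))_i$ is a decreasing chain of $\mm$-downward-closed subsets of the \wqo\ $\ranmn{<\alpha}$, hence eventually constant, so $\ccm(G_i)=\ccm(G_{i+1})$ for some $i$, whence $G_i<G_{i+1}$ by \Lr{subclasses} --- a contradiction.

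The remaining implications $\ref{R iiip}\Rightarrow\ref{R ip}$ (the second distinguished implication) and $\ref{R ivp}\Rightarrow\ref{R ip}$ are where the main work lies, and I would derive both, by contraposition, from one construction. Given an infinite $\mm$-antichain $(A_i,M_i)_{i\in\N}$ in $\ranmn{<\alpha}$, I would first reduce (by pigeonhole on the countably many ordinals below $\alpha$) to the case where all $A_i$ share one rank $\gamma_0$, and then (replacing each $(A_i,M_i)$ by a double marked suspension, using \Lr{lem cones}) to the case where each $A_i$ is $2$-connected and not a forest, without changing ranks or destroying the antichain. Then for $S\subseteq\N$ I would let $G_S$ consist of a fixed ``rank-$\alpha$ skeleton'' --- a graph with a finite kernel containing $n+1$ distinguished vertices and with co-parts including $\oo$ disjoint copies of the minimal tree $T_\beta$ of \Or{min tree} for every $\beta<\alpha$ --- to which, for each $i\in S$, a copy of $A_i$ is glued by joining its $j$-th marked vertex to the $j$-th distinguished kernel vertex. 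One then checks that $\Rank(G_S)=\alpha$, that the distinguished vertices belong to $A(G_S)$, that $i\in S$ forces $(A_i,M_i)\mm(G_S,A(G_S))$, and --- using \Lr{blocks}, the $2$-connectedness and non-forest property of the $A_i$, and the antichain property --- that $i\notin S$ forbids it. Consequently, whenever $S\not\subseteq S'$ we get $G_S\not<G_{S'}$, for otherwise \Or{apices} (equal ranks) would give $\ccm(G_S)\subseteq\ccm(G_{S'})$, contradicting $(A_i,M_i)\in\ccm(G_S)\setminus\ccm(G_{S'})$ for $i\in S\setminus S'$. Taking all $S\subseteq\N$ yields $|\ran{\alpha}|_<\ge\cont$, contradicting \ref{R iiip}; taking $S_k:=\{k,k+1,\dots\}$ yields a descending chain $G_{S_1}\gneq G_{S_2}\gneq\cdots$ in $\ran{\alpha}$, contradicting \ref{R ivp}. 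The hard part is designing this skeleton-plus-gadget so that the kernel stays finite while still accommodating up to $n$ marked vertices, and so that no spurious marked minor of an $A_i$ can arise from the skeleton or from a different $A_j$; the reductions to $2$-connected, non-forest antichains and the block structure supplied by \Lr{blocks} are precisely what control this.
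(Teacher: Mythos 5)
Your overall route coincides with the paper's: transfinite induction on $\alpha$, the same loop of implications, and the same supporting tools (\Lr{subclasses}, \Or{apices}, \Or{minor rank}, \Lr{lem unmark}, \Cr{cor unmark}, \Prr{wqo ch}).  Your arguments for $\ref{R ip}\to\ref{R iip}$, $\ref{R iip}\to\ref{R iii}$ and $\ref{R iip}\to\ref{R ivp}$ are in substance the paper's (e.g.\ the paper's explicit bad sequence $H_k\in\ccm(G_k)\setminus\ccm(G_{k+1})$ is your ``decreasing chains of downward-closed subsets of a \wqo\ stabilise'').  The genuine divergence is in $\ref{R iiip}\to\ref{R ip}$ and $\ref{R ivp}\to\ref{R ip}$: the paper introduces the construction $\cc\mapsto G_\cc$ of \Dr{def GC} and establishes its monotonicity $G_\cc<G_{\cc'}\iff\cc\subseteq\cc'$ in \Lr{lem GC}, whereas you propose a skeleton-plus-gadget graph $G_S$ gluing the antichain members $A_i$ directly.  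Your version is attractive in that the fixed rank-$\alpha$ skeleton forces $\Rank(G_S)=\alpha$ outright, which removes the paper's restriction to co-infinite $X$, and in that it glues only the $A_i$'s themselves rather than representatives of every $\mm$-twin class of a whole minor-closed family.

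There are, however, two gaps.  First, the reduction ``by pigeonhole on the countably many ordinals below $\alpha$ to the case where all $A_i$ share one rank $\gamma_0$'' is not sound: pigeonhole guarantees an infinite fibre only when the set of classes is finite, and here both the index set and the set of ranks below $\alpha$ may be countably infinite, so the rank map could be injective.  (Luckily constant rank does not appear to be used downstream, so this step should simply be dropped.)  Second, and more seriously, the step you defer --- that $i\notin S$ forbids $(A_i,M_i)\mm(G_S,A(G_S))$ --- is exactly where the work is.  With a forest skeleton and bridge gluings, \Lr{blocks} places the model of a $2$-connected non-forest $A_i$ inside a block $D'$ which must be some glued $A_j$; but the model of $A_i$ in $D'$ produced by \Lr{blocks} is a priori only \emph{unmarked}, and $A_i<A_j$ unmarked does not contradict the $\mm$-antichain property.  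Each branch set $\cb(u)$ with $u\in M_i$ meets $A(G_S)$ outside $D'$, and one must perform branch-set surgery to produce a connected branch set inside $A_j$ meeting a marked vertex of $A_j$; this is precisely the content of \eqref{CinCp} in the paper's proof of \Lr{lem GC}, which your sketch does not reproduce.  You also leave unverified that $A(G_S)$ contains the distinguished vertices and nothing else (this requires the skeleton to attach each distinguished vertex to copies of $T_\beta$ with $\beta$ unbounded below $\alpha$).  In short, the route is plausible and arguably slightly leaner, but its technical heart --- a substitute for \Lr{lem GC} --- is missing.
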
 

The proof of \Tr{tfae} is involved, and it is not easy to break its statement up into individual equivalences: not only we prove some of the equivalences by cycling through several items, but also to prove some of the implications we perform an induction on $\alpha$ that requires other implications. 

Before proving \Tr{tfae}, we introduce a way to represent a subclass of $\ranmn{<\alpha}$ by a single, unmarked, graph in $\ran{\alpha}$:

\begin{definition} \label{def GC} 
Given a  marked-minor-closed class $\cc \subseteq \ranmn{<\alpha}$, we define a (unmarked) graph $G_\cc=G_\cc^n$ as follows. 
\begin{enumerate}
\item \label{GC i} \fe\   marked-minor-twin class \ch\ of elements of \cc, we pick a representative $(H,M) \in \ch$, and put \oo\ pairwise disjoint copies of $H$ into $G_\cc$;
\item \label{GC ii} we add a set $A_\cc$ of $n$ isolated vertices to $G_\cc$; and
\item \label{GC iii} for each copy $H_i$ of $(H,M)$ as in \ref{GC i}, and each of the (at most $n$) marked vertices $v$ of $H_i$, we add an edge from $v$ to a distinct element of $A_\cc$. 

\noindent \textnormal{Note that $E(H_i, A_\cc)$ is a complete matching of the set $M(H_i)$ of marked vertices of $H_i$ into $A_\cc$. But $M(H_i)$ is empty for some $H$, and so $G_\cc$ is disconnected. We perform step \ref{GC iii} in such a way that }
\item \label{GC iv} for each possible matching $m$ of $M(H)$ into $A_\cc$, there are infinitely many indices $i$ \st\ $E(H_i, A_\cc)$ coincides with $m$.
\end{enumerate}
\end{definition}

Importantly, we have $\Rank(G_\cc)\geq \Rank(\cc)$ and %
\labtequ{AGC}{$A(G_\cc)= A_\cc$}
 by construction. Moreover, 
\labtequ{RGC}{$\Rank(G_\cc)\leq \alpha$,}
because each component of $G_\cc\sm A_\cc$ belongs to \cc\ and hence to $\ranm{<\alpha}$.

The following lemma shows that, under natural conditions,  $G_\cc$ is `monotone' in \cc. 
\begin{definition} \label{def upto}
We say that \cc\ is \defi{addable up to rank $\alpha$}, if whenever \seq{G}\ is a sequence of graphs in \cc, their disjoint union $G:= \dot{\bigcup}_n G_n$ is also in \cc\ unless $\Rank(G)\geq  \alpha$. 
\end{definition}
For example, suppose $\cc= \ran{<\alpha} \cap \forb{H}$ for some connected graph $H\in \ran{<\alpha}$. Then it is easy to see that \cc\ is addable up to rank $\alpha$, because $H< \dot{\bigcup}_n G_n$ only if $H<G_i$ for some $i$.

\smallskip
\begin{lemma} \label{lem GC}
Let $\cc \subseteq \ranmn{<\alpha}$ and $\cc' \subseteq \ranmm{<\alpha}$ be marked-minor-closed classes for some $n\leq m\in \N$, and suppose both $\cc,\cc'$ are addable up to rank $\alpha$. 
Suppose $\Rank(\cc)=\Rank(\cc')=:\beta\leq \alpha$. Then $G_\cc< G_{\cc'}$ \iff\ $\cc \subseteq \cc'$.
\end{lemma}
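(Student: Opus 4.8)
\textbf{Proof proposal for \Lr{lem GC}.}

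The plan is to prove each direction separately; the backward direction $\cc \subseteq \cc' \Rightarrow G_\cc < G_{\cc'}$ should be routine, and the forward direction is where the work lies.

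For the backward implication, assume $\cc \subseteq \cc'$. I would construct a minor embedding of $G_\cc$ into $G_{\cc'}$ by hand. First, since $n \leq m$, embed $A_\cc$ injectively into $A_{\cc'}$. Now for each copy $H_i$ of a representative $(H,M)$ appearing in $G_\cc$ as in \ref{GC i}, note that $(H,M) \in \cc \subseteq \cc'$, so $(H,M)$ is marked-minor-equivalent to one of the representatives $(H',M')$ chosen for $\cc'$ in step \ref{GC i} of \Dr{def GC} applied to $\cc'$; in particular $(H,M) \mm (H',M')$, so there is a marked-minor model of $H$ inside a copy $H'_j$ of $H'$ in $G_{\cc'}$, in which each marked vertex of $H$ goes to a branch set containing a marked vertex of $H'_j$. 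Using \ref{GC iv} for $G_{\cc'}$, I can choose the copy $H'_j$ so that its matching $E(H'_j, A_{\cc'})$ sends the marked vertices used by the branch sets of $M(H_i)$ to exactly the images under $A_\cc \hookrightarrow A_{\cc'}$ of the $A_\cc$-partners of those marked vertices; this lets me route the matching edges $E(H_i,A_\cc)$ consistently. Since $G_\cc$ has only countably many copies $H_i$, and \ref{GC iv} guarantees infinitely many suitable copies $H'_j$ for each demand, a greedy assignment choosing a fresh copy each time works. This yields the desired model, so $G_\cc < G_{\cc'}$.

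For the forward implication, assume $G_\cc < G_{\cc'}$ via a minor model $\cb$. I want to show $\cc \subseteq \cc'$; since $\cc'$ is marked-minor-closed, it suffices to show that every representative $(H,M)$ used in building $G_\cc$ lies in $\cc'$. The key observation is the interplay of \Or{apices} with \eqref{AGC}: if $\Rank(G_\cc) = \Rank(G_{\cc'})$, then $\cb$ maps $A_\cc = A(G_\cc)$ into $A_{\cc'} = A(G_{\cc'})$. A component $C$ of $G_\cc - A_\cc$ is exactly a copy $H_i$ of some representative $(H,M)$; by \Or{min comps}, $\cb$ maps $C$ into a component of $G_{\cc'} - \cb(A_\cc)$, which (since $\cb(A_\cc) \subseteq A_{\cc'}$) is contained in a single component $C'$ of $G_{\cc'} - A_{\cc'}$, i.e.\ a copy of some representative $(H',M')$ of an element of $\cc'$. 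Tracking the matching edges $E(H_i, A_\cc)$ and their images shows each marked vertex of $H_i$ goes to a branch set meeting $A_{\cc'}$, hence meeting $M(H'_j)$; so $(H,M) \mm (H',M') \in \cc'$, giving $(H,M) \in \cc'$ as wanted. The remaining case is $\Rank(G_\cc) < \Rank(G_{\cc'})$; but here I would argue that $\Rank(G_\cc) \geq \Rank(\cc) = \beta = \Rank(\cc') $, and by \eqref{RGC} $\Rank(G_{\cc'}) \leq \alpha$; more importantly, $\Rank(G_\cc)$ is determined by $\beta$ (it equals $\beta$ if $\cc$ contains no single graph of rank $\beta$ realised with an empty matching forcing a higher rank, and $\beta+1$ otherwise) — in fact by the construction $\Rank(G_\cc) = \Rank(G_{\cc'})$ always, because both equal $\beta$ or both equal $\beta+1$ according to the same criterion (whether $\cc$, equivalently $\cc'$, is addable up to rank $\beta$ or only up to rank $\alpha$ with $\beta$ attained). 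This is where the hypothesis that both $\cc,\cc'$ are addable up to rank $\alpha$ and $\Rank(\cc)=\Rank(\cc')$ is used: it forces $\Rank(G_\cc) = \Rank(G_{\cc'})$, so the first case always applies.

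\textbf{Main obstacle.} The delicate point is pinning down $\Rank(G_\cc)$ exactly and showing $\Rank(G_\cc) = \Rank(G_{\cc'})$ from $\Rank(\cc)=\Rank(\cc')$ together with addability; one must be careful about whether rank $\beta$ is \emph{attained} inside $\cc$ by a single component versus only as a supremum over infinitely many components (this is exactly the distinction governed by \Prr{inf bet} and \Dr{def upto}), and about the role of the isolated-vertex-augmented disconnected structure. Once $\Rank(G_\cc) = \Rank(G_{\cc'})$ is established, the \Or{apices}/\Or{min comps} argument above closes the forward direction cleanly, and the backward direction is a direct construction using \ref{GC iv}.
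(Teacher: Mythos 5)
Your backward direction is essentially the paper's argument and is fine. The forward direction, however, has a genuine gap, and the route you take cannot be repaired without the paper's key idea.

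You invoke \Or{apices} to conclude ``$\cb$ maps $A_\cc$ into $A_{\cc'}$'', and then write $\cb(A_\cc)\subseteq A_{\cc'}$. This is not what \Or{apices} gives: it says that for each $v\in A(G)$ the branch set $B_v$ \emph{intersects} $A(H)$, not that $B_v$ is \emph{contained} in $A(H)$. Branch sets of apex vertices can (and typically do) stick out into the co-parts of $G_{\cc'}$. Moreover, for your next step (``a component of $G_{\cc'}-\cb(A_\cc)$ is contained in a single component $C'$ of $G_{\cc'}-A_{\cc'}$'') you would actually need the \emph{reverse} containment $\cb(A_\cc)\supseteq A_{\cc'}$: removing a smaller set only gives larger components, so the containment you assert goes the wrong way. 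When $n<m$ neither containment is available even in spirit. There is also a second problem your argument leaves untouched: \Or{apices} says nothing at all about the branch sets of the \emph{non}-apex vertices, i.e.\ the vertices of the copy $C$, and these can perfectly well land on vertices of $A_{\cc'}$ (or span several copies of $\cc'$), which would break the application of \Or{min comps} you want.

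The paper sidesteps both problems with a different idea: since $G_\cc$ contains \emph{infinitely many disjoint copies} of $C$ and $A_{\cc'}$ is finite, all but finitely many copies have all their branch sets avoiding $A_{\cc'}$; one picks such a copy, and then \Or{min comps} cleanly lands it in a single co-part $C'$. This renders your whole discussion about whether $\Rank(G_\cc)=\Rank(G_{\cc'})$ unnecessary — the paper never uses \Or{apices} in this direction, so rank equality is a red herring. Finally, your marked-minor step (``tracking the matching edges \dots\ shows each marked vertex of $H_i$ goes to a branch set meeting $A_{\cc'}$, hence meeting $M(H'_j)$'') is an unjustified leap: a branch set inside $C'$ meeting $A_{\cc'}$ is impossible by your own setup, and even with the paper's setup the conclusion does not come for free. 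The paper has to explicitly \emph{modify} the branch sets of marked vertices — in one case taking the unique neighbour of $a'$ inside $C'$, in the other case appending a path $P_i$ through $B_{a_i}$ up to $A'$ — to manufacture the marked vertex inside $B'_{u_i}$. That modification is the genuinely delicate part of the proof and is absent from your sketch.
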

We emphasize that the graphs $G_\cc, G_{\cc'}$ are unmarked, even though the classes $\cc,\cc'$ consist of  marked graphs.
\begin{proof}
For the forward implication, suppose there is a minor model $\cb=\{B_v\mid v\in V(G)\}$ of $\g:=G_\cc$ in $G':=G_\cc$. By \eqref{AGC} we have $A:= A(G)=A_\cc$ and $A':= A(G')=A_{\cc'}$. 

Pick $H\in \cc$. We will prove $H\in \cc'$, thus establishing the forward implication. We may assume that $H$ is connected, because if each component of $H$ lies in $ \cc'$ then so does $H$ by the addability of  $\cc'$; indeed, if $H$ has infinitely many components, then the supremum of their ranks is less than $\alpha$ since $H\in \cc$. Let $C$ be a component of $G \sm A$ which is a marked-minor-twin of $H$, which exists by the construction of \G\ and the connectedness of $H$. Recall that \g contains infinitely many pairwise disjoint copies of $C$. Only finitely many of those can have a vertex the branch set of which intersects the finite set $A'$, and so we may assume that $\cb(C)$ avoids $A'$. Thus by \Or{min comps}, $\cb(C)$ ---i.e.\ the submodel of \cb\ induced by $C$--- 
is contained in a component $C'$ of $G'\sm A'$. 
To prove $H\in \cc'$ it suffices to prove
\labtequ{CinCp}{$C\mm C'$,}
since $H< C$ and $\cc'$ is $\mm$-closed. We will prove \eqref{CinCp} by slightly modifying \cb\ and restricting it to $C$. This modification is needed to ensure that each marked vertex of $C$ is mapped to a branch set containing a marked vertex of $C'$.

Let $\{u_1, \ldots, u_n\}$ denote the marked vertices of $C$, and recall that each $u_i$ sends a unique edge $e_i$ to $A$; let $a_i\in A$ denote the other end-vertex of $e_i$  (\fig{figBu}). 


We consider the following two cases \fe\ $u_i$:

\begin{figure} 
\begin{center}
\begin{overpic}[width=1\linewidth]{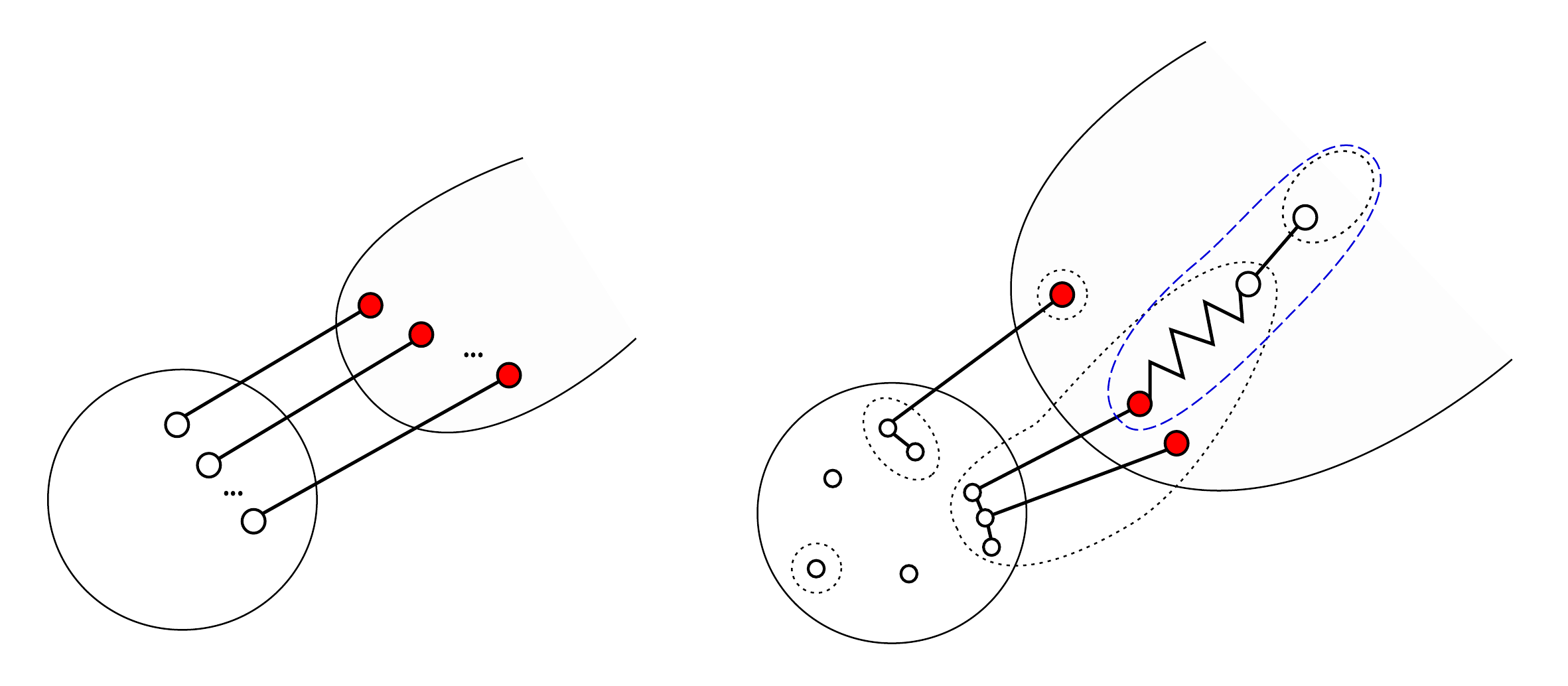} 
\put(5,11){$A$}
\put(45,11){$A'$}
\put(94,22){$C'$}
\put(24,26){$u_1$}
\put(15,22){$e_1$}
\put(33,21){$u_n$}
\put(10,18.4){$a_1$}
\put(16,8){$a_n$}
\put(68,6){$B_{a_2}$}
\put(51.1,15.5){$B_{a_1}$}
\put(52,4){$B_{a_n}$}
\put(83,31){$B_{u_2}$}
\put(83,21){\textcolor{blue}{$B'_{u_2}$}}
\put(61,27.5){$B_{u_1}\mathord{=}B'_{u_1}$}
\put(58,22){$B_{e_1}$}
\put(81,26){$B_{e_2}$}
\end{overpic}
\end{center}
\caption{Defining $B'_{u_i}$ in the proof of \eqref{CinCp}. The left picture depicts a portion of $G$, while the right picture depicts its minor model in $G'$. The dotted curves enclose the original branch sets, while the dashed curve (in blue) encloses $B'_{u_2}$.} \label{figBu}
\end{figure}

If $B_{e_i}$ has an end-vertex $a'\in A'$, then $B_{u_i} \cap B_{e_i}$ must be the unique neighbour of $a'$ in $C'$ because $\cb(C)$ avoids $A'$. That neighbour is a marked vertex of $C'$ by the construction of $G'$. In this case we let $B'_{u_i}:=B_{u_i}$. 


If not, then $B_{e_i}$ lies in $C'$. In this case, let $P_i$ be a path in $G'$ from the vertex $B_{e_i} \cap B_{a_i}$ to $A'$, which exists since $B_{a_i}$ is connected and intersects $A'$ by \Or{apices}. Note that the last edge of $P_i$ joins a vertex $u'_i\in C'$ to a vertex of $A'$, and therefore $u'_i$ is a marked vertex of $C'$. Let $B'_{u_i}:=B_{u_i} \cup B_{e_i} \cup (P_i \sm A')$, and note that $B'_{u_i}$ is connected and contains $u'_i$.


In both cases, $B'_{u_i}$ contains a marked vertex. Easily, $B_{u_i} \cap B_{u_j}=\emptyset$ whenever $i\neq j$ as $P_i \subset B_{a_i}$ and $B_{a_i} \cap B_{a_j}=\emptyset$. Therefore, replacing each $B_{u_i}$  by $B'_{u_i}$, and leaving $B'_x:= B_x$ \fe\ other vertex $x\in V(C)$, we obtain a minor model $\cb'$ of $C$ in $C'$. This proves \eqref{CinCp}. 
 
\medskip
For the backward implication, we assume $\cc \subseteq \cc'$, and construct an embedding of \g into $G'$ as follows. We begin by letting $B_a:= a'$ \fe\ $a\in A$, where $a\mapsto a'$ is an arbitrary but fixed bijection from $A$ to a subset of $A'$, which is possible since $|A|=n\leq m=|A'|$. For each `part' $H_i$ of $G$, find a part $H'$ of $G'$ \st\ $H_i \mm H'$ and a model of $H_i$ in $H'$ \st\ the marked vertices of $H_i$ are joined to $A$ the same way as the marked vertices of $H'$ are joined to $A'$ \wrt\ $a\mapsto a'$. Since there are infinitely many such $H'$ by item \ref{GC iv} of \Dr{def GC}, we can choose them disjointly over the $H_i$'s.
\end{proof}

We are now ready to prove the main result of this section.

\begin{proof}[Proof of \Tr{tfae}]
The equivalence of \ref{R iii} and \ref{R iiim} follows from \Cr{cor unmark}. 
The equivalence of \ref{R iv} and \ref{R ivp} is \Lr{lem unmark}. The implications  \ref{R iii} $\to$ \ref{R iiip} and \ref{R iip} $\to$ \ref{R ip} are trivial. 

For the other implications, we apply induction on $\alpha$. For $\alpha=0$, we define $\ran{<0}$ to be empty, and notice that all items are obviously true.

\medskip

\ref{R iip} $\to$ \ref{R iii}: The proof of this implication is very similar to that of \Tr{thm trees}. 

To prove that $|\ran{\alpha}|_<$ is countable, it suffices to prove that $|\ran{\alpha}^n|_<$ is countable \fe\ \nin, where $\ran{\alpha}^n:= \{G\in \ran{\alpha} \mid |A(G)|=n\}$. Let $G\in \ran{\alpha}^n$, and apply \Lr{subclasses} to deduce that the minor-twin class $[G]_<$ is determined by $n=|A(G)|$ and the class $\ccm(G)$. To be able to apply \Lr{subclasses}, we use the fact that $\ccm(G) \subseteq \ranmn{<\alpha}$ is \wqo\ by our assumption, and that $|\ranm{\beta}|_{\mm}$ is countable \fe\ $\beta<\alpha$ by our inductive hypothesis \ref{R iip} $\to$ \ref{R iiim}---whereby we use the fact that $\ranmn{\beta}$ is \wqo\ since its superset $\ranmn{<\alpha}$ is. 

As $\ccm(G)$ is \wqo, we can express it as $\forb{X} \cap \ranmn{<\alpha}$ for some finite $X \subset \ranmn{<\alpha}$. 
Since $|\ranmn{\beta}|_<$ is countable \fe\ $\beta<\alpha$ as noted above, $|\ranmn{<\alpha}|_<$ is countable too, being the sum of $|\ranmn{\beta}|_<$ over the  $\beta<\alpha$, which are countably many as $\alpha<\omega_1$. As $|\ranmn{<\alpha}|_<$ is countable, there are countably many ways to choose its finite subset $X$ from above, hence countably many ways to choose $\ccm(G)$, and therefore $[G]_<$. This proves that $|\ran{\alpha}|_<$ is countable.
\medskip

\ref{R ip} $\to$ \ref{R iip}: 
It suffices to show that $\ranmn{<\alpha}$ has no  \mm-descending chain by \Prr{wqo ch}, so suppose to the contrary $G_1 \gneq G_2 \gneq  \ldots$ is one. Then letting $\Rank(G_1)=: \beta < \alpha$, and noting that $\Rank(G_i) \leq \Rank(G_1)$ by \Or{minor rank} and $|A(G_i)| \leq |A(G_1)|$ by \Or{apices}, we deduce that this is a descending chain in $\ranmn{\beta}$. This contradicts  the inductive hypothesis \ref{R ip} $\to$ \ref{R iv} for $\beta$.

\medskip

\ref{R iip} $\to$ \ref{R ivp}: Suppose, for a contradiction, that $\ran{\alpha}$ has a descending chain $G_1 \gneq G_2 \gneq G_3 \gneq  \ldots$. By \Or{apices} we may assume that $G_i\in \ran{\alpha}^n$ for a fixed $n\in \N$. Then, by \Lr{subclasses}, 
$\ccm(G_1) \supsetneq \ccm(G_2) \supsetneq \ccm(G_3) \supsetneq  \ldots$ is a descending chain of sub-classes of $\ranmn{<\alpha}$ with respect to containment. For each $k\in \N$, choose a marked graph $H_k\in \ccm(G_k) \sm \ccm(G_{k+1})$, which is possible since  $\ccm(G_k) \supsetneq \ccm(G_{k+1}) $. 
Then $(H_k)$ is a bad sequence in $\ranmn{<\alpha}$\footnote{This idea also appears in \cite[LEMMA~6]{DieRel}.}. Indeed, if $H_k < H_{k+j}$ for some $k,j>0$, then $H_k\in  \ccm(G_{k+j}) \subseteq \ccm(G_{k+1})$ since $\ccm(G_{k+j})$ is marked-minor closed. This contradicts that $\ranmn{<\alpha}$ is \wqo.


\medskip


The following two implications are similar to the forward direction of \Cr{UF ctble}; the reader may want to recall that proof before reading them.
\medskip

\ref{R ivp} $\to$  \ref{R iip}: Suppose, to the contrary,  there is a bad sequence $(H_i)$ in $\ranmn{<\alpha}$. We may assume \obda\ that each $H_i$ is connected by replacing it by $S^\bullet(H_i)$ and applying \Lr{lem cones} (and increasing $n$ by 1). Let $\cc_i:= \ranmn{<\alpha} \cap \forb{H_1,\ldots, H_i}$ for each $i\in\N$. Note that $\cc_1 \supsetneq \cc_2 \supsetneq \ldots$ because $\cc_i$ contains $H_{i+1}$ but $\cc_{i+1}$ does not. Clearly, $\cc_i$ is closed under marked minors. Since the $H_i$ are connected, each $\cc_i$ is addable up to rank $\alpha$ as remarked after \Dr{def upto}. Let $G_i:=G_{\cc_i}$ be as in \Dr{def GC}. By our inductive hypothesis, \ref{R ivp} $\to$ \ref{R iiim} holds \fe\ $\beta<\alpha$, and therefore $\cc_i \subseteq \ranmn{<\alpha}$ consists of countably many minor-twin classes. Thus $G_i$ is countable. 

We claim that 
\labtequ{RCi}{$\Rank(\cc_i)= \alpha$ \fe\ $i\in \N$.} 
To see this, note first that if $\Rank(H_n)\leq \beta$ holds for some $\beta< \alpha$ and infinitely many \nin, then this subsequence of \seq{H}\ is a bad sequence in $\ranmn{\beta}$, contradicting our inductive hypothesis since $\ran{\beta}$ has no descending chain. Thus $\sup_n \Rank(H_n)= \alpha$. Since each $\cc_i$ contains every $H_j,j>i$, we deduce that $\Rank(\cc_i)\geq \alpha$. The converse inequality is obvious since $\cc_i \subset \ranmn{<\alpha}$.

\Lr{lem GC} now implies that $G_n \gneq G_{n+1}$ \fe\ $i$, i.e.\ $\seqi{G}$ is an infinite descending $<$-chain in $\ran{\alpha}$, contradicting our assumption that none exists.

\comment{
	.. Pick a representative from each minor-twin class of $\cc_i$, and let $G'_i$ be the graph consisting of the disjoint union of these representatives. We claim that $G'_i$ is countable. Indeed, by our inductive hypothesis, \ref{R ivp} $\to$ \ref{R iii} holds \fe\ $\beta<\alpha$, and therefore $\cc_i \subseteq \ran{<\alpha}$ consists of countably many minor-twin classes. 

Let $G_i:= \oo \cdot G'_i$ be the disjoint union of countably many copies of $G'_i$.
Note that $\Rank(G_i)\leq \alpha$. We claim that 
\labtequ{AGi}{$A(G_i)=\emptyset$.} 
To see this, note first that if $\Rank(H_n)\leq \beta$ holds for some $\beta< \alpha$ and every \nin, then \seq{H}\ is a bad sequence in $\ran{\beta}$, contradicting our inductive hypothesis. Thus $\sup_n \Rank(H_n)= \alpha$. Since each $\cc_i$, and hence $G_i$, contains (a minor-twin of) every $H_j,j>i$, we deduce that $\Rank(G_i)\geq \alpha$. But as each component of $G_i$ lies in $\ran{<\alpha}$, it follows that $A(G_i)=\emptyset$ and $\Rank(G_i)= \alpha$.

	We claim that $(G_n)_\nin$ is an infinite descending $<$-chain in $\ran{\alpha}$, contradicting our assumption that none exists. To see that $G_{n+1}< G_{n}$ holds, recall that $\cc_{n+1}\subset \cc_{n}$, and therefore each component of $G_{n+1}$ is a minor of a component $C$ of $G_{n}$. As there are infinitely many disjoint copies of $C$ in $G_{n}$, we can thus embed all components of $G_{n+1}$ into $G_{n}$. For the converse, note that $H_{n+1} \in \cc_n \sm \cc_{n+1}$ because no $H_j, j\leq n$ is a minor of $H_{n+1}$. Thus $H_{n+1}< G_{n}$. Since $H_{n+1}$ is connected, if $H_{n+1}< G_{n+1}$, then $H_{n+1}$ is a minor of a component of $G_{n+1}$, which is impossible as  $H_{n+1} \not\in \cc_{n+1}$. Thus $G_{n+1}\not> G_{n}$, completing the proof that $(G_n)_\nin$ is a descending $<$-chain.
} 

\medskip

\ref{R iiip} $\to$  \ref{R ip}: 
Suppose \seq{H}\ is an anti-chain in $\ranmn{<\alpha}$. As above, we may assume that each $H_n$ is connected by \Lr{lem cones}. Call a subset  $X$ of $\ch:= \{H_n \mid \nin\}$ \defi{co-infinite}, if $\ch \sm X$ is infinite. Easily, there are $2^{\aleph_0}$ such $X$, because any subset of the even $H_n$'s is co-infinite. We will follow the lines of the previous implication\mymargin{\tiny make sure this holds} to produce $2^{\aleph_0}$-many graphs $G_X$ 
none of which is a twin of another. Let $\cc_X:= \ranmn{<\alpha} \cap \forb{X}$. Since each $H_n$ is connected, $\cc_X$ is addable up to rank $\alpha$. 

For each co-infinite $X\subset \ch$, let $G_X:=G_{\cc_X}$ be as in \Dr{def GC}. Again, $G_X$ is countable by our inductive hypothesis \ref{R iiip} $\to$ \ref{R iiim} and the fact that $\alpha$ is countable. 
Similarly to \eqref{RCi}, we claim
\labtequ{AGX}{$\Rank(\cc_X)= \alpha$ \fe\ co-infinite $X \subseteq \ch$.} 
Indeed, $\Rank(\cc_X)\leq \alpha$ is obvious, and to confirm $\Rank(\cc_X)\geq \alpha$, we observe that each of the infinitely many graphs in $\ch \sm X$ is a minor of $G_X$ by construction. We claim that for every $\beta< \alpha$ there is a graph $G$ in $\ch \sm X$ with $\Rank(G)\geq \beta$. For if not, then $\ch \sm X$ is an anti-chain in $\ranmn{<\beta}$ contradicting our inductive hypothesis. (This argument is the reason why we are working with co-infinite sets $X$.)

Thus by \Lr{lem GC}, $G_X,G_Y$ are never minor-twins for co-infinite $X\neq Y \subset \ch$, because $\cc_X\neq \cc_Y$ as \ch\ is an anti-chain. 
Thus we have obtained $2^{\aleph_0}$ distinct minor-twin classes, contradicting our assumption \ref{R iiip}. 
\medskip

To summarize, we have obtained the implications 
\begin{align*}
\text{\ref{R iv} $\leftrightarrow$ \ref{R ivp} $\leftrightarrow$}  &\text{\ref{R iip} $\leftrightarrow$ \ref{R ip}, } \\  
&\text{\ref{R iip} $\to$ \ref{R iii}}   \text{ $\to$ \ref{R iiip} $\to$  \ref{R ip}, }\\ 
& \text{\ref{R iiim} $\leftrightarrow$ \ref{R iii},}
\end{align*}
combining which proves all 7 statements to be equivalent.
\end{proof} 

We will use similar ideas to obtain two corollaries that we will also need for our main theorem in the next section:

\begin{corollary} \label{cor LD}
Let $1\leq \alpha<\omega_1$, and suppose 
\labtequ{L}{$\ran{\alpha}$ has no antichain of cardinality \cont.}
Then $\ranmn{<\alpha}$ is \wqo\ \fe\ \nin.
\end{corollary}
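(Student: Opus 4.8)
The statement to prove is: if $\ran{\alpha}$ has no antichain of cardinality $\cont$, then $\ranmn{<\alpha}$ is \wqo\ \fe\ $n\in\N$. The natural route is to prove the contrapositive by way of the equivalences already established in \Tr{tfae}. Suppose $\ranmn{<\alpha}$ is not \wqo\ for some $n$. By \Prr{wqo ch} this means $\ranmn{<\alpha}$ has either an infinite antichain or an infinite descending chain. In the descending-chain case we are already done via \Tr{tfae}: a descending chain in $\ranmn{<\alpha}$ contradicts item \ref{R iip}, hence (since the items are equivalent) contradicts item \ref{R iiip}, giving $|\ran{\alpha}|_< = \cont$; a set of $\cont$ pairwise non-twin graphs in $\ran{\alpha}$ then easily yields an antichain of the same cardinality by a standard Zorn/transfinite-recursion argument (or one just observes that failure of \ref{R iiip} forces, via \ref{R iiip}$\to$\ref{R ip} applied at rank $\alpha$ rather than $<\alpha$, an antichain of size $\cont$ — but the cleanest is to cite \Tr{tfae} directly). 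So the real content is the antichain case.

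Assume then that $\seq{H}$ is an infinite antichain in $\ranmn{<\alpha}$. Following the proof of the implication \ref{R iiip}$\to$\ref{R ip} inside \Tr{tfae} almost verbatim: first replace each $H_n$ by $S^\bullet(H_n)$ (using \Lr{lem cones}, increasing $n$ by $1$) so that every $H_n$ is connected. For each co-infinite $X\subseteq \ch := \{H_n \mid n\in\N\}$, set $\cc_X := \ranmn{<\alpha}\cap\forb{X}$; since each $H_n$ is connected, $\cc_X$ is marked-minor-closed and addable up to rank $\alpha$. Here is where a hypothesis is needed that was automatic in \Tr{tfae} but must now be supplied: to invoke \Dr{def GC} and \Lr{lem GC} we need $|\ranm{\beta}|_{\mm}$ to be countable \fe\ $\beta<\alpha$, equivalently (by \Cr{cor unmark}) $|\ran{\beta}|_<$ countable, so that $G_{\cc_X}$ is a \emph{countable} graph. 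This should follow by induction on $\alpha$: the inductive hypothesis "\Cr{cor LD} holds for all $\beta<\alpha$" combined with \Tr{tfae} at rank $\beta$ (the chain of equivalences \ref{R ip}$\leftrightarrow$\ref{R iii}$\leftrightarrow$\ref{R iiim}) gives countability at lower ranks; one must also check the base case $\alpha=1$ (or fold it into the induction by declaring $\ran{<0}$ empty as in \Tr{tfae}). Alternatively, and perhaps more cleanly, one notes that hypothesis \eqref{L} restricted to rank $\beta<\alpha$ is weaker than "$\ran{\beta}$ has no antichain of cardinality $\cont$", which by \Tr{tfae} (failure of \ref{R iiip} at rank $\beta$ producing such an antichain) gives $|\ran{\beta}|_<<\cont$, and then \Tr{tfae} \ref{R iiip}$\to$\ref{R iii} upgrades this to countability — so in fact \eqref{L} alone, without a separate induction, feeds the countability we need at every lower rank. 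I would use this second line, as it avoids a nested induction.

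Granting countability of $G_{\cc_X}$, the argument of \ref{R iiip}$\to$\ref{R ip} carries over: one shows $\Rank(\cc_X)=\alpha$ for every co-infinite $X$ (the inequality $\leq\alpha$ is trivial; for $\geq$, note that for each $\beta<\alpha$ some member of $\ch\sm X$ has rank $\geq\beta$, else $\ch\sm X$ would be an infinite antichain in $\ranmn{<\beta}$, contradicting the lower-rank instance of the conclusion — which again we get from \eqref{L} at rank $\beta$ via \Tr{tfae}). Then \Lr{lem GC} applies with $\beta = \alpha$ and $n = m$, yielding that $G_{\cc_X}$ and $G_{\cc_Y}$ are minor-twins \iff\ $\cc_X = \cc_Y$ \iff\ $X = Y$ (using that $\ch$ is an antichain, so the symmetric difference of $X,Y$ contains an $H_n$ lying in exactly one of $\cc_X,\cc_Y$). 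Since there are $\cont$ co-infinite subsets $X\subseteq\ch$, we obtain $\cont$ pairwise non-minor-twin graphs in $\ran{\alpha}$, and hence (by the standard fact that a family of $\cont$ pairwise incomparable-or-twin-free objects contains a genuine antichain of size $\cont$, or by appealing to the relevant line of \Tr{tfae}) an antichain in $\ran{\alpha}$ of cardinality $\cont$, contradicting \eqref{L}. This completes the contrapositive.

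**Main obstacle.** The delicate point is not the combinatorics — which is a routine adaptation of the proof already given for \Tr{tfae} — but making sure that at every place where the \Tr{tfae} proof silently used "we are inside an induction on $\alpha$ and lower ranks satisfy all seven equivalent conditions," we here have a \emph{legitimate} substitute coming purely from hypothesis \eqref{L}. Concretely: (i) countability of $|\ran{\beta}|_<$ for $\beta<\alpha$, needed so $G_{\cc_X}$ is countable and so \Dr{def GC} step \ref{GC i} makes sense; and (ii) the "no infinite antichain in $\ranmn{<\beta}$" fact for $\beta<\alpha$, needed to force $\Rank(\cc_X)=\alpha$. Both must be derived from \eqref{L}: one checks that \eqref{L} at rank $\alpha$ implies \eqref{L}-type statements, or better, implies item \ref{R iiip} of \Tr{tfae}, at every $\beta\le\alpha$ (a graph in $\ran{\beta}\subseteq\ran{\alpha}$, so an antichain of size $\cont$ there would be one in $\ran{\alpha}$), and then invokes the already-proven equivalences of \Tr{tfae} at rank $\beta$ to get both countability and the no-antichain statements at lower ranks. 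Once this bookkeeping is set up carefully, the rest is a transcription of the \ref{R iiip}$\to$\ref{R ip} argument. I would also need the passage from "$\cont$ pairwise non-twins in $\ran{\alpha}$" to "antichain of size $\cont$ in $\ran{\alpha}$"; this is exactly the content of \Tr{tfae} \ref{R iiip}$\to$\ref{R ip} (read: failure of \ref{R iiip} at rank $\alpha$ gives an antichain, and tracing the cardinality through the proof of that implication shows the antichain has size $\cont$), so it can simply be cited.
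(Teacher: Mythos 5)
Your proposal follows the construction from the proof of \ref{R iiip}$\to$\ref{R ip} correctly up to the point where you obtain, for each co-infinite $X\subseteq\ch$, a countable graph $G_{\cc_X}\in\ran{\alpha}$ with $G_{\cc_X}$ and $G_{\cc_Y}$ pairwise non-minor-twins for $X\neq Y$. But the final step — passing from ``$\cont$ pairwise non-twin graphs in $\ran{\alpha}$'' to ``an antichain of cardinality $\cont$ in $\ran{\alpha}$'' — is a genuine gap, and neither of the two justifications you offer works. The ``standard fact'' you invoke is false: a linearly ordered set of cardinality $\cont$ has $\cont$ pairwise non-twin elements and no two-element antichain. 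And citing \Tr{tfae} \ref{R iiip}$\to$\ref{R ip} does not help either: the contrapositive of that implication says an antichain in $\ranmn{<\alpha}$ forces $|\ran{\alpha}|_<=\cont$, which is the converse of what you need; turning $|\ran{\alpha}|_<=\cont$ into an antichain of cardinality $\cont$ is precisely the unjustified step, and it is not an implication that \Tr{tfae} establishes (indeed, the paper's \Rr{rem L} explicitly flags that the relation between \eqref{L} and the conditions of \Tr{tfae} is unclear in one direction). The same fallacy appears in your descending-chain branch, so that branch does not close either.

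The paper avoids this by not attempting to extract an antichain from non-twins at all. Instead it observes that, because $\ch$ is an antichain, $G_{\cc_X}<G_{\cc_Y}$ forces $\cc_X\subseteq\cc_Y$ (forward direction of \Lr{lem GC}), which in turn forces $Y\subseteq X$. Consequently, if one restricts to a family $\{X_i\}_{i\in\ci}$ of co-infinite sets that are pairwise $\subseteq$-\emph{incomparable} (such a family of cardinality $\cont$ exists, e.g.\ an almost disjoint family), then the $G_{\cc_{X_i}}$ are automatically pairwise $<$-incomparable, i.e.\ a genuine $<$-antichain of cardinality $\cont$ in $\ran{\alpha}$. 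This is the missing idea: choose the sets $X_i$ $\subseteq$-incomparable rather than merely distinct. Once one has the antichain contradicting \eqref{L}, the conclusion that $\ranmn{<\alpha}$ is \wqo\ follows as you say by \Tr{tfae} \ref{R ip}$\leftrightarrow$\ref{R iip}, and there is no need to treat descending chains separately. (Your discussion of how to secure countability of $G_{\cc_X}$ from \eqref{L} at lower ranks is a legitimate concern and your ``second line'' for handling it is reasonable, but it is orthogonal to the actual gap.)
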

\begin{proof}
We refine the proof of the implication \ref{R iiip} $\to$  \ref{R ip} above to show that if $\ranmn{<\alpha}$ has an anti-chain \seq{H} for some \nin, then \eqref{L} is contradicted. Indeed, given co-infinite sets $X,Y\subset \ch:= \{H_n \mid \nin\}$ 
note that $G_X < G_Y$ implies $\cc_X \subseteq \cc_Y$, which in turn implies $Y \subseteq X$. Thus if $\{X_i\}_{i\in \ci}$ is a family of co-infinite sets that are pairwise $\subseteq$-incomparable, then $\{G_{X_i}\}_{i\in \ci}$ is a $<$-antichain. Easily, there is such a family with $|\ci|= \cont$, and so  \eqref{L} implies that $\ranmn{<\alpha}$ has no infinite anti-chain. By the equivalence of items \ref{R ip}, \ref{R iip} of \Tr{tfae}, \eqref{L} implies the stronger statement that $\ranmn{<\alpha}$ is \wqo\ \fe\ \nin.
\end{proof}

\begin{remark} \label{rem L}
It is not clear whether  \eqref{L} can be added as a further equivalent condition in \Tr{tfae}; we have just seen that it implies \ref{R iip}, but the latter only implies the weaker statement that $\ran{<\alpha}$ has no infinite antichain.
\end{remark}

\comment{
	.. For each co-infinite $X\subset \ch$, pick a representative from each minor-twin class of $\cc_X$, and let $G_X$ be the graph consisting of the disjoint union of these representatives. Again, $G_X$ is countable by our inductive hypothesis. Similarly to \eqref{AGi}, we claim
\labtequ{AGX}{$\Rank(G_X)= \alpha$ and $A(G_X)=\emptyset$.} 
Indeed, $\Rank(G_X)\leq \alpha$ is obvious, and to confirm $\Rank(G_X)\geq \alpha$, we observe that each of the infinitely many graphs in $\ch \sm X$ is a minor of $G_X$ by construction. We claim that for every $\beta< \alpha$ there is a graphs $G$ in $\ch \sm X$ with $\Rank(G)\geq \beta$. For if not, then $\ch \sm X$ is an anti-chain in $\ran{<\beta}$ contradicting our inductive hypothesis. (This argument is the reason why we are working with co-infinite sets $X$.)
This implies $\Rank(G_X)= \alpha$, and from this we immediately deduce that $A(G_X)=\emptyset$ since each component of $G_X$ lies in $\ran{<\alpha}$.

	We claim that $G_X,G_Y$ are never minor-twins for  co-infinite $X\neq Y \subset \ch$. To see this, pick $H\in X \sydi Y$, say $H\in X \sm Y$. Then $H\in \cc_Y\sm \cc_X$ since no element of $Y$ is a minor of $H$. Thus $H< G_Y$. Since $H$ is connected, if $H< G_X$, then $H$ is a minor of a component of $G_X$, which is impossible as  $H\not\in \cc_X$. This proves that $G_X,G_Y$ are not minor-twins, and so we have obtained $2^{\aleph_0}$ distinct minor-twin classes, 	contradicting our assumption \ref{R iiip}.
}

Let $\RM$ denote the class of marked, countable, rayless  graphs.

\begin{corollary} \label{R wqo}
If $\ranmn{<\gamma}$ is \wqo\ \fe\ ordinal $1\leq \gamma<\omega_1$ and every \nin, then $\RM$ is \wqo. 
\end{corollary}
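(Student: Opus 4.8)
The plan is to deduce that $\RM$ is WQO from the hypothesis that $\ranmn{<\gamma}$ is WQO for every countable ordinal $\gamma\geq 1$ and every $n$, by using \Prr{wqo ch} to reduce to showing that $\RM$ has no infinite antichain and no infinite descending chain. A bad sequence in $\RM$ would be a countable family of marked countable rayless graphs; since every rayless graph has a countable rank (by Schmidt's theorem, \Sr{sec rank}), each member $(G_i,M_i)$ of such a sequence has $\Rank(G_i)<\omega_1$. First I would argue that we may pass to a subsequence along which the rank is bounded: if the ranks were unbounded below $\omega_1$, that alone does not immediately give a contradiction, so the key move is instead to observe that for a \emph{descending chain} the ranks are monotone non-increasing by \Or{minor rank}, hence eventually constant, say equal to $\alpha$, and the sizes $|M_i|$ are also monotone non-increasing since $(G',M')\mm(G,M)$ forces $|M|\geq|M'|$; thus a descending chain in $\RM$ becomes, after truncation, a descending chain in $\ranmn{\alpha}\subseteq\ranmn{<\alpha+1}$ for fixed $n=|M_i|$, which contradicts the hypothesis (applied with $\gamma=\alpha+1$) since $\ranmn{<\alpha+1}$ WQO has no descending chain by \Prr{wqo ch}.

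For antichains the situation is more delicate, because along an antichain neither the rank nor the number of marked vertices need be monotone. Here I would use a pigeonhole/Ramsey-type thinning: given an infinite antichain $((G_i,M_i))_{i\in\N}$ in $\RM$, since there are only countably many possible values for the pair $(\Rank(G_i),|M_i|)$ but infinitely many indices, some value $(\alpha,n)$ is attained infinitely often; restricting to that subsequence yields an infinite antichain inside $\ranmn{\alpha}\subseteq\ranmn{<\alpha+1}$, again contradicting the hypothesis. Combining the two cases with \Prr{wqo ch} gives that $\RM$ is WQO. I should double-check that $\ranmn{\alpha}$ genuinely sits inside $\ranmn{<\alpha+1}$ — it does, since a graph of rank exactly $\alpha$ lies in $\ran{\alpha}\subseteq\ran{<\alpha+1}$ — and that the truncation of a bad sequence is still bad, which is immediate.

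The main obstacle I anticipate is the antichain case: one must be sure that bounding the rank (rather than only knowing each rank is countable) is legitimate, and this works only because $\N\times\omega_1$ has the property that any $\omega$-indexed family hitting it must repeat a value — i.e. we are exploiting that $\omega$ is below the cofinality of $\omega_1$, which is exactly the kind of regularity argument already used in \Or{obs H}. There is a minor subtlety if one worries about antichains of larger cardinality, but for WQO only \emph{countable} (indeed $\omega$-indexed) bad sequences matter, so the countable pigeonhole suffices. A secondary point to get right is that \Tr{tfae}'s hypothesis is stated for $\ranmn{<\alpha}$ with $\alpha\geq 1$, so one applies it with $\gamma=\alpha+1\geq 1$ throughout, which is harmless.
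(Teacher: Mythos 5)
Your handling of descending chains is correct: along a descending chain the ranks are non-increasing by \Or{minor rank} and the numbers $|M_i|$ are non-increasing because $(G',M')\mm(G,M)$ forces $|M'|\le|M|$, so both are eventually constant and a tail lies in a single $\ranmn{<\alpha+1}$. The antichain half, however, has a genuine gap. You claim that since there are countably many possible values of $(\Rank(G_i),|M_i|)$ and countably many indices, some value is attained infinitely often. This is false: a countable sequence can assume countably many distinct values, each exactly once (e.g.\ $|M_i|=i$). Regularity of $\omega_1$ does give a single $\alpha<\omega_1$ with all ranks $<\alpha$, because the supremum of countably many countable ordinals is countable; but the same reasoning does not apply to $|M_i|$, which ranges over $\N$ and can tend to infinity. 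An antichain with unbounded $|M_i|$ therefore need not have any infinite subsequence inside a fixed $\ranmn{<\alpha+1}$, and your pigeonhole step does not go through. Note that a union of WQOs is not in general WQO, so covering $\RM$ by the sets $\ranmn{<\gamma}$ is not by itself sufficient.

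The paper handles exactly this obstacle, and it is the reason the result is not a triviality. Its proof does not split into antichain and descending-chain cases; instead, starting from an arbitrary bad sequence \seq{H} in $\RM$ (with ranks bounded by some $\alpha<\omega_1$, as you correctly observed), it forms classes $\cc_i^k:=\ranmN{<\alpha}{k}\cap\forb{H_1,\ldots,H_i}$ for \emph{all} $k\in\N$, maps each to a graph $G_{\cc_i^k}\in\ran{\alpha}$ via the construction of \Dr{def GC}, and sets $G_n:=\dot{\bigcup}_{k\in\N}G_{\cc_n^k}$, which has rank at most $\alpha+1$. Using \Lr{lem GC} one shows \seq{G} is a $<$-descending chain in $\ran{\alpha+1}$, contradicting the implication \ref{R iip}~$\to$~\ref{R ivp} of \Tr{tfae}. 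The deliberate jump to rank $\alpha+1$ (by taking a disjoint union over all $k$) is precisely what absorbs the possibly unbounded $|M_i|$, and it is the step your proposal is missing.
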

\begin{proof}
The proof is similar to the implication \ref{R ivp} $\to$  \ref{R iip} above, but we will have to increase the rank by one. 

Suppose, to the contrary,  there is a bad sequence \seq{H} in \RM. As $\omega_1$ is a regular ordinal, and this sequence is countable, we deduce that \ti\ $\alpha< \omega_1$ \st\ $H_n \in \ranm{<\alpha}$ \fe\ $n$. As usual, we may assume that each $H_i$ is connected by replacing each $H_i$ by $S^\bullet(H_i)$ and applying \Lr{lem cones}. 

Let $\cc_i:= \ranmN{<\alpha}{n_i} \cap \forb{H_1,\ldots, H_i}$ for each $i\in\N$, where $n_i$ is the maximum number of marked vertices in $\{H_1,\ldots, H_i\}$. We can no longer claim that $\cc_1 \supsetneq \cc_2 \supsetneq \ldots$, because graphs in $\cc_j$ can contain more marked vertices than those in $\cc_{j-1}$. To amend this, we introduce $\cc_i^k:= \cc_i \cap \ranmN{<\alpha}{k}$ \fe\ $k\in \N$. We now have $\cc_1^k \supseteq \cc_2^k \supseteq \ldots$, and $\cc_{i}^{i+\ell} \supsetneq \cc_{i+1}^{i+\ell} $ \fe\ fixed $i$ and large enough $\ell$,  because $\cc_i^{i+\ell}$ contains $H_{i+1}$ but $\cc_{i+1}^{i+\ell}$ does not. 

Let $G_{\cc_i^k}$ be as in \Dr{def GC}. By  \Tr{tfae} \ref{R iip} $\to$ \ref{R iiim}  $\cc_i^k \subseteq \ranm{<\alpha}$ consists of countably many minor-twin classes, and so $G_{\cc_i^k}$ is countable. Similarly to \eqref{RCi}, we will prove that $\Rank(\cc_i^k)=  \alpha$ \fe\ $i,k\in \N$. 
Indeed, if $\Rank(H_n)\leq \beta$ holds for some $\beta< \alpha$ and infinitely many \nin, then this  contradicts our inductive hypothesis. Thus $\sup_n \Rank(H_n)= \alpha$. Since each $\cc_i^k, i\in \N$ contains every $H_j,j>i$ as an unmarked graph, we deduce that $\Rank(\cc_i^k)\geq \alpha$. The converse inequality is obvious since $\cc_i^k \subset \ran{<\alpha}$.

Let $G_n:= \dot{\bigcup}_{k\in \N} G_{\cc_n^k}$ \fe\ \nin, and note that  $\Rank(G_n)\leq \alpha +1$. We claim that \seq{G} is a $<$-descending chain. Easily, we have $G_n > G_{n+1}$ by applying the backward direction of \Lr{lem GC} componentwise, since $\cc_n^k \supseteq \cc_{n+1}^k$. One the other hand, if $G_n < G_{n+1}$ holds for some $n$, then \fe\ $k$ \ti\ $k'$ \st\ $G_{\cc_n^k} < G_{\cc_{n+1}^{k'}}$. 
But there is $k$ \leth\ $H_{n+1} \in \cc_n^k$, while $H_{n+1} \not\in \cc_{n+1}^{k'}$ \fe\ $k'$. This contradicts the forward direction of \Lr{lem GC}. 
Thus \seq{G} is a $<$-descending chain in $\Rank_{\alpha+1}$, which contradicts \Tr{tfae} \ref{R iip} $\to$  \ref{R ivp}. 


\end{proof} 


\section{Concluding the proof of \Tr{main Intro}}  \label{sec main prf}

We can now complete the proof of our main \Tr{main Intro} in a more detailed version. Recall that \cgr\ denotes the class of countable rayless  graphs, and \RM\ the marked countable rayless  graphs. Let \RMn\ denote the class of countable rayless  graphs with at most $n$ marked vertices.

\begin{theorem} \label{main}
The following statements are equivalent:
\begin{enumerate}[itemsep=0.0cm,label=(\alph*)]
\item \label{M iir} $\cgr$ is \wqo;
\item \label{M ii} $\RM$ is \wqo;
 \item \label{M iip} $\RMn$ is \wqo\ \fe\ \nin;
  \item \label{M iipp} $\ranmn{<\alpha}$ is \wqo\ \fe\ \nin\ and $\alpha<\omega_1$;
 \item \label{M ivp} $\cgr$ has no descending chain;    
  \item \label{M ivpp} $\ran{\alpha}$ has no descending chain \fe\ $\alpha<\omega_1$;   
 \item \label{M iv} $\RM$ has no descending chain; 
\item \label{M i} $\cgr$ has no infinite antichain; 
\item \label{M ippp} $\cgr$ has no antichain of cardinality \cont;
\item \label{M iii} $|\ran{\alpha}|_<$ is countable \fe\ $\alpha<\omega_1$; 
 \item \label{M iiim} $|\ranm{\alpha}|_{\mm}$ is countable \fe\ $\alpha<\omega_1$; 
 \item \label{M iiip} $|\ran{\alpha}|_< < 2^{\aleph_0}$\fe\ $\alpha<\omega_1$.
\end{enumerate}
\end{theorem}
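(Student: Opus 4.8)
The plan is to assemble \Tr{main} from the per-rank equivalences of \Tr{tfae} together with the unmarking results of \Sr{unmarking} and the two corollaries \Cr{cor LD} and \Cr{R wqo}, plus the reduction \Tr{fin bqo back}-style arguments already in place. Concretely, I would organize the twelve items into three blocks: the ``for every $\alpha$'' layered statements \ref{M iipp}, \ref{M ivpp}, \ref{M iii}, \ref{M iiim}, \ref{M iiip}; the ``global'' marked statements \ref{M ii}, \ref{M iip}, \ref{M iv}; and the ``global'' unmarked statements \ref{M iir}, \ref{M ivp}, \ref{M i}, \ref{M ippp}. Within the first block everything is immediate from \Tr{tfae} applied simultaneously at every $\alpha<\omega_1$: \ref{M iipp} $\leftrightarrow$ \ref{M ivpp} $\leftrightarrow$ \ref{M iii} $\leftrightarrow$ \ref{M iiim} $\leftrightarrow$ \ref{M iiip} follow by quantifying the equivalences \ref{R iip} $\leftrightarrow$ \ref{R ivp} $\leftrightarrow$ \ref{R iii} $\leftrightarrow$ \ref{R iiim} $\leftrightarrow$ \ref{R iiip} over $\alpha$ (noting that an antichain or descending chain, being countable, always lives below some fixed countable rank since $\omega_1$ is regular).

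Next I would connect the global marked statements to the layered ones. The implication \ref{M iipp} $\to$ \ref{M ii} is exactly \Cr{R wqo}. The implications \ref{M ii} $\to$ \ref{M iip} $\to$ \ref{M iipp} are trivial (restricting to at most $n$ marked vertices, and then to graphs of rank $<\alpha$, only shrinks the class). For \ref{M iv}: \ref{M ii} $\to$ \ref{M iv} is immediate from \Prr{wqo ch}, and \ref{M iv} $\to$ \ref{M ivpp} follows because a descending chain in $\ran{\alpha}$ is in particular one in $\RM$ (after \Or{apices} fixes $|A(G_i)|$, but actually no such bookkeeping is needed here—an unmarked descending chain is a marked one with $M=\emptyset$). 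Then \ref{M ivpp} is already tied into the first block. This closes the loop through the marked world.

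Finally I would handle the unmarked global statements. The chain \ref{M iir} $\to$ \ref{M ivp} $\to$ \ref{M ivpp} is trivial (WQO implies no descending chain by \Prr{wqo ch}; a descending chain in some $\ran{\alpha}$ is one in \cgr). Conversely, from the first block we get \ref{M iii}, and $|\ran{\alpha}|_<$ countable for all $\alpha<\omega_1$ together with \Lr{subclasses} gives WQO of \cgr: a bad sequence in \cgr\ lives in $\ran{<\gamma}$ for some countable $\gamma$, so it suffices to know each $\ran{\alpha}$ is WQO, which follows from \ref{M iipp} (via \Tr{tfae} \ref{R iip} $\to$ well-quasi-orderedness of $\ran{\alpha}$, using that $\ran{\alpha}\subseteq\ranmn{<\alpha+1}$ with $M=\emptyset$). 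So \ref{M iii} $\to$ \ref{M iir}. For the antichain side: \ref{M iir} $\to$ \ref{M i} $\to$ \ref{M ippp} are trivial, and \ref{M ippp} $\to$ \ref{M iipp} is \Cr{cor LD} quantified over all $\alpha<\omega_1$ (again using regularity of $\omega_1$ to see that an antichain of size \cont\ in \cgr, being of size \cont, has all its members below some countable rank—this needs a small argument since \cont\ many graphs need not have bounded rank a priori; but one can instead argue contrapositively: if some $\ran{\alpha}$ has a bad sequence then by \Tr{tfae} \ref{R ip} it has an antichain of size \cont, which is an antichain in \cgr).

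The main obstacle I anticipate is the bookkeeping around \emph{which} countable rank bounds a given bad sequence or large antichain, and making sure the cardinality-\cont\ antichain condition \ref{M ippp} really does feed correctly into \Cr{cor LD} at a \emph{fixed} rank rather than merely ``some'' rank. The cleanest route is to run all the unmarked-to-layered implications contrapositively: a bad sequence (resp.\ antichain of size \cont) in \cgr\ is countable (resp.\ has size \cont), hence—since each graph has a countable rank and $\omega_1$ is regular—there is a single $\alpha<\omega_1$ below which infinitely many (resp.\ \cont\ many, using that \cont\ is uncountable and a union of countably many sets each of size $<\cont$ cannot have size \cont\ under no assumption... here one must be careful: without CH a union of $\aleph_1$-many sets could have size \cont) of its members lie. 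To sidestep the delicate cardinal arithmetic in the \cont-antichain case, I would route \ref{M ippp} through \ref{M i}: an antichain of size \cont\ contains a countably infinite antichain, so \ref{M ippp} $\to$ \ref{M i} is trivial in the other direction, and I only need \ref{M i} $\to$ \ref{M iipp}, i.e.\ an infinite antichain in \cgr, being countable, lies below a fixed $\alpha$, giving an infinite antichain in $\ran{\alpha}$ and hence (by \Tr{tfae} \ref{R ip}) contradicting \ref{M iipp}. That makes every implication go through a countable object and the regularity of $\omega_1$ does all the work; the role of \Cr{cor LD} is then only to show \ref{M ippp} is not strictly stronger than \ref{M i}, i.e.\ it closes the cycle \ref{M i} $\leftrightarrow$ \ref{M ippp} without extra hypotheses.
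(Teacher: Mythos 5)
Your overall decomposition is correct and follows essentially the paper's blueprint: assemble Theorem~\ref{main} from Theorem~\ref{tfae} (layered equivalences), Corollary~\ref{R wqo} (layered $\to$ marked global), the trivial restrictions for $\ref{M ii}\to\ref{M iip}\to\ref{M iipp}$, monotonicity of rank in a descending chain for $\ref{M ivpp}\to\ref{M ivp}$, and Corollary~\ref{cor LD} for the antichain side. You also correctly observe that $\ref{M iipp}\to\ref{M iir}$ can be argued directly by regularity of $\omega_1$ (a bad sequence in $\cgr$ is countable, so lives in a single $\ran{\gamma}=\ranmN{<\gamma+1}{0}$), and that $\ref{M ii}\to\ref{M iv}\to\ref{M ivpp}$ ties $\ref{M iv}$ in without having to invoke Lemma~\ref{lem unmark} directly for $\ref{M ivp}\leftrightarrow\ref{M iv}$, which is a minor simplification over the paper's route.

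However, your final paragraph contains a direction error. You claim ``I only need $\ref{M i}\to\ref{M iipp}$'' and then argue: an infinite antichain in $\cgr$, being countable, lies in $\ran{\alpha}$ for a fixed $\alpha$, giving an antichain in $\ranmN{<\alpha+1}{0}$ and hence contradicting $\ref{M iipp}$. But this is the contrapositive of $\ref{M iipp}\to\ref{M i}$, which is the \emph{trivial} direction (already available via $\ref{M iipp}\to\ref{M ii}\to\ref{M iir}\to\ref{M i}$). The direction you actually need, namely $\ref{M i}\to\ref{M iipp}$ (or more precisely some implication feeding the antichain conditions back into the cycle), is exactly what you cannot get this way: an antichain in $\ranmn{<\alpha}$ is a \emph{marked} antichain, and Remark~\ref{rem anti} explicitly notes that the unmarking trick of Lemma~\ref{lem unmark} does not extend to antichains, so ``no unmarked antichain'' does not straightforwardly give ``no marked antichain''. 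As written, your sidestep leaves the cycle open.

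The good news is that your worry about ``delicate cardinal arithmetic'' is misplaced, and the original route you considered works cleanly with no such issues. Corollary~\ref{cor LD} is applied rank-by-rank, and the hypothesis it needs (``$\ran{\alpha}$ has no antichain of cardinality $2^{\aleph_0}$'') is inherited from $\ref{M ippp}$ purely by the subset containment $\ran{\alpha}\subseteq\cgr$---there is no need to push a $2^{\aleph_0}$-sized antichain in $\cgr$ down to a single rank, which is indeed problematic when $\cont$ is singular or $\aleph_1<\cont$. So the clean proof is: $\ref{M iir}\to\ref{M i}\to\ref{M ippp}$ trivially, and $\ref{M ippp}\to\ref{M iipp}$ via Corollary~\ref{cor LD} quantified over $\alpha$; no sidestep is required, and the sidestep as formulated does not work.
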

\begin{proof}
We trivially have \ref{M ii}$\to$ \ref{M iip}  $\to$ \ref{M iipp}, and the implication \ref{M iipp} $\to$ \ref{M ii} is \Cr{R wqo}.  Thus items \ref{M ii}, \ref{M iip},  \ref{M iipp} are equivalent.

We trivially have \ref{M ii}$\to$ \ref{M iir} $\to$ \ref{M ivp}. By the implication \ref{R ivp} $\to$ \ref{R iip} of \Tr{tfae}, \ref{M ivp} implies \ref{M iipp}. Thus \ref{M iir}, \ref{M ivp} are also equivalent to the above items.

We have \ref{M ivpp}$\to$ \ref{M ivp} since rank is monotone decreasing in any descending chain by \Or{minor rank}. Thus \ref{M ivpp} is also equivalent to the above items.

The equivalence between \ref{M ivp} and \ref{M iv} follows easily from \Lr{lem unmark}, by noting again that in any descending chain the rank is monotone decreasing.

We trivially have \ref{M iir} $\to$ \ref{M i} $\to$ \ref{M ippp}. 
\Cr{cor LD} provides  the implication  \ref{M ippp} $\to$  \ref{M iipp}, thus adding \ref{M i}, \ref{M ippp} to be above equivalences.

The equivalences between items \ref{M iii}, \ref{M iiim}, \ref{M iiip}  and \ref{M iipp} follow by applying \Tr{tfae} to every $\alpha<\omega_1$. 
\end{proof}

\section{\cf\ is WQO impies \cgr\ is WQO, and even BQO} \label{sec conv}

Let \cf\ denote the class of finite graphs (ordered by $<$). This section completes the proof of \Tr{fin bqo} (started with the implication \impl{RW}{FB} in \Sr{sec wbqo}) by providing the implication \impl{FB}{RB}: 

\begin{theorem} \label{thm BWQO}
If \cf\ is BQO then so is \cgr. 
\end{theorem}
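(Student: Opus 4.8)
The goal is to upgrade \Tr{main Intro} (equivalently \Tr{main}) from WQO to BQO, assuming the deep hypothesis that $\cf$ is BQO. The natural strategy mirrors the proof of \Tr{fin bqo back}: there we encoded the ``iterated power set'' $H^*_{\omega_1}(Q)$ (for $Q$ the finite $1$-connected graphs) inside the countable rayless graphs via the tree-building map $T$, using the fact that $T(X)<T(Y)\Rightarrow X\le_+ Y$ together with \Tr{lem H wqo}. Now I want to run this in reverse: starting from a \emph{bad} sequence in $\cgr$ (or in $H^*_{\omega_1}(\cgr)$), I want to extract a bad sequence in $H^*_{\omega_1}(\cf)$, contradicting the assumption that $\cf$ is BQO, i.e. that $H^*_{\omega_1}(\cf)$ is WQO.

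\textbf{First reduction.} By \Tr{lem H wqo}, $\cgr$ is BQO iff $H^*_{\omega_1}(\cgr)$ is WQO. By \Or{obs H}, $H^*_{\omega_1}(H^*_{\omega_1}(\cgr))=H^*_{\omega_1}(\cgr)$, so it is in fact enough (and this is the content of \Cr{cor gen} alluded to in the introduction) to show: \emph{if $\cgr$ is WQO then $\cgr$ is BQO.} Combining this with the already-proved implication \impl{RW}{FB} (i.e.\ \Tr{fin bqo back}) and with the hypothesis that $\cf$ is BQO $\Rightarrow$ $\cf$ is WQO, one still needs the implication \impl{FB}{RW}, which is \Tr{main Intro}'s hard direction; so the cleanest logical route is: assume $\cf$ is BQO; by \Tr{fin bqo back}'s converse (the implication \impl{FB}{RW}, proved via \Tr{tfae}) deduce $\cgr$ is WQO; then prove the self-strengthening ``$\cgr$ WQO $\Rightarrow$ $\cgr$ BQO''. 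I would structure \Tr{thm BWQO} around this last step, isolating it as a lemma of the form: for any $\alpha<\omega_1$, if $\Rank_\alpha$ is WQO then it is BQO, proved by transfinite induction on $\alpha$.

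\textbf{The inductive core.} For the induction, fix $\alpha$ and assume $\Rank_{<\alpha}$ (in the marked sense, $\ranm{<\alpha}$) is BQO; I want $\ranm{\alpha}$ BQO. Via \Lr{subclasses}, a graph $G\in\ran{\alpha}^n$ is determined up to minor-twin by the pair $(n,\ccm(G))$, where $\ccm(G)$ is a marked-minor-closed subclass of $\ranmn{<\alpha}$. Since $\ranm{<\alpha}$ is BQO hence WQO, each such $\ccm(G)$ is $\forb{X}\cap\ranmn{<\alpha}$ for a \emph{finite} set $X$ of minimal excluded marked graphs; so $G\mapsto$ (this finite antichain in $\ranm{<\alpha}$) is a ``reduction'' from $\ran{\alpha}$ to the set of finite antichains of $\ranm{<\alpha}$. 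Now one uses the standard BQO toolkit: the set of finite subsets of a BQO is BQO (a special case of \Lr{lem Seq}, applying to the domination order on finite sequences, which restricts correctly to antichains because $G<H$ in the target reflects containment $\ccm(G)\supseteq\ccm(H)$), and products of BQOs are BQO (\Lr{lem prod}), to account for the parameter $n\in\N$. The key verification is that this reduction is order-reflecting in the sense needed: if the finite antichain attached to $G$ is ``dominated'' by that attached to $H$ (each excluded graph of $H$ has a minor among those of $G$, roughly $\ccm(G)\subseteq\ccm(H)$), then $G<H$ — which is exactly the backward direction of \Lr{subclasses}, whose hypotheses ($\ccm(H)$ WQO, $|\ranm{\beta}\cap\ccm(G)|_{\mm}$ countable) are supplied by the inductive hypothesis and by \Tr{main}\ref{M iiim}. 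Finally, to get from ``$\ranm{\alpha}$ BQO for each $\alpha$'' to ``$\cgr$ BQO'', stratify: a bad \emph{supersequence} (transfinite iterate) over $\cgr$ lives, by regularity of $\omega_1$, inside $H^*_{\omega_1}(\ranm{<\alpha})$ for a single $\alpha<\omega_1$, reducing to the fixed-rank statement.

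\textbf{Expected obstacle.} The main difficulty is not the BQO bookkeeping but making precise the ``reduction to finite antichains'' so that it respects \emph{all} the transfinite levels of the BQO hierarchy simultaneously, not just antichains: one really needs a map $H^*_{\omega_1}(\ran{\alpha})\to H^*_{\omega_1}(\text{something already known BQO})$ that reflects $\le_+$, and building it requires feeding \Lr{subclasses} (which is a statement about single pairs $G<H$) through the hereditarily-countable iterated power set, much as $T$ was built recursively in \Tr{fin bqo back}. I expect the correct device is a tree-of-excluded-graphs encoding analogous to $T$, combined with \Lr{subclasses R} (the marked refinement used in this final section) to handle the propagation of the WQO/countability side-conditions down the ranks; verifying that the side-conditions of \Lr{subclasses} remain available at every node of the recursion is where the real work lies.
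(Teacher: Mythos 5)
Your overall strategy—transfinite induction on rank, using \Lr{subclasses} to turn the class map $G\mapsto\ccm(G)$ into an immersion—is the same spine as the paper's proof, and the stratification-by-regularity step at the end is correct. But the proposal has two genuine gaps.

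\textbf{The ``first reduction'' is circular.} You propose to ``assume $\cf$ is BQO; by \Tr{fin bqo back}'s converse (the implication \impl{FB}{RW}, proved via \Tr{tfae}) deduce $\cgr$ is WQO; then prove the self-strengthening `$\cgr$ WQO $\Rightarrow$ $\cgr$ BQO'.'' But \impl{FB}{RW} is \emph{not} previously proved: \Tr{tfae} records internal equivalences between various WQO/descending-chain/cardinality conditions on $\cgr$, with no input from $\cf$ being BQO. In the paper the only proof of \impl{FB}{RW} \emph{is} the present theorem (via the trivial implication RB $\Rightarrow$ RW). Moreover, the ``self-strengthening'' you would then need—$\cgr$ WQO $\Rightarrow$ $\cgr$ BQO—is false for general quasi-orders and is not what the paper proves; the paper instead builds an immersion $m:\cgr\to H^*_{\omega_1}(\cf)$ and appeals to \Cr{cor gen}, so that the hypothesis ``$\cf$ is BQO'' enters at the very end, not as ``$\cgr$ is WQO.''

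\textbf{The marked/unmarked mismatch is not addressed.} Your inductive hypothesis is that $\ranm{<\alpha}$ (marked) is BQO, but your inductive step—immersing $\ran{\alpha}^n$ into (subclasses of) $\ranm{<\alpha}$ via $G\mapsto\ccm(G)$—only yields BQO of the \emph{unmarked} class $\ran{\alpha}$. To close the induction you must produce BQO of the \emph{marked} class $\ranm{\alpha}$, and there is no cheap way to deduce this from $\ran{\alpha}$ being BQO. The paper handles exactly this with the ``unmarking trick'' of \Sr{sec u}: the map $u$ sending a marked graph $(G,M)$ to the sequence of depth-$n$ self-amalgamations of its double suspension, and \Lr{lem seq} establishing that $u$ is an immersion of $\ranm{<\alpha}$ into $\pst(\ran{<\alpha})$. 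In the paper's recursion one then defines $m_\alpha:=m^{**}_{<\alpha}\circ u^*\circ\ccm$ and, separately, $\mbu_\alpha:=m_\alpha^*\circ u$, and the compatibility of these families across $\alpha$ is what makes the union $m=\bigcup_\alpha m_\alpha$ a single immersion into $H^*_{\omega_1}(\cf)$. None of this apparatus (nor any substitute for it) appears in your plan; without it, the induction does not go through.

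Two smaller points. First, $G<H$ reflects $\ccm(G)\subseteq\ccm(H)$, not $\supseteq$ as you wrote. Second, your appeal to \Lr{lem Seq} (Higman embedding order on finite sequences) does not directly apply to the obstruction-set encoding: $\ccm(G)\subseteq\ccm(H)$ translates for the minimal obstruction antichains $X_G,X_H$ into ``$\forall Y\in X_H\,\exists Z\in X_G: Z\le Y$,'' which is a Smyth-type domination, not the Higman order; this can be salvaged by passing to the downward-closed sets themselves and using $\le_+$ together with \Tr{lem H wqo}, which is exactly why the paper works with $\ccm(G)$ (replaced by a countable transversal $\cc'(G)$ to stay hereditarily countable) rather than with finite obstruction sets.
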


Given two quasi-orders $(Q,\leq)$ and $(R,\preceq)$, a map $f: Q \to R$ is called an \defi{immersion}, 
if $f(p) \preceq f(q)$ implies $p \leq q$ \fe\ $p,q\in Q$. It is easy to see that immersions preserve WQOs:

\begin{observation} \label{pr im}
Suppose $f: R \to Q$ is an immersion between quasi-orders. If $Q$ is a WQO, then so is $R$. \qed 
\end{observation}

\mymargin{A straightforward consequence of \Tr{lem H wqo} is that immersions also preserve BQOs}

Given a quasi-order $(Q,\leq)$, we obtain a quasi-order $(\pst(Q), \leq_*)$ by restricting \Dr{def qo}, i.e.\ by letting $X \leq_* Y$ for every $X,Y \in \pst(Q)$ \st\ for every $X'\in X$ there exists $Y'\in Y$ with $X'\leq Y'$. Any map $m: (R,\preceq) \to (Q,\leq)$ between quasi-orders naturally extends to power sets: 
\begin{definition} \label{def star}
We define $m^*: (\pst(R), \preceq_*) \to (\pst(Q), \leq_*)$ by\\ $X \mapsto \{ m(X') \mid X' \in X \}$. 
\end{definition}
It is straightforward to check from the definitions that this operation preserves immersions too: 

\begin{observation} \label{obs pst}
If $m: (R,\preceq) \to (Q,\leq)$ is an immersion between quasi-orders, then so is $m^*: (\pst(R), \preceq_*) \to  (\pst(Q), \leq_*)$. \qed
\end{observation}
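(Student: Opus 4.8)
The plan is to verify the two defining properties of an immersion directly from \Dr{def star} and the definition of $\preceq_*$ and $\leq_*$. So let $m: (R,\preceq) \to (Q,\leq)$ be an immersion, and let $X, Y \in \pst(R)$ be non-empty subsets of $R$ with $m^*(X) \leq_* m^*(Y)$; our goal is to conclude $X \preceq_* Y$. Unpacking $\leq_*$, the hypothesis says that for every element of $m^*(X)$ there is an element of $m^*(Y)$ above it. By \Dr{def star}, every element of $m^*(X)$ has the form $m(X')$ for some $X' \in X$, and similarly for $m^*(Y)$. Thus for each $X' \in X$ there exists $Y' \in Y$ with $m(X') \leq m(Y')$. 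Since $m$ is an immersion, $m(X') \leq m(Y')$ forces $X' \preceq Y'$. Hence for every $X' \in X$ there is $Y' \in Y$ with $X' \preceq Y'$, which is exactly $X \preceq_* Y$ by the definition of $\preceq_*$. This establishes that $m^*$ is an immersion.

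There is essentially no obstacle here: the statement is a purely formal unravelling of the definitions, and the only mild point to be careful about is that $\pst$ consists of \emph{non-empty} subsets, so that the quantifier ``for every $X' \in X$'' is over a non-empty set and the correspondence $X' \mapsto Y'$ is genuinely witnessed. I would present the argument in two or three sentences, matching the telegraphic style of \Or{pr im} and the surrounding observations. One could also remark that, combined with \Tr{lem H wqo} (as noted in the margin), immersions and the operation $m^*$ also preserve BQO, which is the form in which this will be applied in the proof of \Tr{thm BWQO}, but the observation as stated needs only the WQO-level bookkeeping.
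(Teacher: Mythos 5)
Your proof is correct and is exactly the definition-unravelling argument the paper has in mind: the paper states the observation with a terminal \qed (preceded by ``it is straightforward to check from the definitions''), so no written proof is given, and your two-step chase through \Dr{def star} and the definition of $\leq_*$ is precisely what fills that in. One tiny slip of phrasing: an immersion has a single defining property (the implication), not ``two defining properties,'' but the substance of the argument is right.
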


Our approach for the proof of \Tr{thm BWQO} can be summarized as follows. We will immerse \cgr\ into $H^*_{\omega_1}(\cf)$, which is WQO by \Tr{lem H wqo} and our assumption that \cf\ is BQO. This implies that \cgr\ is \wqo\ by \Or{pr im}.

This immersion starts by defining $f:\ran{\alpha} \to \pst(\ranm{<\alpha})$ by $G\mapsto \ccm(G)$; we will use \Lr{subclasses} to deduce that $f$ is an immersion. Note that $f$ maps rayless graphs to sets of marked rayless graphs of smaller ranks. By recursively composing $f$ with the set operation of \Or{obs pst}, and noting that compositions of immersions are immersions, we immerse \ran{\alpha}\ into $H^*_{\omega_1}(\cf)$. But this proof sketch is incomplete for two reasons. 

Firstly, \Lr{subclasses} requires $\Rank(H)= \Rank(G)$, and this equality needs to be relaxed in order for $f$ to immerse all of $\ran{\alpha}$; we do this with \Lr{subclasses R} below. An interesting aspect is that to be able to satisfy the conditions of \Lr{subclasses} we will need to apply \Tr{tfae}. 

Secondly, the image of $f$ is a set of {\emph marked} graphs, and so we cannot compose it with $f^*$ applied to smaller ranks. We will amend this with an unmarking trick in \Sr{sec u}. 
 
\medskip
Having constructed this immersion $m: \cgr \to \hst(\cf)$, it will then be easy to deduce that \cgr\ is \bqo\ by combining  Observation~\ref{pr im} with Observation~\ref{obs hst} below. To state it, let us extend \Dr{def star} to define a map $m^+: (\hst(Q), \leq_+) \to (\hst(R), \preceq_+)$, where $\leq_+, \preceq_+$ are as in \Dr{def qo}: we apply $X \mapsto \{ m(X') \mid X' \in X \}$ to each $X\in V^*_\alpha(Q)$ by transfinite induction on $\alpha$. Analogously to \Or{obs pst}, we now have the following, which is straightforward to prove by induction on the $R$-Rank: 

\begin{observation} \label{obs hst}
If $m: (R,\preceq) \to (Q,\leq)$ is an immersion between quasi-orders, then so is $m^+: (\hst(R), \preceq_+) \to (\hst(Q), \leq_+)$. \qed
\end{observation}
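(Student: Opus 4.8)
The plan is to prove the statement by a straightforward transfinite induction that mirrors the three-way case split of \Dr{def qo}. First I would record the one structural fact that makes everything run: that $m^+$ preserves $Q$-rank, i.e.\ $\QRank(m^+(X)) = \QRank(X)$ for every $X \in \hst(R)$. This follows at once by transfinite induction on $\QRank(X)$ from the recursive clause $m^+(X) = \{ m^+(X') \mid X' \in X\}$, and it has two consequences I will use repeatedly: that $m^+$ genuinely maps $\hst(R)$ into $\hst(Q)$ (the image of a countable set is countable, and hereditary countability is inherited level by level), and that $m^+(X) \in Q$ holds precisely when $X \in R$. The latter equivalence is what lets me read off, from the positions of $m^+(X)$ and $m^+(Y)$ relative to $Q$, which clause of \Dr{def qo} governs the hypothesis $m^+(X) \leq_+ m^+(Y)$.

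Then I would prove the immersion property $m^+(X) \leq_+ m^+(Y) \Rightarrow X \preceq_+ Y$ by induction on the pair $(\QRank(X), \QRank(Y))$ ordered lexicographically, splitting into cases by whether $X$ and $Y$ lie in $R$. If both lie in $R$, clause \ref{qo i} reduces the hypothesis to $m(X) \leq m(Y)$ in $Q$, and the immersion property of $m$ gives $X \preceq Y$, i.e.\ $X \preceq_+ Y$. If $X \notin R$ but $Y \in R$, then $m^+(X) \notin Q$ while $m^+(Y) \in Q$, a configuration for which \Dr{def qo} declares no comparability, so the implication holds vacuously. If $X \in R$ and $Y \notin R$, clause \ref{qo ii} produces $Y' \in Y$ with $m^+(X) \leq_+ m^+(Y')$; since $\QRank(Y') < \QRank(Y)$ the induction hypothesis gives $X \preceq_+ Y'$, and then clause \ref{qo ii} on the $R$-side gives $X \preceq_+ Y$. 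Finally, if both $X, Y \notin R$, clause \ref{qo iv} says that for each $X' \in X$ there is $Y' \in Y$ with $m^+(X') \leq_+ m^+(Y')$; since $\QRank(X') < \QRank(X)$, the lexicographically lower induction hypothesis yields $X' \preceq_+ Y'$ for each such $X'$, and clause \ref{qo iv} on the $R$-side concludes $X \preceq_+ Y$.

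There is no genuine obstacle here; the only points needing a little care are (a) choosing the right induction parameter — lexicographic order on $(\QRank(X),\QRank(Y))$ rather than the ordinal sum $\QRank(X) + \QRank(Y)$, since ordinal addition is not strictly monotone in its left argument and so would not make the reduction from $X$ to $X'$ in the last case strictly decreasing; and (b) the asymmetry of \Dr{def qo}, namely that the case $X \notin Q$, $Y \in Q$ is deliberately omitted, which is exactly why the ``$X\notin R$, $Y\in R$'' branch closes vacuously. Both are absorbed by the case analysis above, so the whole argument is short once the $Q$-rank preservation lemma is in place.
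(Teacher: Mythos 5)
Your proof is correct and is precisely the routine rank induction the paper has in mind: \Or{obs hst} is stated with no written proof beyond the remark that it is ``straightforward to prove by induction on the $R$-Rank'', and your case split along \Dr{def qo}, together with the rank-preservation observation and the lexicographic ordering of $(\QRank(X),\QRank(Y))$, is exactly how that remark is made precise. No gaps.
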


\subsection{Extending  \Lr{subclasses}} \label{sec ext sub}

We will need the following variant of the backward direction of \Lr{subclasses}, obtained be removing the condition $\Rank(H)= \Rank(G)$, and slightly strengthening the other conditions:

\begin{lemma} \label{subclasses R}
Let $G,H$ be countable graphs with $\Rank(H),  \Rank(G)\leq \alpha< \oo_1$.  Assume  $\ranm{<\alpha}$ is \wqo, and $|\ranm{\beta}|_{\mm}$ is countable \fe\ $\beta<\alpha$. If $\ccm(G)\subseteq \ccm(H)$, then $G<H$.
\end{lemma}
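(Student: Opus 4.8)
The plan is to reduce \Lr{subclasses R} to \Lr{subclasses}, which requires equal ranks, by replacing $G$ and $H$ with suitable padded copies of the same rank. Concretely, set $\alpha := \max(\Rank(G),\Rank(H))$ (we may take $\alpha$ to be this maximum rather than an arbitrary upper bound, since decreasing $\alpha$ only weakens the hypotheses to be satisfied while keeping $\ranm{<\alpha}$ \wqo\ via \Prr{wqo ch} applied to a subclass). If $\Rank(G)=\Rank(H)$ already, we are done by \Lr{subclasses}, whose two extra hypotheses hold: $\ccm(H)\subseteq \ranm{<\alpha}$ is \wqo\ because $\ranm{<\alpha}$ is, and $|\ranm{\beta}\cap \ccm(G)|_{\mm}\leq |\ranm{\beta}|_{\mm}$ is countable for $\beta<\alpha$. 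So assume $\Rank(G)<\Rank(H)=\alpha$ (the case $\Rank(H)<\Rank(G)$ cannot arise together with $\ccm(G)\subseteq\ccm(H)$ if $\Rank(G)=\alpha$, since then by \Or{min tree} applied to a suitable part we would force $\Rank(H)\geq\alpha$; more carefully, $\ccm(G)\subseteq\ccm(H)$ forces $\Rank(H)\geq\Rank(G)$ because any co-part of $G$ of rank $\Rank(G){-}1$ — or, at limits, of cofinally large rank — lies in $\ccm(G)\subseteq\ccm(H)$, and \Or{min tree} plus \Or{minor rank} then bound $\Rank(H)$ from below, so we may assume $\Rank(G)\leq\Rank(H)$ throughout).

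The key construction: let $T_\alpha$ be the minimal tree of rank $\alpha$ from \Or{min tree}, and form $G^+$ by taking $G$ together with a disjoint copy of $T_\alpha$, then joining every vertex of $G$ and every vertex of $T_\alpha$ to a new common vertex $s$ — equivalently, $G^+ := S(G \mathbin{\dot\cup} T_\alpha)$, the suspension. Then $\Rank(G^+) = \alpha = \Rank(H^+)$ where $H^+ := S(H)$ (using $\Rank(S(K))=\Rank(K)$ and $\Rank(G\mathbin{\dot\cup}T_\alpha)=\alpha$). Now I claim:
\begin{enumerate}
\item $G<H$ \iff\ $G^+<H^+$. The forward direction is clear. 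For the converse, note $A(G^+)\ni s$ (in fact $s$ is a cut vertex whose removal gives components of strictly smaller rank), and a minor model of $G^+$ in $H^+$ maps the branch set of $s$ to a set meeting $A(H^+)\ni s_H$ by \Or{apices}, and then one argues as in \Lr{lem cones unm} / \Lr{blocks} that the model restricts to one of $G$ in $H$ — here the presence of the $T_\alpha$ summand does no harm because $T_\alpha< H$ automatically (as $\Rank(H)=\alpha$, by \Or{min tree}), so the copy of $T_\alpha$ can be accommodated separately.
\item $\ccm(G^+)\subseteq \ccm(H^+)$. This is where $\ccm(G)\subseteq\ccm(H)$ gets used: a marked minor of $(G^+, A(G^+))$ of rank $<\alpha$ decomposes, via its interaction with $s$, into marked minors of parts of $G^+$; each part of $G^+$ is either (essentially) a part of $G$ with $A(G)$ marked — handled by $\ccm(G)\subseteq\ccm(H)$ — or a part of $T_\alpha$, which is a tree of rank $<\alpha$ and hence a marked minor of the corresponding $T_\beta<H$; I then need to reassemble these into a marked minor of $(H^+,A(H^+))$, which works because $H^+$ has the suspension vertex to glue along and $H$ absorbs all the tree pieces. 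This step needs care but is routine given \Or{min tree} and the structure of suspensions.
\end{enumerate}

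Given (1) and (2), apply \Lr{subclasses} to the pair $G^+, H^+$ (same rank $\alpha$; the two auxiliary hypotheses hold as above, now with $\ccm(H^+)\subseteq\ranm{<\alpha}\cup\{\text{rank-}{<\alpha}\text{ marked minors}\}$ still \wqo, and $|\ranm{\beta}\cap\ccm(G^+)|_{\mm}$ countable since $\ccm(G^+)\subseteq\ranm{<\alpha}$ and $|\ranm{\beta}|_{\mm}$ is countable) to conclude $G^+<H^+$, hence $G<H$ by (1). The main obstacle I anticipate is verifying step (2) — controlling exactly which marked minors $\ccm(G^+)$ and $\ccm(H^+)$ pick up from the added suspension vertex and the padding tree, and checking that no ``new'' marked minor of $G^+$ escapes the cover provided by $\ccm(G)$ together with the trees $T_\beta$. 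A cleaner alternative, if the suspension bookkeeping proves awkward, is to pad $G$ directly by attaching, at a single new marked-free apex, one copy of $T_\beta$ for every $\beta<\alpha$ (so the apex becomes the new kernel), and to pad $H$ only by a suspension; then $\Rank$ matches and every part of the padded $G$ is either a part of $G$ or a tree $T_\beta<H$, which makes the inclusion of $\ccm$'s more transparent. Either way, once ranks are equalised the result is immediate from \Lr{subclasses}.
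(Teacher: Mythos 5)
Your strategy (pad $G,H$ to equal rank and then invoke \Lr{subclasses}) is the same as the paper's, but the padding $G^+:=S(G\,\dot{\cup}\,T_\alpha)$, $H^+:=S(H)$ does not preserve the kernel, and this kills both steps~(1) and~(2). One computes $A(G^+)=\{s,r_\alpha\}$ regardless of $G$, whereas $A(H^+)=A(H)\cup\{s_H\}$, and these two kernels can be badly out of balance. Concretely, take $G$ a single vertex and $H:=\dot{\bigcup}_n K_n$, so $\Rank(G)=0$, $\Rank(H)=1=:\alpha$, $A(H)=\emptyset$, $\ccm(G)=\emptyset\subseteq\ccm(H)$, and of course $G<H$. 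Here $T_1$ is the infinite star, $|A(G^+)|=|\{s,r_1\}|=2$ while $|A(H^+)|=|\{s_H\}|=1$, with $\Rank(G^+)=\Rank(H^+)=1$. By \Or{apices} there is no model of $G^+$ in $H^+$ at all (the two disjoint kernel branch sets cannot both meet a singleton), so step~(1) is false. And since $s,r_1$ are adjacent in $G^+$, the one-edge marked graph with both ends marked lies in $\ccm(G^+)$ but not in $\ccm(H^+)$ (two disjoint branch sets cannot both contain $s_H$), so $\ccm(G^+)\not\subseteq\ccm(H^+)$ and step~(2) is false --- exactly the ``main obstacle'' you flagged but did not resolve.

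Two further gaps: the ``clear'' forward direction of~(1) tacitly uses \Or{min tree} to get $T_\alpha<H$, but that observation requires $H$ to be \emph{connected} (indeed $T_1\not<\dot{\bigcup}_n K_n$); and your ``cleaner alternative'' (attach a single $T_\beta$ for each $\beta<\alpha$ at a new apex) fails for successor $\alpha$, since the one copy of $T_{\alpha-1}$ can be killed by a finite separator and the padded $G$ need not reach rank~$\alpha$ (in the example it is a three-vertex graph of rank~$0$). The paper's padding differs in exactly the right way: after first suspending so that $G,H$ are 2-connected and their kernels nonempty, it attaches a copy of $T_\alpha$ at \emph{every} vertex $a\in A(G)$ by identifying $r_\alpha$ with $a$ (and likewise at each $a'\in A(H)$). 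This raises both ranks to $\alpha$ while keeping $A(G')=A(G)$ and $A(H')=A(H)$, so that $\ccm(G')\subseteq\ccm(H')$ is transferred from $\ccm(G)\subseteq\ccm(H)$ by rerouting branch sets into the trees hanging off the corresponding kernel vertices; a model of $G$ in $H$ is then recovered from the model of $G'$ in $H'$ via \Lr{blocks}. If you want to repair your argument, the essential change is to attach the padding \emph{at} the kernel vertices rather than disjointly --- the disjoint-union-then-suspend construction gives you no control over what the new kernel becomes.
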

\begin{proof}
By \Prr{inf bet} and \Or{minor rank}, $\ccm(G)\subseteq \ccm(H)$ easily implies $\Rank(G)\leq  \Rank(H)$. If $\Rank(G)=\Rank(H)$, then \Lr{subclasses} applies. 

We may assume that $G,H$ are 2-connected by applying \Lr{lem cones unm}. 

If $\Rank(G)<\Rank(H)=:\beta$, we modify $G,H$ into supergraphs $G',H'$ with $\Rank(G')=\Rank(H')$ as follows. For each $a\in A(G) \cup A(H)$, attach the root $r_\alpha$ of a copy of the minimal tree $T_\alpha$ of rank $\alpha$ as provided by \Or{min tree}. Easily, $\Rank(G')=\Rank(H')$, and $A(G')=A(G)$ and $A(H')=A(H)$. Moreover, we have $\ccm(G')\subseteq \ccm(H')$, because for every $(K,M)\in \ccm(G')$, we can extend any model of $(K\cap G,M)$ in $(H,A(H))$ into a model of $(K,M)$ in $(H',A(H))$ by mapping the $T_\alpha$'s attached to  $M\subseteq A(G)$ to those attached to $A(H)$.

Thus \Lr{subclasses} applies to $G',H'$, and yields a model of $G'$ in $H'$. By \Lr{blocks}, this model yields a model of $G$ in $H$ since the former is 2-connected. 
\end{proof}

\comment{ 
	We first observe that the condition $|A(G)| = |A(H)|$ of \Lr{subclasses} can be dropped. 

\begin{lemma} \label{subclasses A}
\Lr{subclasses} remains true when $|A(G)| \leq |A(H)|$. 
\end{lemma}
\begin{proof}
Let $A:=A(G)$ and $A':=A(H)$. Let $\delta:= |A'| -|A|$, and note that \Lr{subclasses} handles the case $\delta=0$. Note that if $|A| > |A'|$ then we cannot have $G<H$, and so the statement is trivially true in that case. Thus it suffices to extend that lemma to the case $|A| < |A'|$, i.e.\ $\delta>0$. 

As before, the forward direction is obvious (and not needed). We will prove the backward direction by induction on $\delta$.

We may assume \obda\ that both $G,H$ are 2-connected by replacing them by $S(S(G))$ and  $S(S(H))$, and applying \Lr{lem cones unm}: indeed, $\ccm(G)\subseteq \ccm(H)$ implies $\ccm(S(S(G)))\subseteq \ccm(S(S(H)))$ by the forward direction of that lemma, and $S(S(G)) < S(S(H))$ implies $G<H$ by its backward direction applied twice. 

As in the proof of \Lr{subclasses}, we let $G_1 \subset G_{2} \subset \ldots$ be  a sequence of subgraphs of \g as provided by \Lr{lem Gn}, and observe that \ti\ a sequence of marked-minor embeddings $f_n: (G_n,A) \mm (H,A')$.

We call $h\in V(H)$ \defi{crucial} for $x\in V(G)$, if $h$ lies in the branch set $f_n(x)$ for all but finitely many $\nin$. We distinguish three cases: \smallskip

{\bf Case 1:} There is $h\in A'$ which is not crucial for any $x\in V(G)$. \\ \smallskip

In this case we claim that 
\labtequ{ccm Hh}{$\ccm(G) \subseteq \ccm(H-h)$.}
This is easily obtained by a compactness argument: let \seq{x}\ be an enumeration of the vertices of \G. Let $f'_1$ be a member of $\seq{f}$ \st\ $h\not\in f'_1(x_1)$, let $f'_2$ be a later member \st\ $h\not\in f'_2(x_1) \cup f'_2(x_2)$, and continue to define the infinite subsequence $f'$ of $f$. 
\end{proof}

\labtequ{f emb}{$f$ is an immersion.}
	\mymargin{(of $\ran{\alpha}^n$ for now)}
}

\subsection{The unmarking trick} \label{sec u}

We define a map $u: \ranm{\alpha} \to \pst(\ran{\alpha})$, i.e.\ mapping each marked graph to a set of unmarked graphs of the same rank. 

Given $(G,M) \in \RM$ with $\mu:=|M|\geq 2$, and \nin, we define the \defi{depth-$n$ self-amalgamation $u_n(G)$ of \G} as follows. We enumerate the elements of $M$ as $m_1,\ldots,m_\mu$. We let $T_n$ be the finite rooted tree of depth $n$ each non-leaf vertex of which (including the root $r$) has degree $\mu$. For each vertex $v$ of $T_n$, we let $G^v$ be a copy of $(G,M)$, disjoint from all other copies. We label each of the $\mu$ edges of $r$ by a distinct number in $[\mu]$. We then label the remaining edges of $T_n$ so that for each non-leaf vertex $x$, the labels of the edges of $x$ are in bijection with $[\mu]$; this is easy to achieve via a breadth-first process.

We let $\amal{G}{n}=\amal{(G,M)}{n}$ be the (unmarked) graph obtained from $\dot{\bigcup}_{x\in V(T_n)} G^x$ by identifying, for each edge $xy$ of $T_n$ with label $i\in [\mu]$, the copies of $m_i$ in $G^x$ and $G^y$ into a single vertex. By construction, if \g is 2-connected, then 
\labtequ{amal bl}{Each block of $\amal{G}{n}$ is isomorphic to \G.}
Finally,  let $S_G:= \Sb(\Sb((G,M)))$, and let $u((G,M)):= \{ \amal{S_G}{n} \mid \nin \}$, which we consider as an element of $\pst(\ran{\alpha})$ where $\alpha:= \Rank(G)= \Rank(S_G)$. The reason we applied the self-amalgamation to $S_G$ instead of $G$ is that this ensures our above requirement $\mu\geq 2$, so that $u((G,M))$ is well-defined \fe\ $G$. It also ensures that $S_G$ is 2-connected, and so \eqref{amal bl} holds.

\begin{lemma} \label{lem seq}
\Fe\ $0<\alpha< \oo_1$, $u$ is an immersion of 
\ranm{<\alpha} to $\pst(\ran{<\alpha})$.
\end{lemma}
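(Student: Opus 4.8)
The plan is to show that $u$ is an immersion, i.e.\ that $u((G,M)) \preceq_* u((H,M'))$ implies $(G,M) \mm (H,M')$ for marked graphs $G,H$ in $\ranm{<\alpha}$. Writing $S_G = \Sb(\Sb((G,M)))$ and $S_H = \Sb(\Sb((H,M')))$, recall that by \Lr{lem cones} applied twice it suffices to show $S_G \mm S_H$, since this is equivalent to $(G,M) \mm (H,M')$. So I will assume $u((G,M)) \preceq_* u((H,M'))$, which by the definition of $\preceq_*$ (via \Dr{def qo}, item \ref{qo iv}) means that for every $n$ there is some $k = k(n)$ with $\amal{S_G}{n} < \amal{S_H}{k}$; in particular, taking $n=1$, we have $\amal{S_G}{1} < \amal{S_H}{k}$ for some $k$, and $\amal{S_G}{1}$ already contains a copy of $S_G$ as a block.

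First I would record the structural fact \eqref{amal bl}: since $S_G$ and $S_H$ are $2$-connected, every block of $\amal{S_G}{n}$ is isomorphic (as a marked graph, with the marking inherited from $M$) to $S_G$, and likewise for $H$; this is immediate from the amalgamation construction, as the only shared vertices between distinct copies $G^x, G^y$ are the identified marked vertices, which are cut-vertices. Next, given a minor model $\cb$ of $\amal{S_G}{n}$ in $\amal{S_H}{k}$, I would apply \Lr{blocks} to a block $D$ of $\amal{S_G}{n}$ that is a copy of $S_G$: this yields a block $D'$ of $\amal{S_H}{k}$, necessarily a copy of $S_H$ by \eqref{amal bl}, together with a minor model $\cb'$ of $D \isom S_G$ inside $D' \isom S_H$. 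This shows $S_G < S_H$ as unmarked graphs. To upgrade this to a \emph{marked} minor $S_G \mm S_H$, I would invoke \Or{apices}: since $\Rank(S_G) = \Rank(S_H) = \alpha$ (the suspension does not change the rank, and $\ccm(G)\subseteq\ccm(H)$-type reasoning is not needed here because both have rank exactly $\alpha$ when $G,H$ do---more carefully, we only need $\Rank(S_G)\le\Rank(S_H)$, which follows since $S_G<S_H$, and if the ranks are unequal we must argue separately), the minor model $\cb'$ maps $A(S_G)$ into $A(S_H)$; and by the construction of the suspensions, $M$ (or rather its image in $S_G$) is contained in $A(S_G)$ and the marked vertex of $S_G$ is the outer suspension vertex, which again lies in $A(S_G)$, so a bijective correspondence of apices respecting the marking can be extracted, giving $S_G \mm S_H$.

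The main obstacle I anticipate is the case where $\Rank(G) \ne \Rank(H)$, so that \Or{apices} does not apply directly to conclude that branch sets of apices hit apices. When $\Rank(S_G) < \Rank(S_H)$, the naive block argument only yields an unmarked minor $S_G < S_H$, and I would need to recover the marked relation by a separate device --- for instance, by the attaching-$T_\alpha$ trick used in the proof of \Lr{subclasses R}: attach a copy of the minimal tree $T_\alpha$ (from \Or{min tree}) at each marked vertex before forming the self-amalgamation, so that marked vertices become forced to map to vertices carrying high-rank pendant structure. In fact the cleanest route is probably to build this $T_\alpha$-attachment into the definition of $u$ (or to observe it is harmless to assume it), ensuring that in any minor model the block-image of a marked vertex must again be marked because only marked vertices of the target carry an attached $T_\alpha$ of rank $\ge \alpha$, while all other vertices lie in parts of rank $<\alpha$. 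Once this marked correspondence is secured, $S_G \mm S_H$ follows, hence $(G,M)\mm(H,M')$ by \Lr{lem cones}, completing the proof that $u$ is an immersion.

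One more point to handle carefully is that $\preceq_*$ as used here is the restriction of $\leq_+$ to $\pst$, so $u((G,M)) \preceq_* u((H,M'))$ does give, for \emph{each} $\amal{S_G}{n}$, \emph{some} $\amal{S_H}{k}$ with $\amal{S_G}{n} < \amal{S_H}{k}$; I only use the instance $n=1$, and it is worth remarking (as a sanity check on the definition) that the family $u((G,M))$ is nonempty, which holds since $S_G$ has $|M|+2 \ge 2$ marked vertices after two suspensions, so $\amal{S_G}{n}$ is defined for every $n$. The converse direction of an "iff" is not claimed in the lemma statement, so nothing further is needed.
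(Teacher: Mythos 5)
Your proof has a genuine gap at the crucial step: upgrading the unmarked minor $S_G < S_H$ to a \emph{marked} minor $S_G \mm S_H$. You take the instance $n=1$ of the hypothesis $u((G,M)) \preceq_* u((H,M'))$, restrict via \Lr{blocks} to a block, and obtain $S_G < S_H$ --- all fine. But your route to recovering the marking does not work. You argue via \Or{apices}, claiming that ``by the construction of the suspensions, $M$ (or rather its image in $S_G$) is contained in $A(S_G)$.'' This is false: the marked set $M$ is an arbitrary finite subset of $V(G)$ and need not lie in the kernel $A(G)$; passing to $S_G = S^\bullet(S^\bullet(G))$ only adds the two suspension vertices to the kernel, so $A(S_G) = A(G) \cup \{s_1,s_2\}$, which still need not contain $M$. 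Hence even in the equal-rank case, \Or{apices} gives no information about where the branch sets of the original marked vertices land. And when the ranks differ, as you note, \Or{apices} does not apply at all; the suggested patch of attaching $T_\alpha$-trees amounts to changing the definition of $u$, which is not what the lemma asserts.

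The paper's argument is structurally different and the difference is essential. It fixes a \emph{large} $n$, specifically $n > \|S_H\|$, rather than $n=1$. For each marked vertex $v$ of the root block $S_G^r$ of $\amal{S_G}{n}$, one considers the entire amalgamated pendant $\cls{K_v}$ hanging off $v$; the depth-$n$ self-amalgamation makes $\|\cls{K_v}\| \geq n$. Defining $f'(v)$ to be the union of the branch sets over $\cls{K_v}$, one shows $f'(v)$ must contain a marked vertex of the target block $B \isom S_H$: otherwise $f(\cls{K_v})$ is trapped inside $B$, giving $\cls{K_v} < S_H$, which contradicts $\|\cls{K_v}\| \geq n > \|S_H\|$ via \Or{apices} applied in the form of the monotonicity of $\|\cdot\|$ under minors of equal rank. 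This counting argument on the amalgamation depth is precisely what makes $u$ an immersion on the \emph{marked} quasi-order, and it is the idea your proposal is missing.
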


\begin{proof}
Let $(G,M), (H,M')$ be two marked graphs as in the statement. We have to show that if $u((G,M)) \leq u((H,M'))$ then $(G,M) \mm (H,M')$. So suppose the former is the case, which means that 
\labtequ{GnHk}{\fe\ \nin\ \ti\ $k\in \N$ \st\ $\amal{S_G}{n} < \amal{S_H}{k}$.}
This implies in particular that $\Rank(G)\leq \Rank(H)$ by \Or{minor rank}, since  $\Rank(\amal{S_G}{n}) = \Rank(S_G)= \Rank(G)$ by construction.

For a graph $R\in \cgr$, we let \defi{$||R||:=|A(R)|$} if $\Rank(R)>0$, and let $||R||:=|V(R)|$ if $\Rank(R)=0$ (i.e.\ $R$ is finite). Note that 
\labtequ{mR}{If $\Rank(R)=\Rank(R')$ and $R<R'$, then $||R||\leq ||R'||$}
by \Or{apices}.
\medskip

Fix some $n> ||S_H||$, and let $f: \amal{S_G}{n} < \amal{S_H}{k}$ be a \minem\ as provided by \eqref{GnHk}. Let $G':= \amal{S_G}{n}$ and $H':= \amal{S_H}{k}$. Recall that $S_G^r$ denotes the copy of $S_G$ corresponding to the root of $T_n$ in the construction of $\amal{S_G}{n}$. By \Lr{blocks}, there is a block $B$ of $\amal{S_H}{k}$ \st\ each branch set of $S_G^r$ intersects $B$. By \eqref{amal bl}, $B$ is isomorphic to $S_H$. For $v\in V(S_G^r)$, let $K_v$ denote the union of components of $G' - v$ other than $S_G^r - v$, and let $\cls{K_v}:= S_{G'}[\{v\} \cup V(K_v)]$. Note that $||\cls{K_v}||\geq n$, because $||S_G||\geq 2$ by assumption, and therefore each non-root layer of $T_n$ contributes at least one to each component of $K_v$.

For a marked vertex $v$ of  $S_G^r$, let $f'(v):= \bigcup_{w\in V(\cls{K_v})} f(w)$. If $v$ is unmarked we just let $f'(v):= f(v)$. Note that $f'(v)$ spans a connected subgraph of $H'$ because \cls{K_v}\ is connected. Thus we can think of $f'$ as a \minem\ of  $S_G^r$ into $H'$. We claim that 
\labtequ{clm fm}{for each marked vertex $v$ of $S_G^r$, the set $f'(v)$ contains a marked vertex of $B$.}
This claim implies $S_G \mm S_H$, because by the second sentence of \Lr{blocks} we can restrict $f': S_G^r \to H'$ into a marked-\minem\ of $S_G^r$ into $B$. 

To prove \eqref{clm fm}, suppose to the contrary that $f'(v)$ fails to contain a marked vertex of $B$. Since the marked vertices of $B$ separate it from the rest of $H'$, we deduce that $f'(v)=f(\cls{K_v}) \subseteq B$, and therefore $\cls{K_v} < S_H$ since $B \isom S_H$. However, we have $||\cls{K_v}||\geq n > ||H||$, which contradicts \eqref{mR}.

\medskip
Thus we have proved $S_G \mm S_H$. Applying \Lr{lem cones} twice we deduce the desired $(G,M) \mm (H,M')$. 
\end{proof}

\subsection{Completing the proof of \Tr{thm BWQO}} \label{sec com BWQO}

We have now gathered all ingredients to complete the proof we sketched at the beginning of this section: 

\begin{proof}[Proof of \Tr{thm BWQO}]
\Fe\ ordinal $\alpha<\omega_1$ and every \nin\ we will define immersions $m_\alpha: \ran{\alpha} \to H^*_{\omega_1}(\cf)$ and $\mbu_\alpha: \ranm{\alpha} \to H^*_{\omega_1}(\cf)$ by transfinite induction on $\alpha$. It is $m_\alpha$ that we really care about, but we need $\mbu_\alpha$ to be able to perform our inductive step. More precisely, our inductive hypothesis is that such immersions exist, and that both families $\{m_\alpha\}_{\alpha<\omega_1}$ and $\{\mbu_\alpha\}_{\alpha<\omega_1}$ are \defi{compatible}, i.e.\ for every $G\in \ran{\delta}$, and ordinals $\beta >\gamma\geq \delta$, we have $m_\beta(G)=m_\gamma(G)$ and $\mbu_\beta((G,M))=\mbu_\gamma((G,M))$ \fe\ finite $M\subset V(G)$.

\medskip 
To start the induction, for $G\in \Rank_0$ (i.e.\ $G\in \cf$) we just let $m_0(G)=G$. (We will define $\mbu_0$ later.) 

For the induction step, for any ordinal $0<\alpha<\oo_1$, we assume that immersions $m_{\bet}: \ran{\beta} \to H^*_{\omega_1}(\cf)$ and $\mbu_{\bet}: \ranm{\beta} \to H^*_{\omega_1}(\cf)$ have been defined in all previous steps $\beta< \alpha$.  We then define  $m_\alpha: \Rank_\alpha \to H^*_{\omega_1}(\cf)$ by a composition of immersions 
\labtequ{imms}{$\Rank_\alpha \xrightarrow{\ccm} \pst(\ranm{<\alpha}) \xrightarrow{u^*} \pst(\pst(\ran{<\alpha})) \xrightarrow{m^{**}_{<\alpha}} H^*_{\omega_1}(\cf)$}
that we will now define. We let $u$ be the immersion of \Lr{lem seq}, and $u^*$ its extension as in \Or{obs pst}. Thus $u^*$ is an immersion by combining these two results. Similarly, $m^{**}_{<\alpha}$ is obtained by applying \Dr{def star} twice to $m_{<\alpha}$, defined as $m_{<\alpha}:=\bigcup_{\beta<\alpha} m_\beta$. Here  the $m_\beta$ have been defined in previous steps of our induction, and their limit $m_{<\alpha}$ is well-defined by our compatibility assumption. Note that $m_{<\alpha}$ is an immersion since each $m_\beta$ is, and being an immersion is a `local' property. (If $\alpha=\beta +1$ is a successor ordinal, then we can replace $m_{<\alpha}$ by $m_\beta$.)  
 Applying \Or{obs pst} twice to $m_{<\alpha}$ we deduce that $m^{**}_{<\alpha}$ is an immersion too. (The image of $m^{**}_{<\alpha}$ is not all of 
$H^*_{\omega_1}(\cf)$, but a subset of $V^*_\gamma(\cf)$ for some ordinal $\gamma< \oo_1$.)
Finally, we let
\labtequ{imms def}{$m_\alpha(G):=m^{**}_{<\alpha} (u^*(\ccm(G) )) $.}
%

To prove that $m_{\alpha}$ is an immersion, it suffices to check that each of the three maps it is composed of is an immersion, since compositions of immersions are immersions. We have already checked this for $m^{**}_{<\alpha}$ and  $u^*$, and so it remains to check that $\ccm$ is an immersion.

\Lr{subclasses R} says exactly that $\ccm$ is an immersion of $\ran{\alpha}$ into $\pst(\ranm{<\alpha})$, assuming its conditions are satisfied, namely a) $\ranm{<\alpha}$ is \wqo, and b) $|\ranm{\beta}|_{\mm}$ is countable \fe\ $\beta<\alpha$. We will prove that they are satisfied using our induction hypothesis. To confirm a), recall that we have a compatible family of immersions $\mbu_{\bet}: \ranm{\beta} \to H^*_{\omega_1}(\cf), \bet<\alpha$. As above, compatibility implies that the limit $\mbu_{<\alpha}:= \bigcup_{\bet<\alpha} \mbu_{\bet}$ is well-defined, and it is an immersion of $\ranm{<\alpha}$ in $H^*_{\omega_1}(\cf)$. Thus $\ranm{<\alpha}$ is \wqo\ by \Or{pr im}, i.e.\ a) holds. To confirm b), we plug a) into the implication \ref{R iip} $\to$ \ref{R iiim} of \Tr{tfae} and obtain the stronger $|\ranm{\alpha}|_{\mm}= \aleph_0$. This completes the proof that $\ccm$ is an immersion of $\ran{\alpha}$ into $\pst(\ranm{<\alpha})$, and therefore that $m_{\alpha}$ is an immersion.

It remains to ensure that the image of $m_{\alpha}$ is contained in $H^*_{\omega_1}(\cf)$, i.e.\ it consists of hereditarily countable sets. Both $m^{**}_{<\alpha}$ and  $u^*$ preserve countability by definition, but we need to be careful with $\ccm$. The easiest way to handle this issue is to choose a representative from each marked-minor-twin class of $\ranm{<\alpha}$, and replace $\ccm(G)$ by its subset $\cc'(G)$ consisting of the representative of each marked-minor-twin class of $\ccm(G)$. Note that \Lr{subclasses R} is unaffected by replacing $\ccm(G)$ by  $\cc'(G)$. Using (b) again, we deduce that $\cc'(G)$ is countable, and so $m_{\alpha}(G) \in H^*_{\omega_1}(\cf)$ as desired.
\medskip

Having defined $m_{\alpha}$, we define $\mbu_{\alpha}$ by 
\labtequ{mbu}{$(G,M)\mapsto m_{\alpha}^*(u((G,M)))$.}
This is an immersion to  $H^*_{\omega_1}(\cf)$ by \Lr{lem seq} and~\Or{obs pst} applied to $m_{\alpha}$. This formula can be applied to finite $(G,M)$, and we consider this as the definition of $\mbu_0$. 

To see that $m_{\alpha}$ and $\mbu_{\alpha}$ are both compatible, note that their definitions \eqref{imms def} and \eqref{mbu} are independent of the rank of \g (as long as the latter is at most $\alpha$), whereby for the former we use the inductive hypothesis of compatibility. This completes the proof of our inductive hypothesis. 

\medskip
Using compatibility as in the definition of $m_{<\alpha}$ above, we can now define an immersion $m: \cgr \to H^*_{\omega_1}(\cf)$ by  $m:=\bigcup_{\alpha< \oo_1} m_\alpha$, and apply \Lr{pr im} to deduce that \cgr\ is \wqo. (Similarly, we can deduce that \RM\ is \wqo.) 

\medskip
To prove that \cgr\ is \bqo, we plug $m$ into \Or{obs hst}, with $H^*_{\omega_1}(\cf)$ playing the role of $Q$, and obtain an immersion\\ $m^+: \hst(\cgr) \to \hst(\hst(\cf))$. But $\hst(\hst(\cf)) = \hst(\cf)$ by \Or{obs H}, and so $m^+$ immerses $\hst(\cgr)$ into the \wqo\ set $\hst(\cf)$. Thus  $\hst(\cgr)$ is \wqo\ by \Or{pr im}, and so \cgr\ is \bqo\ by \Tr{lem H wqo}.
\end{proof}

The last paragraph of this proof is applicable to arbitrary quasi-orders $\cgr,\cf$, and so we obtain
\begin{corollary} \label{cor gen}
Let $\cgr,\cf$ be quasi-orders. Suppose \cf\ is \bqo, and there is an immersion $m: \cgr \to H^*_{\omega_1}(\cf)$. Then \cgr\ is \bqo.
\end{corollary}

\comment{
\medskip
???From here it is easy to deduce that \cgr\ is in fact \bqo: ???

Let \cgrc\ denote the set of connected graphs in \cgr. 
We will define an immersion $i: \hst(\cgrc) \to \cgr$. Since the latter is \wqo, we deduce that $\hst(\cgrc)$ is \wqo\ by \Or{pr im}, and so \cgrc\ is \bqo\ by \Tr{lem H wqo}. It then follows easily from \Lr{lem Seq} that \cgr\ is \bqo\ too.

We define $i=i_\alpha$ on $V^*_\alpha(\cgrc) \cap \hst(\cgrc)$ by transfinite induction on $\alpha$ as follows. We define $i_0$ to be the identity on \cgrc. For any successor ordinal $0<\alpha<\oo_1$, we define $i_\alpha$ by $X \mapsto S(\dot{\bigcup}_{X'\in X} i_{< \alpha}(X'))$, where $S()$ stands for the suspension as in \Lr{lem cones unm}, and as usual $i_{< \alpha}:= \bigcup_{\bet<\alpha} i_\beta$; this is well-defined unless $X\in \cgrc$, in which case we just let $i_\alpha(X)=i_0(X)=X$. Our inductive hypothesis is that  $i_\alpha$ is an immersion of $V^*_\alpha(\cgrc) \cap \hst(\cgrc)$ to \cgrc. To prove this, we suppose that $i_\alpha(X) < i_\alpha(Y)$ for some $X,Y\in V^*_\alpha(\cgrc) \cap \hst(\cgrc)$, and have to deduce $X\leq Y$ in \hst(\cgrc)\ where $\leq$ is the relation of \Dr{def qo}. By \Lr{lem cones unm} we have $\dot{\bigcup}_{X'\in X} i_{< \alpha}(X') < \dot{\bigcup}_{Y'\in Y} i_{< \alpha}(Y')$, and since the $X'\in X$ are connected, we deduce that \fe\ $X'\in X$ \ti\ $Y'\in Y$ with $X'<Y'$, and therefore $X\leq Y$ by item \ref{qo iv} of \Dr{def qo}.???}

\section{Open problems} \label{sec outl}

Our main open problem, motivated by Thomas' conjecture and \Tr{fin bqo} is
\begin{problem} \label{prob rless}
Are the (countable, or of arbitrary cardinality) rayless graphs  \wqo? 
\end{problem}
Thomas \cite{ThoCou} provides an example of a bad sequence of graphs of the cardinality of the continuum, which however contain plenty of rays. 
\medskip

\comment{
It is not clear to me whether \Prb{prob rless} is decidable in ZFC. 
Thus perhaps the right question to ask is the following:

\begin{problem} \label{prob zfc}
Is the statement that the countable rayless graphs are  \wqo\ consistent with ZFC? 
\end{problem}

A positive answer to \Prb{prob zfc} would still imply a positive answer to \Prb{prob wqo}, because the latter is not independent of ZFC. 
\medskip
}

\comment{
	A well-known problem of R.~Bonnet \cite{PouSauWel} asks whether every \wqo\ poset is a countable union of \bqo\ posets. Our \Tr{main Intro} comes close to corroborating this for the poset of countable rayless graphs. This motivates:
\begin{problem} \label{prob Bon}
Is the class of countable, rayless, graphs a countable union of \bqo\ subclasses?
\end{problem}
}

I find the following special case of \Prb{prob rless} particularly interesting: 

\begin{problem} \label{prob plan}
Are the countable, planar, rayless graphs \wqo?
\end{problem}

I expect that a positive answer to this would imply a positive answer to the following problem, by following the lines of the proof of \Tr{fin bqo} (and \Cr{cor Q}):

\begin{problem} \label{prob fin pl}
Are the finite planar graphs \bqo?
\end{problem}

Recall that Thomas proved that $TW(k)$ is \bqo\ \fe\ $k\in \N$ \cite[(1.7)]{ThoWel}. Let $TW_{<\infty}= \bigcup_{k\in \N} TW(k)$ be the class of countable rayless graphs of finite (but not bounded) tree-width. To appreciate the difficulty of the last two problems, the reader may try the following: 

\begin{problem} \label{prob TW}
Is $TW_{<\infty}$ \wqo?
\end{problem}

See \cite[Conjecture 10.2.]{ExcSigma} for a problem of similar flavour.

Another interesting special case of Thomas' conjecture is whether the countable Cayley graphs are \wqo; see \cite[\S 5]{GeoHamFul} for some progress. 
\medskip

Our next problem is motivated by item \ref{I twin} of \Tr{main Intro}.

\begin{problem} \label{prob contnm}
Is there a family \cc\ of countable ---not necessarily rayless--- graphs with $|\cc|=\cont$ \st\ no two elements of \cc\ are minor-twins?
\end{problem}
A similar question can be asked for countable (rayless or not) marked graphs with no restriction on the number of marked vertices.

\medskip
Our last problems are motivated by \Tr{thm Borel Intro}. 

\begin{problem} \label{prob Bor}
Let $\cc\subseteq \cg$ be a family of $\N$-labelled rayless graphs which is closed under minor-twins. Is it true that \cc\ is \wqo\ \iff\ $\cc \cap \Rank_\alpha$ is Borel \fe\ $\alpha< \oo_1$? 
\end{problem}
(The forward direction is true.)

Similarly, one can ask
\begin{problem} \label{prob Bor fin}
Let $Q$ be a family of finite graphs. Is it true that $Q$ is \bqo\ \iff\ $\Rank^1_\alpha(Q)$ is Borel \fe\ $\alpha< \oo_1$? 
\end{problem}

\medskip
Recall that Robertson, Seymour, \& Thomas \cite{RoSeThoExc} wrote that there is not much chance of proving Thomas' conjecture, and even our restriction to the rayless case seems out of reach. \Tr{main} provides new tools for attacking it, and perhaps there is now a chance of disproving it: 

\begin{problem} \label{prob non Bor}
Is there a family of rayless \N-labelled graphs which is closed under minors, 
has rank less than $\omega_1$, and is not Borel? 
\end{problem}

A positive answer, combined with \Tr{thm min Bor} and the implication \ref{M iir} $\to$ \ref{M iii} of \Tr{main}, would disprove Thomas' conjecture.

\bibliographystyle{plain}
\bibliography{../collective}


\comment{
	Let \defi{$\ran{\alpha}^n$} be the set of those $G\in \ran{\alpha}$ with $|A(G)|=n$.

\begin{lemma} \label{lem ran n}
If $\ran{\alpha}^n$ is \wqo\ \fe\ ordinal $\alpha<\omega_1$ and every \nin, then \cgr\ is \wqo.
\end{lemma}
\begin{proof}
Suppose \seq{G}\ is a bad sequence in \cgr. We may assume that each $G_n$ is connected, because we can replace each $G_n$ with $S(G_n)$ by \Lr{lem cones unm}.

Let $\alpha:= \sup_{\nin} \Rank(G_n)$. \Fe\ $m\in \N$, let $H_m:= \dot{\bigcup}_{n\geq m} G_n$. We claim that \seq{H}\ is a bad sequence. Indeed, if $H_m < H_k$ for some $m<k$, then $G_m$, which is a component of $H_m$, is a minor of a component of $H_k$, i.e.\ some $G_j, j\geq k>m$. But $H_m <G_j$ contradicts that \seq{G}\ is bad. 

Easily, each $H_n$ belongs to $\ran{\alpha}^0$ by construction, proving our statement. (In fact, we have proved that we can replace $\ran{\alpha}^n$ by $\ran{\alpha}^0$.)
\end{proof}

}

\comment{
\mymargin{Not needed?}
\begin{lemma} \label{lem epi}
Let $Q_1,Q_2$ be quasi-orders, and let $g: Q_1 \to Q_2$ be \st\ $g(X)\leq g(Y)$ implies $X\leq Y$ \fe\ $X,Y\in Q$.  If $Q_2$ is BQO then so is $Q_1$.
\end{lemma}

{\tiny 
\begin{definition} \label{def bqo}
\mymargin{Not needed?} A \defi{$Q$-array} is an Ellentuck-continuous/Tychonov-continuous/Borel-measurable function $f: X^{(\omega)} \to Q$ for some $X \in \N^{(\omega)}$, and it is \defi{bad} if there is no $s\in X^{(\omega)}$ \st\ $f(s) \leq f(s \sm \{\min s\})$. The equivalence of these three definitions of $Q$-array is proved by Mathias  (see K\"uhn). Here $Q$ is endowed with the discrete topology.

A quasi-order $Q$ is \defi{\bqo} if there is no bad $Q$-array.
\end{definition}
}}

\comment{
\begin{lemma} \label{lem TW m}
For every $n,t\in \N$, the graph class  $TW(t) \cap \ranmn{}$ is \wqo.
\end{lemma}
\begin{proof}
Let $((G_i,M_i))_{i\in\N}$ be a sequence of marked graphs in $TW(t) \cap \ranmn{}$. We need to show that it is good. We can pick an infinite subsequence along which the number of marked vertices is constant, since this number is upper bounded by $n$. We may assume \obda\ that this constant is $n$, and the subsequence coincides with $((G_i,M_i))_{i\in\N}$.

For each $i$, we modify $(G_i,M_i)$ into an unmarked graph $G'_i$ of tree-width $t+1$ by attaching a copy of $K_{t+2}$ (which has tree-width $t+1$) to each $v\in M_i$ by identifying $v$ with an arbitrary vertex of $K_{t+2}$. We claim that \fe\ $i,j\in \N$, 
\labtequ{cla Gi2}{$G'_i< G'_j$ \iff\ $(G_i,M_i) \mm (G_i,M_i)$.}
This claim implies our statement, because $(G'_i)_{i\in\N}$ is good by Thomas' aforementioned theorem, implying that $((G_i,M_i))_{i\in\N}$ is good too.

The backward implication of \eqref{cla Gi} is trivial (and not needed for our proof). For the forward implication, note that each of $G'_i, G'_j$ has exactly $n$ copies of $K_{t+2}$, which is a 2-connected graph, and these copies are pairwise disjoint. Therefore, if \cb\ is a minor model of $G'_i$ in $G'_j$, then it maps each copy of $K_{t+2}$ in $G'_i$ to one in $G'_j$ by \Lr{blocks}, whereby we use the fact that $G_j$ has no $K_{t+2}$ minor as its tree-width is less than that of $K_{t+2}$. It follows easily from this that for each $v\in M_i$ the branch set $\cb(v)$ contains a vertex of $M_j$. Moreover, if $w\in V(G_i) \sm M_i$,  then   $\cb(w)$ cannot intersect $G'_j \sm G_j$, because all vertices in the latter subgraph are needed to accommodate the copies of $K_{t+2}$ in $G'_i$. Thus by restricting \cb\ to $G_i$ we obtain a marked-minor model of $(G_i,M_i)$ in $(G_i,M_i)$.
\end{proof}
}

\comment{
	\section{On the cardinality of $\ranmn{\alpha}$}

I do not know whether \ref{R iii} $\to$ \ref{R iiim} holds.  Given $G\in \ran{\alpha}$, there are countably many ways to mark finitely many vertices of \g to form a marked graph in $\ranm{\alpha}$. However, $[G]$ may be uncountable. However, we can prove the following weakening:

\begin{proposition}
If $\ran{\alpha}$ is countable then so is $\ranm{<\alpha}$. \label{marked ctble}
\end{proposition}
\begin{proof}
Suppose $\{G_i\}_{i\in \ci}$ is an uncountable family of graphs in $\ranm{<\alpha}$ no two of which are marked-minor-twins. We may assume \obda\ that $G_i\in \ranmn{<\alpha}$ for a fixed \nin. We obtain  unmarked graphs $G'_i\in \ran{\alpha}$ similarly to \Dr{def GC} as follows. We start with infinitely many disjoint copies of $G_i$, add a set $A_i$ of $n$ isolated vertices, and for each copy of $G_i$, and each of its $n$ marked vertices $v$, we add an edge from $v$ to a distinct element of $A_i$. 
As in the forward direction of the proof of \Lr{lem GC}, we deduce that no two of the $G'_i$'s are minor-twins, contradicting that $\ran{\alpha}$ is countable.
	\end{proof}
}

\comment{
	\begin{lemma} \label{unstable}
NOW COPIED INLINE; DELETE\\
Suppose $\ccm(G) \subseteq \ccm(H)$, $|A(G)|= |A(H)|$, and fix a bijection $z: A(G) \to A(H)$ `realised' by a sequence witnessing $\ccm(G) \subseteq \ccm(H)$. 
There is \seq{g}, $g_n: G_n < H$ with $g_n^A=z$, such that every part $P$ of \g that is $g^P$-unstable \wrt\ some such sequence \seq{g^P} satisfying $g_n^{P,A}=z$ \fe\ $n$ is also $g$-unstable. 
\end{lemma}
\begin{proof}
NOW COPIED INLINE; DELETE\\
Enumerate those $P$'s as \seq{P}, and form $g_n$ by picking infinitely many members from each $g^{P_i}$, assuming \obda\ that $g^{P_i}_n(P\sm A(G))$ lie in distinct parts of $H$ for different values of $n$.
\end{proof}
}

\comment{

NOT NEEDED; PARTLY COPY OF PREVIOUS ITEM: \ref{R ivp} $\to$  \ref{R iipp}: 
Suppose, to the contrary,  there is a bad sequence $(H_i)$ in $\ran{<\alpha}$. We may assume \obda\ that each $H_i$ is connected by replacing each $H_i$ by $S^\bullet(H_i)$ and applying \Lr{lem cones un}. Let $\cc_i:= \ran{<\alpha} \cap \forb{H_1,\ldots, H_i}$ for each $i\in\N$. Note that $\cc_1 \supsetneq \cc_2 \supsetneq \ldots$ because $\cc_i$ contains $H_{i+1}$ but $\cc_{i+1}$ does not. 

Note that $\cc_i$ is closed under minors. 
Pick a representative from each minor-twin class of $\cc_i$, and let $G'_i$ be the graph consisting of the disjoint union of these representatives. We claim that $G'_i$ is countable. Indeed, by our inductive hypothesis, \ref{R ivp} $\to$ \ref{R iii} holds \fe\ $\beta<\alpha$, and therefore $\cc_i \subseteq \ran{<\alpha}$ consists of countably many minor-twin classes. 

Let $G_i:= \oo \cdot G'_i$ be the disjoint union of countably many copies of $G'_i$.
Note that $\Rank(G_i)\leq \alpha$. We claim that 
\labtequ{AGi}{$A(G_i)=\emptyset$.} 
To see this, note first that if $\Rank(H_n)\leq \beta$ holds for some $\beta< \alpha$ and every \nin, then \seq{H}\ is a bad sequence in $\ran{\beta}$, contradicting our inductive hypothesis. Thus $\sup_n \Rank(H_n)= \alpha$. Since each $\cc_i$, and hence $G_i$, contains (a minor-twin of) every $H_j,j>i$, we deduce that $\Rank(G_i)\geq \alpha$. But as each component of $G_i$ lies in $\ran{<\alpha}$, it follows that $A(G_i)=\emptyset$ and $\Rank(G_i)= \alpha$.


We claim that $(G_n)_\nin$ is an infinite descending $<$-chain in $\ran{\alpha}$, contradicting our assumption that none exists. To see that $G_{n+1}< G_{n}$ holds, recall that $\cc_{n+1}\subset \cc_{n}$, and therefore each component of $G_{n+1}$ is a minor of a component $C$ of $G_{n}$. As there are infinitely many disjoint copies of $C$ in $G_{n}$, we can thus embed all components of $G_{n+1}$ into $G_{n}$. For the converse, note that $H_{n+1} \in \cc_n \sm \cc_{n+1}$ because no $H_j, j\leq n$ is a minor of $H_{n+1}$. Thus $H_{n+1}< G_{n}$. Since $H_{n+1}$ is connected, if $H_{n+1}< G_{n+1}$, then $H_{n+1}$ is a minor of a component of $G_{n+1}$, which is impossible as  $H_{n+1} \not\in \cc_{n+1}$. Thus $G_{n+1}\not> G_{n}$, completing the proof that $(G_n)_\nin$ is a descending $<$-chain.

\medskip
NOT NEEDED; PARTLY COPY OF PREVIOUS ITEM: \ref{R iiip} $\to$  \ref{R iipp}: If not, then $\ran{<\alpha}$ contains a descending chain or an anti-chain. Suppose  first \seq{H}\ is a descending chain in $\ran{<\alpha}$. Then $\Rank(H_n)$ is decreasing, hence upper bounded by some $\beta< \alpha$. But this contradicts our inductive hypothesis because $|\ran{\beta}|_< \leq |\ran{\alpha}|_< < 2^{\aleph_0}$. 

Suppose next \seq{H}\ is an anti-chain in $\ran{<\alpha}$. Again, by \Lr{lem cones un}, we may assume that each $H_n$ is connected. Call a subset  $X$ of $\ch:= \{H_n \mid \nin\}$ \defi{co-infinite}, if $\ch \sm X$ is infinite. Easily, there are $2^{\aleph_0}$ such $X$, because any subset of the even $H_n$'s forms such an $X$. Let $\cc_X:= \ran{<\alpha} \cap \forb{X}$. We will follow the lines of the previous implication\mymargin{make sure this holds} to produce $2^{\aleph_0}$-many graphs $G_X$ 
none of which is a twin of another. 

%
%
For each co-infinite $X\subset \ch$, pick a representative from each minor-twin class of $\cc_X$, and let $G_X$ be the graph consisting of the disjoint union of these representatives. Again, $G_X$ is countable by our inductive hypothesis. Similarly to \eqref{AGi}, we claim
\labtequ{AGX}{$\Rank(G_X)= \alpha$ and $A(G_X)=\emptyset$.} 
Indeed, $\Rank(G_X)\leq \alpha$ is obvious, and to confirm $\Rank(G_X)\geq \alpha$, we observe that each of the infinitely many graphs in $\ch \sm X$ is a minor of $G_X$ by construction. We claim that for every $\beta< \alpha$ there is a graphs $G$ in $\ch \sm X$ with $\Rank(G)\geq \beta$. For if not, then $\ch \sm X$ is an anti-chain in $\ran{<\beta}$ contradicting our inductive hypothesis. (This argument is the reason why we are working with co-infinite sets $X$.)
This implies $\Rank(G_X)= \alpha$, and from this we immediately deduce that $A(G_X)=\emptyset$ since each component of $G_X$ lies in $\ran{<\alpha}$.

We claim that $G_X,G_Y$ are never minor-twins for  co-infinite $X\neq Y \subset \ch$. To see this, pick $H\in X \sydi Y$, say $H\in X \sm Y$. Then $H\in \cc_Y\sm \cc_X$ since no element of $Y$ is a minor of $H$. Thus $H< G_Y$. Since $H$ is connected, if $H< G_X$, then $H$ is a minor of a component of $G_X$, which is impossible as  $H\not\in \cc_X$. This proves that $G_X,G_Y$ are not minor-twins, and so we have obtained $2^{\aleph_0}$ distinct minor-twin classes, contradicting our assumption \ref{R iiip}.
}

\comment{
The following refines \Lr{subclasses}; perhaps delete the former.

\mymargin{somewhere: The following allows us to bypass the part of the proof of \Lr{UF} that uses compactness to embed $G_S$, and therefore the finiteness of the parts}

\begin{lemma} \label{subclasses2}
Assume $\ranmn{<\alpha}$ is \wqo\ \fe\ \nin. Let $G,H\in \ran{\alpha}$ and suppose $\Rank(G)\leq \Rank(H)$, and $|A(G)|= |A(H)|$. Then the following are equivalent: 
\begin{enumerate}
\item \label{s i} $(G,A(G)) <(H,A(H))$; and
\item \label{s ii} $\ccm(G)\subseteq \ccm(H)$. 
\item \label{s iii}  ?? $\ccmf(G)\subseteq \ccmf(H)$, where $\ccmf(G)$ is defined by considering finite unions of parts.
\end{enumerate}
\end{lemma}
\begin{proof}
The implication \ref{s i} $\to$  \ref{s ii} is 
trivial. 
We will prove the implication \ref{s ii} $\to$  \ref{s i} by induction on $\Rank(G)$. Let \seq{P}\ be an enumeration of the parts of \G, and let $G_n:= \bigcup_{i\leq n} P_i$. Since $|A(G)|=|A(H)|$, every minor embedding $h: G_n < H$ induces a bijection of $A(G)$ onto $A(H)$, which bijection we denote by $h^A$.
Instead of the implication \ref{s ii} $\to$  \ref{s i} we will inductively prove the following strengthening:
\labtequ{fA}{\Fe\ sequence \seq{f}\ of minor embeddings $f_n: G_n < H$ witnessing \ref{s ii}, \ti\ $g:G<H$ \st\ $g^A=f_n^A$ for infinitely many \nin.}
Recall that $\Rank(G)=0$ means that \g is finite, and that we have defined $A(G):= V(G)$ in that case. Thus the base case $\Rank(G)=0$ of \eqref{fA} is obvious\mymargin{For finite \g define $A(G):= V(G)$ in prels}, so let us assume from now on $\Rank(G)\geq 1$.

Every co-part of \g is mapped to a co-part of $H$ by any marked minor embedding, and so we can extend the definition of stable.. 

We may assume \obda\ that $f_n^A$ is a constant bijection $z$, because we can achieve this by passing to a subsequence. Applying \Lr{unstable} \mymargin{dissolve it and embed here} we obtain a sequence $g'_n: G_n < H, \nin$ of minor embeddings \st\ $g_n^{'A}=z$ and every part $P$ of \g that is $p$-unstable \wrt\ some such sequence \seq{p}\ with $p_n^{A}=z$ is also $g'$-unstable. 
 
Let $\cs$ be the set of $g'$-stable parts of $G$, and $\cu$ the set of all other parts of \G. Let $\cs'$ be the set of $H$-stable parts of $H$, and $\cu'$ the set of all other parts of $H$. By \Lr{lem redundant}, 
 \labtequ{csfin}{$\cs'$ is finite.}
(This is why we need the assumption that  $\ranmn{<\alpha}$ is \wqo, which assumption we need to drop in some applications of \Lr{subclasses} with $A(G)=\emptyset$.)
Suppose there is some $P\in \cs$ \st\ $g'_n(P^c)$ is contained in an element $U_n$ of $U'$ for infinitely many values of $n$. Since $P$ is $g'$-stable, $\bigcup_n g'_n(P^c)$ meets only finitely many parts of $H$, and so we may assume that these $U_n$ coincide with a fixed $U\in U'$. Let $\seq{h}, h_n: H < H$ be a sequence of \minem s witnessing that $U$ is $H$-unstable, i.e.\ embedding $U$ into infinitely many distinct parts of $H$, and such that $h_n^{A(H)}$ is the identity on $A(H)$. Then the sequence of compositions $h_n \circ  g'_n$ embed $P^c$ into infinitely many distinct parts of $H$, and so $P$ is $(h \circ  g')$-unstable. Here, we used the fact that $(h_n \circ  g'_n)^A=z$ by construction.  But this contradicts our choice of $g'$ since $P$ is $g'$-stable.

This contradiction proves that $g'_n(P^c)\subset \cs'$ for almost all $n$ \fe\ $P\in \cs$. We modify $(g'_n)$ into the desired sequence $(g_n)$ as follows. For every $n$, and every $P\in \cs$ \st\ $g'_n(P^c)\not\subset \bigcup \cs'$, we omit $P^c$ from the domain of definition of $g'_n$ to obtain $g''_n$. Note that each $P$ is omitted for at most finitely many $n$ by the previous statement. Then, we let $(g_n)$ be a subsequence of $(g''_n)$, chosen so that $g_n$ does not omit any of the first $n$ members of our fixed enumeration \seq{P}\ of the parts of \G, which is possible since each $P_i$ is omitted finitely often. By construction, we have $g_n(P^c)\subset \bigcup \cs'$ \fe\  $P\in \cs$ and every $\nin$ \st\ $g_n(P^c)$ is defined. In particular, every $g'$-stable part is also $g$-stable. By the choice of $g'$, the converse also holds, i.e.\ every $g$-stable part is $g'$-stable. By construction, $(g_n)$ retains the property of $(g'_n)$ that each member induces the same bijection $z$ of  $A(G)$ onto $A(H)$.

\medskip

Let $G_S:= \bigcup \cs$. We claim that
\labtequ{RGS p}{$\Rank(G_S)< \Rank(G)$,}
and to prove this we will show that a graph with the same rank as $G_S$ is a minor of a graph with the same rank as $\bigcup \cs'$. 

For this, let $H'$ be the graph obtained from $\bigcup \cs'$ by adding all possible edges with one end-vertex in $A(H)$ and one end-vertex in $\bigcup \cs'\sm A(H)$. By \eqref{csfin}, $\Rank(H')< \Rank(H)$. 

Let $A':= \bigcup_{Q\in \cs'} A(Q^c)$, and note that $A'$ is finite since $\cs'$ is. Thus \fe\ $n$, $g_n(P)$ intersects $A'$ for a bounded number of parts $P$ of $G$ ..Moreover, \seq{g}\ proves that 
\medskip

Note that \ref{RGS}\ implies $\Rank(G_S)< \Rank(H)$, and so combining \ref{RGS}\ with our inductive hypothesis \eqref{fA}\ applied to the sequence $\seq{g}$ restricted to $G_S$, we deduce that there is $g_S: G_S < H$ \st\ $g_S^A=g_n^A=z$ for infinitely many $n$, and in fact for all $n$ since $g_n^A=z$ \fe\ $n$.

From now on we can proceed as in the proof of \Lr{UF} (and \Lr{Rank 1} if it is kept) to extend $g_S$ to $g: G<H$ with $g^A=z$ using the HH principle. This completes the proof of \eqref{fA}.
\end{proof}
}

\comment{
The following allows us to bypass the part of the proof of \Lr{UF} that uses compactness to embed $G_S$, and therefore the finiteness of the parts:

\begin{lemma} \label{lem GS}
Suppose $\ccm(G) \subseteq \ccm(H)$ and $\Rank(H)\leq \Rank(G)\leq \alpha$ and $|A(G)|=|A(H)|$. Assume $\ranm{<\alpha}$ is \wqo. Then we can choose \seq{g},  $g_n: G_n < H$, \st\ $G_S^g< H$, and each $g_n$ induces the same bijection of  $A(G)$ onto $A(H)$. 
\end{lemma}
\begin{proof}
Since $|A(G)|=|A(H)|$, every minor embedding $m_n: G_n < H$ induces a bijection of $A(G)$ onto $A(H)$. 
Moreover, every co-part of \g is mapped to a co-part of $H$ by any marked minor embedding, and so we can extend the definition of stable.. 

We begin with a sequence
 $g'_n: G_n < H, \nin$ of minor embeddings as in \Lr{unstable}, i.e.\ \st\ every part $P$ of \g that is $g^P$-unstable \wrt\ some such sequence \seq{g^P} is also $g'$-unstable. By passing to a subsequence, we may assume that each $g_n$ induces the same bijection of  $A(G)$ onto $A(H)$. 
 
Let $\cs$ be the set of $g'$-stable parts of $G$, and $\cu$ the set of all other parts of \G. Let $\cs'$ be the set of $H$-stable parts of $H$, and $\cu'$ the set of all other parts of $H$. By \Lr{lem redundant}, 
 \labtequ{csfin}{$\cs'$ is finite.}
Suppose there is some $P\in \cs$ \st\ $g'_n(P^c)$ is contained in an element $U_n$ of $U'$ for infinitely many values of $n$. Since $P$ is $g'$-stable, $\bigcup_n g'_n(P^c)$ meets only finitely many parts of $H$, and so we may assume that these $U_n$ coincide with a fixed $U\in U'$. Let $\seq{h}, h_n: H < H$ be a sequence of \minem s witnessing that $U$ is $H$-unstable, i.e.\ embedding $U$ into infinitely many distinct parts of $H$. Then the sequence of compositions $h_n \circ  g'_n$ embed $P^c$ into infinitely many distinct parts of $H$, and so $P$ is $h' \circ  g$-unstable. But this contradicts our choice of $g'$ since $P$ is $g$-stable.

This contradiction proves that $g'_n(P^c)\subset \cs'$ for almost all $n$ \fe\ $P\in \cs$. We modify $(g'_n)$ into the desired sequence $(g_n)$ as follows. For every $n$, and every $P\in \cs$ \st\ $g'_n(P^c)\not\subset \bigcup \cs'$, we omit $P^c$ from the domain of definition of $g'_n$ to obtain $g''_n$. Note that each $P$ is omitted for at most finitely many $n$ by the previous statement. Then, we let $(g_n)$ be a subsequence of $(g''_n)$, chosen so that $g_n$ does not omit any of the first $n$ members of our fixed enumeration \seq{P}\ of the parts of \G, which is possible since each $P_i$ is omitted finitely often. By construction, we have $g_n(P^c)\subset \bigcup \cs'$ \fe\  $P\in \cs$ and every $\nin$ \st\ $g_n(P^c)$ is defined. In particular, every $g'$-stable part is also $g$-stable. By the choice of $g'$, the converse also holds, i.e.\ every $g$-stable part is $g'$-stable. By construction, $(g_n)$ retains the property of $(g'_n)$ that each member induces the same bijection of  $A(G)$ onto $A(H)$.

Let $H'$ be the graph obtained from $\bigcup \cs'$ by adding all possible edges with one end-vertex in $A(H)$ and one end-vertex in $\bigcup \cs'\sm A(H)$. By \eqref{csfin}, $\Rank(H')< \Rank(H)$. 

Let $G_S:= \bigcup \cs$. We claim that
\labtequ{RGS}{$\Rank(G_S)< \Rank(G)$,}
and to prove this we will show that a graph with the same rank as $G_S$ is a minor of a graph with the same rank as $\bigcup \cs'$. 
For this, let $A':= \bigcup_{Q\in \cs'} A(Q^c)$, and note that $A'$ is finite since $\cs'$ is. Thus \fe\ $n$, $g_n(P)$ intersects $A'$ for a bounded number of parts $P$ of $G$ ..Moreover, \seq{g}\ proves that 

By \eqref{RGS}, the marked graph $(G_S,A(G))$ lies in $\ccm(G)$, and therefore it has a twin in $\ccm(H)$, which proves that $(G_S,A(G))< (H,A(H))$. 


\end{proof}
}

\comment{
\begin{proof}[Proof of \Lr{subclasses}]
{\small We will follow the approach of \Lr{Rank 1}, and it is assumed that the reader is familiar with its proof. The main technical difficulty in comparison to that lemma will be handling the stable parts. 
\medskip

Again, the forward implication is trivial. 

For the backward implication, let \seq{P}\ be an enumeration of the parts of \G.
Let \seq{G}\ be a sequence of subgraphs of \g as provided by \Lr{lem Gn}. We may assume that $P_n \subseteq G_n$, because we can add $\bigcup_{i\leq n} P_i$ to $G_n$ without increasing its rank. 

Let $A:=A(G)$ and $A':=A(H)$. Since $|A|=|A'|$, every marked-minor embedding $f: (G_n,A) \mm (H,A')$ induces a bijection $f^A$ of $A$ onto $A'$ as in \Dr{def hA}.
%
%
Since $\ccm(G)\subseteq \ccm(H)$, \ti\ a sequence of marked-minor embeddings $f_n: (G_n,A) \mm (H,A')$. We may assume \obda\ that $f_n^A$ is a constant bijection $z$, because we can achieve this by passing to a subsequence. We call any such sequence \seq{f}\ a \defi{($G_n,z$)-sequence}.

Easily, every co-part of \g is mapped to a co-part of $H$ by any marked minor embedding, and so we can adapt the definition of an $h$-stable part from \Lr{UF} to any sequence \seq{g}\ of embeddings $g_n: (G_n,A) \mm (H,A')$: we call a part $P$ of \g \defi{$g$-stable}, if $\bigcup_n g_n(P)$ is contained in the union of a finite set of parts of $H$, and call $P$ \defi{$g$-unstable} otherwise.

We claim that there is ($G_n,z$)-sequence \seq{g'}\ maximizing the set of unstable parts: 
 \labtequ{unstab}{There is a ($G_n,z$)-sequence \seq{g'}, $g'_n: G_n \to H$, such that every part $P$ of \g that is $g^P$-unstable \wrt\ some ($G_n,z$)-sequence \seq{g^P} is also $g'$-unstable.
}
To see this, enumerate those parts $P$ as \seqi{P'}, and form $g'_n$ by picking infinitely many members from each $g^{P'_i}$, assuming \obda\ that $g^{P'_i}_n(P'_i\sm A)$ lie in distinct parts of $H$ for different values of $n$. This proves \eqref{unstab}.

 \medskip
Let $\cs$ be the set of $g'$-stable parts of $G$, and $\cu$ the set of all other parts of \G. Let $\cs'$ be the set of $H$-stable parts of $H$, and $\cu'$ the set of all other parts of $H$. Let $H_S:= \bigcup \cs'$. By \Lr{lem redundant}, 
 \labtequ{csfin}{$\cs'$ is finite, and therefore $\Rank(H_S)<\Rank(H)$.}
 %

Next, we claim that
\labtequ{gn cs}{$g'_n(P^c)\subset H_S$ holds for almost all $n$ \fe\ $P\in \cs$,}
where $P^c:= P \sm A$ is the co-part of $P$.
To prove this, suppose to the contrary there is some $P\in \cs$ \st\ $g'_n(P^c)$ is contained in an element $U_n$ of $\cu'$ for infinitely many values of $n$. Since $P$ is $g'$-stable, $\bigcup_n g'_n(P^c)$ meets only finitely many parts of $H$, and so we may assume that these $U_n$ coincide with a fixed $U\in U'$. Let $\seq{h}, h_n: H < H$ be a sequence of \minem s witnessing that $U$ is $H$-unstable, i.e.\ embedding $U$ into infinitely many distinct parts of $H$, and such that $h_n^{A}$ is the identity on $A'$. Then the sequence of compositions $h_n \circ  g'_n$ embed $P^c$ into infinitely many distinct parts of $H$, and so $P$ is $(h \circ  g')$-unstable. Here, we used the fact that $(h_n \circ  g'_n)^A=z$ by construction.  But this contradicts our choice of $g'$ because of \eqref{unstab}, since $P$ is $g'$-stable and $(h \circ  g')$ is a ($G_n,z$)-sequence. This contradiction proves \eqref{gn cs}.

\medskip
Let $G_S:= \bigcup \cs$. We claim that
\labtequ{RGS}{$\Rank(G_S)< \Rank(H) (= \Rank(G))$.}
For this, let $S_1, S_2, \ldots$ be an enumeration of \cs, and let $G'_n:= S_1 \cup \ldots \cup S_n$. We consider $G'_n$ as a marked graph, with $A$ being the set of marked vertices. Using $(g'_n)$ it is not hard to obtain a sequence of marked-minor embeddings $g_n: G'_n \mm H_S$. Indeed, 
for every $n$, and every $P\in \cs$ \st\ $g'_n(P^c)\not\subset H_S$, we omit $P^c$ from the domain of definition of $g'_n$ to obtain a minor embedding $g''_n$ from a subgraph of $G'_n$ to $H_S$. Note that each $P$ is omitted for at most finitely many $n$ by \eqref{gn cs}. Then, we let $(g_n)$ be a subsequence of $(g''_n)$, chosen so that $g_n$ does not omit any of $S_1, \ldots S_n$, which is possible since each $S_i$ is omitted finitely often. 
 By construction, $(g_n)$ retains the property of $(g'_n)$ that each member induces the same bijection $z$ of  $A$ onto $A'$, and $g_n$ embeds $G'_n$ into $H_S$. 

We will now deduce \eqref{RGS} using \eqref{csfin} and the sequence $(g_n)$. Let $\beta:= \Rank(H_S)$. Let $A'':= A(H) \cup \bigcup_{Q\in \cs'} A(Q)$  (\fig{figQs}), and note that $A''$ is finite by \eqref{csfin}, and that 
\labtequ{lt beta}{each component of $H_S - A''$ has rank smaller than $\beta$}
by the definition of $A(Q)$. 

\begin{figure} 
\begin{center}
\begin{overpic}[width=.6\linewidth]{figQs.pdf} 
\put(88,59){$Q_1$}
\put(88,13){$Q_k$}
\put(34.5,57){\textcolor{green}{$A(Q_1)$}}
\put(36.4,13){\textcolor{green}{$A(Q_k)$}}
\put(12,33){$A(H)$}
\put(25,60){\textcolor{blue}{$A''$}}
\put(97,36){$C$}
\end{overpic}
\end{center}
\caption{The vertex set $A'':= A(H) \cup \bigcup_{Q\in \cs'} A(Q)$ in the proof of \eqref{lt beta}, enclosed by the dashed curve (blue).} \label{figQs}
\end{figure}

Suppose $\Rank(G_S)$ equals $\Rank(G)$, which is larger than $\beta$ by \eqref{csfin}. Then there are infinitely many $P\in \cs$ with $\Rank(P^c)\geq \beta$ by the definition of rank. Let $n$ be \leth\ $G'_n$ contains more than $|A''|$ such $P$'s. Since the $P^c$'s are pairwise disjoint, by the pigeonhole principle, $g_n$ maps at least one of them to $H_S - A''$, and in fact to a component $C$ of $H_S - A''$ since each $P^c$ is connected. This contradicts \Or{minor rank}, since $\Rank(P^c)\geq \beta > \Rank(C)$ by \eqref{lt beta}.


\medskip


This contradiction proves \eqref{RGS}. Thus $G_S \in \ccg$, and so  $G_S< G_n$ for some $n$, and in particular there is a minor embedding $g_S: G_S < H$, obtained by restricting some $g_n$. (We do not claim that $g_S(G_S)$ is contained in $H_S$.)

From now on we can proceed as in the proof of \Lr{Rank 1} to extend $g_S$ to $g: G<H$ with $g^A=z$ by applying the Hilbert Hotel Principle to the $g'$-unstable parts of \G. The only difference is that instead of appealing to the \GMT, we now use our assumption that $\ccm(H)$ is \wqo---combined with \Or{good seqs}--- in order to find a chain \seq{Y}\ within any sequence of parts of $H$. }
\end{proof}
}

\end{document}